\theoremstyle{definition}
\newtheorem{Def}{Definition}[section]
\newtheorem{Hyp}{Hypothesis}[section]
\newtheorem{Eg}{Example}[section]
\newtheorem{Rm}{Remark}[section]
\theoremstyle{plain}
\newtheorem{Prop}[Def]{Proposition}
\newtheorem{Lem}[Def]{Lemma}
\newtheorem{Thm}[Def]{Theorem}
\newtheorem{Cor}[Def]{Corollary}
\numberwithin{equation}{section}
\newcommand{\authorfootnotesA}{\renewcommand\thefootnote{$\flat$}}%
\newcommand{\authorfootnotesB}{\renewcommand\thefootnote{$\sharp$}}%
\newcommand{\authorfootnotesC}{\renewcommand\thefootnote{$\diamond$}}%
\newcommand{\authorfootnotesD}{\renewcommand\thefootnote{$\S$}}%
\begin{document}

\begin{center}
	\LARGE 
	Distributional It\^o's Formula and
	Regularization of Generalized Wiener Functionals \par \bigskip

	\normalsize
	\authorfootnotesA
	Takafumi Amaba\footnote{
	This work was supported by JSPS KAKENHI Grant Number 15K17562.
	}\textsuperscript{1}
	and
	\authorfootnotesB
	Yoshihiro Ryu\textsuperscript{2}
	\authorfootnotesC
	\authorfootnotesD

	\textsuperscript{1}Fukuoka University,
	8-19-1 Nanakuma, J\^onan-ku, Fukuoka, 814-0180, Japan.\par \bigskip
	\textsuperscript{2}Ritsumeikan University,
	1-1-1 Nojihigashi, Kusatsu, Shiga, 525-8577, Japan.\par \bigskip

	
	\email{(T. Amaba) fmamaba@fukuoka-u.ac.jp}
	\email{(Y. Ryu) gr0095kp@ed.ritsumei.ac.jp}

	\today
\end{center}

\begin{quote}{\small {\bf Abstract.}
	We investigate Bochner integrabilities
	of generalized Wiener functionals.
	We further formulate an It\^o formula
	for a diffusion in a distributional setting,
	and apply
	it
	to investigate
	differentiability-index $s$
	and integrability-index $p \geqslant 2$
	for which the Bochner integral belongs to $\mathbb{D}_{p}^{s}$.
}
\end{quote}

\renewcommand{\thefootnote}{\fnsymbol{footnote}}

\footnote[0]{ 
2010 \textit{Mathematics Subject Classification}.
Primary 60H07; 
Secondary
60J60, 
60J35. 
}

\footnote[0]{ 
\textit{Key words and phrases}.
Malliavin calculus,
H\"older continuity of diffusion local times,
Distributional It\^o's formula,
Smoothing effect brought by time-integral.
}

\section{Introduction}
In this paper, we justify the symbol
``$\int_{0}^{T} \delta_{y} (X_{t}) \mathrm{d}t$"
denoting a quantity relating to the local time of a
$d$-dimensional diffusion process
$X=(X_{t})_{t \geqslant 0}$
with $X_{0}$ being deterministic
(in multi-dimensional case, we assume $X_{0} \neq y$),
or more generally,
the object
``$\int_{0}^{T} \Lambda (X_{t}) \mathrm{d}t$"
where $\Lambda$ is a distribution.
Our diffusion process $X=(X_{t})_{t \geqslant 0}$
is assumed to satisfy a $d$-dimensional
stochastic differential equation
$$
\mathrm{d} X_{t}
=
\sigma ( X_{t} ) \mathrm{d} w(t)
+
b ( X_{t} ) \mathrm{d} t,
\quad
X_{0}=x \in \mathbb{R}^{d},
$$
where
$
w = (w^{1}(t), \cdots , w^{d}(t))_{t \geqslant 0}
$
is
a $d$-dimensional Wiener process with $w(0) = 0$.
The main conditions on
$
\sigma = ( \sigma_{j}^{i} )_{1 \leqslant i,j \leqslant d}
$
and
$
b = (b^{i})_{1 \leqslant i \leqslant d}
$
under which we will work are combinations from the following.

\begin{Hyp} 
\begin{itemize}
\item[(H1)]
the coefficients
$\sigma$ and $b$ are $C^{\infty}$,
and have bounded derivatives in all orders $\geqslant 1$.

\item[(H2)]
$( \sigma \sigma^{*} ) (x)$
is strictly positive,
where $x = X_{0}$
and $\sigma^{*}$ is the transposed matrix of $\sigma$.

\item[(H3)]
$\sigma \sigma^{*}$ is uniformly positive definite, i.e.,
there exists $\lambda > 0$ such that
$$
\lambda \vert \xi \vert_{\mathbb{R}^{d}}^{2}
\leqslant
\langle
	\xi ,
	( \sigma \sigma^{*} )
	( y )
	\xi
\rangle_{\mathbb{R}^{d}}
\quad
\text{for all
$
\xi,y
\in \mathbb{R}^{d}
$,}
$$
\end{itemize}
\noindent
where
$
\langle \bullet , \bullet \rangle_{\mathbb{R}^{d}}
$
is the standard inner product on $\mathbb{R}^{d}$,
and
$
\vert \bullet \vert_{\mathbb{R}^{d}}
=
\vert \bullet \vert
$
is the corresponding norm.
\begin{itemize}
\item[(H4)]
$\sigma$ and $b$ are bounded.
\end{itemize}
\end{Hyp} 

We further formulate stochastic integrals
and an It\^o formula
in this distributional setting
and investigate when the local time
belongs to $\mathbb{D}_{2}^{s}$
(the Sobolev space of integrability-index $2$ and
differentiability-index $s \in \mathbb{R}$
with respect to the Malliavin derivative).

In fact, we will formulate
$\int_{0}^{T} \delta_{y} (X_{t}) \mathrm{d}t$
as a Bochner integral in the space of
generalized Wiener functional.
We remark here the Bochner integrability
seems nontrivial
when $y=X_{0}$,
since
$\delta_{y} (X_{t})$
no longer makes sense at $t=0$.
On the other hand,
the local time is usually formulated as
a classical Wiener functional.
Hence, once the Bochner integrability
is proved, a ``smoothing effect" should occur
in the Bochner integral
$\int_{0}^{T} \delta_{y} (X_{t}) \mathrm{d}t$,
i.e.,
the differentiability-index for
$\int_{0}^{T} \delta_{y} (X_{t}) \mathrm{d}t$,
should be greater than that of $\delta_{y} (X_{t})$.

In the case of Brownian motion
$X_{t}=w(t)$,
everything can be explicitly computed
and we can exhibit this phenomenon.
Namely, the following is the prototype of this study.

Let $\mathscr{S}^{\prime} (\mathbb{R}^{d})$
denote the space of all Schwartz distributions
on $\mathbb{R}^{d}$.

\begin{Thm} 
\label{Main_Thm1} 
Assume $d=1$.
Let
$\Lambda \in \mathscr{S}^{\prime} (\mathbb{R})$
and
$s \in \mathbb{R}$.
If $\Lambda (w(T)) \in \mathbb{D}_{2}^{s}$ then
the mapping
$$
(0, T] \ni t \mapsto
\sqrt{\frac{T}{t}}
\Lambda \Big( \sqrt{\frac{T}{t}} w(t) \Big)
\in \mathbb{D}_{2}^{s}
$$
is Bochner integrable in $\mathbb{D}_{2}^{s}$
and we have
$$
\int_{0}^{T}
\sqrt{\frac{T}{t}}
\Lambda \Big( \sqrt{\frac{T}{t}} w(t) \Big)
\mathrm{d}t
\in \mathbb{D}_{2}^{s+1}.
$$
\end{Thm}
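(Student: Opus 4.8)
The plan is to push everything through the Wiener--It\^o chaos decomposition, which is exactly what makes the index $p=2$ convenient. Writing $H = L^{2}([0,T])$ for the underlying Hilbert space and $\phi_{t}$ for the $N(0,t)$-density, expand $\Lambda(w(T)) = \sum_{n\geqslant 0}\lambda_{n}\,I_{n}(\mathbf{1}_{[0,T]}^{\otimes n})$. Since $D^{n}[\Lambda(w(T))] = \Lambda^{(n)}(w(T))\,\mathbf{1}_{[0,T]}^{\otimes n}$ in the sense of Watanabe's pullback of $\mathscr{S}'$-distributions, the scalar kernels are
$$
\lambda_{n} = \frac{1}{n!}\,\mathbb{E}\big[\Lambda^{(n)}(w(T))\big] = \frac{1}{n!}\,\langle \Lambda^{(n)},\phi_{T}\rangle,
$$
with $\langle\cdot,\cdot\rangle$ the $\mathscr{S}'$--$\mathscr{S}$ pairing. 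By definition of the norm, the hypothesis $\Lambda(w(T))\in\mathbb{D}_{2}^{s}$ reads $\sum_{n}(1+n)^{s}\,n!\,T^{n}\,\lambda_{n}^{2}<\infty$, and this is the quantity to be controlled.

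First I would identify the chaos expansion of the integrand. Put $g_{t}(x):=\sqrt{T/t}\,\Lambda(\sqrt{T/t}\,x)\in\mathscr{S}'(\mathbb{R})$, so that the $n$-th kernel of $g_{t}(w(t))$ is $\tfrac{1}{n!}\langle g_{t}^{(n)},\phi_{t}\rangle\,\mathbf{1}_{[0,t]}^{\otimes n}$. A one-line change of variables, using $\phi_{t}(\sqrt{t/T}\,u)=\sqrt{T/t}\,\phi_{T}(u)$, yields the scaling identity $\langle\Lambda^{(n)}(\sqrt{T/t}\,\cdot\,),\phi_{t}\rangle=\langle\Lambda^{(n)},\phi_{T}\rangle$, hence $\tfrac{1}{n!}\langle g_{t}^{(n)},\phi_{t}\rangle=(T/t)^{(n+1)/2}\lambda_{n}$ and
$$
\sqrt{\tfrac{T}{t}}\,\Lambda\Big(\sqrt{\tfrac{T}{t}}\,w(t)\Big)=\sum_{n\geqslant 0}(T/t)^{(n+1)/2}\,\lambda_{n}\,I_{n}\big(\mathbf{1}_{[0,t]}^{\otimes n}\big).
$$
Summing the squared $\mathbb{D}_{2}^{s}$-contributions term by term gives $\big\|\sqrt{T/t}\,\Lambda(\sqrt{T/t}\,w(t))\big\|_{\mathbb{D}_{2}^{s}}^{2}=(T/t)\,\|\Lambda(w(T))\|_{\mathbb{D}_{2}^{s}}^{2}$; conceptually this holds because $w(\cdot)\mapsto\sqrt{T/t}\,w(\cdot\,t/T)$ is a Brownian rescaling, implemented on Wiener space by a second-quantized isometry preserving every $\mathbb{D}_{2}^{s}$. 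Strong measurability of $(0,T]\ni t\mapsto g_{t}(w(t))\in\mathbb{D}_{2}^{s}$ follows since the finite-chaos truncations are $\mathbb{D}_{2}^{s}$-continuous in $t$ and converge pointwise; as $\int_{0}^{T}\sqrt{T/t}\,\mathrm{d}t=2T<\infty$, the map is Bochner integrable in $\mathbb{D}_{2}^{s}$, which is the first assertion.

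Next I would compute the integral itself. The $n$-th chaos projection $J_{n}$ on $\mathbb{D}_{2}^{s}$ and the map $I_{n}\colon H^{\otimes n}\to\mathbb{D}_{2}^{s}$ are bounded, hence commute with Bochner integration, and $t\mapsto(T/t)^{(n+1)/2}\mathbf{1}_{[0,t]}^{\otimes n}$ is Bochner integrable in $H^{\otimes n}$ (its norm is $T^{(n+1)/2}t^{-1/2}$). Therefore the $n$-th kernel of $\int_{0}^{T}\sqrt{T/t}\,\Lambda(\sqrt{T/t}\,w(t))\,\mathrm{d}t$ equals $\lambda_{n}T^{(n+1)/2}k_{n}$, where $k_{n}(u_{1},\dots,u_{n})=\int_{\max_{i}u_{i}}^{T}t^{-(n+1)/2}\,\mathrm{d}t$ on $[0,T]^{n}$. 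Ordering the variables and integrating the maximum against the volume element $r^{n-1}/(n-1)!$ reduces $\|k_{n}\|_{L^{2}([0,T]^{n})}^{2}$ to an elementary one-dimensional integral, which evaluates to the clean identity $\|k_{n}\|_{L^{2}([0,T]^{n})}^{2}=\tfrac{4T}{n+1}$ for every $n\geqslant 0$. Consequently
$$
\Big\|\int_{0}^{T}\sqrt{\tfrac{T}{t}}\,\Lambda\Big(\sqrt{\tfrac{T}{t}}\,w(t)\Big)\,\mathrm{d}t\Big\|_{\mathbb{D}_{2}^{s+1}}^{2}=\sum_{n\geqslant 0}(1+n)^{s+1}\,n!\,\lambda_{n}^{2}\,T^{n+1}\cdot\frac{4T}{n+1}=4T^{2}\sum_{n\geqslant 0}(1+n)^{s}\,n!\,\lambda_{n}^{2}\,T^{n},
$$
which equals $4T^{2}\|\Lambda(w(T))\|_{\mathbb{D}_{2}^{s}}^{2}<\infty$; in particular the integral lies in $\mathbb{D}_{2}^{s+1}$.

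The one step carrying genuine content is the exact evaluation $\|k_{n}\|_{L^{2}}^{2}=4T/(n+1)$: it is precisely the factor $(n+1)^{-1}$ that absorbs the extra weight $(1+n)^{+1}$ in the $\mathbb{D}_{2}^{s+1}$-norm and produces a gain of \emph{exactly} one differentiability index, uniformly over all $\Lambda\in\mathscr{S}'(\mathbb{R})$; the same computation shows the exponent $s+1$ cannot be improved in general. Everything else --- the kernel formula for $\Lambda(w(t))$ via Watanabe's pullback, the scaling identity for $\langle\Lambda^{(n)}(\sqrt{T/t}\,\cdot\,),\phi_{t}\rangle$, and the interchange of $I_{n}$ and $J_{n}$ with the Bochner integral --- is routine once the $p=2$ Hilbert-space structure is in place.
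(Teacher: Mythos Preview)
Your proposal is correct and follows essentially the same route as the paper's proof (Proposition~\ref{Brown-Bochner}): both compute the Wiener chaos expansion of the integrand, use Brownian scaling to get $\|\sqrt{T/t}\,\Lambda(\sqrt{T/t}\,w(t))\|_{2,s}=\sqrt{T/t}\,\|\Lambda(w(T))\|_{2,s}$ for Bochner integrability, and then evaluate the $L^{2}$-norm of the integrated $n$-th chaos component to obtain the exact factor $4T/(n+1)$ that yields $\|\int_{0}^{T}\cdots\|_{2,s+1}^{2}=4T^{2}\|\Lambda(w(T))\|_{2,s}^{2}$. The only cosmetic difference is that the paper parameterizes the chaos via Hermite polynomials $H_{n}(w(t)/\sqrt{t})$ whereas you use the equivalent multiple integrals $I_{n}(\mathbf{1}_{[0,t]}^{\otimes n})$; the key computation (your $\|k_{n}\|_{L^{2}}^{2}=4T/(n+1)$, the paper's $\mathbf{E}[\{\int_{0}^{T}t^{-1/2}H_{n}(w(t)/\sqrt{t})\,\mathrm{d}t\}^{2}]=\tfrac{4T}{n+1}\,n!$) is literally the same integral in two notations.
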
 

Since it is known that
$\delta_{0} (w(t)) \in \mathbb{D}_{2}^{(-1/2)-}$
(see Watanabe \cite{Wa91}),
we obtain
$
\int_{0}^{T} \delta_{0} (w(t)) \mathrm{d}t
\in \mathbb{D}_{2}^{(1/2)-}
$,
which agrees with the result by
Nualart-Vives \cite{NuVi}
and Watanabe \cite{Wa94}.
The proof of this theorem
is due to
the chaos computations
(which is essentially the same as \cite{NuVi}
but with {\it no} approximations of the integrand).
When $b = \sigma \sigma^{\prime} / 2$,
this computation brings
the H\"older-continuity
of the local time
with respect to space variable.
The norm on $\mathbb{D}_{p}^{s}$ will be denoted by
$
\Vert
\bullet
\Vert_{p,s}
$.

\begin{Thm} 
\label{Main_Thm2} 
Let $d=1$.
Assume {\rm (H1)}, {\rm (H3)} and that
the drift-coefficient is given by
$b = \sigma \sigma^{\prime} / 2$.
Then for each $s < \frac{1}{2}$
and
$\beta \in (0, \min\{ \frac{1}{2}-s,1 \} )$,
there exists a constant $c=c(s, \beta ) > 0$ such that
\begin{equation*}
\begin{split}
\big\Vert
\sigma (y)
\int_{0}^{1} \delta_{y} ( X_{t} ) \mathrm{d}t
-
\sigma (z)
\int_{0}^{1} \delta_{z} ( X_{t} ) \mathrm{d}t
\big\Vert_{2,s}
\leqslant
c
\vert y-z \vert^{\beta}
\end{split}
\end{equation*}
for every $y, z \in \mathbb{R}$.
\end{Thm}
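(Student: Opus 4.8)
The plan is to conjugate $X$ to a standard Brownian motion using the identity $b=\sigma\sigma'/2$, reduce the asserted estimate to one about Brownian Donsker delta functionals, and then run the Wiener chaos computation underlying Theorem~\ref{Main_Thm1} on a \emph{difference} of such functionals. Concretely, set $\varphi(u):=\int_{X_0}^{u}\sigma(v)^{-1}\,\mathrm{d}v$. By {\rm (H1)} and {\rm (H3)} this is a $C^\infty$-diffeomorphism of $\mathbb{R}$ onto itself with $\varphi'=1/\sigma$ and $\sup_{\mathbb{R}}|\varphi'|\leqslant\lambda^{-1/2}$, and It\^o's formula together with $b=\sigma\sigma'/2$ gives $\mathrm{d}\bigl(\varphi(X_t)\bigr)=\mathrm{d}w(t)$, so that $X_t=\varphi^{-1}(w(t))$. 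The pull-back rule for Watanabe distributions — justified by mollifying $\delta_y$ and noting that $\delta_y^{\varepsilon}\circ\varphi^{-1}$ is compactly supported — then yields $\sigma(y)\,\delta_y(X_t)=\delta_{\varphi(y)}(w(t))$ in $\mathbb{D}_{2}^{-\infty}$. Hence, writing $\alpha:=\varphi(y)$, $\alpha':=\varphi(z)$,
\[
\sigma(y)\int_{0}^{1}\delta_y(X_t)\,\mathrm{d}t-\sigma(z)\int_{0}^{1}\delta_z(X_t)\,\mathrm{d}t=\int_{0}^{1}\bigl(\delta_{\alpha}(w(t))-\delta_{\alpha'}(w(t))\bigr)\,\mathrm{d}t ,
\]
and since $|\alpha-\alpha'|\leqslant\lambda^{-1/2}|y-z|$ it suffices to prove $\bigl\|\int_{0}^{1}(\delta_{\alpha}-\delta_{\alpha'})(w(t))\,\mathrm{d}t\bigr\|_{2,s}\leqslant c\,|\alpha-\alpha'|^{\beta}$ with $c=c(s,\beta)$ independent of $\alpha,\alpha'\in\mathbb{R}$.

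\emph{The chaos computation.} As in the proof of Theorem~\ref{Main_Thm1}, $\delta_{\alpha}(w(t))=\sum_{n\geqslant0}\frac{(-1)^n}{n!}\,p_t^{(n)}(\alpha)\,I_n(\mathbf{1}_{[0,t]}^{\otimes n})$ with $p_t$ the Gaussian density; integrating in $t$, the $n$-th chaos kernel of the difference above depends only on $r=\max_i s_i$ and equals $\Phi_n(r)=\int_{r}^{1}\bigl(p_t^{(n)}(\alpha)-p_t^{(n)}(\alpha')\bigr)\,\mathrm{d}t$, whence
\[
\Bigl\|\int_{0}^{1}(\delta_{\alpha}-\delta_{\alpha'})(w(t))\,\mathrm{d}t\Bigr\|_{2,s}^{2}=\bigl|\Phi_0(0)\bigr|^{2}+\sum_{n\geqslant1}\frac{(1+n)^{s}\,n}{n!}\int_{0}^{1}r^{n-1}\,\Phi_n(r)^{2}\,\mathrm{d}r .
\]
I would then use the parabolic scaling $p_t^{(n)}(x)=t^{-(n+1)/2}p_1^{(n)}(x/\sqrt t)$ and the interpolated increment bound $|p_1^{(n)}(u)-p_1^{(n)}(v)|\leqslant 2\,M_n^{1-\beta}M_{n+1}^{\beta}|u-v|^{\beta}$ (for $0<\beta\leqslant1$, $M_k:=\|p_1^{(k)}\|_{\infty}$), which give $|p_t^{(n)}(\alpha)-p_t^{(n)}(\alpha')|\leqslant 2\,M_n^{1-\beta}M_{n+1}^{\beta}\,|\alpha-\alpha'|^{\beta}\,t^{-(n+1+\beta)/2}$, hence $|\Phi_n(r)|\lesssim M_n^{1-\beta}M_{n+1}^{\beta}\,|\alpha-\alpha'|^{\beta}\,(n-1+\beta)^{-1}r^{-(n-1+\beta)/2}$ for $n\geqslant1$; the $r^{n-1}\,\mathrm{d}r$ integration then contributes a factor $\int_0^1 r^{-\beta}\,\mathrm{d}r=(1-\beta)^{-1}$, finite precisely because $\beta<1$. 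Inserting the sharp sup-norm asymptotics $M_n\asymp\sqrt{n!}\,n^{-1/4}$ for the Gaussian derivatives (consistent with $|p_1^{(2m)}(0)|=(2m-1)!!/\sqrt{2\pi}$) one finds $\frac{n}{n!}\,M_n^{2-2\beta}M_{n+1}^{2\beta}(n-1+\beta)^{-2}\asymp n^{\beta-3/2}$, so the series is dominated by $|\alpha-\alpha'|^{2\beta}\sum_{n}(1+n)^{s+\beta-3/2}$, which converges exactly when $s+\beta<1/2$; the $|\Phi_0(0)|^2$ term is $\leqslant C\,|\alpha-\alpha'|^{2\beta}$ by the same increment bound with $n=0$. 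This establishes the estimate claimed in the first paragraph, and thereby the theorem.

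\emph{Main obstacle.} The work is entirely in the chaos computation, and its crux is the use of the \emph{sharp} decay $\|p_1^{(n)}\|_{\infty}\asymp\sqrt{n!}\,n^{-1/4}$: the naive bound $\|p_1^{(n)}\|_{\infty}\lesssim\sqrt{n!}$ (Cram\'er's inequality) would only produce the far smaller range $s+\beta<0$. One must also handle the $r\to0$ (equivalently $t\to0$) end of the $r$-integral carefully — this is exactly the region responsible for the mild non-integrability of $\delta_{X_0}(w(t))$ at $t=0$, and it is what forces $\beta<1$. A variant avoiding the pointwise supremum is to split $\int_{r}^{1}\mathrm{d}t$ at the parabolic scale $t\sim|\alpha-\alpha'|^{2}$, using $|p_t^{(n)}(\alpha)-p_t^{(n)}(\alpha')|\leqslant 2M_n\,t^{-(n+1)/2}$ below that scale; the counting and the resulting range $s+\beta<1/2$ are the same.
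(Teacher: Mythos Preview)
Your proof is correct and follows essentially the same route as the paper. Both arguments hinge on the Lamperti transform $\varphi(u)=\int_{x}^{u}\sigma^{-1}$ (the paper's $\varphi^{-1}$ is your $\varphi$), which collapses $\sigma(a)\delta_{a}(X_t)$ to a Gaussian Donsker delta and reduces the chaos kernels to $t^{-(n+1)/2}p_{1}^{(n)}(\varphi(a)/\sqrt t)$; both then compute the $\Vert\cdot\Vert_{2,s}$ norm term-by-term and obtain the convergence range $s+\beta<\tfrac12$ via Stirling. The only substantive difference is how the increment $|p_{t}^{(n)}(\alpha)-p_{t}^{(n)}(\alpha')|$ is bounded: you interpolate sup-norms and invoke the sharp asymptotic $M_n\asymp\sqrt{n!}\,n^{-1/4}$, while the paper writes $p_{1}^{(n)}$ via its Fourier integral and uses $|e^{i\theta}-1|\leqslant 2|\theta|^{\beta}$, which yields directly $\int|\xi|^{n+\beta}e^{-\xi^{2}/2}\,\mathrm{d}\xi=2^{(n+\beta+1)/2}\Gamma\bigl(\tfrac{n+\beta+1}{2}\bigr)$. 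The two bounds are asymptotically equivalent (indeed the Fourier bound \emph{is} an upper bound for $M_n$, saturated at $x=0$ for even $n$), so neither approach gains or loses anything quantitatively; the paper's version is slightly more self-contained in that it does not need to cite the Hermite sup-norm asymptotics separately.
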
 

The proof of this theorem
seems
interesting in its own right.
The study of H\"older continuity of local times had been initiated by
Trotter \cite[inequalities (2.1) and (2.3)]{Tr},
in which the almost-sure H\"older-continuity of the Brownian local time
$\{ l(t,x) : t \geqslant 0, x \in \mathbb{R}\}$
in time-space variable $(t,x)$
was proved
(see also Boufoussi-Roynette \cite{BoRo}).
There are a lot of such studies
(see, e.g., Liang \cite{Li},
Ait~Ouahra-Kissami-Ouahhabi \cite{AKO},
Shuwen-Cheng \cite{ShCh}
and references therein).

Theorem \ref{Main_Thm2} implies immediately
the following.

\begin{Cor} 
\label{Main_Cor1} 
Under the conditions in Theorem \ref{Main_Thm2},
let $p_{t} (x,y)$ be the transition density
of $X_{t}$.
Then the mapping
$$
\mathbb{R} \ni y
\mapsto
\sigma (y)
\int_{0}^{1}
p_{t} (x,y)
\mathrm{d}t
\in \mathbb{R}
$$
is
(globally)
$\beta$-H\"older continuous for every $\beta < 1$.
\end{Cor}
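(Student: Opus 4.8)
The plan is to identify the expectation of the generalized Wiener functional $\int_{0}^{1}\delta_{y}(X_{t})\,\mathrm{d}t$ with the time-integral of the transition density, and then transfer the H\"older estimate of Theorem \ref{Main_Thm2} from the $\mathbb{D}_{2}^{s}$-norm down to the level of expectations. First I would recall the generalized-expectation pairing: for $F\in\mathbb{D}_{2}^{s}$, the coupling $\mathbb{E}[F]:=\langle F,1\rangle$ makes sense and satisfies $|\mathbb{E}[F]|\leqslant\Vert F\Vert_{2,s}$, since the constant $1$ lies in $\mathbb{D}_{2}^{-s}$ with unit norm. Applying this to $F=\sigma(y)\int_{0}^{1}\delta_{y}(X_{t})\,\mathrm{d}t-\sigma(z)\int_{0}^{1}\delta_{z}(X_{t})\,\mathrm{d}t$ and using Theorem \ref{Main_Thm2} gives
\[
\Bigl|\,\mathbb{E}\Bigl[\sigma(y)\!\int_{0}^{1}\!\delta_{y}(X_{t})\,\mathrm{d}t\Bigr]
-\mathbb{E}\Bigl[\sigma(z)\!\int_{0}^{1}\!\delta_{z}(X_{t})\,\mathrm{d}t\Bigr]\Bigr|
\leqslant c\,|y-z|^{\beta}
\]
for every $\beta<\tfrac12-s$; letting $s\uparrow\tfrac12$ (equivalently taking $s$ close enough to $\tfrac12$ for any prescribed $\beta<1$) this holds for all $\beta<1$.

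Next I would compute the generalized expectation explicitly. Since the Bochner integral commutes with the continuous linear functional $\mathbb{E}[\,\cdot\,]$, we have $\mathbb{E}\bigl[\int_{0}^{1}\delta_{y}(X_{t})\,\mathrm{d}t\bigr]=\int_{0}^{1}\mathbb{E}[\delta_{y}(X_{t})]\,\mathrm{d}t$, and the classical identity $\mathbb{E}[\delta_{y}(X_{t})]=p_{t}(x,y)$ (the pull-back of the Dirac mass under $X_{t}$ has generalized expectation equal to the density of $X_{t}$, which exists and is smooth for $t>0$ under (H1) and (H3) by the usual Malliavin nondegeneracy argument) yields
\[
\mathbb{E}\Bigl[\sigma(y)\!\int_{0}^{1}\!\delta_{y}(X_{t})\,\mathrm{d}t\Bigr]
=\sigma(y)\int_{0}^{1}p_{t}(x,y)\,\mathrm{d}t .
\]
Combining the two displays gives exactly the asserted global $\beta$-H\"older continuity of $y\mapsto\sigma(y)\int_{0}^{1}p_{t}(x,y)\,\mathrm{d}t$ for every $\beta<1$.

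The only points needing care — and the closest thing to an obstacle — are the two interchanges: that $\mathbb{E}[\,\cdot\,]$ is indeed continuous on $\mathbb{D}_{2}^{s}$ and commutes with the Bochner integral (immediate once the integrand is known to be Bochner integrable in $\mathbb{D}_{2}^{s}$, which is part of the setup behind Theorem \ref{Main_Thm2}), and the identification $\mathbb{E}[\delta_{y}(X_{t})]=p_{t}(x,y)$ for each fixed $t\in(0,1]$. The latter is standard Watanabe theory: one approximates $\delta_{y}$ by mollifiers $\phi_{\varepsilon}(\cdot-y)$, notes $\mathbb{E}[\phi_{\varepsilon}(X_{t}-y)]=\int\phi_{\varepsilon}(w-y)p_{t}(x,w)\,\mathrm{d}w\to p_{t}(x,y)$ by continuity of the density, and observes $\phi_{\varepsilon}(X_{t}-y)\to\delta_{y}(X_{t})$ in $\mathbb{D}_{2}^{s'}$ for suitable $s'$ by nondegeneracy of $X_{t}$, $t>0$. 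Everything else is a direct chaining of inequalities, so no substantial difficulty remains beyond quoting Theorem \ref{Main_Thm2}.
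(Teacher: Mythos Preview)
Your overall strategy is exactly the paper's: identify $\mathbf{E}\bigl[\sigma(y)\int_{0}^{1}\delta_{y}(X_{t})\,\mathrm{d}t\bigr]$ with $\sigma(y)\int_{0}^{1}p_{t}(x,y)\,\mathrm{d}t$, bound the absolute value of the generalized expectation by the $\Vert\cdot\Vert_{2,s}$-norm, and invoke Theorem~\ref{Main_Thm2}. The identification and the inequality $|\mathbf{E}[F]|\leqslant\Vert F\Vert_{2,s}$ (valid for all $s\in\mathbb{R}$, since the $0$-th chaos component is unaffected by the weight $(1+k)^{s}$) are both fine.

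However, you have the limit in $s$ going the wrong way. The constraint from Theorem~\ref{Main_Thm2} is $\beta\in\bigl(0,\min\{\tfrac12-s,1\}\bigr)$, so as $s\uparrow\tfrac12$ the admissible range for $\beta$ collapses to $\{0\}$, not expands to $(0,1)$. To allow every $\beta<1$ you need $\tfrac12-s\geqslant 1$, i.e.\ $s\leqslant -\tfrac12$. The paper simply fixes $s=-\tfrac12$: then $\min\{\tfrac12-s,1\}=1$, and one obtains
\[
\Bigl|\sigma(y)\!\int_{0}^{1}\!p_{t}(x,y)\,\mathrm{d}t-\sigma(z)\!\int_{0}^{1}\!p_{t}(x,z)\,\mathrm{d}t\Bigr|
\leqslant
\Bigl\Vert\sigma(y)\!\int_{0}^{1}\!\delta_{y}(X_{t})\,\mathrm{d}t-\sigma(z)\!\int_{0}^{1}\!\delta_{z}(X_{t})\,\mathrm{d}t\Bigr\Vert_{2,-1/2}
\leqslant c\,|y-z|^{\beta}
\]
for all $\beta<1$. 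With that one-line correction your argument coincides with the paper's.
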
 

The latter half of this paper concerns with
an It\^o formula
in a distributional setting.
The classical It\^o-Tanaka formula had been extended
with several formulations
(see
F\"ollmer-Protter-Shiryayev \cite{FPA},
Bouleau-Yor \cite{BoYo},
Wang \cite{Wang},
Kubo \cite{Ku1} and so on).
In particular,
according to results in \cite{Wang} and \cite{BoYo},
the It\^o-Tanaka formula for $f(X_{t})$
is valid in the case where $f$ is just a convex function.
In our case, we obtained
the following.

\begin{Thm} 
\label{Main_Thm3}
Assume {\rm (H1)}, {\rm (H2)} and {\rm (H4)}.
Let
$A_{i} = \sum_{j=1}^{d} \sigma_{i}^{j} \partial / ( \partial x_{j} )$
and
$L$ be the generator of the diffusion process $X$.
Suppose that $f: \mathbb{R}^{d} \to \mathbb{R}$ is
a measurable function such that
\begin{itemize}
\item[(i)]
$f$ is continuous at $x$,

\item[(ii)]
$f$
has
at most exponential growth,

\item[(iii)]
$
\int_{0}^{T}
\Vert
	( A_{i}f ) (X_{t})
\Vert_{2,-k}^{2}
\mathrm{d}t
< +\infty
$
for $i=1, 2, \cdots , d$,

\item[(iv)]
$
\int_{0}^{T}
\Vert
	(Lf) (X_{t})
\Vert_{2,-k}
\mathrm{d}t
< +\infty
$
\end{itemize}
for some $k \in \mathbb{N}$.
Then we have
\begin{equation*}
\begin{split}
&
f(X_{T}) - f(x)
=
\sum_{i=1}^{d}
\int_{0}^{T}
(A_{i}f) (X_{t})
\mathrm{d}w^{i}(t)
+
\int_{0}^{T}
(Lf) (X_{t})
\mathrm{d}t
\quad
\text{in $\mathbb{D}^{-\infty}$.}
\end{split}
\end{equation*}
\end{Thm}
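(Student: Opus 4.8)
The plan is to establish the formula by regularization. Fix a mollifier $\rho\in C_{c}^{\infty}(\mathbb{R}^{d})$ with $\int\rho=1$, set $\rho_{n}(y):=n^{d}\rho(ny)$ and $f_{n}:=f*\rho_{n}$. Since $\rho$ is compactly supported and $f$ is of at most exponential growth, each $f_{n}$ is $C^{\infty}$, all obey $|f_{n}(y)|\leqslant M'\mathrm{e}^{\mu|y|}$ with $M',\mu$ independent of $n$, one has $f_{n}\to f$ at every Lebesgue point of $f$ (hence a.e.), and $f_{n}(x)\to f(x)$ by (i). The classical It\^o formula applied to the smooth $f_{n}$ gives
\begin{equation*}
f_{n}(X_{T})-f_{n}(x)
=\sum_{i=1}^{d}\int_{0}^{T}(A_{i}f_{n})(X_{t})\,\mathrm{d}w^{i}(t)
+\int_{0}^{T}(Lf_{n})(X_{t})\,\mathrm{d}t
\end{equation*}
(all three terms being bona fide elements of $\mathbb{D}^{-\infty}$ because, by (H1), $X$ has finite exponential moments of every order, which absorbs the exponential growth of $f$; if one does not wish to assume $\sigma$ bounded this is arranged by a routine stopping-time localization that we suppress). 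The whole task is then to pass to the limit $n\to\infty$ in each term, the convergence taking place in $\mathbb{D}^{-\infty}$.

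For the left-hand side, (H1) and (H2) make $X_{T}$ a non-degenerate Wiener functional: in the representation $\sigma_{X_{T}}=\int_{0}^{T}(Y_{T}Y_{s}^{-1})(\sigma\sigma^{*})(X_{s})(Y_{T}Y_{s}^{-1})^{*}\,\mathrm{d}s$ of its Malliavin covariance, the small-$s$ contribution together with the invertibility of $\sigma(x)$ granted by (H2) forces $\sigma_{X_{T}}^{-1}$ into every $L^{p}$. Hence the composition $g\mapsto g(X_{T})$ maps measurable functions of at most exponential growth boundedly into some $\mathbb{D}_{2}^{-k_{0}}$, and for $G\in\mathbb{D}^{\infty}$ one has $\mathbb{E}[g(X_{T})G]=\int_{\mathbb{R}^{d}}g(y)\,\mathbb{E}[G\mid X_{T}=y]\,p_{T}(x,y)\,\mathrm{d}y$ with $y\mapsto\mathbb{E}[G\mid X_{T}=y]p_{T}(x,y)$ rapidly decreasing (a standard by-product of non-degeneracy; the fast decay of $p_{T}(x,\cdot)$ beats the exponential growth of $g$). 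With $f_{n}\to f$ a.e.\ and $|f_{n}|\leqslant M'\mathrm{e}^{\mu|\cdot|}$, dominated convergence gives $\mathbb{E}[f_{n}(X_{T})G]\to\mathbb{E}[f(X_{T})G]$ for all such $G$, and with $\sup_{n}\|f_{n}(X_{T})\|_{2,-k_{0}}<\infty$ this yields $f_{n}(X_{T})\to f(X_{T})$ in $\mathbb{D}^{-\infty}$.

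For the drift and martingale terms we work on $(0,T]$, where the same covariance computation makes each $X_{t}$ non-degenerate, so the pull-back by $X_{t}$ is a continuous linear map from suitable weighted negative Sobolev spaces into $\mathbb{D}_{2}^{-k}$, with operator norm $C_{t}$ that deteriorates like a power of $t^{-1}$ as $t\downarrow 0$. Since pull-back is linear, $(Lf_{n})(X_{t})-(Lf)(X_{t})=(Lf_{n}-Lf)(X_{t})$, and we split $Lf_{n}-Lf=\bigl((Lf)*\rho_{n}-Lf\bigr)+R_{n}$, where $R_{n}$ collects the Friedrichs commutators $[\tfrac12(\sigma\sigma^{*})^{jk},\rho_{n}*]\,\partial_{j}\partial_{k}f$ and $[b^{j},\rho_{n}*]\,\partial_{j}f$; by the smoothness in (H1) the standard commutator lemma gives $R_{n}\to 0$, and trivially $(Lf)*\rho_{n}\to Lf$, in the relevant local Sobolev norm, so that $(Lf_{n})(X_{t})\to(Lf)(X_{t})$ in $\mathbb{D}_{2}^{-k}$ for each $t\in(0,T]$. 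Feeding this into the dominated convergence theorem for $\mathbb{D}_{2}^{-k}$-valued Bochner integrals — the domination supplied by (iv) — gives $\int_{0}^{T}(Lf_{n})(X_{t})\,\mathrm{d}t\to\int_{0}^{T}(Lf)(X_{t})\,\mathrm{d}t$ in $\mathbb{D}_{2}^{-k}$. The martingale term is handled the same way, combining the pointwise-in-$t$ convergence $(A_{i}f_{n})(X_{t})\to(A_{i}f)(X_{t})$ in $\mathbb{D}_{2}^{-k}$ with the $\mathbb{D}_{2}^{\bullet}$-valued It\^o isometry for the distributional stochastic integral set up in the preceding sections, $\bigl\|\int_{0}^{T}\bigl((A_{i}f_{n})(X_{t})-(A_{i}f)(X_{t})\bigr)\mathrm{d}w^{i}(t)\bigr\|_{2,-k'}^{2}\lesssim\int_{0}^{T}\|(A_{i}f_{n})(X_{t})-(A_{i}f)(X_{t})\|_{2,-k'}^{2}\,\mathrm{d}t$, whose right-hand side tends to $0$ by dominated convergence with domination furnished by (iii). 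Assembling the three limits yields the asserted identity in $\mathbb{D}^{-\infty}$.

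The main obstacle — and the reason hypotheses (iii) and (iv) are stated exactly as they are — is to move the limit \emph{through the time integrals}: since the pull-back by $X_{t}$ blows up as $t\downarrow 0$ (the Sobolev norms of $(\det\sigma_{X_{t}})^{-1}$ explode, of order $t^{-d/2}$ for a Dirac mass), one cannot dominate $\|(Lf_{n})(X_{t})\|_{2,-k}$ and $\|(A_{i}f_{n})(X_{t})\|_{2,-k'}$ simply by $C_{t}$ times the weighted Sobolev norm of the mollified symbol; instead one must compare the approximants with the \emph{limits} $(Lf)(X_{t})$ and $(A_{i}f)(X_{t})$ — whose $t$-integrability is precisely (iii)--(iv) — showing that $\|(Lf_{n})(X_{t})\|_{2,-k}$ is bounded by $\|(Lf)(X_{t})\|_{2,-k}$ up to an error small in $n$ and still integrable in $t$, and here one uses that convolution with $\rho_{n}$ does not increase the weighted negative-Sobolev norm while the Friedrichs commutator vanishes, uniformly near $t=0$. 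Establishing this uniform control — in particular the uniform integrability near $t=0$ of the family $\{t\mapsto\|(Lf_{n})(X_{t})\|_{2,-k}\}_{n}$, and likewise for the square-integrable bound behind the stochastic-integral term — is the technical heart of the proof; the remaining points, namely identifying $\lim_{n}(A_{i}f_{n})(X_{t})$ with the pull-back $(A_{i}f)(X_{t})$ and the convergence of the classical stochastic and Lebesgue integrals to their distributional counterparts, follow from the continuity of the pull-back and of the distributional stochastic integral established earlier.
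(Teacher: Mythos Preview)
Your mollification strategy is natural, and you correctly identify the crux: obtaining a uniform-in-$n$, integrable-in-$t$ bound for $\|(Lf_{n})(X_{t})\|_{2,-k}$ (and likewise for $A_{i}f_{n}$) near $t=0$. But the mechanism you propose for this bound does not work. You write that ``convolution with $\rho_{n}$ does not increase the weighted negative-Sobolev norm''; this is true for norms on $\mathbb{R}^{d}$ such as $|\cdot|_{-2k}$, but the quantity you must dominate is the \emph{Wiener--space} norm $\|(\cdot)(X_{t})\|_{2,-k}$, and there is no reason for the pull-back by $X_{t}$ to intertwine with convolution in any contractive way. Concretely, writing $(g*\rho_{n})(X_{t})=\int(\tau_{y}g)(X_{t})\rho_{n}(y)\,\mathrm{d}y$, one would need $\|(\tau_{y}g)(X_{t})\|_{2,-k}\leqslant\|g(X_{t})\|_{2,-k}$, i.e.\ translation invariance of the pull-back norm in the $\mathbb{R}^{d}$-variable; this fails already for $g=\delta_{a}$, since $\|\delta_{a}(X_{t})\|_{2,-k}$ depends on $a$ through $p_{t}(x,a)$. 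The only general estimate available is $\|(\cdot)(X_{t})\|_{2,-k}\leqslant C_{t}\,|\cdot|_{-2k}$ with $C_{t}\sim t^{-\nu}$, which reintroduces exactly the blow-up you are trying to avoid. The same objection applies to your Friedrichs commutator term $R_{n}$: its smallness in $\mathscr{S}'$-type norms on $\mathbb{R}^{d}$ does not translate into smallness of $\|R_{n}(X_{t})\|_{2,-k}$ uniformly as $t\downarrow0$.

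The paper sidesteps this entirely by separating the two difficulties. First (Theorem~\ref{Ito_Formula}) one proves the distributional It\^o formula on $[t_{0},T]$ for any fixed $t_{0}>0$: here $\{X_{t}\}_{t\in[t_{0},T]}$ is uniformly non-degenerate, so the pull-back constants are uniform and the approximation $\phi_{n}\to f$ in $\mathscr{S}_{-2(k+1)}$ passes through all terms without trouble. Second, one lets $t_{0}\downarrow0$ in the resulting identity
\[
f(X_{T})-f(X_{t_{0}})=\sum_{i}\int_{t_{0}}^{T}(A_{i}f)(X_{t})\,\mathrm{d}w^{i}(t)+\int_{t_{0}}^{T}(Lf)(X_{t})\,\mathrm{d}t.
\]
Now hypotheses (iii) and (iv) are exactly what make $\int_{0}^{t_{0}}\|\cdot\|\,\mathrm{d}t\to0$, so the right-hand side converges; and (i) together with the exponential growth bound (ii) and $X_{t_{0}}\to x$ give $f(X_{t_{0}})\to f(x)$ in $L_{p}$. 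No uniform-in-$n$ control near $t=0$ is ever needed. If you wish to salvage your approach, the clean fix is precisely this two-step structure: mollify only to obtain the formula on $[t_{0},T]$, then send $t_{0}\downarrow0$.
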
 

We can drop the assumption {\rm (H4)}
if $f$ has at most polynomial growth.

The definition of stochastic integral will be
given in
Section~\ref{Stochastic_Integral}
and the time-integral
$
\int_{0}^{T}
(Lf) (X_{t})
\mathrm{d}t
$
is understood in the sense of Bochner integral
in $\mathbb{D}_{2}^{-k}$.
Kubo \cite{Ku1} also obtained an It\^o formula
for Brownian motion in a distributional setting.
However,
his formula does not need to consider the Bochner integrability
because the time-interval of integration is a closed interval
excluding zero.
A generalization to
the case of one-dimensional fractional Brownian motion
was done by Bender
\cite{Be}
(and see references therein),
in which, even the case where
the time-interval of integration is such as $(0,T]$
is considered
(\cite[Theorem 4.4]{Be}),
though the first distributional derivative of $f$ is assumed to be
a regular distribution.
But he did not give a systematic treatment
of
Bochner integrability.
Theorem \ref{Main_Thm3} will be proved in
Section~\ref{Generalized_sec}
and it will be established
in
Section~\ref{Generalized_Ito}
even the case where $f$ itself is a distribution of exponential-type
and furthermore the time-interval of integration is $(0,T]$.

A distribution
$\Lambda \in \mathscr{S}^{\prime} (\mathbb{R}^{d})$
is said to be {\it positive} if
$
\langle
	\Lambda ,
	f
\rangle
\geqslant 0
$
for every
nonnegative
test function
$f \in \mathscr{S}(\mathbb{R}^{d})$.
To include local times for diffusions in our scope,
we prepare the following

\begin{Thm} 
\label{Main_Thm4} 
Assume $d=1$, {\rm (H1)} and {\rm (H2)}.
Let
$\Lambda \in \mathscr{S}^{\prime} (\mathbb{R})$
be positive.
Then
there exists $k \in \mathbb{Z}_{\geqslant 0}$
such that
we have
$
\int_{0}^{T}
\Vert
	\Lambda ( X_{t} )
\Vert_{p,-2k}
\mathrm{d} t
< +\infty
$
for every $p \in (1,\infty)$.
\end{Thm}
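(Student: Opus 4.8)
The plan is to reduce the statement to an estimate on $\|\Lambda(X_t)\|_{p,-2}$ that is integrable near $t=0$, using positivity of $\Lambda$ to control the composition. First I would recall the standard description of $\mathbb{D}_p^{-2}$ as the dual of $\mathbb{D}_{p'}^{2}$ (with $1/p+1/p'=1$) and the fact that, since $\Lambda$ is a positive tempered distribution, it is represented by a (Radon) measure $\mu$ of at most polynomial growth, so that $\Lambda(X_t) = \int_{\mathbb{R}} \delta_y(X_t)\,\mu(\mathrm{d}y)$ as a pairing against test functionals. Hence $\|\Lambda(X_t)\|_{p,-2}$ is bounded by $\int_{\mathbb{R}} \|\delta_y(X_t)\|_{p,-2}\,\mu(\mathrm{d}y)$ (or one works directly with the duality pairing and pulls the measure out), and the problem becomes: estimate $\|\delta_y(X_t)\|_{p,-2}$ with explicit dependence on $t$ and $y$, then integrate in $y$ against $\mu$ and in $t$ over $(0,T]$.

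The key step is the $t$- and $y$-dependent bound on $\|\delta_y(X_t)\|_{p,-2}$. Under (H1) and (H2) with $d=1$, for $t>0$ the random variable $X_t$ is nondegenerate in the Malliavin sense, its density behaves (by the Gaussian-type heat-kernel bounds, e.g.\ via the scaling $X_t \approx x + \sqrt{t}\,(\text{nondegenerate})$) like $t^{-1/2}$ times a bounded factor with Gaussian decay in $(y-x)/\sqrt t$, and the Malliavin covariance $\gamma_{X_t}$ scales like $t$. Representing $\delta_y = (1 - \partial_y^2)^{1}(1-\partial_y^2)^{-1}\delta_y$ and using that $(1-\partial_y^2)^{-1}\delta_y$ is a bounded continuous function, one obtains via the Meyer-type continuity of $(1-L)^{-1}$ on $\mathbb{D}_p^{\bullet}$ together with the integration-by-parts (pushing two derivatives off $\delta_y$ onto the Malliavin weights) a bound of the form
\begin{equation*}
\|\delta_y(X_t)\|_{p,-2}
\leqslant
C\, t^{-1/2}\,\exp\!\Big( - \frac{c\,|y-x|^2}{t} \Big)
\big( 1 + \text{polynomial in } 1/\sqrt t \ \text{killed by the two negative derivatives}\big),
\end{equation*}
so that in fact $\|\delta_y(X_t)\|_{p,-2} \leqslant C\, t^{-1/2} \exp(-c|y-x|^2/t)$ uniformly, the two units of negative regularity exactly absorbing the two spatial derivatives needed to make $\delta_y$ an $L^1$-type object. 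This is where essentially all the work is: making the $t$-power come out as $t^{-1/2}$ (integrable on $(0,T]$) rather than something worse, which forces a careful bookkeeping of how the $1/\gamma_{X_t}$-weights and the density bound combine, and using the negative Sobolev index $-2$ in an essential way.

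Granting this estimate, I would finish as follows. Integrating in $y$ against the polynomially growing measure $\mu$, the Gaussian factor $\exp(-c|y-x|^2/t)$ controls the growth of $\mu$ and contributes an extra $t^{1/2}$ (times at most a power of $t$ from the polynomial moments), so $\int_{\mathbb{R}} \|\delta_y(X_t)\|_{p,-2}\,\mu(\mathrm{d}y) \leqslant C(1+t^{m})$ for some $m \geqslant 0$, in particular bounded on $(0,T]$; then $\int_0^T \|\Lambda(X_t)\|_{p,-2}\,\mathrm{d}t \leqslant \int_0^T C(1+t^m)\,\mathrm{d}t < \infty$. The main obstacle, as indicated, is the heat-kernel/Malliavin-weight estimate giving the sharp $t^{-1/2}$ blow-up with Gaussian spatial decay; the passage from a single $\delta_y$ to a general positive $\Lambda$ via its representing measure, and the final integrations, are then routine. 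One should also note that positivity is used twice: once to get the representing measure $\mu$ with controlled growth, and once to justify that the pairing $\langle \Lambda(X_t), \varphi\rangle$ really is $\int \langle \delta_y(X_t),\varphi\rangle\,\mu(\mathrm{d}y)$ with everything of one sign, so no cancellation is being exploited and the triangle-inequality bound is not wasteful.
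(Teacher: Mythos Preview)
Your overall strategy---represent $\Lambda$ by its measure $\mu$, reduce to estimating $\|\delta_y(X_t)\|_{p,-2}$, then integrate in $y$ and $t$---is exactly what the paper does (see Theorem~\ref{pos_Bochner} and the Remark following it). The difference lies in how the key $\delta_y$-estimate is obtained, and here your sketch leaves the hardest step unproven.

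You claim a single uniform bound $\|\delta_y(X_t)\|_{p,-2}\le C\,t^{-1/2}\exp(-c|y-x|^2/t)$ and identify ``making the $t$-power come out as $t^{-1/2}$'' as the main obstacle, but you do not actually carry out that bookkeeping. The paper does \emph{not} prove this combined estimate; instead it splits $\mathbb{R}$ into $\{|y|\le K\}$ and $\{|y|>K\}$ for $K>|x|$. On the far set, integration by parts (Lemma~\ref{IbP}) gives $\|\delta_y(X_t)\|_{p,-2}\le c_1 t^{-\nu_0}\exp(-c_2|x-y|^2/t)$ with an \emph{uncontrolled} exponent $\nu_0$; since $|x-y|$ is bounded away from $0$ there, the exponential beats any power of $1/t$ and one gets a function that is bounded in $t$ and Gaussian in $y$, hence $\mu$-integrable. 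On the near set, the precise $t^{-1/2}$ is obtained through a scaling identity (Proposition~\ref{trick1}): $\|\delta_y(X_t)\|_{p,-2}=\|\delta_y(X^{\sqrt t}(1,x,w))\|_{p,-2}=t^{-1/2}\|\delta_{(y-x)/\sqrt t}(F(\sqrt t,x,w))\|_{p,-2}$, and the last norm is bounded uniformly in $(t,y)$ by the uniform non-degeneracy of $\{F(\varepsilon,x,w)\}_{\varepsilon}$ (Proposition~\ref{unif_est}) together with $\sup_a|\delta_a|_{-2}<\infty$ (Lemma~\ref{delta_unif}). This scaling step is what you allude to with ``$X_t\approx x+\sqrt t\,(\text{nondegenerate})$'', but it needs to be made precise to extract the exponent $-1/2$; a direct IBP argument as you sketch yields only $t^{-\nu_0}$.

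There is also a small slip in your final integration: the claim that integrating the Gaussian against $\mu$ ``contributes an extra $t^{1/2}$'' is false when $\mu$ has an atom (take $\mu=\delta_x$: the integral is $1$, and you are left with $t^{-1/2}$, not something bounded). This does not break the argument---$t^{-1/2}$ is still integrable on $(0,T]$---but the correct phrasing on the near set is simply $\int_{|y|\le K}\|\delta_y(X_t)\|_{p,-2}\,\mu(\mathrm{d}y)\le C\,t^{-1/2}\,\mu(\{|y|\le K\})$, finite because $\mu$ is Radon.
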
 

Hence the mapping
$
(0,T] \ni t
\mapsto
\delta_{y} (X_{t}) \in
\mathbb{D}_{p}^{-2 k }
$
is Bochner integrable
in the case of $d=1$.
For multi-dimensional cases, it is sufficient to assume
$x \neq y$
in order to guarantee the Bochner integrability
(Proposition \ref{exclusion_int}).

Finally,
let
$
H_{p}^{s}(\mathbb{R}^{d})
:=
( 1 - \triangle )^{-s/2}
L_{p} (\mathbb{R}^{d}, \mathrm{d}z)
$
for $p \in (1,\infty )$, $s \in \mathbb{R}$,
which are called the Bessel potential spaces
(see \cite{Ab} or \cite{Kr} for details).
We will then apply
the It\^o formula
(Theorem \ref{Ito_Formula})
to derive the following.

\begin{Cor} 
\label{Main_Thm5} 
Assume {\rm (H1)}, \rm{(H3)} and \rm{(H4)}.
Let $p \in (1, \infty )$
and
$s \in \mathbb{R}$.
Then for each
$\Lambda \in H_{p}^{s} (\mathbb{R}^{d})$,
we have
\begin{itemize}
\item[(i)]
$\Lambda (X_{t}) \in \mathbb{D}_{p^{\prime}}^{s}$
for
$t > 0$ and $p^{\prime} \in (1,p)$;

\item[(ii)]
if $p > 2$, we further have
$
\int_{t_{0}}^{T} \Lambda (X_{t}) \mathrm{d}t
\in \mathbb{D}_{p^{\prime}}^{s+1}
$
for $t_{0} \in (0, T]$ and
$p^{\prime} \in [2,p)$.
\end{itemize}
\end{Cor} 

It might be natural to ask about the class to which
$
\int_{t_{0}}^{T} \Lambda (X_{t}) \mathrm{d}t
$
belongs
when $t_{0}=0$.
Some examples are included in
Section~\ref{application}.

The organization of the current paper is as follows:
We first review the classical Malliavin calculus in
Section~\ref{Review}
to introduce several notations.
In particular, the mapping of Watanabe's pull-back
will be extended to the space of distributions of exponential-type.
Section~\ref{sec_Bochner}
is devoted to investigate
Bochner integrability of the mapping
$
(0,T] \ni t \mapsto \Lambda ( X_{t} )
$
where $\Lambda$ is a distribution.
We will illustrate the Brownian case
with detailed computations.
The
methods
there bring
a H\"older continuity in the space variable of the local time
in the case where the stochastic differential equation is
written in a Fisk-Stratonovich symmetric form.
In
Section~\ref{Generalized_sec},
we give a definition of stochastic integrals and
formulate an It\^o formula in this distributional setting.
Corollary \ref{Main_Thm5} and some examples
will be presented in
Section~\ref{application}.
Several estimates necessary for these examples are
wrapped up in Appendix \ref{auxiliary}.

\section{Review of Malliavin Calculus}
\label{Review} 
First, we make a brief review of the
classical Malliavin calculus
on
the $d$-dimensional classical Wiener space
to introduce notations.

Let
$(W, \mathcal{F}, \mathbf{P})$
be the $d$-dimensional Wiener space on $[0,T]$,
that is,
$W$ is the space of all continuous functions
$[0, T] \to \mathbb{R}^{d}$,
$\mathcal{F}$
is the $\sigma$-field generated by
the canonical process
$W \ni w \mapsto w(t) \in \mathbb{R}^{d}$,
$0 \leqslant t \leqslant T$,
and $\mathbf{P}$ is the Wiener measure
with $\mathbf{P} ( w(0) = 0 ) = 1$.
The expectation under $\mathbf{P}$ will be denoted by $\mathbf{E}$.
The space $W$ contains the subspace $H$,
consisting of all absolutely continuous $h \in W$
with $h(0)=0$ and the square-integrable derivative.
The subspace $H$ is called the {\it Cameron-Martin subspace}
and forms a real Hilbert space
with
the inner product
$$
\langle
	h_{1}, h_{2}
\rangle_{H}
:=
\int_{0}^{T}
\langle
	\dot{h}_{1} (t),
	\dot{h}_{2} (t)
\rangle_{\mathbb{R}^{d}}
\mathrm{d}t,
\quad
h_{1}, h_{2} \in H.
$$

It is known that $L_{2} := L_{2}(W, \mathcal{F}, \mathbf{P})$ has the following
orthogonal decomposition,
called the {\it Wiener-It\^o chaos expansion}:
$$
L_{2}
=
\mathbb{R}
\oplus
\mathcal{C}_{1}
\oplus
\mathcal{C}_{2}
\oplus
\cdots ,
$$
where each $\mathcal{C}_{k}$ is a closed linear subspace
of $L_{2}$ spanned by multiple stochastic integrals
$$
\int_{
	0
	\leqslant t_{1}
	< \cdots
	<
	t_{k}
	\leqslant T
}
\langle
	\dot{h}_{1}( t_{1} )
	\otimes
	\cdots
	\otimes
	\dot{h}_{k}(t_{k} ),
	\mathrm{d}w(t_{1})
	\otimes
	\cdots
	\otimes
	\mathrm{d}w(t_{k})
\rangle_{(\mathbb{R}^{d})^{\otimes k}},
$$
for
$
h_{1}, h_{2}, \cdots , h_{k} \in H
$,
of $k$-th degree.
Each $\mathcal{C}_{k}$ is called the subspace of
{\it Wiener's homogeneous chaos of $k$-th order}.
We denote by
$J_{k}$ the orthogonal projection onto $\mathcal{C}_{k}$.
For each separable Hilbert space
$(E, \langle \bullet , \bullet \rangle_{E} )$,
$L_{p}(E)$
denotes the space of
$E$-valued $p$-th
order
integrable random variables $F$
with norm
$\Vert F \Vert_{p} = \mathbf{E} [ \vert F \vert_{E}^{p} ]^{1/p}$.
Each projection $J_{n}$ extends to
$
L_{2}(E) \cong L_{2} \otimes E \to \mathcal{C}_{n} \otimes E
$,
which is still denoted by the same symbol.

For each $s \in \mathbb{R}$ and $p \in (1,\infty )$,
a Sobolev-type space
$\mathbb{D}_{p}^{s} (E)$
(we write this $\mathbb{D}_{p}^{s}$ when $E = \mathbb{R}$)
is defined as the completion of
$
\mathcal{P}
:=
\cup_{n=1}^{\infty} \cap_{m \geqslant n}
\{
F \in L_{2}(E):
J_{m} F = 0
\}
$
under the norm $\Vert \cdot  \Vert_{{p,s}}$ defined by
$\Vert F \Vert_{{p,s}}
=
\Vert
(I-\mathcal{L})^{s/2} F
\Vert_{p}
$
for
$F \in \mathcal{P}$,
where
$\mathcal{L}$
is the Ornstein-Uhlenbeck operator on the Wiener space.
It is known that
\begin{equation}
\label{Norm} 
(I-\mathcal{L})^{s/2}F
=
\sum_{k=0}^{\infty}
(1+k)^{s/2}
J_{k}F,
\quad F \in \mathcal{P} .
\end{equation}
Note that $\mathbb{D}_{p}^{0} = L_{p}$
for $p \in (1, \infty)$, and
\begin{equation}
\label{Norm2} 
\Vert F \Vert_{{2,s}}^{2}
=
\sum_{k=0}^{\infty}
(1+k)^{s}
\Vert J_{k}F \Vert_{2}^{2},
\quad F \in \mathbb{D}_{2}^{s}.
\end{equation}
We further define
\begin{equation*}
\mathbb{D}^{\infty} (E)
:=
\bigcap_{s>0}
\bigcap_{1 < p < \infty}
\mathbb{D}_{p}^{s} (E)
\quad
\text{and}
\quad
\mathbb{D}^{-\infty} (E)
:=
\bigcup_{s<0}
\bigcup_{1 < p < \infty}
\mathbb{D}_{p}^{s} (E) .
\end{equation*}
It is known that
$(\mathbb{D}_{p}^{s}(E))^{\prime} = \mathbb{D}_{q}^{-s}(E)$
if and only if
$1/p + 1/q = 1$
(where ``$\prime$" stands for the ``continuous dual")
for each $s \in \mathbb{R}$,
the space $\mathbb{D}^{\infty} (E)$
is a complete
countably-normed
space
and $\mathbb{D}^{-\infty} (E)$ is its dual
which is called the space of
{\it generalized Wiener functionals}.
The pairing of
$\Phi \in \mathbb{D}^{-\infty} (E)$
and
$F \in \mathbb{D}^{\infty} (E)$
is written as
$
\mathbf{E} [ \Phi F ]
:=
\mbox{}_{\mathbb{D}^{-\infty} (E)}\langle
	\Phi , F
\rangle_{\mathbb{D}^{\infty} (E)}
$,
and then
$
\mathbf{E} [ \Psi ]
=
\mbox{}_{\mathbb{D}^{-\infty}}\langle
	\Psi , 1
\rangle_{\mathbb{D}^{\infty}}
$
is called the
{\it generalized expectation}
of $\Psi \in \mathbb{D}^{-\infty}$.

One can define a (continuous) linear operator
$
D : \mathbb{D}^{-\infty} (E)
\to
\mathbb{D}^{-\infty} ( E \otimes H )
$
such that
(a)
each restriction
$
D : \mathbb{D}_{p}^{s+1} (E)
\to
\mathbb{D}_{p}^{s} ( E \otimes H )
$
and is continuous for every
$s \in \mathbb{R}$ and $p \in (1,\infty)$,
and
(b)
we have
$
\langle
	DF ,
	e \otimes h
\rangle_{E \otimes H}
=
\langle
	D_{h} F , e
\rangle_{E}
$
for $e \in E$, $h \in H$ and $F \in \mathcal{P}$,
where $D_{h}F$ is given by
\begin{equation}
\label{Dif}
\langle
( D_{h}F )(w), e
\rangle_{E}
=
\lim_{\varepsilon \to 0}
\frac{1}{\varepsilon}
\langle
	F( w+h ) - F(w) , e
\rangle_{E}
\quad
\text{for
$w \in W$.}
\end{equation}
The differential operator $D_{h}$ in (\ref{Dif}) is well-defined
for
almost all
$w$ because of the so-called
{\it Cameron-Martin theorem}.
There also exists a (continuous) linear operator
$
D^{*} :
\mathbb{D}^{-\infty} (E \otimes H)
\to
\mathbb{D}^{-\infty}(E)
$
such that
(a)$\mbox{}^{*}$
each restriction
$
D^{*} : \mathbb{D}_{p}^{s+1} ( E \otimes H )
\to
\mathbb{D}_{p}^{s} (E)
$
and is continuous for every
$s \in \mathbb{R}$ and $p \in (1,\infty)$,
and
(b)$\mbox{}^{*}$
we have
\begin{equation*}
D^{*} ( G \otimes h )
=
- D_{h}G
+
\int_{0}^{T}
\langle
	\dot{h}(t),
	\mathrm{d}w(t)
\rangle_{\mathbb{R}^{d}}
G
\end{equation*}
for $h \in H$, $G \in \mathbb{D}_{2}^{1}(E)$.
These operators are related as follows:
For $F,G \in \mathbb{D}_{2}^{1}(E)$
and
$h \in H$, it holds that
\begin{equation*}
\mathbf{E}
[
	\langle DF , G \otimes h  \rangle_{E \otimes H}
]
=
\mathbf{E}
[
	\langle
	F,
	D^{*} ( G \otimes h  )
	\rangle_{E}
] .
\end{equation*}

Let $\mathscr{S}(\mathbb{R}^{d})$ be
the real Schwartz space of rapidly decreasing
$C^{\infty}$-functions on $\mathbb{R}^{d}$.
We denote by
$\mathscr{S}_{2k}(\mathbb{R}^{d})$, $k \in \mathbb{Z}$
the completion of $\mathscr{S}(\mathbb{R}^{d})$
by the norm
$$
\vert \phi \vert_{2k}
:=
\vert
	\big( 1 + x^{2} - \triangle /2 \big)^{k}
	\phi
\vert_{\infty},
\quad
\phi = \phi (x) \in \mathscr{S} (\mathbb{R}^{d}),
$$
where
$
\triangle
=
\sum_{i=1}^{d} \partial^{2} / (\partial x_{i})^{2}
$
and
$
\vert \phi \vert_{\infty}
=
\sup_{x \in \mathbb{R}^{d}}
\vert \phi (x) \vert
$.

\begin{Def} 
\label{non-degeneracy} 
\begin{itemize}
\item[(i)]
A Wiener functional
$
F = (F^{1}, \cdots , F^{d})
\in
\mathbb{D}^{\infty}(\mathbb{R}^{d})
$
is said to be
{\it non-degenerate}
if
$
\Vert
\det
(
\langle
	DF^{i}, DF^{j}
\rangle_{H}
)_{i,j}^{-1}
\Vert_{p}
< \infty
$
for any $p \in (1, \infty )$.

\item[(ii)]
A family of Wiener functionals
$
F_{\alpha} = ( F_{\alpha}^{1}, \cdots , F_{\alpha}^{d} )
\in
\mathbb{D}^{\infty} (\mathbb{R}^{d})
$,
$\alpha \in I$,
where $I$ is an index set,
is said to be
{\it uniformly non-degenerate}
if
$
\sup_{\alpha \in I}
\Vert
\det
(
\langle
	DF_{\alpha}^{i}, DF_{\alpha}^{j}
\rangle_{H}
)_{i,j}^{-1}
\Vert_{p}
< \infty
$
for any $p \in (1, \infty )$.
\end{itemize}
\end{Def} 

If
$F \in \mathbb{D}^{\infty}(\mathbb{R}^{d})$ is
non-degenerate,
then the mapping
$
\mathscr{S}( \mathbb{R}^{d} ) \ni \phi
\mapsto
\phi (F) \in \mathbb{D}^{\infty}
$
extends uniquely to a mapping
$
\mathscr{S}^{\prime}( \mathbb{R}^{d} ) \ni \Lambda
\mapsto
\Lambda (F) \in
\cup_{s>0}
\cap_{1 < p < \infty}
\mathbb{D}_{p}^{-s}
$
such that each restriction maps
$
\mathscr{S}_{-2k} (\mathbb{R}^{d}) \to \mathbb{D}_{p}^{-2k}
$
and is continuous for every
$k \in \mathbb{Z}$
and
$p \in (1, \infty)$
(see e.g., \cite[Chapter~V, Section~9]{IW}).
The generalized Wiener functional
$\Lambda (F)$
is called the
{\it pull-back} of
$\Lambda \in \mathscr{S}^{\prime} (\mathbb{R}^{d})$
by $F \in \mathbb{D}^{\infty}(\mathbb{R}^{d})$.

For $\Lambda \in \mathscr{S}^{\prime} (\mathbb{R})$,
we denote by $\Lambda^{(n)}$
the $n$-th distributional derivative of $\Lambda$.

\begin{Lem} 
\label{delta_unif} 
Let $y \in \mathbb{R}$ and
$\delta_{y}$ be the Dirac delta-function at $y$.
Then $\delta_{y} \in \mathscr{S}_{-2} (\mathbb{R})$,
$\delta_{y}^{(2k)} \in \mathscr{S}_{-2(k+1)} (\mathbb{R})$
for $k \in \mathbb{N}$,
and
$
\sup_{a \in \mathbb{R}}
\vert
	\delta_{a}
\vert_{-2} < +\infty
$.
\end{Lem}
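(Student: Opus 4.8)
\medskip
The plan is to realise $\mathscr{S}_{-2m}(\mathbb{R})$ concretely as a space of distributions and then to compute the relevant ``regularised'' objects. Write $A := 1 + x^{2} - \triangle/2$ on $\mathbb{R}$. Since $A$ is diagonalised on $L^{2}(\mathbb{R})$ by a Hermite-type orthonormal basis with eigenvalues $\geqslant 1$, it maps $\mathscr{S}(\mathbb{R})$ bijectively onto itself; and the identity $\vert A^{-m}\phi \vert_{2m} = \vert \phi \vert_{0} = \vert \phi \vert_{\infty}$ shows that $\Lambda \mapsto A^{-m}\Lambda$ (the operator being extended to $\mathscr{S}^{\prime}(\mathbb{R})$ by transposition) is an isometric isomorphism of $\mathscr{S}_{-2m}(\mathbb{R})$ onto $\mathscr{S}_{0}(\mathbb{R}) = C_{0}(\mathbb{R})$, the space of continuous functions vanishing at infinity. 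Hence $\Lambda \in \mathscr{S}_{-2m}(\mathbb{R})$ if and only if $A^{-m}\Lambda \in C_{0}(\mathbb{R})$, and then $\vert \Lambda \vert_{-2m} = \vert A^{-m}\Lambda \vert_{\infty}$.

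Next I would record the classical facts about the resolvent. The operator $A^{-1}$ is the integral operator with a nonnegative, symmetric kernel $G(x,z)$ --- the Green's function of the one-dimensional Schr\"odinger operator $-\partial_{x}^{2}/2 + (1+x^{2})$ --- which is jointly continuous and bounded on $\mathbb{R}^{2}$, satisfies $-\frac{1}{2}\partial_{x}^{2}G(x,z) = \delta(x-z) - (1+x^{2})G(x,z)$ in $\mathscr{S}^{\prime}$, and is such that $G$ together with all its partial derivatives is rapidly decreasing as one variable goes to $\pm\infty$ while the other stays in a compact set. Put $G_{1} := G$ and let $G_{m}(x,z) := \int_{\mathbb{R}}G(x,w)\,G_{m-1}(w,z)\,\mathrm{d}w$ be the kernel of $A^{-m}$. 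Integrating the displayed equation against $G_{m-1}(\cdot,z)$ gives, for $m \geqslant 2$,
\begin{equation*}
-\tfrac{1}{2}\partial_{x}^{2}G_{m}(x,z) = G_{m-1}(x,z) - (1+x^{2})G_{m}(x,z)
\end{equation*}
in $\mathscr{S}^{\prime}$ in the variable $x$, with jointly continuous right-hand side; an induction on $m$ (bootstrapping in the order of differentiation) then yields $G_{m} \in C^{2(m-1)}(\mathbb{R}^{2})$, with $\partial_{x}^{j}G_{m}$ for $0 \leqslant j \leqslant 2(m-1)$ jointly continuous and rapidly decreasing in the second variable.

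From here the three assertions follow quickly. For $\delta_{y}$ one has $(A^{-1}\delta_{y})(x) = G(x,y) \in C_{0}(\mathbb{R})$, so $\delta_{y} \in \mathscr{S}_{-2}(\mathbb{R})$ with $\vert \delta_{y} \vert_{-2} = \vert G(\cdot,y) \vert_{\infty} \leqslant \Vert G \Vert_{L^{\infty}(\mathbb{R}^{2})}$, the latter bound being independent of $y$; this is precisely $\sup_{a \in \mathbb{R}}\vert \delta_{a}\vert_{-2} < +\infty$. For $\delta_{y}^{(2k)}$ with $k \in \mathbb{N}$, using that $G_{k+1} \in C^{2k}(\mathbb{R}^{2})$ one may differentiate under the integral sign to obtain, for every $\phi \in \mathscr{S}(\mathbb{R})$,
\begin{equation*}
\langle A^{-(k+1)}\delta_{y}^{(2k)}, \phi \rangle = \partial^{2k}\big( A^{-(k+1)}\phi \big)(y) = \int_{\mathbb{R}}(\partial_{x}^{2k}G_{k+1})(y,z)\,\phi(z)\,\mathrm{d}z ,
\end{equation*}
where $\partial_{x}$ is differentiation in the first slot; thus $A^{-(k+1)}\delta_{y}^{(2k)}$ is represented by the function $z \mapsto (\partial_{x}^{2k}G_{k+1})(y,z)$, which lies in $C_{0}(\mathbb{R})$ by the previous paragraph, and so $\delta_{y}^{(2k)} \in \mathscr{S}_{-2(k+1)}(\mathbb{R})$.

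The main obstacle is the regularity-and-decay bookkeeping for the iterated kernels $G_{m}$ needed for the derivatives $\delta_{y}^{(2k)}$; granting this and the standard facts about $G$, the pull-back identities and the uniform bound for $\delta_{a}$ are routine. One may instead quote these kernel properties from the literature on the pull-back map, or avoid $G_{m}$ for $k \geqslant 1$ altogether by showing that multiplication by $1+x^{2}$ is bounded from $\mathscr{S}_{2(j+1)}(\mathbb{R})$ to $\mathscr{S}_{2j}(\mathbb{R})$ for $j \geqslant 1$ --- equivalently, by duality and $\triangle = 2(1+x^{2}) - 2A$ with $A$ an isometry of the scale, that $\triangle$ maps $\mathscr{S}_{-2j}(\mathbb{R})$ into $\mathscr{S}_{-2(j+1)}(\mathbb{R})$ --- and then iterating $\delta_{y}^{(2k)} = \triangle^{k}\delta_{y}$ starting from $\delta_{y} \in \mathscr{S}_{-2}(\mathbb{R})$; that boundedness reduces, through the commutator $[A, 1+x^{2}] = -(1+2x\partial)$, to the same $L^{\infty}$-estimates on the kernels above.
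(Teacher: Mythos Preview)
Your argument is correct, but it takes a different and considerably more hands-on route than the paper.  The paper disposes of the first two claims by citing Ikeda--Watanabe \cite[Chapter~V, \S9, Lemma~9.1]{IW}, and for the uniform bound it uses the single comparison inequality
\[
\big((1+x^{2}-\triangle/2)^{-1}\delta_{y}\big)(x)\ \leqslant\ \frac{1}{2\pi}\int_{-\infty}^{+\infty}\frac{\mathrm{e}^{i\xi(x-y)}}{1+\xi^{2}/2}\,\mathrm{d}\xi,
\]
which follows from monotonicity of the resolvent upon dropping the nonnegative potential $x^{2}$; the right-hand side is the free Green kernel, whose supremum $\tfrac{1}{2\pi}\int(1+\xi^{2}/2)^{-1}\mathrm{d}\xi$ is manifestly finite and independent of $y$.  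Your approach instead builds the harmonic-oscillator Green kernel $G$ and its iterates $G_{m}$, proves their joint continuity, boundedness and $C^{2(m-1)}$ regularity by bootstrapping the resolvent equation, and reads off all three assertions from $A^{-m}\delta_{y}^{(2k)}=(\partial_{x}^{2k}G_{k+1})(y,\cdot)$.  This is self-contained and conceptually transparent---it makes explicit \emph{why} the $\mathscr{S}_{-2m}$ scale is governed by the kernels $G_{m}$---but the regularity bookkeeping you flag is genuine work that the paper simply outsources to \cite{IW}.  The paper's comparison trick is worth internalising: it gives a one-line explicit numerical bound on $\sup_{a}\vert\delta_{a}\vert_{-2}$ and avoids any fine analysis of $G$; conversely, your kernel framework would be the natural one if the statement were sharpened to $\delta_{y}^{(n)}\in\mathscr{S}_{-2\lceil (n+2)/2\rceil}$ for odd $n$, or if one needed decay in $y$ rather than mere boundedness.
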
 
\begin{proof} 
It is well known that
$\delta_{y} \in \mathscr{S}_{-2} (\mathbb{R})$
and
$\delta_{y}^{(2k)} \in \mathscr{S}_{-2(k+1)} (\mathbb{R})$
for $k \in \mathbb{N}$
(see \cite[Chapter V, section 9, Lemma 9.1, p.380]{IW}).
It is also known that
\begin{equation*}
\big(
	( 1+ x^{2} - \triangle / 2 )^{-1} \delta_{y}
\big)
(x)
\leqslant
\frac{1}{2\pi}
\int_{-\infty}^{+\infty}
\frac{
	\mathrm{e}^{i \xi (x-y)}
}{
	(1+\frac{\xi^{2}}{2})
}
\mathrm{d} \xi
\end{equation*}
for any $x,y \in \mathbb{R}$, from which,
we easily conclude that
\begin{equation*}
\sup_{a \in \mathbb{R}}
\vert \delta_{a} \vert_{-2}
\leqslant
\frac{1}{2\pi}
\int_{-\infty}^{+\infty}
\frac{
	\mathrm{d} \xi
}{
	(1+\frac{\xi^{2}}{2})
}
< +\infty .
\end{equation*}
\end{proof} 

\subsection{Slight extension to exponential-type distributions}

It will be convenient to extend the pull-back procedure
from Schwartz distribution space
to the space of all distributions of exponential-type.
Let
$\partial_{i} := \partial / (\partial x_{i})$,
$i=1, 2, \cdots , d$.

\begin{Def}[Hasumi \cite{Ha}] 
We say $\phi \in C^{\infty}(\mathbb{R}^{d})$
belongs to $\mathscr{E}(\mathbb{R}^{d})$ if
$
\sup_{x \in \mathbb{R}}
\vert
	\exp ( p \vert x \vert )
	\partial_{1}^{k_{1}}
	\cdots
	\partial_{d}^{k_{d}}
	\phi (x)
\vert
$
$
< +\infty
$
for any
$p \in \mathbb{Z}_{\geqslant 0}$
and
$k_{1}, \cdots , k_{d}\in \mathbb{Z}_{\geqslant 0}$.
\end{Def} 

Semi-norms on $\mathscr{E}(\mathbb{R}^{d})$, defined by
$$
\vert
	\phi
\vert_{p}
:=
\sup_{
	\substack{
	k_{1} , \cdots , k_{d} \in \mathbb{Z}_{\geqslant 0}: \\
	0 \leqslant k_{1}+\cdots + k_{d} \leqslant p
	}
}
\sup_{x \in \mathbb{R}^{d}}
\vert
	\exp ( p \vert x \vert )
	\partial_{1}^{k_{1}}
	\cdots
	\partial_{d}^{k_{d}}
	\phi (x)
\vert ,
\quad
p=0,1,2,\cdots
$$
make $\mathscr{E}(\mathbb{R}^{d})$ a locally convex metrizable space
and induces continuous inclusions
\begin{equation*}
\mathscr{D} ( \mathbb{R}^{d} )
\hookrightarrow
\mathscr{E} ( \mathbb{R}^{d} )
\hookrightarrow
\mathscr{S} ( \mathbb{R}^{d} )
\quad
\text{and}
\quad
\mathscr{S}^{\prime} ( \mathbb{R}^{d} )
\hookrightarrow
\mathscr{E}^{\prime} ( \mathbb{R}^{d} )
\hookrightarrow
\mathscr{D}^{\prime} ( \mathbb{R}^{d} )
\end{equation*}
where
$\mathscr{D}^{\prime}(\mathbb{R}^{d})$
is the space of all distributions on $\mathbb{R}^{d}$
with the test function space $\mathscr{D}(\mathbb{R}^{d})$,
and $\mathscr{E}^{\prime} (\mathbb{R}^{d})$
is the continuous dual of $\mathscr{E}(\mathbb{R}^{d})$.
Elements in $\mathscr{E}^{\prime}(\mathbb{R}^{d})$ are referred as
{\it distributions of exponential-type}.
The following is known
(see Hasumi \cite[Proposition 3]{Ha}).

\begin{Thm}
\label{Hasu-Thm} 
For any $\Lambda \in \mathscr{E}^{\prime} (\mathbb{R}^{d})$,
there exist $k \in \mathbb{Z}_{\geqslant 0}$
and a bounded continuous function
$f : \mathbb{R}^{d} \to \mathbb{R}$
such that
$$
\Lambda
=
\frac{\partial^{kd}}{\partial x_{1}^{k} \cdots \partial x_{d}^{k}}
[
	\exp ( k \vert x \vert )
	f (x)
].
$$
Here, the derivatives are understood in the sense of
distributional derivatives.
\end{Thm}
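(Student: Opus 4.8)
This is the exponential-type analogue of the structure theorem for tempered (Schwartz) distributions, so the plan is to follow the classical argument with the polynomial weights there replaced by the exponential weights $\exp(p|x|)$. First I would record a continuity estimate: because $\Lambda\in\mathscr{E}'(\mathbb{R}^{d})$ and the seminorms $|\cdot|_{p}$ are increasing and generate the topology of $\mathscr{E}(\mathbb{R}^{d})$, there exist $p\in\mathbb{Z}_{\geqslant 0}$ and $C>0$ with $|\langle\Lambda,\phi\rangle|\leqslant C\,|\phi|_{p}$ for all $\phi\in\mathscr{E}(\mathbb{R}^{d})$, that is, a bound by $\sup_{|\alpha|\leqslant p}\|\exp(p|\cdot|)\partial^{\alpha}\phi\|_{\infty}$.

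Next I would trade this sup-norm for a weighted $L^{1}$-norm at the cost of $d$ extra derivatives. For a smooth $\psi$ with enough decay at $-\infty$ in each variable one has $\|\psi\|_{\infty}\leqslant\|\partial_{1}\cdots\partial_{d}\psi\|_{L^{1}(\mathbb{R}^{d})}$ by the fundamental theorem of calculus applied coordinate by coordinate. Applying this to $\psi=\exp(p|\cdot|)\partial^{\alpha}\phi$ --- here one should work with a smooth (or at least coordinatewise Lipschitz) surrogate of the weight, e.g.\ $\exp(p\langle x\rangle)$ with $\langle x\rangle=(1+|x|^{2})^{1/2}$, or the product weight $\exp(p\sum_{j}|x_{j}|)$, since $x\mapsto|x|$ is not smooth at the origin --- and expanding by the Leibniz rule, whose derivatives of the weight are dominated by a constant times the weight, one obtains, after enlarging $p$, an estimate $|\langle\Lambda,\phi\rangle|\leqslant C\sum_{|\delta|\leqslant p+d}\|\exp(p|\cdot|)\partial^{\delta}\phi\|_{L^{1}}$.

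Then I would realize $\Lambda$ as a functional on an $L^{1}$-space. The map $\phi\mapsto(\exp(p|\cdot|)\partial^{\delta}\phi)_{|\delta|\leqslant p+d}$ is linear and injective from $\mathscr{E}(\mathbb{R}^{d})$ into $L^{1}(\mathbb{R}^{d})^{N}$, $N=\#\{\delta:|\delta|\leqslant p+d\}$, and by the previous estimate $\phi\mapsto\langle\Lambda,\phi\rangle$ is continuous for the $L^{1}$-norm on its image; extending by continuity and then by Hahn--Banach to all of $L^{1}(\mathbb{R}^{d})^{N}$, whose dual is $L^{\infty}(\mathbb{R}^{d})^{N}$, I get bounded measurable $g_{\delta}$ with $\langle\Lambda,\phi\rangle=\sum_{|\delta|\leqslant p+d}\int_{\mathbb{R}^{d}}g_{\delta}(x)\exp(p|x|)\partial^{\delta}\phi(x)\,\mathrm{d}x$, i.e.\ $\Lambda=\sum_{\delta}(-1)^{|\delta|}\partial^{\delta}(\exp(p|\cdot|)g_{\delta})$ in $\mathscr{D}'(\mathbb{R}^{d})$.

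Finally I would upgrade each bounded measurable $g_{\delta}$ to a bounded \emph{continuous} function and merge the finitely many terms into the single normal form of the statement. Writing $g_{\delta}=\partial_{1}\cdots\partial_{d}G_{\delta}$ with $G_{\delta}(x)=\int_{0}^{x_{1}}\!\cdots\int_{0}^{x_{d}}g_{\delta}(y)\,\mathrm{d}y$ --- a continuous function with $|G_{\delta}(x)|\leqslant\|g_{\delta}\|_{\infty}(1+|x|)^{d}$ --- and iterating such antiderivations, one rewrites every $\partial^{\delta}(\exp(p|\cdot|)g_{\delta})$, after commuting the weight past the derivatives, as $\frac{\partial^{kd}}{\partial x_{1}^{k}\cdots\partial x_{d}^{k}}(\exp(k|\cdot|)h_{\delta})$ with $h_{\delta}$ bounded continuous, for one common $k$ taken large enough that all the polynomial factors and weight-commutator terms produced along the way are absorbed by $\exp(k|x|)$; summing yields the claim with $f=\sum_{\delta}h_{\delta}$. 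I expect this last consolidation, together with the non-smoothness of $x\mapsto|x|$, to be the main obstacle: keeping every iterated antiderivative bounded after division by the exponential weight, and controlling the Leibniz cross-terms from moving $\exp(k|\cdot|)$ through the derivatives, is the technical heart and is where I would spend the bulk of the effort; an alternative self-contained argument is in Hasumi \cite{Ha}.
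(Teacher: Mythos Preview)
The paper does not give its own proof of this theorem: it is quoted verbatim as a known result, with the attribution ``see Hasumi \cite[Proposition 3]{Ha}'' immediately before the statement, and no argument follows. So there is no in-paper proof to compare against.

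That said, your outline is the standard route to such structure theorems and matches in spirit the argument in Hasumi's paper: continuity gives control by finitely many seminorms; one converts the weighted sup-norm bound into a weighted $L^{1}$ bound at the cost of extra derivatives; Hahn--Banach and the duality $(L^{1})'=L^{\infty}$ produce bounded measurable coefficients; and repeated antiderivation yields continuous coefficients in a single normal form. Your self-diagnosis of the delicate points is accurate. The non-smoothness of $x\mapsto|x|$ is a genuine nuisance but not a real obstacle: one can either work with $\exp(p\langle x\rangle)$ throughout and only at the very end absorb the ratio $\exp(p\langle x\rangle)/\exp(k|x|)$ (bounded for $k\geqslant p$) into $f$, or simply note that the weight is Lipschitz and the Leibniz expansion only needs first derivatives in each coordinate, which exist a.e.\ and are bounded. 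The consolidation step is routine once one observes that for any multi-index $\delta$ with $\delta_{j}\leqslant k$ one can write $\partial^{\delta}=\partial_{1}^{k}\cdots\partial_{d}^{k}\circ I^{k\mathbf{1}-\delta}$ with $I$ the coordinatewise antiderivative from $0$, and that $I^{k\mathbf{1}-\delta}$ applied to a bounded function grows at most polynomially, hence is bounded after division by $\exp((k-p)|x|)$ for $k$ large. Since the paper treats this as background, your sketch is more than adequate; if you want a fully written proof you can indeed just cite \cite{Ha}.
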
 

\begin{Rm} 
The spaces
$\mathscr{E}(\mathbb{R}^{d})$
and
$\mathscr{E}^{\prime} (\mathbb{R}^{d})$
are denoted by $H$ and $\Lambda_{\infty}$
respectively in Hasumi \cite{Ha}.
The space
$\mathscr{E}^{\prime} (\mathbb{R}^{d}) = \Lambda_{\infty}$
is
introduced by
Silva \cite{Se}
in the study of
the Fourier transform of $\Lambda_{\infty}$,
called {\it ultra-distributions}
(see
Silva \cite{Se},
Hasumi \cite{Ha}
and
Yoshinaga \cite{Yo}).
\end{Rm} 

Define
$
e_{k} (x) := \prod_{i=1}^{d} \cosh (k x_{i})
$
for
$k \in \mathbb{Z}_{\geqslant 0}$
and
$
x = (x_{1}, \cdots , x_{d}) \in \mathbb{R}^{d}
$.
We postpone the proof of the next proposition
after Definition~\ref{pull-back(exp)}.

\begin{Prop} 
\label{exp-pull} 
Suppose that
$
F_{\alpha} = ( F_{\alpha}^{1}, \cdots , F_{\alpha}^{d} )
\in \mathbb{D}^{\infty}(\mathbb{R}^{d})
$,
$\alpha \in I$,
where $I$ is an index set,
are uniformly non-degenerate
and satisfy
\begin{equation}
\label{exp-moment} 
M_{r} :=
\sup_{\alpha \in I}
\mathbf{E}
[
	\exp ( r \vert F_{\alpha }\vert_{\mathbb{R}^{d}} )
]
< +\infty
\quad
\text{for each $r > 0$.}
\end{equation}
Then for any $p \in (1, \infty)$,
$k \in \mathbb{Z}_{\geqslant 0}$
and
$r>0$,
there exists a constant
$
c = c(q,k,r,
M_{r}
) >0
$,
where $1/p +1/q = 1$,
such that
\begin{equation*}
\begin{split}
\big\Vert
	\frac{
		\partial^{kd} ( e_{r} \phi )
	}{
		\partial x_{1}^{k} \cdots \partial x_{d}^{k}
	}
	( F_{\alpha} )
\big\Vert_{p,-kd}
\leqslant
c
\sup_{x \in \mathbb{R}^{d}} \vert \phi (x) \vert
\end{split}
\end{equation*}
for all $\phi \in \mathscr{S}(\mathbb{R}^{d})$
and
$\alpha \in I$.
\end{Prop}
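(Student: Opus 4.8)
The plan is to give meaning to $\frac{\partial^{kd}(e_{r}\phi)}{\partial x_{1}^{k}\cdots\partial x_{d}^{k}}(F_{\alpha})$ through the Malliavin integration‑by‑parts duality and to push all $kd$ derivatives off of $\phi$ onto test functionals, so that the only trace of $\phi$ that survives is $\sup_{x}|\phi(x)|$. This detour is unavoidable: a Schwartz function need not decay exponentially, so $e_{r}\phi$ is in general \emph{not} tempered, and Watanabe's pull‑back of $\frac{\partial^{kd}(e_{r}\phi)}{\partial x_{1}^{k}\cdots\partial x_{d}^{k}}$ is not directly available — indeed this Proposition is precisely the estimate that allows one to \emph{define} the pull‑back of an exponential‑type distribution, in view of Theorem \ref{Hasu-Thm}.

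First I would set up the integration‑by‑parts operators. Let $\gamma_{\alpha}:=(\langle DF_{\alpha}^{i},DF_{\alpha}^{j}\rangle_{H})_{1\leqslant i,j\leqslant d}$. Since the family $\{F_{\alpha}\}$ is bounded in every $\mathbb{D}_{p}^{s}$ (which is what writing $F_{\alpha}\in\mathbb{D}^{\infty}(\mathbb{R}^{d})$ for a family means) and is uniformly non‑degenerate in the sense of Definition \ref{non-degeneracy}, Cramér's rule together with Meyer's inequalities give $\sup_{\alpha}\Vert(\gamma_{\alpha}^{-1})_{ij}\Vert_{p,s}<+\infty$ and $\sup_{\alpha}\Vert DF_{\alpha}^{l}\Vert_{p,s}<+\infty$ for all $i,j,l$, $p\in(1,\infty)$ and $s\geqslant 0$. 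For $i\in\{1,\dots,d\}$ put $\Theta_{i}^{\alpha}G:=\sum_{l=1}^{d}D^{*}\!\big((\gamma_{\alpha}^{-1})_{il}\,G\,DF_{\alpha}^{l}\big)$, so that the classical formula
\[
\mathbf{E}\big[(\partial_{i}\psi)(F_{\alpha})\,G\big]=\mathbf{E}\big[\psi(F_{\alpha})\,\Theta_{i}^{\alpha}G\big],\qquad\psi\in\mathscr{S}(\mathbb{R}^{d}),\ G\in\mathbb{D}^{\infty},
\]
holds. Using the usual multiplier estimates — multiplication by a uniformly bounded $\mathbb{D}^{\infty}$‑functional, respectively tensoring by $DF_{\alpha}^{l}\in\mathbb{D}^{\infty}(H)$, maps $\mathbb{D}_{q_{1}}^{m}$ continuously into $\mathbb{D}_{q_{2}}^{m}$, respectively into $\mathbb{D}_{q_{2}}^{m}(H)$, for every $q_{2}<q_{1}$ — together with the continuity of $D^{*}:\mathbb{D}_{q_{2}}^{m}(H)\to\mathbb{D}_{q_{2}}^{m-1}$, one obtains that $\Theta_{i}^{\alpha}:\mathbb{D}_{q_{1}}^{m}\to\mathbb{D}_{q_{2}}^{m-1}$ is bounded with operator norm \emph{uniform in} $\alpha$, for any $q_{2}<q_{1}$.

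Next I would iterate. Let $\Theta_{\alpha}^{(k)}$ be the composition of $kd$ of the operators $\Theta_{i}^{\alpha}$ obtained by peeling off the $kd$ derivatives in $\partial_{1}^{k}\cdots\partial_{d}^{k}$ one at a time, so that iterating the displayed identity gives $\mathbf{E}[(\partial_{1}^{k}\cdots\partial_{d}^{k}\psi)(F_{\alpha})G]=\mathbf{E}[\psi(F_{\alpha})\,\Theta_{\alpha}^{(k)}G]$ for $\psi\in\mathscr{S}(\mathbb{R}^{d})$; fixing a chain $q=q_{0}>q_{1}>\cdots>q_{kd}=:\bar q>1$ and applying the bound of the previous step $kd$ times yields a bounded operator $\Theta_{\alpha}^{(k)}:\mathbb{D}_{q}^{kd}\to L_{\bar q}$ with $\sup_{\alpha}\Vert\Theta_{\alpha}^{(k)}G\Vert_{\bar q}\leqslant c_{1}\Vert G\Vert_{q,kd}$, where $1/p+1/q=1$ and $c_{1}=c_{1}(q,k)$ (for $k=0$ the claim is already the estimate below, so assume $k\geqslant 1$). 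I then \emph{define} $\frac{\partial^{kd}(e_{r}\phi)}{\partial x_{1}^{k}\cdots\partial x_{d}^{k}}(F_{\alpha})$, for $\phi\in\mathscr{S}(\mathbb{R}^{d})$, as the functional on $\mathbb{D}^{\infty}$ given by $G\mapsto\mathbf{E}[(e_{r}\phi)(F_{\alpha})\,\Theta_{\alpha}^{(k)}G]$; this agrees with Watanabe's pull‑back whenever $e_{r}\phi$ happens to be tempered, and is the natural instance of the pull‑back of an element of $\mathscr{E}^{\prime}(\mathbb{R}^{d})$ (it is independent of the order in which the derivatives are peeled off, as one sees by dualizing instead the smooth map $G\mapsto\mathbf{E}[\delta_{\bullet}(F_{\alpha})G]$, which assumption (\ref{exp-moment}) places in $\mathscr{E}(\mathbb{R}^{d})$). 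Finally, the factor $(e_{r}\phi)(F_{\alpha})=e_{r}(F_{\alpha})\,\phi(F_{\alpha})$ is harmless: since $e_{r}(x)=\prod_{i}\cosh(rx_{i})\leqslant e^{r\sum_{i}|x_{i}|}\leqslant e^{r\sqrt{d}\,|x|}$, assumption (\ref{exp-moment}) gives $\sup_{\alpha}\mathbf{E}[e_{r}(F_{\alpha})^{t}]<+\infty$ for every $t\in(1,\infty)$, and as $|\phi(F_{\alpha})|\leqslant\sup_{x}|\phi(x)|$ a.s., $\Vert(e_{r}\phi)(F_{\alpha})\Vert_{t}\leqslant c_{2}\sup_{x}|\phi(x)|$ with $c_{2}=c_{2}(t,r)$ independent of $\alpha$. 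Writing $\bar q^{\prime}$ for the conjugate of $\bar q$ (so $\bar q^{\prime}<\infty$ and $\bar q^{\prime}>p$), Hölder's inequality gives $|\mathbf{E}[(e_{r}\phi)(F_{\alpha})\,\Theta_{\alpha}^{(k)}G]|\leqslant c_{1}c_{2}\,(\sup_{x}|\phi(x)|)\,\Vert G\Vert_{q,kd}$, and since $\mathbb{D}^{\infty}$ is dense in $\mathbb{D}_{q}^{kd}$ with $(\mathbb{D}_{q}^{kd})^{\prime}=\mathbb{D}_{p}^{-kd}$, this is exactly the asserted inequality with $c:=c_{1}c_{2}=c(q,k,r)$, uniform in $\alpha$ and in $\phi$.

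The main obstacle is the bookkeeping in this iteration. One must realize that the pull‑back can only be introduced through integration by parts, and then run the $kd$‑fold composition $\Theta_{\alpha}^{(k)}$ so that it costs \emph{exactly} $kd$ units of the differentiability index — which is what puts the answer in $\mathbb{D}_{p}^{-kd}$ and not worse — while costing only an arbitrarily small loss of integrability, a loss that is absorbable precisely because $e_{r}(F_{\alpha})$ lies in every $L_{t}$. Making every constant uniform in $\alpha$ is what forces the joint use of all the hypotheses: uniform non‑degeneracy (and the uniform Sobolev bounds implicit in "$F_{\alpha}\in\mathbb{D}^{\infty}$") enter through $\Theta_{\alpha}^{(k)}$, while the uniform exponential integrability (\ref{exp-moment}) enters through the factor $(e_{r}\phi)(F_{\alpha})$.
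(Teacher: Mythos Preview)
Your proof is correct and follows essentially the same route as the paper's: pair with a test functional $G\in\mathbb{D}^{\infty}$, perform $kd$ Malliavin integrations by parts to transfer all derivatives onto $G$ (the paper writes the result as $l_{\alpha}(J)=\sum_{j=0}^{kd}\langle P_{j},D^{j}J\rangle$ rather than building the operators $\Theta_{i}^{\alpha}$ explicitly), and then apply H\"older with the exponential-moment hypothesis to control $(e_{r}\phi)(F_{\alpha})$ by $|\phi|_{\infty}$.

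One small remark: your discussion of having to \emph{define} the pull-back is more delicate than necessary here. For $\phi\in\mathscr{S}(\mathbb{R}^{d})$ the function $\partial_{1}^{k}\cdots\partial_{d}^{k}(e_{r}\phi)$ is smooth with at most exponential growth (Leibniz: derivatives of $e_{r}$ are bounded by $Ce^{r|x|}$, derivatives of $\phi$ are bounded), so its composition with $F_{\alpha}$ is already a classical random variable in $\cap_{p}L_{p}$ by (\ref{exp-moment}); no distributional detour is needed to make sense of the left-hand side, only to obtain the \emph{estimate} in terms of $|\phi|_{\infty}$ alone. Your integration-by-parts argument does exactly that, and the duality interpretation you give coincides with the classical one.
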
 

Now, let $F \in \mathbb{D}^{\infty} (\mathbb{R}^{d})$
be non-degenerate.
Take
$
\Lambda
=
\partial_{1}^{k} \cdots \partial_{d}^{k}
[ \exp (k \vert x \vert) f(x) ]
\in \mathscr{E}^{\prime} (\mathbb{R}^{d})
$
(where $f$ is the one associated to $\Lambda$ in Theorem \ref{Hasu-Thm})
and assume $k \geqslant 1$.
Let
$\varepsilon \in (0,1)$
be arbitrary
and put $r := k+\varepsilon > 0$.
Define $\phi \in C_{0}(\mathbb{R}^{d})$
(the space of continuous functions on $\mathbb{R}^{d}$
vanishing at infinity)
by
$
\phi (x)
:=
(
	\mathrm{e}^{ k \vert x \vert }
	/
	\prod_{i=1}^{d} \cosh (rx_{i})
)
f(x)
$,
so that now we have
$
\Lambda
=
\partial_{1}^{k} \cdots \partial_{d}^{k}
( e_{r} \phi )
=
\partial_{1}^{k} \cdots \partial_{d}^{k}
[ ( \prod_{i=1}^{d} \cosh ( r x_{i} ) ) \phi (x) ]
$.
Take any sequence
$\phi_{n} \in \mathscr{S} (\mathbb{R}^{d})$,
$n \in \mathbb{N}$
such that
$
\vert \phi_{n} - \phi \vert_{\infty} \to 0
$.
Then Proposition \ref{exp-pull} tells us that
$
\lim_{n \to \infty}
[ \partial_{1}^{k} \cdots \partial_{d}^{k} ( e_{r} \phi_{n} ) ]
(F)
$
exists in $\mathbb{D}_{p}^{-kd}$
for each $p \in (1,\infty )$.
The limit does not depend on the choice of
$\varepsilon > 0$
and the sequence $\phi_{n} \in \mathscr{S}(\mathbb{R}^{d})$.
Under these notations,
we put the following.

\begin{Def} 
\label{pull-back(exp)} 
We denote the limit by $\Lambda (F)$ and call the
{\it pull-back of $\Lambda \in \mathscr{E}^{\prime} (\mathbb{R}^{d})$ by $F$}.
\end{Def} 

\begin{proof}[Proof of Proposition \ref{exp-pull}] 
Let
$p \in (1,\infty)$,
$k \in \mathbb{Z}_{\geqslant 0}$,
$r>0$,
$\alpha \in I$,
$\phi \in \mathscr{S}(\mathbb{R}^{d})$
and
$J \in \mathbb{D}^{\infty}$ be arbitrary.
Then
\begin{equation*}
\begin{split}
\mathbf{E}
[
	\big(
	\partial_{1}^{k} \cdots \partial_{d}^{k} ( e_{r} \phi )
	\big)
	( F_{\alpha} )
	J
]
=
\mathbf{E}
[
	\Big(
	\prod_{i=1}^{d}
	\cosh ( r F_{\alpha}^{i} )
	\Big)
	\phi ( F_{\alpha} )
	l_{\alpha} (J)
]
\end{split}
\end{equation*}
where $l_{\alpha} (J) \in \mathbb{D}^{\infty}$ is of the form
$$
l_{\alpha} (J)
=
\sum_{j=0}^{kd}
\langle
	P_{j} (w), D^{j} J
\rangle_{H^{\otimes j}}
$$
for some
$P_{j}(w) \in \mathbb{D}^{\infty} ( H^{\otimes j} )$,
$j=0,1, \cdots , kd$
which are polynomials in $F_{\alpha}$,
its derivatives up to the order $kd$,
and
$
\det ( \langle DF_{\alpha}^{i}, DF_{\alpha}^{j} \rangle_{H} )_{ij}^{-1}
$.
Take $q^{\prime} \in (1, q)$,
where $1/p + 1/q = 1$.
Since
$\{ F_{\alpha} \}_{\alpha \in I}$
is uniformly non-degenerate,
there exists $c_{0} > 0$ such that
$$
\Vert
	l_{\alpha} (J)
\Vert_{q^{\prime}}
\leqslant
c_{0} \Vert J \Vert_{q,k}
\quad
\text{for all $\alpha \in I$ and $J \in \mathbb{D}^{\infty}$.}
$$
Therefore by taking
$p^{\prime} \in (1, \infty)$
such that
$1/p^{\prime} + 1/q^{\prime} = 1$,
we have
\begin{equation*}
\begin{split}
&
\big\vert
\mathbf{E}
[
	\Big(
	\prod_{i=1}^{d}
	\cosh ( r F_{\alpha}^{i} )
	\Big)
	\phi ( F_{\alpha} )
	l_{\alpha} (J)
]
\big\vert
\leqslant
c_{0}^{\prime}
\vert \phi \vert_{\infty}
\Vert
\exp \big( r \vert F_{\alpha} \vert \big)
\Vert_{p^{\prime}}
\Vert J \Vert_{q, k},
\end{split}
\end{equation*}
for some constant $c_{0}^{\prime} > 0$, which implies
\begin{equation*}
\begin{split}
\Vert
	( e_{k} \phi )^{(k)}
	( F_{\alpha} )
\Vert_{p,-k}
\leqslant
c_{0}^{\prime}
\vert \phi \vert_{\infty}
\Vert \exp ( r \vert F_{\alpha} \vert ) \Vert_{p^{\prime}}
\leqslant
c_{0}^{\prime}
\sup_{\alpha \in I}
\Vert \exp ( r \vert F_{\alpha} \vert ) \Vert_{p^{\prime}}
\vert \phi \vert_{\infty}.
\end{split}
\end{equation*}
\end{proof} 

\begin{Cor} 
\label{exp-pull-conti} 
Suppose that
$F_{\alpha} \in \mathbb{D}^{\infty}(\mathbb{R}^{d})$
for
$\alpha \in I \subset \mathbb{R}$,
where $I$ is an index set,
satisfy
\begin{itemize}
\item[(i)]
$\{ F_{\alpha} \}_{\alpha \in I}$
is uniformly non-degenerate;

\item[(ii)]
$
\sup_{\alpha \in I}
\mathbf{E}
[
	\exp ( r \vert F_{\alpha }\vert_{\mathbb{R}^{d}} )
]
< +\infty
$
for each $r > 0$;

\item[(iii)]
the mapping
$
I \ni \alpha
\mapsto
F_{\alpha} \in \mathbb{D}^{\infty}(\mathbb{R}^{d})
$
is continuous.
\end{itemize}
Then for any $p \in (1, \infty )$ and
$
\Lambda
=
\partial_{1}^{k} \cdots \partial_{d}^{k}
[ \exp (k\vert x \vert) f (x)]
\in \mathscr{E}^{\prime} (\mathbb{R}^{d})
$
{\rm ($f$ is the one associated to $\Lambda$ in Theorem \ref{Hasu-Thm})},
the mapping
$$
I \ni \alpha \mapsto \Lambda (F_{\alpha}) \in \mathbb{D}_{p}^{-kd}
$$
is continuous.
\end{Cor}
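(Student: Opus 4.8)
The plan is to combine Proposition \ref{exp-pull} with a routine $3\varepsilon$-argument, the essential point being that Proposition \ref{exp-pull} already supplies the approximation bound \emph{uniformly} over the family $\{F_{\alpha}\}_{\alpha\in I}$, which is exactly what permits the interchange of the relevant limits. Since $I\subset\mathbb{R}$ is metrizable it suffices to prove sequential continuity: fix $\alpha_{0}\in I$ and $\alpha_{n}\to\alpha_{0}$ in $I$, and show $\Lambda(F_{\alpha_{n}})\to\Lambda(F_{\alpha_{0}})$ in $\mathbb{D}_{p}^{-kd}$. We may assume $k\geqslant 1$; when $k=0$ one has $\Lambda=f\in C_{b}(\mathbb{R}^{d})$ and $\Lambda(F_{\alpha})=f(F_{\alpha})$, whose continuity in $\alpha$ is the elementary argument used for the ``middle term'' below.

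Following the construction that defines the exponential-type pull-back, fix $\varepsilon\in(0,1)$, set $r:=k+\varepsilon$ and $\phi(x):=\bigl(\mathrm{e}^{k|x|}/\prod_{i=1}^{d}\cosh(rx_{i})\bigr)f(x)\in C_{0}(\mathbb{R}^{d})$, so that $\Lambda=\partial_{1}^{k}\cdots\partial_{d}^{k}(e_{r}\phi)$. Choose $\phi_{m}\in C_{c}^{\infty}(\mathbb{R}^{d})$ with $|\phi_{m}-\phi|_{\infty}\to 0$ (legitimate, since $C_{c}^{\infty}$ is sup-norm dense in $C_{0}$ and the limit defining the pull-back is independent of the approximating sequence), and put $\psi_{m}:=\partial_{1}^{k}\cdots\partial_{d}^{k}(e_{r}\phi_{m})\in C_{c}^{\infty}(\mathbb{R}^{d})$. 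Since $\psi_{m}$ is a bounded smooth function, its pull-back $\psi_{m}(F_{\alpha})$ is simply the composition, and by definition $\Lambda(F_{\alpha})=\lim_{m\to\infty}\psi_{m}(F_{\alpha})$ in $\mathbb{D}_{p}^{-kd}$ for every $\alpha\in I$.

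The approximation error is controlled uniformly in $\alpha$: applying Proposition \ref{exp-pull} to the Schwartz function $\phi_{m}-\phi_{m'}$ and using hypotheses (i) and (ii), there is a constant $c=c(p,k,r)>0$, independent of $\alpha$, with $\|\psi_{m}(F_{\alpha})-\psi_{m'}(F_{\alpha})\|_{p,-kd}\leqslant c\,|\phi_{m}-\phi_{m'}|_{\infty}$ for all $m,m'$; letting $m'\to\infty$ yields $\|\psi_{m}(F_{\alpha})-\Lambda(F_{\alpha})\|_{p,-kd}\leqslant c\,|\phi_{m}-\phi|_{\infty}$ for all $m$ and all $\alpha\in I$. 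For the fixed-$m$ term, $\psi_{m}$ is bounded and continuous and, by hypothesis (iii), $F_{\alpha_{n}}\to F_{\alpha_{0}}$ in $\mathbb{D}^{\infty}$, hence in $L_{p}$ and in probability; therefore $\psi_{m}(F_{\alpha_{n}})\to\psi_{m}(F_{\alpha_{0}})$ in probability, and, being uniformly bounded, in $L_{p}$, hence in $\mathbb{D}_{p}^{-kd}$ since $\|\cdot\|_{p,-kd}\leqslant\|\cdot\|_{p}$.

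Finally we assemble the estimate: given $\eta>0$, first pick $m$ with $c\,|\phi_{m}-\phi|_{\infty}<\eta/3$, which bounds $\|\psi_{m}(F_{\alpha_{n}})-\Lambda(F_{\alpha_{n}})\|_{p,-kd}$ (for every $n$) and $\|\psi_{m}(F_{\alpha_{0}})-\Lambda(F_{\alpha_{0}})\|_{p,-kd}$ by $\eta/3$; then pick $N$ with $\|\psi_{m}(F_{\alpha_{n}})-\psi_{m}(F_{\alpha_{0}})\|_{p,-kd}<\eta/3$ for $n\geqslant N$. By the triangle inequality $\|\Lambda(F_{\alpha_{n}})-\Lambda(F_{\alpha_{0}})\|_{p,-kd}<\eta$ for $n\geqslant N$, which is the assertion. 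The only genuinely delicate point is the interchange of the $m$- and $n$-limits, and this is handled precisely by the $\alpha$-uniformity built into Proposition \ref{exp-pull}; the rest is bookkeeping.
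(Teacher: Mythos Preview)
Your proof is correct and follows essentially the same $3\varepsilon$-strategy as the paper's own argument: both invoke Proposition~\ref{exp-pull} to obtain an approximation bound that is uniform in $\alpha$, then handle the middle term via continuity of the composition with a smooth approximant, and conclude by the triangle inequality. Your choice of compactly supported approximants $\phi_m\in C_c^{\infty}$ is a small improvement, since it makes $\psi_m$ bounded and hence the $L_p$-convergence of the middle term immediate from dominated convergence, whereas the paper's generic $\psi\in\mathscr{S}$ leaves $(e_r\psi)^{(k)}$ of exponential growth and tacitly relies on hypothesis~(ii) for uniform integrability there.
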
 
\begin{proof} 
Let
$p \in (1,\infty)$,
$\alpha \in I$
and
$\varepsilon > 0$
be arbitrary.
Suppose that
$
\Lambda
=
\partial_{1}^{k} \cdots \partial_{d}^{k}
[
	( \prod_{i=1}^{d} \cosh ( r x_{i} ) )
	\phi (x)
]
$
where $r>k \geqslant 1$ and $\phi \in C_{0}(\mathbb{R}^{d})$.
Then by Proposition \ref{exp-pull},
there exists $\psi \in \mathscr{S}(\mathbb{R}^{d})$
such that
$
\Vert
	\Lambda (F_{\beta})
	-
	[ \partial_{1}^{k} \cdots \partial_{d}^{k}
	( e_{r} \psi ) ] (F_{\beta})
\Vert_{p, -k} < \varepsilon /3
$
for every $\beta \in I$.
Furthermore,
by the condition
(ii) and
(iii),
there exists $\delta > 0$ such that
$
\Vert
	[ \partial_{1}^{k} \cdots \partial_{d}^{k}
	( e_{r} \psi ) ] ( F_{\beta} )
	-
	[ \partial_{1}^{k} \cdots \partial_{d}^{k}
	( e_{r} \psi ) ] ( F_{\alpha} )
\Vert_{p}
< \varepsilon /3
$
if $\vert \beta - \alpha \vert < \delta$.
Hence, for each $\beta \in I$
with
$\vert \beta - \alpha \vert < \delta$,
\begin{equation*}
\begin{split}
\Vert
	\Lambda ( F_{\alpha} )
	-
	\Lambda ( F_{\beta} )
\Vert_{p,-k}
&\leqslant
\Vert
	\Lambda ( F_{\alpha} )
	-
	[ \partial_{1}^{k} \cdots \partial_{d}^{k}
	( e_{r} \psi ) ] ( F_{\alpha} )
\Vert_{p,-k} \\
&\hspace{10mm}+
\Vert
	[ \partial_{1}^{k} \cdots \partial_{d}^{k}
	( e_{r} \psi ) ] ( F_{\alpha} )
	-
	[ \partial_{1}^{k} \cdots \partial_{d}^{k}
	( e_{r} \psi ) ] ( F_{\beta} )
\Vert_{p} \\
&\hspace{25mm}+
\Vert
	[ \partial_{1}^{k} \cdots \partial_{d}^{k}
	( e_{r} \psi ) ] ( F_{\beta} )
	-
	\Lambda ( F_{\beta} )
\Vert_{p,-k} \\
&< \varepsilon .
\end{split}
\end{equation*}
The case of $k=0$ is clear.
\end{proof} 

If we assume
{\rm (H1)}, {\rm (H2)} and {\rm (H4)},
then
we have the Gaussian estimate
for the transition density of $X$
(the special case $n=0$ in Lemma~\ref{IbP}--(ii))
and
\begin{equation}
\label{referee-1} 
\begin{split}
\sup_{t \in K}
\mathbf{E}
[
	\exp ( r \vert X(t,x,w) \vert )
]
< +\infty
\end{split}
\end{equation}
for all $r>0$ and compact set $K \subset (0, \infty )$.
Hence by Corollary \ref{exp-pull-conti},
for any
$
\Lambda
=
\partial_{1}^{k} \cdots \partial_{d}^{k}
[ \exp ( k \vert x \vert ) f(x) ]
\in \mathscr{E}^{\prime} (\mathbb{R}^{d})
$,
the mapping
$
(0, \infty ) \ni t
\mapsto
\Lambda ( X(t,x,w) ) \in \mathbb{D}_{p}^{-kd}
$
is continuous for every $p \in (1, \infty )$.

\section{Bochner Integrability of Pull-Backs by Diffusions}
\label{sec_Bochner} 
Let $w = (w(t))_{t \geqslant 0}$
be a $d$-dimensional Wiener process
with $w(0) = 0$.
Let
$
\sigma : \mathbb{R}^{d} \to \mathbb{R}^{d} \otimes \mathbb{R}^{d}
$,
$b: \mathbb{R}^{d} \to \mathbb{R}^{d}$.
We consider the following stochastic differential equation
\begin{equation}
\label{SDE} 
\mathrm{d} X_{t}
=
\sigma (X_{t}) \mathrm{d} w(t)
+
b ( X_{t} ) \mathrm{d} t,
\quad
X_{0} = x \in \mathbb{R}^{d}.
\end{equation}

In this
section,
we assume conditions {\rm (H1)} and {\rm (H2)}.
Under these conditions,
the equation (\ref{SDE}) admits
a unique strong solution.
We denote by
$\{ X(t,x,w) \}_{t \geqslant 0}$ the unique strong solution
$X = (X_{t})_{t \geqslant 0}$ to (\ref{SDE}).
Furthermore, for each $t>0$ and $x \in \mathbb{R}$,
we have
$
X(t,x,w) \in \mathbb{D}^{\infty} (\mathbb{R}^{d})
$
and is non-degenerate.
Henceforward
for $t>0$,
one can define the pull-back $\Lambda (X(t,x,w))$
of $\Lambda \in \mathscr{S}^{\prime} ( \mathbb{R}^{d} )$
by $X(t,x,w)$ as an element of
$\cup_{s>0} \cap_{1 < p < \infty} \mathbb{D}_{p}^{-s}$
(Recall things just after Definition \ref{non-degeneracy}).

Fix $T>0$ be arbitrary.
By the condition {\rm (H2)},
we further know that
$$
\sup_{t \in K}
\Vert
	\det
	(
	\langle
	DX_{t}^{i},
	DX_{t}^{j}
	\rangle_{H}
	)_{ij}^{-1}
\Vert_{p}
<
+\infty
\quad
\text{for $1<p<\infty$}
$$
holds for each closed interval $K \subset (0,T]$,
which implies that for each
$k \in \mathbb{Z}_{\geqslant 0}$,
$p \in (1,\infty)$
and
$\Lambda \in \mathscr{S}_{-2k}(\mathbb{R}^{d})$,
the mapping
$
(0,T] \ni t
\mapsto
\Lambda (X(t,x,w)) \in \mathbb{D}_{p}^{-2k}
$
is continuous
(see e.g. \cite[Remark 2.2]{Wa87}).
In particular,
the mapping
$
[t_{0} ,T] \ni t
\mapsto
\Lambda (X(t,x,w)) \in \mathbb{D}_{p}^{-2k}
$
is Bochner integrable for each
$t_{0} > 0$,
and hence the Bochner integral
$
\int_{t_{0}}^{T}
\Lambda (X(t,x,w))
\mathrm{d}t
$
makes sense as an element in $\mathbb{D}_{p}^{-2k}$
for each $t_{0} > 0$.
If we assume {\rm (H4)} additionally,
then
analogous results follow also for
$\Lambda \in \mathscr{E}^{\prime}(\mathbb{R}^{d})$
by virtue of
the Gaussian estimates
for the transition density function of
$(X_{t})_{t\geqslant 0}$
(or one can refer Lemma~\ref{IbP} below)
and
Corollary \ref{exp-pull-conti}.

Now our problem is the Bochner integrability on $(0,T]$,
i.e.,
whether
$
\int_{0}^{T}
\Vert \Lambda (X(t,x,w)) \Vert_{p,-2k}
\mathrm{d}t
< +\infty
$
holds or not.
The main results of this
section
are
Theorem \ref{pos_Bochner}
and
Proposition \ref{exclusion_int}.
Before seeing this,
we shall start with the Brownian case,
which would be an introductory example.
\subsection{Bochner integrability of $\delta_{0} (w(t))$}

\begin{Prop} 
\label{Brown-Bochner} 
Let $d=1$,
$\Lambda \in \mathscr{S}^{\prime} (\mathbb{R})$
and
$s \in \mathbb{R}$.
If $\Lambda (w(T)) \in \mathbb{D}_{2}^{s}$ then
the mapping
$$
(0, T] \ni t \mapsto
\sqrt{\frac{T}{t}}
\Lambda \Big( \sqrt{\frac{T}{t}} w(t) \Big)
\in \mathbb{D}_{2}^{s}
$$
is Bochner integrable in $\mathbb{D}_{2}^{s}$
and we have
$$
\int_{0}^{T}
\sqrt{\frac{T}{t}}
\Lambda \Big( \sqrt{\frac{T}{t}} w(t) \Big)
\mathrm{d}t
\in \mathbb{D}_{2}^{s+1}.
$$
\end{Prop}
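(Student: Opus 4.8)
The plan is to use the Wiener--It\^o chaos expansion to compute the $\mathbb{D}_2^s$-norm of the integrand explicitly, and then to carry out the time-integral directly at the level of the chaos coefficients. First I would recall that for fixed $t>0$ the random variable $\sqrt{T/t}\,w(t)$ is a centered Gaussian with variance $T$, hence it is equal \emph{in law} to $w(T)$; more usefully, if $\Lambda(w(T)) = \sum_{k\geqslant 0} J_k \Lambda(w(T))$ is the chaos decomposition, then each $J_k \Lambda(w(T))$ is a constant times the $k$-th Hermite polynomial $H_k(w(T)/\sqrt{T})$, and the scaling relation $w(t)\mapsto \sqrt{T/t}\,w(t)$ acts diagonally on chaoses. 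Concretely, writing $\Lambda(w(T)) = \sum_k a_k\, h_k\big(w(T); T\big)$ where $h_k(\cdot;T)$ is the $k$-th Hermite polynomial normalized so that $h_k(w(T);T) \in \mathcal{C}_k$ with $\|h_k(w(T);T)\|_2^2 = k!\,T^k$ (or whatever normalization is convenient), one checks that
$$
\sqrt{\tfrac{T}{t}}\,\Lambda\Big(\sqrt{\tfrac{T}{t}}w(t)\Big)
= \sum_{k\geqslant 0} a_k \Big(\tfrac{T}{t}\Big)^{(k+1)/2} h_k\big(w(t); t\big)\cdot(\text{normalization}),
$$
so that the $k$-th chaos component has $\mathbb{D}_2^0$-norm proportional to $|a_k|\,(T/t)^{(k+1)/2} (k!\,t^k)^{1/2} = |a_k|\,(k!\,T^k)^{1/2}\,(T/t)^{1/2}$. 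The key point is that the $t$-dependence of the $L_2$-norm of the $k$-th component is $(T/t)^{1/2}$, \emph{uniformly in $k$}.

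**Carrying out the integration.**
Granting the computation above, the $\mathbb{D}_2^s$-norm of the integrand at time $t$ is
$$
\Big\| \sqrt{\tfrac{T}{t}}\,\Lambda\big(\sqrt{\tfrac{T}{t}}w(t)\big)\Big\|_{2,s}^2
= \sum_{k\geqslant 0} (1+k)^s \,|a_k|^2\, k!\,T^k \cdot \tfrac{T}{t}
= \|\Lambda(w(T))\|_{2,s}^2\cdot \tfrac{T}{t},
$$
so the norm equals $\|\Lambda(w(T))\|_{2,s}\sqrt{T/t}$. Since $\int_0^T \sqrt{T/t}\,\mathrm{d}t = 2T < \infty$, the scalar function $t\mapsto \|\cdot\|_{2,s}$ is integrable on $(0,T]$; combined with strong measurability (which follows from the continuity of $t\mapsto\Lambda(X(t,x,w))$ on $(0,T]$ already recorded in the text, specialized to $X_t=w(t)$), Bochner's criterion gives Bochner integrability in $\mathbb{D}_2^s$. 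To identify the target space of the integral, I would integrate termwise: since $\int_0^T (T/t)^{(k+1)/2}\,\mathrm{d}t$ converges exactly when $(k+1)/2 < 1$, i.e.\ never for $k\geqslant 1$ --- wait, this is the obstruction to handle carefully (see below) --- one must instead integrate the $\mathbb{D}_2^s$-valued function as a whole and then expand the result in chaos. The cleaner route: the Bochner integral $I := \int_0^T \sqrt{T/t}\,\Lambda(\sqrt{T/t}w(t))\,\mathrm{d}t$ is defined already as an element of $\mathbb{D}_2^s$; to show $I\in\mathbb{D}_2^{s+1}$ I would compute $J_k I = \int_0^T J_k\big(\sqrt{T/t}\,\Lambda(\sqrt{T/t}w(t))\big)\,\mathrm{d}t$ (projections commute with Bochner integrals) and evaluate $\|J_k I\|_2$, then sum $\sum_k (1+k)^{s+1}\|J_k I\|_2^2$.

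**The main obstacle.**
The delicate point is exactly the termwise integration: the $k$-th chaos component of the integrand, $a_k (T/t)^{(k+1)/2} h_k(w(t);t)$, has an $L_2$-norm whose $t$-profile is the \emph{constant} $(T/t)^{1/2}$ (the $t^{-k/2}$ from the scaling cancels against the $t^{k/2}$ in $\|h_k(w(t);t)\|_2$), so each component is individually integrable with $\int_0^T$ of its norm equal to $2T\cdot|a_k|(k!T^k)^{1/2}$; but the \emph{integrated} chaos element $J_k I$ is \emph{not} simply a multiple of $h_k(w(T);T)$, because the Hermite kernels $h_k(w(t);t)$ at different times $t$ are not proportional as elements of $\mathcal{C}_k$ --- $h_k(w(t);t)$ corresponds to the symmetric kernel $\mathbf{1}_{[0,t]}^{\otimes k}$, and $\int_0^T (T/t)^{(k+1)/2}\mathbf{1}_{[0,t]}^{\otimes k}\,\mathrm{d}t$ must be computed as a function in $L_2([0,T]^k)$. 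I expect this is where the gain of one derivative comes from: after symmetrization one gets a kernel supported near the diagonal whose $L_2([0,T]^k)$ norm is of order $(k!\,T^k)^{1/2}/\sqrt{k}$ (roughly), producing an extra factor $(1+k)^{-1/2}$ in $\|J_kI\|_2$ relative to the naive estimate, which upgrades the weight $(1+k)^s$ to $(1+k)^{s+1}$ after squaring. So the real work --- and the one genuinely computational step I would not skip --- is: write $J_k I$ as multiple Wiener--It\^o integral of the kernel $g_k(t_1,\dots,t_k) = c_k\int_{\max_i t_i}^{T}(T/t)^{(k+1)/2}\,\mathrm{d}t$, compute $\|g_k\|_{L_2([0,T]^k)}^2$ exactly, and verify $\sum_k (1+k)^{s+1} k!\,\|g_k\|^2 \lesssim \sum_k (1+k)^s k!\, |a_k|^2 T^k = \|\Lambda(w(T))\|_{2,s}^2 < \infty$. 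This mirrors the Nualart--Vives computation cited in the introduction but, as the authors note, with no approximation of the integrand needed.
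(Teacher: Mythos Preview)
Your proposal is correct and matches the paper's approach. The only cosmetic difference is that where you propose to compute $\|g_k\|_{L_2([0,T]^k)}$ for the integrated kernel, the paper instead evaluates $\mathbf{E}\big[(\int_0^T t^{-1/2}H_n(w(t)/\sqrt{t})\,\mathrm{d}t)^2\big]$ by expanding the square and using the covariance identity $\mathbf{E}[H_n(w(t)/\sqrt{t})H_n(w(s)/\sqrt{s})]=n!\,(t/s)^{n/2}$ for $t<s$, arriving at the closed form $\tfrac{4T}{n+1}\,n!$ --- precisely the $(1+n)^{-1}$ gain you anticipated.
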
 

\begin{Rm} 
Hence the Bochner integral poses a sort of
``smoothing effect", in the sense of
raising the regularity-index $s$,
which might be a common understanding for most of us.
\end{Rm} 

\begin{proof} 
Recall the integration by parts formula
$$
\mathbf{E}
[
	\Lambda^{\prime} (w(t)) H_{n} \Big( \frac{w(t)}{\sqrt{t}} \Big)
]
=
t^{-1/2}
\mathbf{E}
[
	\Lambda (w(t)) H_{n+1} \Big( \frac{w(t)}{\sqrt{t}} \Big)
]
$$
where
$\Lambda^{\prime} = \partial \Lambda$
is the distributional derivative of $\Lambda$,
$H_{n} := \partial^{*n}1$
is the $n$-th Hermite polynomial
and
$\partial^{*} = - \partial + x $.
The family
$\{ \frac{1}{\sqrt{n!}} H_{n} \}_{n=0}^{\infty}$
forms a
complete orthonormal base
of
$
L_{2}
(
	\mathbb{R},
	(2\pi)^{-1/2} \mathrm{e}^{-x^{2}/2} \mathrm{d}x
)
$
(see Lemma \ref{Hermite-facts} in Appendix \ref{auxiliary}).

Then $\Lambda ( (T/t)^{1/2} w(t) )$ has the Fourier expansion
(the Wiener-It\^o chaos expansion)
$$
\Lambda \big( \sqrt{\frac{T}{t}} w(t) \big)
=
\sum_{n=0}^{\infty}
\frac{1}{n!}
\mathbf{E}
[
	\Lambda \big( \sqrt{\frac{T}{t}} w(t) \big)
	H_{n} \Big( \frac{w(t)}{\sqrt{t}} \Big)
]
H_{n} \Big( \frac{w(t)}{\sqrt{t}} \Big) ,
$$
and hence
\begin{equation*}
\begin{split}
&
\Vert
	\Lambda \big( \sqrt{\frac{T}{t}} w(t) \big)
\Vert_{2,s}^{2}
=
\sum_{n=0}^{\infty}
\frac{(1+n)^{s}}{n!}
\mathbf{E}
[
	\Lambda \big( \sqrt{\frac{T}{t}} w(t) \big)
	H_{n} \Big( \frac{w(t)}{\sqrt{t}} \Big)
]^{2}
\end{split}
\end{equation*}
Here we have
\begin{equation*}
\begin{split}
&
\mathbf{E}
[
	\Lambda \big( \sqrt{\frac{T}{t}} w(t) \big)
	H_{n} \Big( \frac{w(t)}{\sqrt{t}} \Big)
]
=
\mathbf{E}
[
	\Lambda ( w(T) )
	H_{n} \Big( \frac{w(T)}{\sqrt{T}} \Big)
] .
\end{split}
\end{equation*}
Therefore
\begin{equation*}
\begin{split}
&
\Vert
	\sqrt{\frac{T}{t}}
	\Lambda \big( \sqrt{\frac{T}{t}} w(t) \big)
\Vert_{2,s}^{2}
=
\frac{T}{t}
\sum_{n=0}^{\infty}
\frac{(1+n)^{s}}{n!}
\mathbf{E}
[
	\Lambda (w(T))
	H_{n} \Big( \frac{w(T)}{\sqrt{T}} \Big)
]^{2}
=
\frac{T}{t}
\Vert \Lambda (w(T)) \Vert_{2,s}^{2} ,
\end{split}
\end{equation*}
that is, we get
$
\Vert
	(T/t)^{1/2}
	\Lambda ( (T/t)^{1/2} w(t) )
\Vert_{2,s}
=
( T/t )^{1/2}
\Vert \Lambda (w(T)) \Vert_{2,s}
$.
Since
$$
\int_{0}^{T}
\big\Vert
	\sqrt{\frac{T}{t}}
	\Lambda \big( \sqrt{\frac{T}{t}} w(t) \big)
\big\Vert_{2,s}
\mathrm{d}t
=
T^{1/2}
\Vert \Lambda (w(T)) \Vert_{2,s}
\int_{0}^{T}
t^{-1/2}
\mathrm{d}t
< +\infty ,
$$
the function
$
(0,T] \ni t \mapsto
(T/t)^{1/2}
\Lambda ( (T/t)^{1/2} w(t) )
\in \mathbb{D}_{2}^{s}
$
is Bochner integrable and
$
\int_{0}^{T}
(T/t)^{1/2}
\Lambda ( (T/t)^{1/2} w(t) )
\mathrm{d}t \in \mathbb{D}_{2}^{s} .
$

Next we show
\begin{equation}
\label{raised} 
\int_{0}^{T}
\sqrt{\frac{T}{t}}
\Lambda
\big(
	\sqrt{\frac{T}{t}}
	w(t)
\big)
\mathrm{d}t \in \mathbb{D}_{2}^{s+1} .
\end{equation}
For this, we note that
\begin{equation*}
\begin{split}
&
\int_{0}^{T}
\sqrt{\frac{T}{t}}
\Lambda
\big( \sqrt{\frac{T}{t}} w(t) \big)
\mathrm{d}t
=
\sum_{n=0}^{\infty}
\frac{1}{n!}
\mathbf{E}
[
	\Lambda (w(T))
	H_{n} \Big( \frac{w(T)}{\sqrt{T}} \Big)
]
\int_{0}^{T}
\sqrt{\frac{T}{t}}
H_{n} \Big( \frac{w(t)}{\sqrt{t}} \Big)
\mathrm{d}t
\end{split}
\end{equation*}
is the chaos expansion for 
$
\int_{0}^{T}
(T/t)^{1/2}
\Lambda
\big( (T/t)^{1/2} w(t) \big)
\mathrm{d}t
$
(more precisely, we have used Corollary \ref{commute} below).
We shall focus on the $L_{2}$-norm of the last factor:
\begin{equation*}
\begin{split}
&
\mathbf{E}
[
\Big\{
\int_{0}^{T}
\frac{1}{\sqrt{t}}
H_{n} \Big( \frac{w(t)}{\sqrt{t}} \Big)
\mathrm{d}t
\Big\}^{2}
] \\
&=
2
\int_{0 < t < s < T}
\frac{1}{\sqrt{ts}}
\mathbf{E}
[
H_{n} \Big( \frac{w(t)}{\sqrt{t}} \Big)
H_{n} \Big( \frac{w(s)}{\sqrt{s}} \Big)
]
\mathrm{d}t \mathrm{d}s \\
&=
2
\int_{0 < t < s < T}
\frac{1}{\sqrt{ts}}
\mathbf{E}
[
H_{n} \Big( \frac{w(t)}{\sqrt{t}} \Big)
H_{n}
\Big(
	\sqrt{\frac{t}{s}}
	\frac{
		w(t)
	}{\sqrt{t}}
	+
	\frac{w(s)-w(t)}{\sqrt{s}}
\Big)
]
\mathrm{d}t \mathrm{d}s \\
&=
2
\int_{0 < t < s < T}
\frac{1}{\sqrt{ts}}
n! \Big( \frac{t}{s} \Big)^{n/2}
\mathrm{d}t \mathrm{d}s \\
&=
2(n!)
\int_{0 < t < s < T}
t^{
	(n-1)/2
}
s^{-(n+1)/2}
\mathrm{d}t \mathrm{d}s
=
\frac{4T}{n+1} n!.
\end{split}
\end{equation*}
In the third equality, we have used the integration by parts formula
and $H_{n}^{\prime} = n H_{n-1}$.
Hence we have
\begin{equation*}
\begin{split}
&
\Vert
	\int_{0}^{T}
	\sqrt{\frac{T}{t}}
	\Lambda \big( \sqrt{\frac{T}{t}} w(t) \big)
	\mathrm{d}t
\Vert_{2,s+1}^{2} \\
&=
T
\sum_{n=0}^{\infty}
\frac{ ( 1+n )^{s+1} }{ (n!)^{2} }
\mathbf{E}
[
	\Lambda (w(T))
	H_{n} \Big( \frac{w(T)}{\sqrt{T}} \Big)
]^{2}
\mathbf{E}
[
\Big\{
\int_{0}^{T}
\frac{1}{\sqrt{t}}
H_{n} \Big( \frac{w(t)}{\sqrt{t}} \Big)
\mathrm{d}t
\Big\}^{2}
] \\
&=
4T^{2}
\sum_{n=0}^{\infty}
\frac{ (1+n)^{s} }{ n! }
\mathbf{E}
[
	\Lambda (w(T))
	H_{n} \Big( \frac{w(T)}{\sqrt{T}} \Big)
]^{2}
=
4T^{2}
\Vert \Lambda (w(T))  \Vert_{2,s}^{2}
< +\infty ,
\end{split}
\end{equation*}
which proves (\ref{raised}).
\end{proof} 

By
Nualart-Vives \cite[Section 2]{NuVi}
and
Watanabe \cite{Wa94},
it is known that
$$
\delta_{0} (w(t))
\in \mathbb{D}_{2}^{(-1/2)-}
\quad
\text{but}
\quad
\delta_{0} (w(t)) \notin \mathbb{D}_{2}^{-1/2}
\quad
\text{for $t > 0$,}
$$
where
$
\mathbb{D}_{2}^{s-}
:=
\cap_{\alpha <s} \mathbb{D}_{2}^{\alpha}
$.
From this fact and Proposition \ref{Brown-Bochner},
we reached the following result by
Nualart-Vives \cite{NuVi}
and
Watanabe \cite{Wa94}.

\begin{Cor} 
If $d=1$, we have
$\displaystyle
\int_{0}^{T}
\delta_{0} ( w(t) )
\mathrm{d}t
\in \mathbb{D}_{2}^{(1/2)-}
$.
\end{Cor}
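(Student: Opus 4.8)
The plan is to deduce this corollary directly from Proposition \ref{Brown-Bochner} by choosing $\Lambda = \delta_{0}$ and exploiting the scaling invariance of the Dirac mass. First I would recall that $\delta_{0}$ is positively homogeneous of degree $-1$ as a Schwartz distribution: for any $a>0$ one has $\delta_{0}(a\,\bullet) = a^{-1}\delta_{0}$ in $\mathscr{S}^{\prime}(\mathbb{R})$. Since $w(t)$ is a non-degenerate element of $\mathbb{D}^{\infty}(\mathbb{R})$ for $t>0$, so is $\sqrt{T/t}\,w(t)$, and because the pull-back map $\mathscr{S}^{\prime}(\mathbb{R}) \ni \Lambda \mapsto \Lambda(F)$ is linear and continuous (indeed $\mathscr{S}_{-2}(\mathbb{R}) \to \mathbb{D}_{p}^{-2}$ continuously) for non-degenerate $F$, this homogeneity transfers to the Wiener-functional level, giving
$$
\sqrt{\frac{T}{t}}\,\delta_{0}\Big(\sqrt{\frac{T}{t}}\,w(t)\Big)
=
\sqrt{\frac{T}{t}}\cdot\sqrt{\frac{t}{T}}\;\delta_{0}(w(t))
=
\delta_{0}(w(t))
\qquad\text{for every } t\in(0,T].
$$
Hence the integrand appearing in Proposition \ref{Brown-Bochner} with $\Lambda=\delta_{0}$ is literally $\delta_{0}(w(t))$.

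Next I would invoke the known fact of Nualart--Vives \cite{NuVi} and Watanabe \cite{Wa94}, quoted just above, that $\delta_{0}(w(T)) \in \mathbb{D}_{2}^{s}$ for every $s<-1/2$. Fixing such an $s$, Proposition \ref{Brown-Bochner} applied to $\Lambda=\delta_{0}$ shows that $(0,T]\ni t\mapsto \delta_{0}(w(t)) \in \mathbb{D}_{2}^{s}$ is Bochner integrable and that $\int_{0}^{T}\delta_{0}(w(t))\,\mathrm{d}t \in \mathbb{D}_{2}^{s+1}$. Letting $s\uparrow -1/2$ then yields $\int_{0}^{T}\delta_{0}(w(t))\,\mathrm{d}t \in \bigcap_{\alpha<1/2}\mathbb{D}_{2}^{\alpha} = \mathbb{D}_{2}^{(1/2)-}$, which is exactly the assertion.

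There is essentially no hard step here: the corollary is a direct specialization of Proposition \ref{Brown-Bochner}. The only point deserving a line of care is the justification that the relation $\delta_{0}(a\,\bullet)=a^{-1}\delta_{0}$ survives pull-back by $\sqrt{T/t}\,w(t)$. This follows by taking $\phi_{n}\in\mathscr{S}(\mathbb{R})$ with $\phi_{n}\to\delta_{0}$ in $\mathscr{S}_{-2}(\mathbb{R})$, observing that then $\phi_{n}(a\,\bullet)\to a^{-1}\delta_{0}$ in $\mathscr{S}_{-2}(\mathbb{R})$ as well, using the elementary identity $\phi_{n}(aF)=\big(\phi_{n}(a\,\bullet)\big)(F)$ valid for genuine test functions, and passing to the limit by continuity of the pull-back into $\mathbb{D}_{p}^{-2}$. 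I would note in passing that the sharpness ($\int_{0}^{T}\delta_{0}(w(t))\,\mathrm{d}t\notin\mathbb{D}_{2}^{1/2}$) is not claimed here and would require the complementary lower bound for the chaos norms, which is not needed for the stated result.
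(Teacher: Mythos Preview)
Your proposal is correct and follows essentially the same approach as the paper: both deduce the corollary by combining Proposition~\ref{Brown-Bochner} with the known fact $\delta_{0}(w(T))\in\mathbb{D}_{2}^{(-1/2)-}$. The paper states this implication in one line without spelling out the scaling identity $\sqrt{T/t}\,\delta_{0}(\sqrt{T/t}\,w(t))=\delta_{0}(w(t))$, whereas you make it explicit and justify its passage through the pull-back; this is a welcome clarification but not a different argument.
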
 

\subsection{Bochner integrability of $\Lambda (X_{t})$ where $\Lambda$ is a distribution.}
\label{Boc_pos_sec} 

A distribution
$
\Lambda \in
\mathscr{D}^{\prime} (\mathbb{R}^{d})
$
is said to be {\it positive} if
$
\langle
	\Lambda ,
	f
\rangle
\geqslant 0
$
for all nonnegative
$
f \in
\mathscr{D} ( \mathbb{R}^{d} )
$.
It
is known that
for a positive distribution
$
\Lambda \in
\mathscr{D}^{\prime} (\mathbb{R}^{d})
$,
there exists a Radon measure $\mu$ on $\mathbb{R}^{d}$
such that
$$
\langle \Lambda , f \rangle
=
\int_{\mathbb{R}^{d}}
\langle
	\delta_{y},
	f
\rangle
\mu ( \mathrm{d}y ) ,
\quad
f \in
\mathscr{D} (\mathbb{R}^{d}).
$$

The main objective in this
section
is to prove

\begin{Thm} 
\label{pos_Bochner} 
Let $d=1$ and $x \in \mathbb{R}$.
Suppose that
$\Lambda \in \mathscr{D}^{\prime} ( \mathbb{R} )$
is positive.
\begin{itemize}
\item[(i)]
If
{\rm (H1)},
{\rm (H2)}
and
$\Lambda \in \mathscr{S}_{-2k}(\mathbb{R})$
where $k \in \mathbb{Z}_{\geqslant 0}$,
then
for any
$p \in ( 1, +\infty )$,
$
\int_{0}^{T}
\Vert
	\Lambda ( X(t,x,w) )
\Vert_{p,-2k}
\mathrm{d} t
< +\infty
$.
\vspace{2mm}
\item[(ii)]
If
{\rm (H1)},
{\rm (H2)},
{\rm (H4)}
and
$\Lambda \in \mathscr{E}^{\prime} (\mathbb{R})$
hold, then
\begin{equation}
\label{integral} 
\int_{0}^{T}
\int_{\mathbb{R}}
\Vert
	\delta_{y} ( X( t, x, w ) )
\Vert_{p,-2}
\mu ( \mathrm{d}y )
\mathrm{d} t
< +\infty ,
\end{equation}
where $\mu$ is the Radon measure associated to $\Lambda$
as above.
\end{itemize}
\end{Thm} 

\begin{Rm} 
\begin{itemize}
\item[(a)]
Therefore if we assume {\rm (H1)}, {\rm (H2)}
and
$
\Lambda \in
\mathscr{S}_{-2k} (\mathbb{R})
$
is positive, then
$
(0, T] \ni t
\mapsto
\Lambda
(
	X_{t}
) \in \mathbb{D}_{p}^{-2k}
$
is Bochner integrable.
If we have {\rm (H4)} additionally,
the mapping
$
(0, T] \times \mathbb{R} \ni ( t, y )
\mapsto
\delta_{y}
(
	X_{t}
) \in \mathbb{D}_{p}^{-2}
$
is also Bochner integrable with respect to
$\mathrm{d}t \otimes \mu (\mathrm{d}y)$
(the measurability can be checked from the construction of
$
\delta_{y} ( X_{t} )
$),
and we have
\begin{equation*}
\begin{split}
\int_{0}^{T}
\Lambda
(
	X_{t}
)
\mathrm{d}t
&=
\int_{0}^{T}
\int_{\mathbb{R}}
\delta_{y}
(
	X_{t}
)
\mu (\mathrm{d}y) \mathrm{d}t
=
\int_{\mathbb{R}}
\int_{0}^{T}
\delta_{y}
(
	X_{t}
)
\mathrm{d}t \mu (\mathrm{d}y)
\quad
\text{in $\mathbb{D}_{p}^{-2}$.}
\end{split}
\end{equation*}

\item[(b)]
Under {\rm (H1)} and {\rm (H2)},
Lemma~\ref{delta_unif}
and
Theorem~\ref{pos_Bochner}--(i)
assures
$
\int_{0}^{T}
\Vert
	\delta_{y}
	(
		X_{t}
	)
\Vert_{p,-2}
\mathrm{d}t
$
is finite for any $y \in \mathbb{R}$.
\end{itemize}
\end{Rm} 

\begin{Cor} 
\label{commute} 
Let $d=1$.
Assume {\rm (H1)} and {\rm (H2)}.
For each $y \in \mathbb{R}$
and
$n \in \mathbb{Z}_{\geqslant 0}$,
the mapping
$
(0,T] \ni t \mapsto J_{n} [ \delta_{y} (X_{t}) ] \in L_{2}
$
is Bochner integrable and
$$
J_{n} \big[ \int_{0}^{T} \delta_{y} (X_{t}) \mathrm{d}t \big]
=
\int_{0}^{T} J_{n} [ \delta_{y} (X_{t}) ] \mathrm{d}t .
$$
\end{Cor}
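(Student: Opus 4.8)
The plan is to deduce the statement from the elementary fact that a bounded linear operator commutes with the Bochner integral, the analytic content having already been supplied by Theorem \ref{pos_Bochner}.

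First I would record the Bochner integrability of the integrand. Since $\delta_y$ is a positive element of $\mathscr{S}^{\prime}(\mathbb{R})$ (with associated Radon measure the unit mass at $y$), Theorem \ref{pos_Bochner} applied with $p=2$ gives $\int_{0}^{T} \Vert \delta_{y}(X_{t}) \Vert_{2,-2}\, \mathrm{d}t < +\infty$. Moreover $\delta_{y} \in \mathscr{S}_{-2}(\mathbb{R})$ by Lemma \ref{delta_unif}, so the continuity statement recalled at the beginning of this section shows that $(0,T] \ni t \mapsto \delta_{y}(X_{t}) \in \mathbb{D}_{2}^{-2}$ is continuous, hence strongly measurable; together with the finiteness of the norm-integral this yields that $t \mapsto \delta_{y}(X_{t})$ is Bochner integrable in $\mathbb{D}_{2}^{-2}$, and $\int_{0}^{T}\delta_{y}(X_{t})\,\mathrm{d}t$ is precisely the element of $\mathbb{D}_{2}^{-2}$ appearing in the statement.

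Next I would check that $J_{n}$ restricts to a bounded linear operator $\mathbb{D}_{2}^{-2} \to L_{2}$. Indeed, from \eqref{Norm2} one has $(1+n)^{-2}\Vert J_{n}F \Vert_{2}^{2} \leqslant \Vert F \Vert_{2,-2}^{2}$ for every $F \in \mathbb{D}_{2}^{-2}$, i.e. $\Vert J_{n}F \Vert_{2} \leqslant (1+n)\Vert F \Vert_{2,-2}$; on the dense subspace $\mathcal{P}$ the operator $J_{n}$ is the orthogonal projection onto $\mathcal{C}_{n}$, and this bound lets it extend continuously to all of $\mathbb{D}_{2}^{-2}$, with range in $\mathcal{C}_{n} \subset L_{2}$. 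Composing, $t \mapsto J_{n}[\delta_{y}(X_{t})]$ is continuous, hence strongly measurable, into $L_{2}$, and $\int_{0}^{T} \Vert J_{n}[\delta_{y}(X_{t})] \Vert_{2}\,\mathrm{d}t \leqslant (1+n)\int_{0}^{T} \Vert \delta_{y}(X_{t}) \Vert_{2,-2}\,\mathrm{d}t < +\infty$, so it is Bochner integrable in $L_{2}$. Finally, the identity $J_{n}\big[\int_{0}^{T}\delta_{y}(X_{t})\,\mathrm{d}t\big] = \int_{0}^{T} J_{n}[\delta_{y}(X_{t})]\,\mathrm{d}t$ is the general commutation of a bounded linear map with the Bochner integral (for instance by pairing both sides against an arbitrary $G \in L_{2} = (L_{2})^{\prime}$, using that $J_{n}$ is self-adjoint on $L_{2}$ and the defining property of the Bochner integral).

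There is essentially no serious obstacle here: the substantive estimate is Theorem \ref{pos_Bochner}, and what remains is bookkeeping. The only point requiring a word of care is the consistency of the Bochner integrals — that $\int_{0}^{T}\delta_{y}(X_{t})\,\mathrm{d}t$ computed in $\mathbb{D}_{2}^{-2}$ is the same object as the one obtained in the other spaces $\mathbb{D}_{p}^{-2}$ considered earlier — which is immediate from the continuity of the relevant inclusions and the uniqueness of limits of Riemann-type approximating sums.
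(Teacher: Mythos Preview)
Your proposal is correct and essentially identical to the paper's proof: both use Theorem~\ref{pos_Bochner} together with the bound $\Vert J_{n}F\Vert_{2}\leqslant (1+n)\Vert F\Vert_{2,-2}$ from \eqref{Norm2} to get Bochner integrability, and then pass $J_{n}$ through the integral via duality (the paper pairs against $F\in\mathbb{D}^{\infty}$ and uses self-adjointness of $J_{n}$, while you invoke the abstract commutation of bounded linear maps with Bochner integrals --- these are the same argument).
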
 
\begin{proof}[Proof of Corollary \ref{commute}]
By Theorem \ref{pos_Bochner},
we have
\begin{equation*}
\begin{split}
\int_{0}^{T}
\Vert
	J_{n} [ \delta_{y} (X_{t}) ]
\Vert_{L_{2}}
\mathrm{d}t
&=
(1+n)
\int_{0}^{T}
\Vert
	J_{n} [ \delta_{y} (X_{t}) ]
\Vert_{2,-2}
\mathrm{d}t
\leqslant
(1+n)
\int_{0}^{T}
\Vert
	\delta_{y} (X_{t})
\Vert_{2,-2}
\mathrm{d}t
< +\infty .
\end{split}
\end{equation*}
This shows the Bochner integrability of the mapping
$
(0,T] \ni t \mapsto J_{n} [ \delta_{y} (X_{t}) ] \in L_{2}
$,
and hence
$
\int_{0}^{T} J_{n}[ \delta_{y} ( X_{t} ) ] \mathrm{d}t
\in L_{2}
$.

Second,
for each $F \in \mathbb{D}^{\infty}$, the mapping
$
\mathbb{D}_{2}^{s} \ni G \mapsto \mathbf{E} [GF] \in \mathbb{R}
$
is linear and bounded for each $s \in \mathbb{R}$.
Therefore Bochner integrals and (generalized) expectations
may be interchanged, and accordingly we have
\begin{equation*}
\begin{split}
&
\mathbf{E}
\big[
	J_{n}
	[
	\int_{0}^{T} \delta_{y} ( X_{t} ) \mathrm{d}t
	]
	F
\big]
=
\mathbf{E}
\big[
	\int_{0}^{T} \delta_{y} ( X_{t} ) \mathrm{d}t
	J_{n} F
\big] \\
&=
\int_{0}^{T}
\mathbf{E}
[
	\delta_{y} ( X_{t} )
	J_{n} F
]
\mathrm{d}t
=
\int_{0}^{T}
\mathbf{E}
\big[
	J_{n} [ \delta_{y} ( X_{t} ) ]
	F
\big]
\mathrm{d}t
=
\mathbf{E}
\big[
	\int_{0}^{T} J_{n} [ \delta_{y} ( X_{t} ) ] \mathrm{d}t
	F
\big] ,
\end{split}
\end{equation*}
which proves the second assertion.
\end{proof} 

To prove Theorem \ref{pos_Bochner},
we need several implements.

For each $\varepsilon > 0$,
we consider the following
$d$-dimensional stochastic differential equation
\begin{equation}
\label{SDE-per} 
\mathrm{d} X_{t}
=
\varepsilon \sigma (X_{t}) \mathrm{d} w(t)
+
\varepsilon^{2} b ( X_{t} ) \mathrm{d} t.
\end{equation}
Similarly to the equation (\ref{SDE}),
we denote by
$\{ X^{\varepsilon}(t,x,w) \}_{t \geqslant 0}$
a unique strong solution
$X^{\varepsilon} = (X_{t}^{\varepsilon})_{t \geqslant 0}$ to (\ref{SDE-per})
such that $X_{0}^{\varepsilon} = x \in \mathbb{R}^{d}$.
Then it holds that
for each $\varepsilon > 0$,
$\{ X( \varepsilon^{2} t,x,w) \}_{t \geqslant 0}$
is equivalent to
$\{ X^{\varepsilon} (t,x,w) \}_{t \geqslant 0}$
in law.
A more tricky fact which we need is the following.

\begin{Prop} 
\label{trick1} 
Let $\Lambda \in \mathscr{S}^{\prime} (\mathbb{R}^{d})$.
Then for every
$p \in (1,\infty)$,
$s \in \mathbb{R}$
and
$t > 0$,
we have
$
\Vert \Lambda ( X( \varepsilon^{2} t,x,w) ) \Vert_{p,s}
=
\Vert \Lambda ( X^{\varepsilon} ( t,x,w) ) \Vert_{p,s}
$.
\end{Prop}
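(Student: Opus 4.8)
\emph{Plan.} The subtle point is that "equivalent in law" does not by itself preserve Malliavin--Sobolev norms, so I would not argue from the law but would instead realise the comparison through an explicit \emph{linear} scaling of the Wiener space, which preserves both the Wiener measure and the Wiener chaos grading. First, assume $\varepsilon \in (0,1]$ and define $S_\varepsilon : W \to W$ by $(S_\varepsilon w)(s) := \varepsilon^{-1} w(\varepsilon^2 s)$, $s \in [0,T]$; this is well defined because $\varepsilon^2 s \leqslant T$, it is linear and continuous on $W$, and $S_\varepsilon w$ is a continuous centred Gaussian process with $(S_\varepsilon w)(0)=0$ and $\mathbf{E}[(S_\varepsilon w)^i(s)(S_\varepsilon w)^j(u)] = \varepsilon^{-2}(\varepsilon^2 s \wedge \varepsilon^2 u)\delta^{ij} = (s\wedge u)\delta^{ij}$, so $S_\varepsilon$ pushes $\mathbf{P}$ forward to $\mathbf{P}$. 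Changing the time variable $r=\varepsilon^2 s$ in $X_r = x + \int_0^r \sigma(X_u)\mathrm{d}w(u) + \int_0^r b(X_u)\mathrm{d}u$ and using that $v\mapsto \varepsilon^{-1}w(\varepsilon^2 v)=(S_\varepsilon w)(v)$ is the Brownian motion driving the rescaled equation, one sees that $s \mapsto X(\varepsilon^2 s, x, w)$ solves (\ref{SDE-per}) with driving Brownian motion $S_\varepsilon w$; by uniqueness of strong solutions,
$$
X(\varepsilon^2 t, x, w) = X^\varepsilon(t, x, S_\varepsilon w) = C_{S_\varepsilon}\big[ X^\varepsilon(t,x,\cdot) \big]
\quad \mathbf{P}\text{-a.s.,}
$$
where $C_{S_\varepsilon}F := F \circ S_\varepsilon$. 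When $\varepsilon > 1$ the same time change with $\omega \mapsto \varepsilon\, \omega(\varepsilon^{-2}\cdot)$ (which now maps $W$ into $W$, since $\varepsilon^{-2}\leqslant 1$) gives a measure-preserving $T_\varepsilon$ with $X^\varepsilon(t,x,w) = C_{T_\varepsilon}[X(\varepsilon^2 t, x, \cdot)]$; in all cases the two functionals in the statement differ by composition with a measure-preserving \emph{linear} transformation of $W$, and I treat $\varepsilon\leqslant 1$, the other case being identical with the roles exchanged.

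Next I would show that $C_{S_\varepsilon}$ is an isometry of $\mathbb{D}_p^s$ for every $p\in(1,\infty)$ and $s\in\mathbb{R}$. Since $S_\varepsilon$ preserves $\mathbf{P}$, $C_{S_\varepsilon}$ is an isometry of $L_2$ and of $\mathbb{D}_p^0=L_p$. Since $S_\varepsilon$ is linear on $W$, it maps each first-chaos element $\int_0^T \langle \dot h, \mathrm{d}w\rangle_{\mathbb{R}^d}$ to another first-chaos element, namely $\int_0^T \langle \varepsilon^{-1}\dot h(\cdot/\varepsilon^2)\mathbf{1}_{[0,\varepsilon^2 T]}, \mathrm{d}w\rangle_{\mathbb{R}^d}$, with the same $L_2$-norm; hence $C_{S_\varepsilon}$ maps the first Wiener chaos isometrically into itself, and therefore maps each homogeneous chaos $\mathcal{C}_n$ into $\mathcal{C}_n$. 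Consequently $C_{S_\varepsilon}$ commutes with every projection $J_n$, hence with $(I-\mathcal{L})^{s/2} = \sum_{n\geqslant 0}(1+n)^{s/2}J_n$, so that for $F\in\mathcal{P}$
$$
\Vert C_{S_\varepsilon}F \Vert_{p,s}
= \Vert (I-\mathcal{L})^{s/2} C_{S_\varepsilon}F \Vert_p
= \Vert C_{S_\varepsilon}(I-\mathcal{L})^{s/2}F \Vert_p
= \Vert (I-\mathcal{L})^{s/2}F \Vert_p
= \Vert F \Vert_{p,s};
$$
by density $C_{S_\varepsilon}$ extends to an isometry of every $\mathbb{D}_p^s$.

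Finally I would transport this through Watanabe's pull-back. For $\phi\in\mathscr{S}(\mathbb{R}^d)$ one has, pointwise, $\phi(X(\varepsilon^2 t, x, w)) = \phi(X^\varepsilon(t,x,S_\varepsilon w)) = C_{S_\varepsilon}[\phi(X^\varepsilon(t,x,\cdot))]$. Given $\Lambda\in\mathscr{S}^{\prime}(\mathbb{R})$, choose $k$ with $\Lambda\in\mathscr{S}_{-2k}(\mathbb{R})$ and $\phi_n\in\mathscr{S}(\mathbb{R})$ with $\vert\phi_n-\Lambda\vert_{-2k}\to 0$. Both $X(\varepsilon^2 t,x,\cdot)$ and $X^\varepsilon(t,x,\cdot)$ lie in $\mathbb{D}^\infty(\mathbb{R}^d)$ and are non-degenerate (the coefficients of (\ref{SDE-per}) again satisfy (H1) and (H2)), so $\phi_n(X(\varepsilon^2 t,x,\cdot))\to\Lambda(X(\varepsilon^2 t,x,\cdot))$ and $\phi_n(X^\varepsilon(t,x,\cdot))\to\Lambda(X^\varepsilon(t,x,\cdot))$ in $\mathbb{D}_p^{-2k}$; passing to the limit in the pointwise identity and using continuity of $C_{S_\varepsilon}$ on $\mathbb{D}_p^{-2k}$ gives $\Lambda(X(\varepsilon^2 t, x, w)) = C_{S_\varepsilon}[\Lambda(X^\varepsilon(t,x,\cdot))]$ in $\mathbb{D}_p^{-2k}$. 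Applying $\Vert\cdot\Vert_{p,s}$ and the isometry property (with both norms read in $[0,+\infty]$) yields $\Vert\Lambda(X(\varepsilon^2 t, x, w))\Vert_{p,s} = \Vert\Lambda(X^\varepsilon(t,x,w))\Vert_{p,s}$ for all $s\in\mathbb{R}$ and $p\in(1,\infty)$.

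The main obstacle is the very first step: one must resist invoking equivalence in law and instead produce the explicit Gaussian-linear map $S_\varepsilon$ relating the two diffusions on the \emph{same} Wiener space, from which the crucial fact — invariance of the chaos grading — follows. After that, the isometry computation and the limiting argument through the pull-back are routine; the only minor nuisances are the case split $\varepsilon\leqslant 1$ versus $\varepsilon>1$ needed so that the scaling maps $W([0,T])$ into itself, and the harmless observation that $C_{S_\varepsilon}$ need not be surjective, since only its isometry property is used.
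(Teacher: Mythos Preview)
Your proof is correct and takes a different route from the paper's. The paper reduces to $f\in\mathscr{S}(\mathbb{R}^d)$, computes each chaos component $J_n[f(X^\varepsilon(t,x,w))]$ explicitly via the Veretennikov--Krylov formula, and then checks --- using the semigroup identities $P_{\varepsilon^2 r}=P_r^\varepsilon$, $Q_{\varepsilon^2 r}=\varepsilon^{-1}Q_r^\varepsilon$ together with the Brownian scaling $(\varepsilon^{-1}w(\varepsilon^2\cdot))\stackrel{d}{=}w(\cdot)$ --- that the joint law of $\big(J_k[f(X(\varepsilon^2 t,x,w))]\big)_{k\leqslant n}$ coincides with that of $\big(J_k[f(X^\varepsilon(t,x,w))]\big)_{k\leqslant n}$ for every $n$; equality of $\Vert\cdot\Vert_{p,s}$-norms then follows because $\Vert F\Vert_{p,s}=\big\Vert\sum_k(1+k)^{s/2}J_kF\big\Vert_p$ depends only on that joint law. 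You instead promote the same Brownian scaling to an explicit linear measure-preserving map $S_\varepsilon$ on $W$, observe that the composition operator $C_{S_\varepsilon}$ preserves each $\mathcal{C}_n$ and hence commutes with $(I-\mathcal{L})^{s/2}$, and conclude that $C_{S_\varepsilon}$ is an isometry of every $\mathbb{D}_p^s$. This bypasses the Veretennikov--Krylov machinery entirely, uses nothing about the SDE beyond pathwise uniqueness, and in fact yields the stronger identity $\Lambda(X(\varepsilon^2 t,x,\cdot))=C_{S_\varepsilon}\big[\Lambda(X^\varepsilon(t,x,\cdot))\big]$ in $\mathbb{D}^{-\infty}$, rather than a mere equality in law of chaos components. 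The paper's approach, in exchange, stays entirely within the toolbox already set up in Section~\ref{Review} and does not need to discuss transformations of Wiener space.
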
 
\begin{proof} 
For simplicity, we give a proof in the case of $d=1$.
Let
$p \in (1,\infty)$,
$s \in \mathbb{R}$
and
$t > 0$
be arbitrary.
It is enough to prove that
$
\Vert f ( X( \varepsilon^{2} t,x,w) ) \Vert_{p,s}
=
\Vert f ( X^{\varepsilon} ( t,x,w) ) \Vert_{p,s}
$
for each $f \in \mathscr{S}(\mathbb{R})$.
By the Veretennikov-Krylov formula
(see \cite[p.279, Theorem 4]{VeKr}),
we have
\begin{equation*}
\begin{split}
&
J_{n} [ f( X^{\varepsilon} (t,x,w) ) ]
=
\int_{0}^{t}
\cdots
\int_{0}^{r_{2}}
(
	P_{r_{1}}^{\varepsilon}
	Q_{r_{2}-r_{1}}^{\varepsilon}
	\cdots
	Q_{r_{n}-r_{n-1}}^{\varepsilon}
	Q_{t-r_{n}}^{\varepsilon}
	f
)
(x)
\mathrm{d} w(r_{1})
\cdots
\mathrm{d} w(r_{n}),
\end{split}
\end{equation*}
where
$
( P_{r}^{\varepsilon} f )(z)
:=
\mathbf{E} [ f(X^{\varepsilon} (r,z,w)) ]
$
for $z \in \mathbb{R}$,
$
Q_{r}^{\varepsilon} f
:=
A^{\varepsilon} ( P_{r}^{\varepsilon} f )
$
and
$
A^{\varepsilon} := \varepsilon \sigma \frac{ \mathrm{d} }{ \mathrm{d} x }
$.
In the case $\varepsilon =1$,
we will write
$P_{r} := P_{r}^{1}$,
$A := A^{1}$ and $Q_{r} := Q_{r}^{1}$
for simplicity.
Therefore, we have to show that
\begin{equation*}
\begin{split}
&
\Big\{
\int_{0}^{\varepsilon^{2}t}
\cdots
\int_{0}^{\varepsilon^{2}r_{2}}
(
	P_{r_{1}}
	Q_{r_{2}-r_{1}}
	\cdots
	Q_{
		r_{k}-r_{k-1}
	}
	Q_{
		\varepsilon^{2} t-r_{k}
	}
	f
)
(x)
\mathrm{d} w(r_{1})
\cdots
\mathrm{d} w(r_{k})
\Big\}_{k=0}^{n} \\
&=
\Big\{
\int_{0}^{t}
\cdots
\int_{0}^{r_{2}}
(
	P_{r_{1}}^{\varepsilon}
	Q_{r_{2}-r_{1}}^{\varepsilon}
	\cdots
	Q_{
		r_{k}-r_{k-1}
	}^{\varepsilon}
	Q_{
		t-r_{k}
	}^{\varepsilon}
	f
)
(x)
\mathrm{d} w(r_{1})
\cdots
\mathrm{d} w(r_{k})
\Big\}_{k=0}^{n}
\end{split}
\end{equation*}
in law for each $n \in \mathbb{N}$,
and hence
by the scaling property of Brownian motion:
$
( \varepsilon^{-1} w(\varepsilon^{2} t) )_{t \geqslant 0}
=
( w(t) )_{t \geqslant 0}
$
in law,
it suffices to show that
\begin{equation}
\label{law_equiv} 
\begin{split}
&
(
	P_{ \varepsilon^{2} r_{1} }
	Q_{ \varepsilon^{2}r_{2} - \varepsilon^{2}r_{1} }
	\cdots
	Q_{ \varepsilon^{2}r_{n} - \varepsilon^{2}r_{n-1} }
	Q_{ \varepsilon^{2}t - \varepsilon^{2}r_{n} }
	f
)
(x) \\
&=
\varepsilon^{-n}
(
	P_{r_{1}}^{\varepsilon}
	Q_{r_{2}-r_{1}}^{\varepsilon}
	\cdots
	Q_{r_{n}-r_{n-1}}^{\varepsilon}
	Q_{t-r_{n}}^{\varepsilon}
	f
)
(x)
\end{split}
\end{equation}
for each $0 < r_{1} < \cdots < r_{n} < t$.
But this is clear since we have
$
P_{\varepsilon^{2}r} = P_{r}^{\varepsilon}
$,
$A = \varepsilon^{-1} A^{\varepsilon}$
and
$
Q_{\varepsilon^{2}r} = \varepsilon^{-1} Q_{r}^{\varepsilon}
$
for each $r > 0$.
\end{proof} 

For each $\varepsilon > 0$ and $x \in \mathbb{R}^{d}$,
we set
$$
F( \varepsilon , x,  w )
:=
\frac{ X^{\varepsilon} ( 1,x,w ) - x }{ \varepsilon }.
$$
Then the following two conditions are equivalent
(see \cite[Theorem 3.4]{Wa87}):
\begin{itemize}
\item[$\circ$]
{\rm (H2)}, i.e., there exists $\lambda > 0$ such that
$$
\lambda \vert \xi \vert^{2}
\leqslant
\langle
	\xi ,
	( \sigma \sigma^{*} ) (x)
	\xi
\rangle_{\mathbb{R}^{d}},
\quad
\text{for all $\xi \in \mathbb{R}^{d}$.}
$$

\item[$\circ$]
the family
$\{ F(\varepsilon , x, w) \}_{\varepsilon > 0}$
is uniformly non-degenerate.
\end{itemize}

\begin{Prop} 
\label{asymp1} 
Let $d=1$ and $x \in \mathbb{R}$.
Suppose {\rm (H1)} and $\sigma (x)^{2} > 0$.
Then for any $p \in (1, \infty)$,
we have
$
\sup_{0 < \varepsilon \leqslant 1}
\Vert
\delta_{0} ( F( \varepsilon , x, w ) )
\Vert_{p,-2}
< +\infty
$.
\end{Prop}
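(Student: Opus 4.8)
The plan is to reduce the statement to a uniform (in $\varepsilon$) pull-back estimate for $\delta_{0}$ by the rescaled increments $F(\varepsilon,x,w)$, and then invoke the known equivalence between (H2) and the uniform non-degeneracy of $\{F(\varepsilon,x,w)\}_{0<\varepsilon\leqslant 1}$ recalled just above the statement. First I would recall that $\delta_{0}\in\mathscr{S}_{-2}(\mathbb{R})$ with $\sup_{a}|\delta_{a}|_{-2}<+\infty$ by Lemma~\ref{delta_unif}. Then, by the continuity property of Watanabe's pull-back (the map $\mathscr{S}_{-2k}(\mathbb{R})\to\mathbb{D}_{p}^{-2k}$ is continuous, with operator norm controlled by negative moments of the Malliavin covariance), there is a constant $c=c(p)>0$ and an integer exponent $m$ such that
\begin{equation*}
\Vert \delta_{0}(F(\varepsilon,x,w))\Vert_{p,-2}
\leqslant
c\,|\delta_{0}|_{-2}\,
\big(1+\Vert \gamma_{F(\varepsilon,x,w)}^{-1}\Vert_{p'}^{m}\big)\,\big(1+\Vert F(\varepsilon,x,w)\Vert_{\mathbb{D}^{\infty}\text{-type seminorms}}\big),
\end{equation*}
where $\gamma$ denotes the (scalar, since $d=1$) Malliavin covariance $\langle DF(\varepsilon,x,w),DF(\varepsilon,x,w)\rangle_{H}$. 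The precise shape of the right-hand side is the standard quantitative version of the pull-back construction; I would cite it from \cite{Wa87} or reprove it via integration by parts exactly as in the proof of Proposition~\ref{exp-pull}.

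The substance is then to bound the right-hand side uniformly for $0<\varepsilon\leqslant1$. For the negative moments of $\gamma_{F(\varepsilon,x,w)}$: since (H1) holds and $\sigma(x)^{2}>0$, condition (H2) is satisfied at the point $x$, so by \cite[Theorem 3.4]{Wa87} the family $\{F(\varepsilon,x,w)\}_{0<\varepsilon\leqslant 1}$ is uniformly non-degenerate, i.e. $\sup_{0<\varepsilon\leqslant1}\Vert\gamma_{F(\varepsilon,x,w)}^{-1}\Vert_{p'}<+\infty$ for every $p'\in(1,\infty)$. For the positive-order Sobolev seminorms of $F(\varepsilon,x,w)$ and of its derivatives: these are controlled uniformly in $\varepsilon\leqslant1$ by the usual $L^{p}$-estimates for the SDE \eqref{SDE-per} and its derivative flows under (H1) — the coefficients $\varepsilon\sigma$ and $\varepsilon^{2}b$ and all their derivatives are bounded uniformly in $\varepsilon\in(0,1]$, so Gronwall-type arguments give moment bounds that are monotone in the size of the coefficients and hence uniform. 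Plugging these two uniform bounds into the displayed inequality yields $\limsup_{0<\varepsilon\leqslant1}\Vert\delta_{0}(F(\varepsilon,x,w))\Vert_{p,-2}<+\infty$.

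I expect the main obstacle to be making the quantitative pull-back estimate clean and genuinely uniform in $\varepsilon$: one must track how the constant in the continuity of $\mathscr{S}_{-2}(\mathbb{R})\to\mathbb{D}_{p}^{-2}$ depends on the Wiener functional, and verify that the polynomial-in-$(F,DF,D^{2}F,\gamma^{-1})$ expression produced by the iterated integration-by-parts has all its ingredients bounded uniformly in $\varepsilon\leqslant1$. The derivative-flow estimates for \eqref{SDE-per} are routine but need the observation that shrinking $\varepsilon$ only shrinks the coefficients, so no blow-up occurs; the covariance lower bound is precisely the content of the cited equivalence, so that part is immediate. Once both uniform bounds are in hand the conclusion follows by a single application of the estimate, with $F=F(\varepsilon,x,w)$ and $\Lambda=\delta_{0}$.
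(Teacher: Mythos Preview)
Your proposal is correct and is essentially the same argument the paper gives: the paper writes $\delta_{0}=(1+z^{2}-\triangle)\phi$ with $\phi\in\mathscr{S}_{0}$, performs the integration by parts explicitly to produce $l_{\varepsilon}(J)$, and then uses the uniform non-degeneracy of $\{F(\varepsilon,x,w)\}_{0<\varepsilon\leqslant1}$ (equivalent to (H2)) to bound $\Vert l_{\varepsilon}(J)\Vert_{q'}\leqslant c_{0}\Vert J\Vert_{q,2}$ uniformly in $\varepsilon$, which is exactly the duality form of the quantitative pull-back estimate you describe. Your abstract formulation via a single inequality controlled by $\Vert\gamma_{F}^{-1}\Vert$ and $\mathbb{D}^{\infty}$-seminorms of $F$, together with the uniform SDE bounds under (H1), packages the same ingredients; the paper simply unpacks them by hand.
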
 
\begin{proof} 
Let
$
\phi (z) := ( 1 + z^{2} - \triangle )^{-1} \delta_{0} (z)
\in \mathscr{S}_{0}
$
and take $q \in (1, \infty)$
so that $1/p +1/q = 1$.
Then for each $J \in \mathbb{D}^{\infty}$,
we have
\begin{equation*}
\begin{split}
\mathbf{E}
[
	\delta_{0} ( F( \varepsilon , x, w ) )
	J
]
&=
\mathbf{E}
[
	\big(
		( 1 + z^{2}- \triangle )
		\phi
	\big)
	( F( \varepsilon , x, w ) )
	J
] \\
&=
\mathbf{E}
[
	\phi ( F( \varepsilon , x, w ) )
	l_{\varepsilon} ( J )
]
\end{split}
\end{equation*}
where $l_{\varepsilon} ( J ) \in \mathbb{D}^{\infty}$
is of the form
\begin{equation*}
l_{\varepsilon} ( J )
=
\langle
	P_{0} ( \varepsilon , w ),
	DJ
\rangle_{H}
+
\langle
	P_{1} ( \varepsilon , w ),
	D^{2} J
\rangle_{H \otimes H}
\end{equation*}
for some
$
P_{i}(\varepsilon , w)
\in \mathbb{D}^{\infty} ( H^{\otimes i} )
$,
$i=1,2$,
both of which are polynomials in
$F(\varepsilon , x, w)$,
its derivatives up to the second order
and
$
\Vert DX^{\varepsilon} (1,x,w) \Vert_{H}^{-2}
$
(see e.g., \cite[equation (2.20)]{Wa87}).

Take $q^{\prime} \in (1, q)$.
Since
$\{ F(\varepsilon , x, w) \}_{\varepsilon > 0}$
is uniformly non-degenerate,
there exists $c_{0} > 0$ such that
$$
\Vert
	l_{\varepsilon} (J)
\Vert_{q^{\prime}}
\leqslant
c_{0} \Vert J \Vert_{q,2}
\quad
\text{for all $\varepsilon \in (0,1]$ and $J \in \mathbb{D}^{\infty}$.}
$$
Therefore we have
for each $J \in \mathbb{D}^{\infty}$,
\begin{equation*}
\big\vert
\mathbf{E}
[
	\delta_{0} ( F( \varepsilon , x, w ) )
	J
]
\big\vert
\leqslant
\Vert \phi \Vert_{\infty}
\Vert l_{\varepsilon} ( J ) \Vert_{q^{\prime}}
\leqslant
c_{0}
\Vert \phi \Vert_{\infty}
\Vert J \Vert_{q, 2}
\end{equation*}
which implies
$
\sup_{0 < \varepsilon \leqslant 1}
\Vert
	\delta_{0} ( F( \varepsilon , x, w ) )
\Vert_{p,-2}
\leqslant
c_{0} \Vert \phi \Vert_{\infty}
< +\infty
$.
\end{proof} 

Second,
we recall the next fact
(see \cite[Chapter V, Section 8, p.384]{IW})
to prove Theorem \ref{pos_Bochner}.

\begin{Prop} 
\label{unif_est} 
Let $x \in \mathbb{R}^{d}$
and suppose that {\rm (H1)} and {\rm (H2)}.
Then for any
$p \in (1, \infty)$
and $k \in \mathbb{Z}_{\geqslant 0}$,
there exists $C>0$ such that
\begin{equation*}
\begin{split}
\Vert
	\phi \circ F ( \varepsilon , x, w )
\Vert_{p,-2k}
\leqslant
C
\Vert
	\phi
\Vert_{-2k}
\quad
\text{for $\phi \in \mathscr{S} (\mathbb{R}^{d})$,
$\varepsilon \in (0,T]$.}
\end{split}
\end{equation*}
\end{Prop}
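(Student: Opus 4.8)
The plan is to reduce everything to a uniform integration-by-parts estimate for the family $\{F(\varepsilon,x,w)\}_{\varepsilon\in(0,T]}$, exactly in the spirit of the proof of Proposition \ref{asymp1} but now tracking the dependence on $\phi$ through the norm $\vert\phi\vert_{-2k}$ rather than through $\vert\phi\vert_{\infty}$. First I would recall that (H2) is equivalent to the uniform non-degeneracy of $\{F(\varepsilon,x,w)\}_{\varepsilon>0}$ (cited above from \cite[Theorem 3.4]{Wa87}), and that under (H1) the moment bounds $\sup_{\varepsilon\in(0,T]}\Vert F(\varepsilon,x,w)\Vert_{\mathbb{D}_p^s}<\infty$ hold for every $s\in\mathbb{R}$, $p\in(1,\infty)$; these are the standard consequences of the boundedness of the derivatives of $\sigma,b$ together with the scaling of the SDE \eqref{SDE-per}. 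Fix $k\in\mathbb{Z}_{\geqslant 0}$ and $p\in(1,\infty)$, and pick $q$ with $1/p+1/q=1$. The key identity is: for $\phi\in\mathscr{S}(\mathbb{R}^d)$, writing $\psi:=(1+\vert x\vert^2-\triangle/2)^{-k}\phi\in\mathscr{S}(\mathbb{R}^d)$, we have $\phi=(1+\vert x\vert^2-\triangle/2)^{k}\psi$ and $\vert\psi\vert_{\infty}=\vert\phi\vert_{-2k}$ by definition of the norm on $\mathscr{S}_{-2k}(\mathbb{R}^d)$.

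Next I would expand the differential operator $(1+\vert x\vert^2-\triangle/2)^{k}$ applied to $\psi$ and push the multiplications-by-polynomials and the derivatives onto the test functional $J\in\mathbb{D}^{\infty}$ via the Malliavin integration-by-parts formula. Concretely, for $J\in\mathbb{D}^{\infty}$,
\begin{equation*}
\mathbf{E}\big[(\phi\circ F(\varepsilon,x,w))\,J\big]
=\mathbf{E}\big[\big((1+\vert x\vert^2-\triangle/2)^{k}\psi\big)(F(\varepsilon,x,w))\,J\big]
=\mathbf{E}\big[\psi(F(\varepsilon,x,w))\,l_\varepsilon(J)\big],
\end{equation*}
where $l_\varepsilon(J)=\sum_{j=0}^{2k}\langle P_j(\varepsilon,w),D^{j}J\rangle_{H^{\otimes j}}$ for suitable $P_j(\varepsilon,w)\in\mathbb{D}^{\infty}(H^{\otimes j})$, each a polynomial in $F(\varepsilon,x,w)$, its Malliavin derivatives up to order $2k$, and the inverse Malliavin covariance $\langle DX^{\varepsilon}(1,x,w),DX^{\varepsilon}(1,x,w)\rangle_H^{-1}$ (the $d$-dimensional determinant in general). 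Here the role of the weight $(1+\vert x\vert^2)$ is that the polynomial growth in $x$ is absorbed into polynomials in $F(\varepsilon,x,w)$, whose $L_p$-norms are uniformly bounded in $\varepsilon$ by the moment estimates above; the role of $\triangle$ is handled by the usual integration by parts producing divergence-type terms $D^{*}$. Choosing $q'\in(1,q)$, uniform non-degeneracy gives a constant $c_0>0$ with $\Vert l_\varepsilon(J)\Vert_{q'}\leqslant c_0\Vert J\Vert_{q,2k}$ for all $\varepsilon\in(0,T]$, $J\in\mathbb{D}^{\infty}$; then with $1/p'+1/q'=1$,
\begin{equation*}
\big\vert\mathbf{E}\big[(\phi\circ F(\varepsilon,x,w))\,J\big]\big\vert
\leqslant \vert\psi\vert_\infty\,\Vert l_\varepsilon(J)\Vert_{q'}
\leqslant c_0\,\vert\phi\vert_{-2k}\,\Vert J\Vert_{q,2k}.
\end{equation*}
Taking the supremum over $J\in\mathbb{D}^{\infty}$ with $\Vert J\Vert_{q,2k}\leqslant 1$ and using the duality $(\mathbb{D}_p^{-2k})'=\mathbb{D}_q^{2k}$ yields $\Vert\phi\circ F(\varepsilon,x,w)\Vert_{p,-2k}\leqslant C\vert\phi\vert_{-2k}$ with $C=c_0$, uniformly in $\varepsilon\in(0,T]$, which is the claim.

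The main obstacle is bookkeeping rather than conceptual: one must verify that the operator $(1+\vert x\vert^2-\triangle/2)^{k}$, when transferred across the expectation, genuinely produces only polynomials in $F(\varepsilon,x,w)$ and its first $2k$ derivatives together with negative powers of the Malliavin covariance — i.e. that no uncontrolled $\varepsilon$-dependence creeps in — and that the constant $c_0$ from the uniform non-degeneracy indeed dominates all the resulting coefficient functionals simultaneously. Since this is precisely the mechanism already recorded in \cite[Chapter V, section 8]{IW} and mirrored in the proof of Proposition \ref{asymp1} (the case $k=1$, $\phi=\delta_0$), I would simply cite that computation for the structure of $l_\varepsilon(J)$ and the uniform bound, and note that the case $k=0$ is trivial since then $l_\varepsilon(J)=J$ and $\vert\phi\vert_0=\vert\phi\vert_\infty$.
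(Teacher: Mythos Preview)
Your proof is correct and is precisely the standard argument; note however that the paper does not actually prove this proposition but simply cites it from Ikeda--Watanabe \cite[Chapter V, Section 8, p.~384]{IW}. Your write-up reproduces that argument faithfully (and mirrors the structure of Proposition~\ref{asymp1}), with the only cosmetic point being that the inverse Malliavin covariance appearing in $l_\varepsilon(J)$ should be that of $F(\varepsilon,x,w)$ itself rather than of $X^{\varepsilon}(1,x,w)$---these differ by a factor of $\varepsilon^{2}$, and it is the uniform non-degeneracy of $\{F(\varepsilon,x,w)\}_{\varepsilon}$ (equivalent to (H2)) that delivers the uniform bound on $\Vert l_\varepsilon(J)\Vert_{q'}$.
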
 

The last tool we need is the following.

\begin{Lem} 
\label{IbP} 
Let $x \in \mathbb{R}^{d}$.
\begin{itemize}
\item[(i)]
If {\rm (H1)} and {\rm (H2)} hold, then
there exist
$K>0$, $T_{0} \in (0,T]$
with the following property{\rm :}
For each
$k_{1} , \cdots , k_{d} \in \mathbb{Z}_{\geqslant 0}$
and
$p \in (1,\infty)$,
there exist constants
$\nu_{0} , c_{1},c_{2}>0$
such that
\begin{equation*}
\Vert
	\partial_{1}^{k_{1}} \cdots \partial_{d}^{k_{d}}
	\delta_{y} ( X(t,x,w) )
\Vert_{p,-(n+2)}
\leqslant
c_{1} t^{-\nu_{0}}
\exp
\Big\{
	- c_{2}
	\frac{ \vert x-y \vert }{ t }
\Big\}
\end{equation*}
for each
$t \in (0,T_{0}]$
and
$y \in \mathbb{R}$ such that $\vert y \vert \geqslant K$.
\vspace{2mm}
\item[(ii)]
If {\rm (H1)}, {\rm (H2)} and {\rm (H4)} hold, then
for each
$k_{1} , \cdots , k_{d} \in \mathbb{Z}_{\geqslant 0}$,
$K > \vert x \vert$
and
$p \in (1,\infty)$,
there exist constants $\nu_{0} , c_{1},c_{2}>0$
such that
\begin{equation*}
\Vert
	\partial_{1}^{k_{1}} \cdots \partial_{d}^{k_{d}}
	\delta_{y} ( X(t,x,w) )
\Vert_{p,-(n+2)}
\leqslant
c_{1} t^{-\nu_{0}}
\exp
\Big\{
	- c_{2}
	\frac{ \vert x-y \vert^{2} }{ t }
\Big\}
\end{equation*}
for each
$t \in (0,T]$
and
$y \in \mathbb{R}$ such that $\vert y \vert \geqslant K$.
\end{itemize}
Here, $n:=k_{1} + \cdots + k_{d}$ in both cases.
\end{Lem} 
\begin{proof} 
(i)
Assume {\rm (H1)} and {\rm (H2)}.
For simplicity of notation, we prove the case $d=1$.
The case $d \geqslant 2$ can be proved by a similar argument.
It suffices to prove that
there exist $K>0$ and $T_{0} \in (0,T]$
with the following property:
\begin{quote}
For each
$n \in \mathbb{Z}_{\geqslant 0}$
and
$p \in (1,\infty)$,
there exist constants $\nu_{0} , c_{1},c_{2}>0$
such that
\begin{equation*}
\begin{split}
\vert
\mathbf{E}
[
	J
	\delta_{y}^{(n)} ( X_{t} )
]
\vert
\leqslant
c_{1} t^{-\nu_{0}}
\exp
\Big\{
	- c_{2}
	\frac{ \vert x-y \vert }{ t }
\Big\}
\Vert
	J
\Vert_{p,n+2},
\end{split}
\end{equation*}
for each
$J \in \mathbb{D}^{\infty}$,
$t \in (0,T_{0}]$
and
$y \in \mathbb{R}$ with $\vert y \vert \geqslant K$.
\end{quote}

Let $y \in \mathbb{R}$ be arbitrary and
$
\varphi (z)
:= ( 1 + z^{2} - \triangle  )^{-1} \delta_{y} (z)
\in \mathscr{S}_{0}
$.
Take a $C^{\infty}$-function
$\phi : \mathbb{R} \to \mathbb{R}$ such that
$$
\phi (\xi)
=
\left\{\begin{array}{ll}
1 & \text{if $\xi \leqslant 1/3$,} \\
0 & \text{if $\xi \geqslant 2/3$}
\end{array}\right.
$$
and then we set
$
\psi_{y}
(z)
:=
\phi \big( \frac{z-y}{\vert x-y \vert} \big)
$.
Let $p \in (1, \infty)$ be arbitrary
and let $q \in (1, \infty)$ be such that $1/p +1/q = 1$.
Further take $q^{\prime} \in (1,q)$.
Since
$
\psi_{y}
\delta_{y} = \delta_{y}
$,
we have for each $J \in \mathbb{D}^{\infty}$ that
\begin{equation}
\label{cal3} 
\begin{split}
\mathbf{E}
[
	J
	\delta_{y}^{(n)} ( X_{t} )
]
&=
\mathbf{E}
[
	J
	\psi_{y} (X_{t})
	\Big(
		\frac{ \mathrm{d}^{n} }{ \mathrm{d} z^{n} }
		( 1+z^{2}- \triangle ) \varphi
	\Big)
	( X_{t} )
] \\
&=
\mathbf{E}
[
	\varphi ( X_{t} )
	\Big\{
	\sum_{j=0}^{n+2}
	\psi_{y}^{(j)}
	( X_{t} )
	l_{j, t} (J)
	\Big\}
] ,
\end{split}
\end{equation}
where
$
\psi_{y}^{(j)}
$
is the $j$-th derivative of
$
\psi_{y}
$
(with convention that
$
\psi_{y}^{(0)}
=
\psi_{y}
$)
and each $l_{j, t} (J)$ is of the form
\begin{equation}
\label{bear} 
P_{0} ( t, w )
J
+
\langle
	P_{1} ( t, w ),
	D J
\rangle_{H}
+
\cdots
+
\langle
	P_{n+2} ( t, w ),
	D^{n+2}J
\rangle_{H^{\otimes (n+2)}}
\end{equation}
for some
$
P_{i} ( t, w ) \in \mathbb{D}^{\infty} (H^{\otimes i})
$,
$i=0,1,\cdots , n+2$,
which is a polynomial in
$X_{t} = X(t,x,w)$,
its derivatives
and
$\Vert DX(t,x,w) \Vert_{H}^{-2}$,
but does not depend on
$
\psi_{y}
$.

Note that
$
\psi_{y}
(z)  \equiv 0
$
for $\vert z-x \vert < \frac{ \vert y-x \vert }{3}$,
and hence the last
term
in (\ref{cal3}) equals to
\begin{equation*}
\begin{split}
&
\mathbf{E}
[
	\Big(
	\sum_{j=0}^{n+2}
	( \varphi \psi_{y}^{(j)} ) ( X_{t} )
	l_{j, t} (J)
	\Big)
	1_{\text{{\small $
	\{
	\vert X_{t} - x \vert
	\geqslant
	\frac{ \vert y-x \vert }{ 3 }
	\}$}}
	}
].
\end{split}
\end{equation*}
Therefore, by taking
$p^{\prime} \in (1,\infty)$
such that $1/p^{\prime} + 1/q^{\prime} = 1$,
we have
\begin{equation}
\label{3factors} 
\begin{split}
&
\vert
\mathbf{E}
[
	J
	\delta_{y}^{(n)}
	( X_{t} )
]
\vert
\leqslant
\sum_{j=0}^{n+2}
\Vert
\varphi
\psi_{y}^{(j)}
\Vert_{\infty}
\Vert l_{j,t} (J) \Vert_{q^{\prime}}
\mathbf{P}
\Big(
	\vert X_{t} - x \vert
	\geqslant
	\frac{ \vert y-x \vert }{ 3 }
\Big)^{1/p^{\prime}} .
\end{split}
\end{equation}

Henceforth we shall focus on each factor
in the last equation.

First,
since
$
\psi_{y}^{(j)}
(z)
=
\vert x-y \vert^{-j}
\phi^{(j)} ( (z-y) / \vert x-y \vert )
$
for $j=0,1,\cdots , n+2$,
and by Lemma \ref{delta_unif}
we have
\begin{equation}
\label{phi_psi} 
\sup_{
	\substack{
	y \in \mathbb{R} \\
	\vert y \vert \geqslant K
	}
}
\Vert
\varphi
\psi_{y}^{(j)}
\Vert_{\infty}
< +\infty
\quad
\text{for $K>0$ and $j=0,1,\cdots , n+2$.}
\end{equation}

Second,
it is well known that
for each $r \in (1, \infty)$,
there exist
$\nu, c_{1}^{\prime} = c_{1}^{\prime}(r) > 0$
such that
\begin{equation*}
\big\Vert
	\Vert DX_{t} \Vert_{H}^{-2}
\big\Vert_{r}
\leqslant
c_{1}^{\prime}
t^{-\nu}
\quad
\text{for any $t \in (0,T]$}
\end{equation*}
(see \cite[(3.25) Corollary p.22]{KS84b} or
\cite[Chapter V, Section 10, Theorem 10.2]{IW}).
Now, bearing in mind the form (\ref{bear}),
we see that there exist $\nu_{0}, c_{1}^{\prime\prime} >0$ such that
\begin{equation}
\label{el_est}
\Vert
	l_{j,t} (J)
\Vert_{q^{\prime}}
\leqslant
c_{1}^{\prime\prime}
t^{-\nu_{0}}
\Vert J \Vert_{p,2}
\quad
\text{for $j=0,1,\cdots ,n+2$ and $t \in (0,T]$.}
\end{equation}

Third, we shall prove that
there exist $K>0$ and $T_{0} \in (0,T]$
with the following property:
there exist $c_{1}^{\prime\prime\prime}, c_{2} > 0$ such that
\begin{equation}
\label{prob_est} 
\mathbf{P}
\Big(
	\vert X_{t} - x \vert
	\geqslant
	\frac{ \vert y-x \vert }{ 3 }
\Big)
\leqslant
c_{1}^{\prime\prime\prime}
\exp \Big\{ - c_{2} \frac{ \vert x-y \vert }{ t } \Big\}
\end{equation}
for all $t \in (0,T_{0}]$
and
$y \in \mathbb{R}$ with $\vert y \vert \geqslant K$.
For this, we recall the following general fact
(see \cite[Chapter V, section 10, Lemma 10.5]{IW}):
\begin{quote}
{\it
Let $\kappa >0$
and
$X=(X_{t})_{0 \leqslant t \leqslant T}$
be a one-dimensional continuous semimartingale
with its Doob-Mayer decomposition
$
X_{t} = X_{0} + M_{t} + A_{t}
$
such that
$
\langle M \rangle_{t}
=
\int_{0}^{t} \alpha (s) \mathrm{d}s
$,
$
A_{t}
=
\int_{0}^{t} \beta (s) \mathrm{d}s
$
and
$$
\sup_{0 \leqslant t \leqslant T}
\max \{ \vert \alpha (t) \vert , \vert \beta (t) \vert \}
\leqslant \kappa .
$$
Then for any $a>0$ and
$t \in ( 0, \min\{ \frac{a}{2 \kappa }, T \} ]$,
it holds that
\begin{equation*}
\begin{split}
\mathbf{P}
(
	\tau_{a} < t
)
\leqslant
\frac{ 4 }{ \sqrt{ \pi a } }
\exp \Big( - \frac{ a^{2} }{ 8 \kappa t } \Big) ,
\end{split}
\end{equation*}
where
$\tau_{a} := \inf \{ t > 0 : \vert X_{t} - X_{0} \vert > a \}$.
}
\end{quote}

Since we have assumed {\rm (H1)},
$\sigma^{2}$ and $b$ are Lipschitz continuous.
Thus there exists $\alpha , \beta \geqslant 0$
such that
$
\max \{ \vert \sigma^{2} (y) \vert , \vert b(y) \vert \}
\leqslant
\alpha \vert y-x \vert + \beta
$
for any $y \in \mathbb{R}$.
Put $\kappa (z) := \alpha z + \beta$
and fix $y \in \mathbb{R}$ arbitrarily.
Let $T_{0} \in (0,T]$ so that
$1 - 6 \alpha T_{0} > 0$.
Define
$
\tau :=: \tau_{ \vert y-x \vert / 3 }
:=
\inf
\{
	t > 0:
	\vert X_{t} - x \vert > \frac{\vert y-x \vert}{3}
\}
$.
Then the stopped process
$
X^{\tau}
=
( X_{t \wedge \tau} )_{t \geqslant 0}
$
is a continuous semi-martingale satisfying
$$
X_{t \wedge \tau}
=
x
+
\int_{0}^{
	t\wedge \tau
}
\sigma_{y} ( X_{s \wedge \tau} ) \mathrm{d} w(s)
+
\int_{0}^{
	t\wedge \tau
}
b_{y} ( X_{s \wedge \tau} ) \mathrm{d} s ,
$$
where
$
\sigma_{y} (z)
:=
\min \{ \sigma (z) , \kappa ( \vert y-x \vert ) \}
$
and
$
b_{y} (z)
:=
\min \{ b(z) , \kappa ( \vert y-x \vert ) \}
$.

By setting
$a = \frac{\vert y-x \vert}{3}$
in the above,
we see that
if
$
\vert y-x \vert
\geqslant
\xi_{0}
:=
\max \{ \frac{ 6 \beta T_{0} }{ 1 - 6 \alpha T_{0} }, 1 \}
>0
$
and
$
0 < t \leqslant
T_{0}
$
(here, note that
$\frac{ \xi_{0} }{ 6 ( \alpha \xi_{0} + \beta ) }
\geqslant T_{0}
$
since
$
\xi \mapsto \frac{ \xi }{ 6 ( \alpha \xi + \beta ) }
$
is a nondecreasing function)
then
\begin{equation*}
\begin{split}
\mathbf{P}
\Big(
	\vert X_{t} - X_{0} \vert
	>
	\frac{\vert y-x \vert}{3}
\Big)
&\leqslant
\mathbf{P}
\Big(
	\vert X_{t}^{\tau_{a}} - X_{0} \vert
	>
	\frac{\vert y-x \vert}{3}
\Big) \\
&\leqslant
\mathbf{P}
(
	\tau_{a} < t
) \\
&\leqslant
\frac{ 4 }{ \sqrt{ \pi \frac{\vert y-x \vert}{3} } }
\exp
\Big(
	- \frac{ (\frac{\vert y-x \vert}{3})^{2} }{ 8 ( \alpha \vert y-x \vert + \beta ) t }
\Big) \\
&\leqslant
\frac{ 4 \sqrt{3} }{ \sqrt{ \pi \xi_{0} } }
\exp
\Big(
	-
	\frac{
		\vert y-x \vert^{2}
	}{
		72 ( \alpha \vert y-x \vert + \beta \vert y-x \vert ) t
	}
\Big) \\
&=
\frac{ 4 \sqrt{3} }{ \sqrt{ \pi \xi_{0} } }
\exp
\Big(
	- \frac{ \vert y-x \vert }{ 72 ( \alpha + \beta ) t }
\Big).
\end{split}
\end{equation*}
Thus
(\ref{prob_est})
is satisfied if we set
$K := \vert x \vert + \xi_{0}$,
$
c_{1}^{\prime\prime\prime}
:=
\frac{ 4 \sqrt{3} }{ \sqrt{ \pi \xi_{0} } }
$
and
$
c_{2}
:=
( 72 ( \alpha + \beta ) )^{-1}
$.

Now, combining
(\ref{3factors}),
(\ref{phi_psi}),
(\ref{el_est})
and
(\ref{prob_est}),
we obtain the result.

(ii)
Assume {\rm (H1)}, {\rm (H2)} and {\rm (H4)}.
In this case, $\sigma$ and $b$ are bounded,
and hence the following well-known estimate
is available:
There exist $c_{1}^{\prime\prime\prime}, c_{2} > 0$ such that
\begin{equation}
\mathbf{P}
\Big(
	\vert X_{t} - x \vert
	\geqslant
	\frac{ \vert y-x \vert }{ 3 }
\Big)
\leqslant
c_{1}^{\prime\prime\prime}
\exp \Big\{ -c_{2} \frac{ \vert x-y \vert^{2} }{ t } \Big\}
\end{equation}
for all $t \in (0,T]$ and $x,y \in \mathbb{R}$.
By using this instead of (\ref{prob_est}),
and combining with
(\ref{3factors}),
(\ref{phi_psi})
and
(\ref{el_est}),
we obtain the result.
\end{proof} 

We are now in a position to prove
Theorem \ref{pos_Bochner}.

\begin{proof}[Proof of Theorem \ref{pos_Bochner}]
(i)
Let
$K>0$ and $T_{0} \in (0,T]$
be as in Lemma~\ref{IbP}--(i).
Since
\begin{equation*}
\begin{split}
\frac{ \vert x-y \vert^{2} }{ y^{2} }
\geqslant
\frac{ ( \vert x \vert - \vert y \vert )^{2} }{ y^{2} }
=
\frac{ y^{2} - 2 \vert xy \vert + x^{2} }{ y^{2} }
\to
1
\quad
\text{as $\vert y \vert \to +\infty$,}
\end{split}
\end{equation*}
there exists $K^{\prime}>0$ such that
\begin{equation}
\label{simple_est} 
\vert x-y \vert \geqslant \frac{ \vert y \vert }{ 2 },
\quad
\text{for any $\vert y \vert > K^{\prime}$.}
\end{equation}

Let
$
K^{\prime\prime}
:=
\max
\{
	K,
	K^{\prime}
\}
$.
Let $k \in \mathbb{N}$ be such that
$\Lambda \in \mathscr{S}_{-2k} (\mathbb{R}^{d})$.
Let $p \in (1,\infty )$ be arbitrary.
We have
\begin{equation*}
\begin{split}
\int_{0}^{T}
\Vert \Lambda (X_{t}) \Vert_{p,-2k}
\mathrm{d}t
=
\int_{0}^{T_{0}}
\Vert \Lambda (X_{t}) \Vert_{p,-2k}
\mathrm{d}t
+
\int_{T_{0}}^{T}
\Vert \Lambda (X_{t}) \Vert_{p,-2k}
\mathrm{d}t,
\end{split}
\end{equation*}
here, the last term is finite since
$
(0,T] \ni t \mapsto \Lambda ( X_{t} ) \in \mathbb{D}_{p}^{-2k}
$
is continuous.
The other term is estimated as
\begin{equation*}
\begin{split}
\int_{0}^{T_{0}}
\Vert \Lambda (X_{t}) \Vert_{p,-2k}
\mathrm{d}t
\leqslant
\int_{0}^{T_{0}}
\int_{\mathbb{R}}
\Vert \delta_{y} (X_{t}) \Vert_{p,-2}
\mu ( \mathrm{d} y )
\mathrm{d}t
\leqslant
I_{1} + I_{2},
\end{split}
\end{equation*}
where
\begin{equation*}
\begin{split}
I_{1}
&:=
\int_{0}^{T_{0}}
\int_{ \vert y \vert > K^{\prime\prime} }
\Vert \delta_{y} (X_{t}) \Vert_{p,-2}
\mu ( \mathrm{d} y )
\mathrm{d}t,
\quad
I_{2}
:=
\int_{0}^{T_{0}}
\int_{ \vert y \vert \leqslant K^{\prime\prime} }
\Vert \delta_{y} (X_{t}) \Vert_{p,-2}
\mu ( \mathrm{d} y )
\mathrm{d}t .
\end{split}
\end{equation*}

We shall look at the integral $I_{1}$.
By
Lemma \ref{IbP}--(i)
and
(\ref{simple_est}),
there exist $\nu_{0}, c_{1}, c_{2} > 0$ such that
we have
\begin{equation}
\label{ineq:4} 
\Vert
	\delta_{y} ( X_{t} )
\Vert_{p,-2}
\leqslant
c_{1}
t^{-\nu_{0}}
\mathrm{e}^{ \text{{\small $
	- c_{2} \frac{ \vert x-y \vert }{ t }
$}}}
\leqslant
c_{1}
t^{-\nu_{0}}
\exp
\Big(
	- c_{2} \frac{ \vert y \vert }{ 2t }
\Big)
\end{equation}
for $\vert y \vert > K^{\prime\prime}$ and $t \in (0, T_{0}]$.
To dominate the last quantity,
we shall prove that
for some $c_{3}>0$, it holds that
\begin{equation}
\label{ineq:5} 
t^{-\nu_{0}}
\exp
\Big(
	- c_{2} \frac{ \vert y \vert }{ 2t }
\Big)
\leqslant
c_{3}
\exp
\Big(
	- c_{2} \frac{ \vert y \vert }{ 4t }
\Big)
\quad
\text{for $t \in (0,T_{0}]$ and $\vert y \vert > K^{\prime\prime}$.}
\end{equation}
Indeed, we have
\begin{equation*}
\frac{
	t^{-\nu_{0}}
	\exp
	(
		- c_{2} \frac{ \vert y \vert }{ 2t }
	)
}{
	\exp
	(
		- c_{2} \frac{ \vert y \vert }{ 4t }
	)
}
=
t^{-\nu_{0}}
\exp
\Big(
	- c_{2} \frac{ \vert y \vert }{ 4t }
\Big)
\leqslant
t^{-\nu_{0}}
\exp
\Big(
	- c_{2} \frac{ K^{\prime\prime} }{ 4t }
\Big)
\to 0
\end{equation*}
as $t \downarrow 0$, which proves (\ref{ineq:5}).
Combining (\ref{ineq:4}) and (\ref{ineq:5}),
we obtain
\begin{equation*}
\begin{split}
I_{1}
&\leqslant
c_{1} c_{3}
\int_{0}^{T_{0}}
\int_{ \vert y \vert > K^{\prime\prime} }
\exp
\Big\{
	- c_{2}
	\frac{
		\vert y \vert
	}{
		4t
	}
\Big\}
\mu ( \mathrm{d}y )
\mathrm{d} t \\
&\leqslant
c_{1} c_{3} T
\int_{ \vert y \vert > K^{\prime\prime} }
\exp
\Big\{
	- c_{2}
	\frac{
		\vert y \vert
	}{
		4 T
	}
\Big\}
\mu ( \mathrm{d}y )
=
c_{1} c_{3} T
\langle
	\Lambda , f
\rangle
< +\infty
\end{split}
\end{equation*}
where $f \in \mathscr{S}(\mathbb{R})$ is given by
$
f (y) :=
\phi (y)
\mathrm{e}^{
	- c_{2} \vert y \vert / ( 4 T )
}
$,
$y \in \mathbb{R}$
and
$\phi$ is a $C^{\infty}$-function
such that
$\phi = 0$ on a neighbourhood of $0$
and
$\phi (y) = 1$ if $\vert y \vert \geqslant K^{\prime\prime}$.

Next we turn to $I_{2}$.
By Proposition \ref{unif_est}
and
with noting that
$
\sup_{a \in \mathbb{R}}
\Vert
	\delta_{a}
\Vert_{-2}
< +\infty
$
(Proposition \ref{delta_unif}),
we have
\begin{equation*}
\begin{split}
&
t^{1/2}
\sup_{y \in \mathbb{R}}
\Vert \delta_{y} ( X(t,x,w) ) \Vert_{p,-2}
=
t^{1/2}
\sup_{y \in \mathbb{R}}
\Vert \delta_{y} ( X^{\sqrt{t}}(1,x,w) ) \Vert_{p,-2} \\
&=
\sup_{y \in \mathbb{R}}
\Vert
	\delta_{
		(y-x)
		/ \sqrt{t}
	}
	( F(\sqrt{t},x,w) )
\Vert_{p,-2}
\leqslant
C
\sup_{a \in \mathbb{R}}
\Vert
	\delta_{a}
\Vert_{-2}
\end{split}
\end{equation*}
for each $t>0$.
Hence we can conclude that there exists
$C^{\prime} > 0$
such that
\begin{equation*}
\Vert \delta_{y} ( X(t,x,w) ) \Vert_{p,-2}
\leqslant
C^{\prime} t^{-1/2}
\quad
\text{for $\vert y \vert \leqslant K^{\prime\prime}$, $t \in (0,T]$.}
\end{equation*}
Now, it is easy to deduce that
\begin{equation*}
I_{2}
\leqslant
C^{\prime} \mu ( \{ y \in \mathbb{R} : \vert y \vert \leqslant K^{\prime\prime} \} )
\int_{0}^{T} t^{-1/2} \mathrm{d}t
< +\infty .
\end{equation*}

(ii) is proved similarly by using Lemma~\ref{IbP}--(ii)
instead of Lemma~\ref{IbP}--(i).
\end{proof} 

Recall that the support of
$
\Lambda \in \mathscr{D}^{\prime} (\mathbb{R}^{d})
$
is defined as the complement of
$$
\left\{
	y \in \mathbb{R}:
	\begin{array}{c}
	\text{there exists an open neighbourhood} \\
	\text{$U$ of $y$ such that for any $f \in \mathscr{D}(\mathbb{R}^{d})$,} \\
	\text{$\mathrm{supp} f \subset U \Rightarrow \langle \Lambda , f \rangle = 0$.}
	\end{array}
\right\} .
$$

The proof of the following lemma will be given after
Proposition~\ref{exclusion_int} below.

\begin{Lem} 
\label{exclusion} 
Let
$x \in \mathbb{R}^{d}$
and
$\Lambda \in \mathscr{D}^{\prime} (\mathbb{R}^{d})$
be such that
$
\mathrm{supp} \Lambda
\not\hspace{0.3mm} \ni
x
$.
Then
there exists $k \in \mathbb{Z}_{\geqslant 0}$
such that
$$
\lim_{t \downarrow 0}
\Vert
	\Lambda ( X( t, x, w ) )
\Vert_{p,-k}
= 0
\quad
\text{for each $p \in (1,\infty )$,}
$$
if either one of the following holds{\rm :}
\begin{itemize}
\item[(i)]
{\rm (H1)}, {\rm (H2)} and $\Lambda \in \mathscr{S}^{\prime} (\mathbb{R}^{d})$.
\vspace{2mm}
\item[(ii)]
{\rm (H1)}, {\rm (H2)}, {\rm (H4)} and
$\Lambda \in \mathscr{E}^{\prime} (\mathbb{R}^{d})$.
\end{itemize}
\end{Lem} 

From this, the following is immediate.

\begin{Prop} 
\label{exclusion_int} 

Let
$x \in \mathbb{R}^{d}$
and
$\Lambda \in \mathscr{D}^{\prime} (\mathbb{R}^{d})$
be such that
$
\mathrm{supp} \Lambda
\not\hspace{0.3mm} \ni
x
$.
Under the assumption in Lemma~\ref{exclusion},
there exists $k \in \mathbb{Z}_{\geqslant 0}$
such that
$$
\int_{0}^{T}
\Vert
	\Lambda ( X( t, x, w ) )
\Vert_{p,-k}^{p}
\mathrm{d} t
< +\infty
\quad
\text{for each $p \in (1,\infty )$.}
$$
\end{Prop}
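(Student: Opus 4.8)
The plan is to combine Lemma \ref{exclusion} with the continuity of $t\mapsto\Lambda(X(t,x,w))$ recorded just above it, thereby reducing the claim to a boundedness argument on a compact interval.

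First I would apply Lemma \ref{exclusion} to $\Lambda$: since $\mathrm{supp}\,\Lambda\not\ni x$, there is $k_{0}\in\mathbb{Z}_{\geqslant 0}$ with $\lim_{t\downarrow 0}\Vert\Lambda(X(t,x,w))\Vert_{p,-k_{0}}=0$ for every $p\in(1,\infty)$. Next, writing $\Lambda=\partial_{1}^{m}\cdots\partial_{d}^{m}[\exp(m\vert x\vert)f(x)]$ as in Theorem \ref{Hasu-Thm}, the paragraph preceding Lemma \ref{exclusion} — which rests on the Gaussian bound for the transition density (the case $J\equiv 1$, $n=0$ of Lemma \ref{IbP}) together with Corollary \ref{exp-pull-conti} — yields that $(0,\infty)\ni t\mapsto\Lambda(X(t,x,w))\in\mathbb{D}_{p}^{-md}$ is continuous for each $p$. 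Put $k:=\max\{k_{0},md\}$. Because the inclusion $\mathbb{D}_{p}^{-j}\hookrightarrow\mathbb{D}_{p}^{-k}$ is continuous whenever $j\leqslant k$, both the continuity on $(0,\infty)$ and the vanishing $\Vert\Lambda(X(t,x,w))\Vert_{p,-k}\to 0$ as $t\downarrow 0$ remain valid with this single index $k$.

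Consequently the function $g(t):=\Vert\Lambda(X(t,x,w))\Vert_{p,-k}$, defined on $(0,T]$, extends to a continuous function on the compact interval $[0,T]$ by setting $g(0):=0$; being continuous on $[0,T]$ it is bounded, say $g\leqslant M$, so that $\int_{0}^{T}g(t)^{p}\,\mathrm{d}t\leqslant M^{p}T<+\infty$, which is exactly the asserted estimate. (The power $p$ in the integrand costs nothing here, since $g$ is uniformly bounded on $[0,T]$.)

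There is essentially no obstacle once Lemma \ref{exclusion} is granted; the only points requiring a little care are (i) selecting one Sobolev index $k$ that simultaneously accommodates the continuity statement on $(0,\infty)$ and the boundary vanishing at $t=0$, which is handled by the continuous inclusions among the $\mathbb{D}_{p}^{-j}$, and (ii) observing that the exponential-moment/continuity input is available only on \emph{compact} subsets of $(0,\infty)$, which is precisely why Lemma \ref{exclusion} is needed to control the behaviour at $t=0$. The genuine work — the localization away from $x$ (multiplying $\delta$-type factors by a cutoff vanishing near $x$) and the repeated integration by parts behind the decay of $\Vert\Lambda(X(t,x,w))\Vert_{p,-k}$ as $t\downarrow 0$ — is entirely contained in Lemma \ref{exclusion}.
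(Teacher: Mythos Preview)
Your proposal is correct and follows exactly the approach the paper intends: the paper itself simply states that Proposition~\ref{exclusion_int} is ``immediate'' from Lemma~\ref{exclusion}, and you have spelled out why---combining the vanishing at $t=0$ from Lemma~\ref{exclusion} with the continuity on $(0,\infty)$ (recorded just before that lemma via Corollary~\ref{exp-pull-conti}) to obtain boundedness of $t\mapsto\Vert\Lambda(X(t,x,w))\Vert_{p,-k}$ on $[0,T]$. Your care in taking $k=\max\{k_{0},md\}$ is harmless overkill, since inspection of the proof of Lemma~\ref{exclusion} shows the index produced there already coincides with the one coming from the Hasumi representation.
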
 

\begin{proof}[Proof of Lemma \ref{exclusion}] 
We shall prove under the assumption (ii).
The proof in the case (i) is omitted since
one can prove similarly.

We assume $d=1$ just for simplicity.
Let $p \in (1, \infty )$ be arbitrary.
By Proposition~\ref{Hasu-Thm},
we can write
$
\Lambda
=
\frac{\mathrm{d}^{k}}{\mathrm{d}x^{k}}
[ \exp ( k \vert x \vert ) f(x) ]
$,
where $k$ is a nonnegative integer
and
$f:\mathbb{R} \to \mathbb{R}$
is a bounded continuous function.

Since $x \notin \mathrm{supp} \Lambda$,
we have
$
r_{0}
:=
\inf
\{
	\vert x - y \vert :
	y \in \mathrm{supp} \Lambda
\} > 0
$.
Let
$
\Omega
:=
\cup_{y \in \mathrm{supp} \Lambda}
B_{r_{0}/2}(y)
$
(i.e., the
$
(r_{0}/2)
$-neighbourhood of $\mathrm{supp} \Lambda$),
where $B_{r}(y)$
is the open ball with center $y$ and radius $r$.
Then, we have
$\Lambda \in \mathscr{D}^{\prime} ( \Omega )$,
and the function $f$ can be rearranged
so that
$\mathrm{supp} f \subset \Omega$.

Now let
$\varepsilon > 0$
and
$J \in \mathbb{D}^{\infty}$
be arbitrary.
Putting
$
\widetilde{e}_{k} (x)
=
e_{k} ( \vert x \vert )
=
\exp ( k \vert x \vert )$,
we have
\begin{equation*}
\begin{split}
\mathbf{E} [ \Lambda (X^{\varepsilon}(1,x,w)) J ]
&=
\mathbf{E}
[
	(
		\widetilde{e}_{k}
		f
	)^{(k)} (X^{\varepsilon}(1,x,w)) J
] \\
&=
\mathbf{E}
[
	\exp ( k \vert X^{\varepsilon}(1,x,w) \vert )
	f ( X^{\varepsilon}(1,x,w) )
	l_{\varepsilon} (J)
]
\end{split}
\end{equation*}
where
$l_{\varepsilon} (J) \in \mathbb{D}^{\infty}$
is of the form
$$
l_{\varepsilon} (J)
=
\sum_{j=0}^{k}
\langle
	P_{j} ( \varepsilon , w ),
	D^{j} J
\rangle_{H^{\otimes j}}
$$
for some
$
P_{j} ( \varepsilon , w )
\in
\mathbb{D}^{\infty} ( H^{\otimes j} )
$,
$j=0,1,\cdots , k$,
which are polynomials in
$F(\varepsilon , x, w)$,
its derivatives up to the order $k$,
and
$\vert DX^{\varepsilon}(t,x,w) \vert_{H}^{-2}$.
Take $q^{\prime} \in (1, q)$.
Since
$\{ F(\varepsilon , x, w) \}_{\varepsilon > 0}$
is uniformly non-degenerate,
there exists $c_{0}, \nu > 0$ such that
$$
\Vert
	l_{\varepsilon} (J)
\Vert_{q^{\prime}}
\leqslant
c_{0} \varepsilon^{-\nu} \Vert J \Vert_{q,k}
\quad
\text{for all $\varepsilon \in (0,T]$ and $J \in \mathbb{D}^{\infty}$.}
$$
Therefore by taking
$p^{\prime} \in (1, \infty)$
such that
$1/p^{\prime} + 1/q^{\prime} = 1$,
we have
\begin{equation*}
\begin{split}
\big\vert
\mathbf{E}
[
	\Lambda ( X^{\varepsilon}( 1 , x, w ) )
	J
]
\big\vert
&\leqslant
\Vert
	\exp ( k \vert X^{\varepsilon}(1,x,w) \vert )
	f( X^{\varepsilon}(1,x,w) )
\Vert_{p^{\prime}}
\Vert l_{\varepsilon} ( J ) \Vert_{q^{\prime}} \\
&\leqslant
c_{0}
\varepsilon^{-\nu}
\Vert
	\exp ( k \vert X^{\varepsilon}(1,x,w) \vert )
	f( X^{\varepsilon}(1,x,w) )
\Vert_{p^{\prime}}
\Vert J \Vert_{q, k},
\end{split}
\end{equation*}
which implies
$
\Vert
	\Lambda ( X^{\varepsilon}( 1 , x, w ) )
\Vert_{p,-k}
\leqslant
c_{0}
\varepsilon^{-\nu}
\Vert
	\exp ( k \vert X^{\varepsilon}(1,x,w) \vert )
	f( X^{\varepsilon}(1,x,w) )
\Vert_{p^{\prime}}
$
for any $\varepsilon > 0$.
By Proposition \ref{trick1}, we obtain
\begin{equation*}
\begin{split}
\Vert
	\Lambda ( X( t , x, w ) )
\Vert_{p,-k}
&=
\Vert
	\Lambda ( X^{\sqrt{t}}( 1 , x, w ) )
\Vert_{p,-k} \\
&\leqslant
c_{0}
t^{-\nu /2}
\Vert
	\exp ( k \vert X^{\sqrt{t}}(1,x,w) \vert )
	f( X^{\sqrt{t}}(1,x,w) )
\Vert_{p^{\prime}} \\
&=
c_{0}
t^{-\nu /2}
\big\Vert
	\exp ( k \vert X_{t} \vert )
	f( X_{t} )
	1_{\text{\small $
	\{ \vert X_{t} - x \vert
	>
	\frac{
		r_{0}
	}{2}
	\}
	$}}
\big\Vert_{p^{\prime}} .
\end{split}
\end{equation*}
By Lemma~\ref{IbP}--(ii),
we find that
for any $r>0$,
$
\mathbf{E}
[
	\exp ( r \vert X (t,x,w) \vert )
]
= O(t^{-\nu^{\prime}})
$
as $t\downarrow 0$ for some $\nu^{\prime} > 0$.
However we have
$
\mathbf{P}
(
\vert X_{t} - x \vert
>
\frac{
	r_{0}
}{2}
)
=
O(
\mathrm{e}^{ - r_{0}^{2} /( c_{1} t ) }
)
$
as $t \downarrow 0$
for some constant $c_{1} > 0$,
so that the last quantity
converges to zero as $t \downarrow 0$,
and hence get the conclusion.
\end{proof} 

\subsection{H\"older continuity of local time in space variable: a special case}

In this
section,
we assume $d=1$, {\rm (H1)} and {\rm (H3)}.
Let $X=(X_{t})_{t \geqslant 0}$ be a
unique strong solution to the following one-dimensional
stochastic differential equation
\begin{equation}
\label{Fisk-Strat-SDE} 
\mathrm{d}X_{t}
=
\sigma ( X_{t} ) \mathrm{d}w(t)
+
\frac{1}{2} \sigma (X_{t}) \sigma^{\prime} (X_{t}) \mathrm{d}t ,
\quad
X_{0} = x \in \mathbb{R},
\end{equation}
or equivalently,
\begin{equation*}
\mathrm{d}X_{t}
=
\sigma ( X_{t} ) \circ \mathrm{d} w(t),
\quad
X_{0} = x.
\end{equation*}

The main purpose in this
section
is to prove
Theorem \ref{Main_Thm2}.

Note that the object
$
\sigma (y)^{2}
\int_{0}^{t} \delta_{y} ( X_{u} ) \mathrm{d}u
$
in Theorem \ref{Main_Thm2}
is identified with the symmetric local time
associated to the diffusion process $(X_{t})_{t\geqslant 0}$.
See Remark \ref{sym-local_time}.

The Hermite polynomials $H_{n}$,
$n \in \mathbb{Z}_{\geqslant 0}$
are defined by
$H_{0} (x) = 1$
and
$H_{n}(x) := \partial^{*n} 1 (x)$
for
$n \in \mathbb{N}$ and $x \in \mathbb{R}$,
where the operator $\partial^{*}$ is given by
$$
\partial^{*}f (x)
:=
- f^{\prime} (x) + x f(x) ,
\quad
x \in \mathbb{R}
$$
for any differentiable function
$f: \mathbb{R} \to \mathbb{R}$.

The proof of Theorem
\ref{Main_Thm2}
starts from this paragraph.
Let $y,z \in \mathbb{R}$ be arbitrary.
Let
$A = A_{z} := \sigma (z) \frac{\mathrm{d}}{\mathrm{d} z}$
and
$p_{t}(z_{1},z_{2})$
be the transition-density function of $X$.
For each $t \geqslant 0$,
the Krylov-Veretennikov formula
tells us that
$
\delta_{a} (X_{t})
=
\sum_{n=0}^{\infty}
J_{n} [ \delta_{a} (X_{t}) ]
$
(the convergence is in $\mathbb{D}_{2}^{-\infty}$)
for every $a \in \mathbb{R}$,
where
\begin{equation}
\begin{split}
\label{KryVer} 
J_{n} [ \delta_{a} (X_{t}) ]
=
\int_{0 \leqslant t_{1} < \cdots < t_{n} \leqslant t}
\Pi_{n}(x;t,a) [t_{1}, \cdots , t_{n}]
\mathrm{d}w(t_{1}) \cdots \mathrm{d}w(t_{n})
\end{split}
\end{equation}
and
\begin{equation*}
\begin{split}
&
\Pi_{n}(x;t,a) [t_{1}, \cdots , t_{n}] \\
&=
\int_{\mathbb{R}} \cdots \int_{\mathbb{R}}
p_{t_{1}}(x,z_{1})
A_{ z_{1} }
[
p_{t_{2}-t_{1}}
(
	z_{1},
	z_{2}
)
]
\cdots
A_{z_{n}}
[
p_{
	t - t_{n}
}
( z_{n}, a )
]
\mathrm{d}z_{1} \cdots \mathrm{d}z_{n} .
\end{split}
\end{equation*}
We remark that the stochastic integral in (\ref{KryVer})
is well defined since the square-integrability of each component
$(t_{1}, \cdots , t_{n}) \mapsto \Pi_{n} (x;t,a) [t_{1}, \cdots , t_{n}]$
of the chaos kernel
$\{ \Pi_{n} (x;t,a) \}_{n=0}^{\infty}$ is established in
\cite[Corollary 4.5]{Wa94b},
in a more general situation.
Since the generator $L=\frac{1}{2} A^{2}$
commutes with $A$,
we have
\begin{equation*}
\begin{split}
&
\Pi_{n}(x;t,a) [t_{1}, \cdots , t_{n}]
=
(
\mathrm{e}^{ t_{1} L }
A \mathrm{e}^{(t_{2}-t_{1}) L}
\cdots
A \mathrm{e}^{(t_{n}-t_{n-1}) L}
A \mathrm{e}^{(t-t_{n}) L}
)
\delta_{a} \\
&=
[
A^{n}
\exp (
	\{ t_{1} + (t_{2}-t_{1}) + \cdots + (t_{n}-t_{n-1}) + (t-t_{n}) \}
	L
)
]
\delta_{a}
=
(
A^{n}
\mathrm{e}^{ t L }
)
( \delta_{a} )
\end{split}
\end{equation*}
in the distributional sense.
By using
$
( \mathrm{e}^{ t L } \delta_{a} )(x)
=
p_{t} ( x , a )
$,
the formula (\ref{KryVer}) reduces to
\begin{equation*}
\begin{split}
J_{n} [ \delta_{a} (X_{t}) ]
=
[ A_{x}^{n} p_{t} (x,a) ]
\int_{0 \leqslant t_{1} < \cdots < t_{n} \leqslant t}
\mathrm{d}w(t_{1}) \cdots \mathrm{d}w(t_{n}) .
\end{split}
\end{equation*}
Therefore,
by using Corollary \ref{commute},
we have
\begin{equation*}
\begin{split}
&
\sigma (a)
\int_{0}^{1} \delta_{a} ( X_{s} ) \mathrm{d}s
=
\sum_{n=0}^{\infty}
\int_{0}^{1}
[
\sigma (a)
A_{x}^{n} p_{t} (x,a)
]
\int_{0 \leqslant t_{1} < \cdots < t_{n} \leqslant t}
\mathrm{d}w(t_{1}) \cdots \mathrm{d}w(t_{n})
\mathrm{d}t .
\end{split}
\end{equation*}
Hence we have
\begin{equation*}
\begin{split}
&
\big\Vert
\sigma (y)
\int_{0}^{1} \delta_{y} ( X_{t} ) \mathrm{d}t
-
\sigma (z)
\int_{0}^{1} \delta_{z} ( X_{t} ) \mathrm{d}t
\big\Vert_{2,s}^{2} \\
&=
\sum_{n=0}^{\infty}
(1+n)^{s}
\mathbf{E}
\big[
\Big\{
\int_{0}^{1}
[
	\sigma (y)
	A_{x}^{n} p_{t} (x,y)
	-
	\sigma (z)
	A_{x}^{n} p_{t} (x,z)
]
\int_{0 \leqslant t_{1} < \cdots < t_{n} \leqslant t}
\mathrm{d}w(t_{1}) \cdots \mathrm{d}w(t_{n})
\mathrm{d}t
\Big\}^{2}
\big] ,
\end{split}
\end{equation*}
and so we need to compute
\begin{equation}
\label{I_n-definition} 
\begin{split}
&
I_{n}
:=
\mathbf{E}
\big[
\Big\{
\int_{0}^{1}
[
	\sigma (y)
	A_{x}^{n} p_{t} (x,y)
	-
	\sigma (z)
	A_{x}^{n} p_{t} (x,z)
]
\int_{0 \leqslant t_{1} < \cdots < t_{n} \leqslant t}
\mathrm{d}w(t_{1}) \cdots \mathrm{d}w(t_{n})
\mathrm{d}t
\Big\}^{2}
\big] \\
&= 
2
\int_{0 \leqslant s < t \leqslant 1}
\mathrm{d}s\mathrm{d}t
\big\{
	\sigma (y)
	A_{x}^{n} p_{s} (x,y)
	-
	\sigma (z)
	A_{x}^{n} p_{s} (x,z)
\big\}
\big\{
	\sigma (y)
	A_{x}^{n} p_{t} (x,y)
	-
	\sigma (z)
	A_{x}^{n} p_{t} (x,z)
\big\} \\
&\hspace{10mm}\times
\mathbf{E}
\big[
\int_{0 \leqslant t_{1} < \cdots < t_{n} \leqslant s}
\mathrm{d}w(t_{1}) \cdots \mathrm{d}w(t_{n})
\int_{0 \leqslant t_{1} < \cdots < t_{n} \leqslant t}
\mathrm{d}w(t_{1}) \cdots \mathrm{d}w(t_{n})
\big] \\
&= 
\frac{ 2 }{ n! }
\int_{0 \leqslant s < t \leqslant 1}
s^{n}
\mathrm{d}s\mathrm{d}t
\big\{
	\sigma (y)
	A_{x}^{n} p_{s} (x,y)
	-
	\sigma (z)
	A_{x}^{n} p_{s} (x,z)
\big\}
\big\{
	\sigma (y)
	A_{x}^{n} p_{t} (x,y)
	-
	\sigma (z)
	A_{x}^{n} p_{t} (x,z)
\big\} .
\end{split}
\end{equation}

Since $\sigma$ is
Lipschitz continuous
(which is because of (H1)),
the vector field $A = \sigma (z) ( \mathrm{d}/\mathrm{d} z )$ is
{\it complete}, so that
one can associate the one-parameter group
of diffeomorphisms
$\{ \mathrm{e}^{sA} \}_{s \in \mathbb{R}}$.
Note that $\mathrm{e}^{sA}(x)$ for each $x \in \mathbb{R}$
is defined by
$
\frac{ \mathrm{d} }{ \mathrm{d}s }
\mathrm{e}^{ sA } (x)
=
\sigma ( \mathrm{e}^{ sA } (x) )
$
and
$
\mathrm{e}^{ sA } (x) \vert_{s=0} = x
$.

\begin{Lem} 
\label{intertwine} 
$
\frac{ \mathrm{d} }{ \mathrm{d}u }
\mathrm{e}^{uA} (x)
=
\sigma (x) \frac{ \partial }{ \partial x }
\mathrm{e}^{uA} (x)
$
for every $u\in \mathbb{R}$ and $x \in \mathbb{R}$.
\end{Lem}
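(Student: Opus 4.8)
The plan is to derive the identity from the group property of the flow $\{e^{uA}\}_{u\in\mathbb{R}}$ together with the smooth dependence of ODE-solutions on their initial data; no further input is needed. Write $\Phi(u,x):=e^{uA}(x)$. By the very definition of the one-parameter group generated by $A=\sigma(z)\,\mathrm{d}/\mathrm{d}z$, the function $\Phi$ is the flow of the scalar equation
\begin{equation*}
\frac{\partial}{\partial u}\Phi(u,x)=\sigma\big(\Phi(u,x)\big),
\qquad
\Phi(0,x)=x,
\end{equation*}
and under {\rm (H1)} the field $\sigma$ is $C^{\infty}$ with bounded derivatives of all orders $\geqslant 1$; hence (besides the completeness already recorded above) the map $(u,x)\mapsto\Phi(u,x)$ is $C^{\infty}$, so in particular $\partial_{u}\Phi$ and $\partial_{x}\Phi$ exist and are continuous. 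This regularity is all we shall use.

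First I would record the group identity $\Phi(u+s,x)=\Phi\big(u,\Phi(s,x)\big)$, valid for all $u,s,x\in\mathbb{R}$, which merely restates that $\{e^{uA}\}$ is a one-parameter group. Differentiating both sides in $s$ and setting $s=0$: the left-hand side produces $\partial_{u}\Phi(u,x)$, while the chain rule applied to the right-hand side gives
\begin{equation*}
\frac{\partial\Phi}{\partial x}\big(u,\Phi(0,x)\big)\cdot\frac{\partial}{\partial s}\Phi(s,x)\Big|_{s=0}
=\frac{\partial\Phi}{\partial x}(u,x)\cdot\sigma\big(\Phi(0,x)\big)
=\sigma(x)\,\frac{\partial\Phi}{\partial x}(u,x),
\end{equation*}
where we have used $\Phi(0,x)=x$ and $\partial_{s}\Phi(s,x)|_{s=0}=\sigma(\Phi(0,x))$ from the defining equation above. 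Equating the two expressions yields exactly $\frac{\mathrm{d}}{\mathrm{d}u}e^{uA}(x)=\sigma(x)\,\frac{\partial}{\partial x}e^{uA}(x)$.

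There is no real obstacle here: this is the familiar assertion that the flow of a vector field carries that field to itself, written out in dimension one, and the only care needed is that $\sigma(x)$ may vanish --- but then $\Phi(u,x)\equiv x$ and both sides are $0$, so the identity holds globally in $x$ without any non-degeneracy assumption. Should one wish to avoid quoting joint smoothness of the flow, an equivalent route is to fix $x$, note that $V(u):=\partial_{x}\Phi(u,x)$ solves the first variational equation $\dot V(u)=\sigma'(\Phi(u,x))\,V(u)$ with $V(0)=1$, observe that $W(u):=\sigma(\Phi(u,x))$ solves the same linear equation with $W(0)=\sigma(x)$ (by the chain rule and the defining ODE for $\Phi$), and conclude $W\equiv\sigma(x)V$ by uniqueness for linear ODEs with continuous coefficient; combined with $\partial_{u}\Phi(u,x)=\sigma(\Phi(u,x))$ this again gives the claim.
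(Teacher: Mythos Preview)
Your proof is correct and follows essentially the same approach as the paper: both use the group identity $e^{(u+s)A}(x)=e^{uA}\!\big(e^{sA}(x)\big)$ and differentiate in $s$ at $s=0$, applying the chain rule together with $\partial_{s}e^{sA}(x)\big|_{s=0}=\sigma(x)$. You supply more detail on regularity and an alternative variational-equation argument, but the core idea is identical.
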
 
\begin{proof} 
By the homomorphism property:
$
\mathrm{e}^{(s+u)A}
=
\mathrm{e}^{sA} \circ \mathrm{e}^{uA}
$,
we have
$
\frac{ \mathrm{d} }{ \mathrm{d}s }
\mathrm{e}^{sA} (x)
=
\left.
\frac{ \mathrm{d} }{ \mathrm{d}u }
\right\vert_{u=0}
\mathrm{e}^{(s+u)A} (x)
=
\left.
\frac{ \mathrm{d} }{ \mathrm{d}u }
\right\vert_{u=0}
\mathrm{e}^{sA} ( \mathrm{e}^{uA}(x) )
=
\sigma (x) \frac{ \partial }{ \partial x }
\mathrm{e}^{sA} (x)
$.
\end{proof} 

To continue the calculation of $I_{n}$,
we shall
look at $A_{x} p_{t} (x,a)$.
The unique strong solution
$X=(X_{t})_{t \geqslant 0}$
is now expressed by
$X_{t} = \mathrm{e}^{w(t)A}(x)$
(To check this, just apply the It\^o formula for $\mathrm{e}^{w(t)A}(x)$).
Therefore by using Lemma \ref{intertwine}, we have
\begin{equation*}
\begin{split}
&
A_{x} p_{t} (x,a)
=
A_{x}
\mathbf{E} [ \delta_{a} ( \mathrm{e}^{w(t) A} (x) ) ] \\
&=
\sigma (x) \frac{\partial}{\partial x}
\int_{\mathbb{R}}
\delta_{a} ( \mathrm{e}^{\sqrt{t} u A} (x) )
\frac{ \mathrm{e}^{- u^{2}/2 } }{ \sqrt{2\pi} }
\mathrm{d}u \\
&=
\int_{\mathbb{R}}
\Big(
\sigma (x)
\frac{\partial}{\partial x}
\mathrm{e}^{\sqrt{t} u A} (x)
\Big)
\delta_{a}^{\prime}
(
	\mathrm{e}^{\sqrt{t} u A} (x)
)
\frac{ \mathrm{e}^{- u^{2}/2 } }{ \sqrt{2\pi} }
\mathrm{d}u \\
&=
\int_{\mathbb{R}}
\Big(
\frac{1}{\sqrt{t}}
\frac{\mathrm{d}}{\mathrm{d} u}
\mathrm{e}^{\sqrt{t} u A} (x)
\Big)
\delta_{a}^{\prime} ( \mathrm{e}^{\sqrt{t} u A} (x) )
\frac{ \mathrm{e}^{- u^{2}/2 } }{ \sqrt{2\pi} }
\mathrm{d}u \\
&=
\frac{1}{\sqrt{t}}
\int_{\mathbb{R}}
\Big(
\frac{\mathrm{d}}{\mathrm{d} u}
\delta_{a} ( \mathrm{e}^{\sqrt{t} u A} (x) )
\Big)
\frac{ \mathrm{e}^{- u^{2}/2 } }{ \sqrt{2\pi} }
\mathrm{d}u \\
&=
\int_{\mathbb{R}}
\delta_{a} ( \mathrm{e}^{\sqrt{t} u A} (x) )
\Big(
\frac{-1}{\sqrt{t}}
\frac{\mathrm{d}}{\mathrm{d} u}
\frac{ \mathrm{e}^{- u^{2}/2 } }{ \sqrt{2\pi} }
\Big)
\mathrm{d}u ,
\end{split}
\end{equation*}
where the integral is understood as the coupling
of the Schwartz distribution and the test function.
By the repetition of the above procedure,
we obtain
\begin{equation*}
\begin{split}
A_{x}^{n} p_{t} (x,a)
&= 
\int_{\mathbb{R}}
\delta_{a} ( \mathrm{e}^{\sqrt{t} u A} (x) )
\Big\{
\frac{ (-1)^{n} }{ t^{n/2} }
\frac{\mathrm{d}^{n}}{\mathrm{d} u^{n}}
\frac{ \mathrm{e}^{- u^{2}/2 } }{ \sqrt{2\pi} }
\Big\}
\mathrm{d}u \\
&= 
\int_{\mathbb{R}}
\delta_{a} ( \mathrm{e}^{\sqrt{t} u A} (x) )
\frac{ 1 }{ t^{n/2} }
H_{n} (u)
\frac{ \mathrm{e}^{- u^{2}/2 } }{ \sqrt{2\pi} }
\mathrm{d}u .
\end{split}
\end{equation*}
For each $t > 0$,
define a mapping
$\varphi : \mathbb{R} \to \mathbb{R}$
by
$
\varphi (u) := \mathrm{e}^{\sqrt{t} uA} (x)
$.
Since $\varphi$ is continuously differentiable and
$
\vert \varphi^{\prime} (u) \vert
=
\sqrt{t}
\vert \sigma ( \varphi (u) ) \vert
\geqslant
\sqrt{t} \lambda^{1/2}
$
for every $u$,
where $\lambda >0$ is
the constant
appeared in ({\rm H3}),
we see that $\varphi$ is
a diffeomorphism.
Hence we can apply the change of variables
$
b
= \varphi (u)
$
\footnote{when $t=1$, we see that
$
\varphi^{-1} (z)
=
\int_{x}^{z} \frac{\mathrm{d}a}{\sigma (a)}
$
and the stochastic process
$\varphi^{-1}(X_{s})$,
which is nothing but the Wiener process $w(s)$,
is known as the Lamperti transformation of $X$.}
(and then
$
\mathrm{d}u / \mathrm{d}
b
=
( \sqrt{t} \sigma ( b ) )^{-1}
$)
in the above integral to get
\begin{equation*}
\begin{split}
A_{x}^{n} p_{t} (x,a)
&= 
\int_{\mathbb{R}}
\delta_{a} ( b )
\frac{ 1 }{ t^{(n+1)/2} }
H_{n} ( \varphi^{-1}( b ) )
\frac{
	\mathrm{e}^{- (\varphi^{-1}( b ) )^{2}/2 }
}{
	\sqrt{ 2\pi }
	\sigma ( b )
}
\mathrm{d} b \\
&= 
\frac{ 1 }{ t^{(n+1)/2} }
H_{n} ( \varphi^{-1}(a) )
\frac{
	\mathrm{e}^{- (\varphi^{-1}(a))^{2}/2 }
}{
	\sqrt{ 2\pi }
	\sigma (a)
}.
\end{split}
\end{equation*}
By using
Lemma \ref{Hermite-facts}--(i),
we obtain the formula
\begin{equation}
\label{A^n_formula} 
A_{x}^{n} p_{t} (x,a)
=
\frac{
	(-1)^{n} t^{-(n+1)/2}
}{
	2\pi \sigma (a)
}
\int_{-\infty}^{\infty}
( i\xi )^{n}
\mathrm{e}^{-\xi^{2} /2}
\mathrm{e}^{ i \xi \varphi^{-1} (a) }
\mathrm{d}\xi .
\end{equation}

Therefore by using (\ref{A^n_formula}),
\begin{equation}
\label{Ap-difference} 
\begin{split}
&
\sigma (y)
A_{x}^{n} p_{t} (x,y)
-
\sigma (z)
A_{x}^{n} p_{t} (x,z) \\
&= 
\frac{ (-1)^{n} t^{-(n+1)/2} }{ 2\pi }
\int_{-\infty}^{\infty}
( i\xi )^{n}
\mathrm{e}^{-\xi^{2} /2}
\big\{
	\mathrm{e}^{ i \xi \varphi^{-1} (y) }
	-
	\mathrm{e}^{ i \xi \varphi^{-1} (z) }
\big\}
\mathrm{d}\xi
=: 
I\!\!I_{n}.
\end{split}
\end{equation}

\begin{Lem} 
\label{gimmick} 
For any $\alpha \in [0,1]$ and $\theta \in \mathbb{R}$, we have
$
\vert
	\mathrm{e}^{i\theta}
	-
	1
\vert
\leqslant
2\vert \theta \vert^{\alpha}
$
with the convention $0^{0} := 1$.
\end{Lem}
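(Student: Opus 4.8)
The plan is to establish the claim by interpolating between two trivial bounds: the \emph{global} bound $\vert \mathrm{e}^{i\theta}-1\vert \leqslant 2$ (both numbers lie on the unit circle) and the \emph{linear} bound $\vert \mathrm{e}^{i\theta}-1\vert \leqslant \vert\theta\vert$. For the latter I would write $\mathrm{e}^{i\theta}-1 = \int_{0}^{\theta} i\mathrm{e}^{it}\,\mathrm{d}t$ and take absolute values, or alternatively use $\vert \mathrm{e}^{i\theta}-1\vert = 2\vert\sin(\theta/2)\vert \leqslant \vert\theta\vert$. Combining these gives $\vert \mathrm{e}^{i\theta}-1\vert \leqslant \min\{2,\vert\theta\vert\}$ for every $\theta \in \mathbb{R}$.

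It then remains to check that $\min\{2,\vert\theta\vert\} \leqslant 2\vert\theta\vert^{\alpha}$ for all $\alpha\in[0,1]$, which I would do by splitting on whether $\vert\theta\vert\leqslant 2$ or $\vert\theta\vert>2$: in the first case $\vert\theta\vert = \vert\theta\vert^{1-\alpha}\vert\theta\vert^{\alpha} \leqslant 2^{1-\alpha}\vert\theta\vert^{\alpha} \leqslant 2\vert\theta\vert^{\alpha}$, and in the second case $2 = 2^{1-\alpha}\,2^{\alpha} \leqslant 2^{1-\alpha}\vert\theta\vert^{\alpha} \leqslant 2\vert\theta\vert^{\alpha}$. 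The boundary case $\theta=0$ is handled by the stated convention $0^{0}:=1$ (if $\alpha=0$ the right-hand side is $2$; if $\alpha>0$ both sides are $0$). There is no real obstacle here — the only point requiring a line of care is the elementary case distinction in the interpolation step, and making sure the degenerate value $\theta = 0$ is covered by the convention.
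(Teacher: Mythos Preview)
Your proposal is correct and follows essentially the same approach as the paper: both arguments rest on the two elementary bounds $\vert\mathrm{e}^{i\theta}-1\vert\leqslant 2$ and $\vert\mathrm{e}^{i\theta}-1\vert\leqslant\vert\theta\vert$, followed by a case split to interpolate. The only cosmetic difference is that the paper splits at $\vert\theta\vert=1$ (using $\vert\theta\vert\leqslant\vert\theta\vert^{\alpha}$ directly when $\vert\theta\vert<1$) while you split at $\vert\theta\vert=2$; both work.
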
 
\begin{proof} 
Let
$\alpha \in [0,1]$. If $\theta \in \mathbb{R} \setminus (-1,1)$,
then we see that
$
\vert
	\mathrm{e}^{i\theta}
	-
	1
\vert
\leqslant
2
\leqslant
2\vert \theta \vert^{\alpha} .
$
On the other hand, for $\theta \in (-1,1)$,
$
\vert
	\mathrm{e}^{i\theta}
	-
	1
\vert
\leqslant
\vert \theta \vert
\leqslant
\vert \theta \vert^{\alpha}
\leqslant
2\vert \theta \vert^{\alpha}
$
because $\alpha \in [0,1]$.
Thus we obtain
$
\vert
	\mathrm{e}^{i\theta}
	-
	1
\vert
\leqslant
2\vert \theta \vert^{\alpha}
$
for all $\theta \in \mathbb{R}$.
\end{proof} 

Let $\beta \in [0,1)$ be arbitrary.
By Lemma \ref{gimmick}, we have
\begin{equation*}
\begin{split}
\vert I\!\!I_{n} \vert
&
\leqslant
\frac{ t^{-(n+1)/2} }{ \pi }
\vert
	\varphi^{-1} (y)
	-
	\varphi^{-1} (z)
\vert^{\beta}
\int_{-\infty}^{\infty}
\vert \xi \vert^{n+\beta}
\mathrm{e}^{-\xi^{2} /2}
\mathrm{d}\xi \\
&=
\frac{
	t^{-(n+1)/2} 2^{(n+\beta -1)/2}
}{
	\pi
}
\Gamma \Big( \frac{n+\beta +1}{2} \Big)
\vert
	\varphi^{-1} (y)
	-
	\varphi^{-1} (z)
\vert^{\beta} .
\end{split}
\end{equation*}
Since
$
\frac{
	\mathrm{d}
}{
	\mathrm{d}
	b
}
\varphi^{-1} ( b )
=
\frac{1}{
	\sqrt{t}
	\sigma ( b )
}
$,
we have
$
\vert
\varphi^{-1}(y) - \varphi^{-1}(z)
\vert
=
t^{-1/2}
\vert
\int_{y}^{z}
\sigma ( b )^{-1}
\mathrm{d} b
\vert
$,
so that
\begin{equation}
\label{II_1-est} 
\begin{split}
\vert I\!\!I_{n} \vert
&
\leqslant
\frac{
	t^{-(n+\beta +1)/2} 2^{(n+\beta -1)/2}
}{
	\pi \lambda^{\beta /2}
}
\Gamma \Big( \frac{n+\beta +1}{2} \Big)
\vert
	y - z
\vert^{\beta} ,
\end{split}
\end{equation}
where it is recalled again that $\lambda > 0$ is what appeared in {\rm (H3)}.

Substituting (\ref{II_1-est})
into (\ref{Ap-difference}),
\begin{equation*}
\begin{split}
&
\vert
	\sigma (y)
	A_{x}^{n} p_{t} (x,y)
	-
	\sigma (z)
	A_{x}^{n} p_{t} (x,z)
\vert
\leqslant 
c_{2}
t^{-(n+\beta +1)/2} 2^{(n+\beta -1)/2}
\Gamma \Big( \frac{n+\beta +1}{2} \Big)
\vert y-z \vert^{\beta}
\end{split}
\end{equation*}
where
$
c_{2}
:=
( \pi \lambda^{ \beta /2 } )^{-1}
$.
Note that $c_{2}$ does not depend on $y$ and $z$.
With putting
$$
c_{3} (n)
:=
\Gamma \Big( \frac{n+\beta +1}{2} \Big) ,
$$
we have obtained
\begin{equation}
\label{Ap-difference'} 
\begin{split}
&
\vert
	\sigma (y)
	A_{x}^{n} p_{t} (x,y)
	-
	\sigma (z)
	A_{x}^{n} p_{t} (x,z)
\vert
\leqslant 
c_{2}
t^{-(n+\beta +1)/2} 2^{(n+\beta -1)/2}
c_{3}(n)
\vert y-z \vert^{\beta} .
\end{split}
\end{equation}

Now, by substituting
(\ref{Ap-difference'})
into
(\ref{I_n-definition}),
we have
\begin{equation*}
\begin{split}
I_{n}
&\leqslant 
\frac{ (c_{2})^{2} }{ n! }
2^{n+\beta}
c_{3} (n)^{2}
\vert y-z \vert^{2\beta}
\int_{0}^{1}
s^{n}
s^{-(n+\beta +1)/2}
\mathrm{d}s
\int_{s}^{1} t^{-(n+\beta +1)/2} \mathrm{d}t .
\end{split}
\end{equation*}
Note that the last iterated integral
is finite because $\beta < 1$,
and gives
$$
\int_{0}^{1}
s^{n}
s^{-(n+\beta +1)/2}
\mathrm{d}s
\int_{s}^{1} t^{-(n+\beta +1)/2} \mathrm{d}t
=
\frac{
	2
}{
	( 1 - \beta )
	( n-\beta +1 )
}.
$$
Hence we have
\begin{equation*}
\begin{split}
I_{n}
&\leqslant 
\frac{ (c_{2})^{2} }{ ( 1 - \beta ) }
\vert y-z \vert^{2\beta}
\frac{
	2^{n+\beta +1}
	c_{3} (n)^{2}
}{
	n! ( n-\beta +1 )
} .
\end{split}
\end{equation*}

Finally we have
\begin{equation*}
\begin{split}
&
\big\Vert
	\sigma (y)
	\int_{0}^{1} \delta_{y} ( X_{t} ) \mathrm{d}t
	-
	\sigma (z)
	\int_{0}^{1} \delta_{z} ( X_{t} ) \mathrm{d}t
\big\Vert_{2,s}^{2} \\
&=
\sum_{n=0}^{\infty}
(1+n)^{s}
I_{n}
\leqslant
\frac{ (c_{2})^{2} }{ ( 1 - \beta ) }
\vert y-z \vert^{2\beta}
\sum_{n=0}^{\infty}
(1+n)^{s}
\frac{
	2^{n+\beta +1}
	c_{3} (n)^{2}
}{
	n! ( n-\beta +1 )
}.
\end{split}
\end{equation*}
By  Stirling's formula, we see that
the quantity
\begin{equation*}
\begin{split}
&
(1+n)^{s}
\frac{
	2^{n+\beta +1}
	c_{3} (n)^{2}
}{
	n! ( n-\beta +1 )
}
=
(1+n)^{s}
\frac{
	2^{n+\beta +1}
}{
	n! ( n-\beta +1 )
}
\Gamma \Big( \frac{n+\beta +1}{2} \Big)^{2}
\end{split}
\end{equation*}
behaves like
\begin{equation*}
\begin{split}
&
(1+n)^{s}
\frac{
	2^{n+\beta +1}
}{
	( n-\beta +1 )
	\sqrt{2\pi n} ( \frac{n}{\mathrm{e}} )^{n}
}
\times
\big\{
\sqrt{\pi (n+\beta -1)}
	\Big(
		\frac{n+\beta -1}{2\mathrm{e}}
	\Big)^{\frac{n+\beta -1}{2}}
\big\}^{2} \\
&= 
\frac{
	(1+n)^{s}
	2^{n+\beta +1}
}{
	( n-\beta +1 )
	\sqrt{2\pi n}
	( \frac{n}{\mathrm{e}} )^{n}
}
\pi (n+\beta -1)
\Big(
	\frac{n+\beta -1}{2\mathrm{e}}
\Big)^{n+\beta -1}
=
O(n^{s+\beta - \frac{3}{2}})
\end{split}
\end{equation*}
as $n\to \infty$ for each $\beta$.
Hence the sum converges if $s+\beta - \frac{3}{2} < -1$,
i.e.,
$s+ \beta < \frac{1}{2}$.
The proof of Theorem
\ref{Main_Thm2}
finishes.

\begin{proof}[Proof of Corollary \ref{Main_Cor1}]
This is clear from
Theorem
\ref{Main_Thm2}
and the inequality
\begin{equation*}
\begin{split}
\big\vert
\sigma (y)
\int_{0}^{1} p_{t} (x,y) \mathrm{d}t
-
\sigma (z)
\int_{0}^{1} p_{t} (x,z) \mathrm{d}t
\big\vert
&=
\big\vert
\mathbf{E}
[
	\sigma (y)
	\int_{0}^{1} \delta_{y} ( X_{t} ) \mathrm{d}t
	-
	\sigma (z)
	\int_{0}^{1} \delta_{z} ( X_{t} ) \mathrm{d}t
]
\big\vert \\
&\leqslant
\Vert
	\sigma (y)
	\int_{0}^{1} \delta_{y} ( X_{t} ) \mathrm{d}t
	-
	\sigma (z)
	\int_{0}^{1} \delta_{z} ( X_{t} ) \mathrm{d}t
\Vert_{2, -1/2} .
\end{split}
\end{equation*}
\end{proof} 

\section{It\^o's Formula for Generalized Wiener Functionals}
\label{Generalized_sec} 

Let $w = (w(t))_{t \geqslant 0}$
be the $d$-dimensional Wiener process
with $w(0) = 0$
and
let $(\mathcal{F}_{t}^{w})_{t \geqslant 0}$
be the filtration generated by $w$:
$
\mathcal{F}_{t}^{w}
:=
\sigma ( w(s) : 0 \leqslant s \leqslant t )
$,
for $t \geqslant 0$.
Similarly to the previous
section,
we fix $x \in \mathbb{R}^{d}$ and
consider the following $d$-dimensional
stochastic differential equation
\begin{equation}
\label{SDE2} 
\mathrm{d} X_{t}
=
\sigma (X_{t}) \mathrm{d} w(t)
+
b ( X_{t} ) \mathrm{d} t,
\quad
X_{0} = x \in \mathbb{R}^{d}.
\end{equation}
Also in this
section,
we assume the conditions
{\rm (H1)} and {\rm (H2)}.
We denote by $\{ X(t,x,w) \}_{t \geqslant 0}$
a unique strong solution
$X = (X_{t})_{t \geqslant 0}$ to (\ref{SDE2}).

\subsection{Stochastic integrals of pull-backs by diffusion}
\label{Stochastic_Integral} 

For each $J \in \mathbb{D}^{\infty}$,
we will denote by $(DJ)^{i}$
the $i$-th component of $DJ \in \mathbb{D}^{\infty}(H)$:
$
DJ = ( (DJ)^{1}, \cdots , (DJ)^{d} )
$
(Recall $H$ is the Cameron-Martin subspace
of the $d$-dimensional Wiener space).
For each $t \in [0,T]$,
the evaluation map
$\mathrm{ev}_{t}: H \ni h \mapsto h(t) \in \mathbb{R}^{d}$
naturally induces a map
$
\mathrm{id} \otimes \mathrm{ev}_{t} :
L_{2}(H) \cong L_{2} \otimes H \to L_{2} \otimes \mathbb{R}^{d}
$
and then we write
$
D_{t}J
:=: ( (D_{t}J)^{1}, \cdots , (D_{t}J)^{d} )
:= \frac{\mathrm{d}}{\mathrm{d}t}( \mathrm{id} \otimes \mathrm{ev}_{t} ) (DJ)
$
for a.a. $t \in [0,T]$.

Let
$
\Lambda \in \mathscr{D}^{\prime} (\mathbb{R}^{d})
$.
Throughout this
section,
we assume either one of
\begin{itemize}
\item[$\circ$]
{\rm (H1)}, {\rm (H2)} and $\Lambda \in \mathscr{S}^{\prime} (\mathbb{R}^{d})$;
\vspace{2mm}
\item[$\circ$]
{\rm (H1)}, {\rm (H2)}, {\rm (H4)} and $\Lambda \in \mathscr{E}^{\prime} (\mathbb{R}^{d})$.
\end{itemize}
Then $\Lambda ( X(t,x,w) )$ is defined as a generalized Wiener functional.
If
$
\int_{0}^{T}
\Vert \Lambda ( X(t,x,w) ) \Vert_{2,-k}^{2}
\mathrm{d}t
$
is finite for some $k \in \mathbb{N}$,
we
define the {\it stochastic integrals}
$
\int_{0}^{T}
\Lambda ( X(t,x,w) )
\mathrm{d} w^{i}(t)
$,
$i=1, \cdots , d$
as elements in $\mathbb{D}^{-\infty}$
via the pairing
\begin{equation}
\label{stoc_int} 
\begin{split}
\mathbf{E}
[
	\Big(
	\int_{0}^{T}
	\Lambda ( X(t,x,w) )
	\mathrm{d} w^{i}(t)
	\Big)
	J
]
&=
\int_{0}^{T}
\mathbf{E}
[
	\Lambda ( X(t,x,w) )
	( D_{t} J )^{i}
]
\mathrm{d}t,
\end{split}
\end{equation}
for $J \in \mathbb{D}^{\infty}$ and $i=1, 2, \cdots , d$.
We define the stochastic integral
$
\int_{s}^{T}
\Lambda ( X(t,x,w) )
\mathrm{d}w^{i}(t)
$
similarly
for each $0 < s \leqslant T$.

This pairing is well defined because of the following:

\begin{Prop} 
\label{ver_pair} 
For each $k\in \mathbb{N}$ and $i =1, 2, \cdots , d$,
there exists a constant $C>0$ such that
\begin{equation*}
\begin{split}
&
\int_{0}^{T}
\vert
\mathbf{E}
[
	\Lambda ( X(t,x,w) )
	( D_{t} J )^{i}
]
\vert
\mathrm{d}t
\leqslant
C
\Big\{
	\int_{0}^{T}
	\Vert
		\Lambda ( X(t,x,w) )
	\Vert_{2,-k}^{2}
	\mathrm{d}t
\Big\}^{1/2}
\Vert
	J
\Vert_{2,k+1}
\end{split}
\end{equation*}
for all $J \in \mathbb{D}^{\infty}$.
\end{Prop}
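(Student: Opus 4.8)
The plan is to estimate the integrand pointwise in $t$ by the duality between $\mathbb{D}_{2}^{-k}$ and $\mathbb{D}_{2}^{k}$, then integrate in $t$ with the Cauchy--Schwarz inequality, and finally recognize the resulting quadratic functional of $J$ as (a component of) the squared $\mathbb{D}_{2}^{k}(H)$-norm of $DJ$, which is controlled by $\Vert J\Vert_{2,k+1}$ via the continuity of the Malliavin derivative.

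First I would observe that, since $DJ\in\mathbb{D}_{2}^{k}(H)$, one has $\int_{0}^{T}\Vert D_{t}J\Vert_{2,k}^{2}\,\mathrm{d}t<+\infty$, so that $D_{t}J\in\mathbb{D}_{2}^{k}(\mathbb{R}^{d})$ and in particular $(D_{t}J)^{i}\in\mathbb{D}_{2}^{k}$ for a.a.\ $t$; at those $t$ the pairing $\mathbf{E}[\Lambda(X(t,x,w))(D_{t}J)^{i}]$ is meaningful because $(\mathbb{D}_{2}^{-k})'=\mathbb{D}_{2}^{k}$, and it is bounded by $\Vert\Lambda(X(t,x,w))\Vert_{2,-k}\,\Vert(D_{t}J)^{i}\Vert_{2,k}$. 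Integrating over $(0,T]$ and applying the Cauchy--Schwarz inequality in $L_{2}(0,T)$ gives
\[
\int_{0}^{T}\big\vert\mathbf{E}[\Lambda(X(t,x,w))(D_{t}J)^{i}]\big\vert\,\mathrm{d}t
\leqslant
\Big\{\int_{0}^{T}\Vert\Lambda(X(t,x,w))\Vert_{2,-k}^{2}\,\mathrm{d}t\Big\}^{1/2}
\Big\{\int_{0}^{T}\Vert(D_{t}J)^{i}\Vert_{2,k}^{2}\,\mathrm{d}t\Big\}^{1/2}.
\]

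It then remains to bound the last factor by $C\Vert J\Vert_{2,k+1}$. The key point is that $(I-\mathcal{L})^{k/2}$ acts on the $L_{2}$-factor, hence commutes with the evaluation $\mathrm{id}\otimes\mathrm{ev}_{t}$ and with $\frac{\mathrm{d}}{\mathrm{d}t}$, so that $(I-\mathcal{L})^{k/2}D_{t}J$ is precisely the time-kernel of $(I-\mathcal{L})^{k/2}DJ\in L_{2}(H)$; therefore
\[
\sum_{i=1}^{d}\int_{0}^{T}\Vert(D_{t}J)^{i}\Vert_{2,k}^{2}\,\mathrm{d}t
=\mathbf{E}\big[\,\vert(I-\mathcal{L})^{k/2}DJ\vert_{H}^{2}\,\big]
=\Vert DJ\Vert_{2,k}^{2}.
\]
Since $D:\mathbb{D}_{2}^{k+1}\to\mathbb{D}_{2}^{k}(H)$ is continuous (property (a) in Section \ref{Review}), the right-hand side is at most $C^{2}\Vert J\Vert_{2,k+1}^{2}$, and combining the two displays finishes the proof. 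Alternatively, one may bypass the citation and compute directly on the Wiener chaos: from $J_{n-1}(DJ)=D(J_{n}J)$ and $\mathbf{E}[\vert D(J_{n}J)\vert_{H}^{2}]=n\Vert J_{n}J\Vert_{2}^{2}$ one gets $\Vert DJ\Vert_{2,k}^{2}=\sum_{n\geqslant 0}n^{k+1}\Vert J_{n}J\Vert_{2}^{2}\leqslant\Vert J\Vert_{2,k+1}^{2}$, i.e.\ the inequality holds with $C=1$.

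The only genuinely delicate point is the measurable disintegration $t\mapsto D_{t}J$ together with the identity $\Vert(I-\mathcal{L})^{k/2}DJ\Vert_{L_{2}(H)}^{2}=\sum_{i}\int_{0}^{T}\Vert(D_{t}J)^{i}\Vert_{2,k}^{2}\,\mathrm{d}t$, i.e.\ the commutation of the Ornstein--Uhlenbeck power with time-evaluation; once this is in place, the estimate is merely duality plus the Cauchy--Schwarz inequality, and I do not anticipate any further obstacle.
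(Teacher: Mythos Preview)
Your proof is correct and follows the same overall strategy as the paper: pointwise duality between $\mathbb{D}_{2}^{-k}$ and $\mathbb{D}_{2}^{k}$, Cauchy--Schwarz in $t$, and then a bound of $\int_{0}^{T}\Vert (D_{t}J)^{i}\Vert_{2,k}^{2}\,\mathrm{d}t$ by $\Vert J\Vert_{2,k+1}^{2}$. The only difference lies in how this last estimate is carried out. The paper invokes Meyer's inequality twice --- once to dominate $\Vert D_{t}J\Vert_{2,k}^{2}$ by $\sum_{l=0}^{k}\Vert D^{l}D_{t}J\Vert_{2}^{2}$, and once in the reverse direction to recombine $\sum_{l=1}^{k+1}\Vert D^{l}J\Vert_{2}^{2}$ into $\Vert J\Vert_{2,k+1}^{2}$ --- whereas you exploit the $p=2$ setting directly: since $(I-\mathcal{L})^{k/2}$ acts only on the Wiener factor it commutes with $\mathrm{id}\otimes\mathrm{ev}_{t}$, giving the exact identity $\sum_{i}\int_{0}^{T}\Vert (D_{t}J)^{i}\Vert_{2,k}^{2}\,\mathrm{d}t=\Vert DJ\Vert_{2,k}^{2}$, after which either the cited continuity of $D$ or your chaos computation $\Vert DJ\Vert_{2,k}^{2}=\sum_{n\geqslant 1}n^{k+1}\Vert J_{n}J\Vert_{2}^{2}\leqslant\Vert J\Vert_{2,k+1}^{2}$ finishes the job with $C=1$. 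Your route is a bit more elementary here because it avoids Meyer's inequality altogether; the paper's route, on the other hand, would transplant more readily to an $L_{p}$ statement.
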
 
\begin{proof} 
For simplicity of notation, we prove in the case $d=1$.
For each $k \in \mathbb{N}$ and $J \in \mathbb{D}^{\infty}$,
\begin{equation}
\label{schw_ineq} 
\begin{split}
\int_{0}^{T}
\vert
\mathbf{E}
[
	\Lambda ( X_{t} )
	D_{t} J
]
\vert
\mathrm{d}t
&\leqslant
\int_{0}^{T}
\Vert
	\Lambda ( X_{t} )
\Vert_{2,-k}
\Vert
	D_{t} J
\Vert_{2,k}
\mathrm{d}t \\
&\leqslant
\Big\{
	\int_{0}^{T}
	\Vert
		\Lambda ( X_{t} )
	\Vert_{2,-k}^{2}
	\mathrm{d}t
\Big\}^{1/2}
\Big\{
	\int_{0}^{T}
	\Vert
		D_{t} J
	\Vert_{2,k}^{2}
	\mathrm{d}t
\Big\}^{1/2} .
\end{split}
\end{equation}
By Meyer's inequality, there exist constants
$c^{\prime}, C^{\prime}>0$ such that
\begin{equation*}
c^{\prime}
\Vert D^{k^{\prime}} F \Vert_{2}
\leqslant
\Vert F \Vert_{2,k^{\prime}}
\leqslant
C^{\prime}
\sum_{l=0}^{k^{\prime}}
\Vert D^{l}F \Vert_{2},
\quad
\text{for all $F \in \mathbb{D}_{2}^{k^{\prime}}$}
\end{equation*}
for $k^{\prime} = 1,2, \cdots , k+1$.
Therefore we have
\begin{equation*}
\begin{split}
&
\Vert D_{t} J \Vert_{2,k}^{2}
\leqslant
( C^{\prime} )^{2}
\Big\{
\sum_{l=0}^{k}
\Vert D^{l} D_{t} J \Vert_{2}
\Big\}^{2}
\leqslant
( C^{\prime} )^{2}
(k+1)
\sum_{l=0}^{k}
\Vert D^{l} D_{t} J \Vert_{2}^{2} \\
&=
( C^{\prime} )^{2}
(k+1)
\sum_{l=0}^{k}
\mathbf{E}
[
	\Vert D^{l} D_{t} J \Vert_{H^{\otimes l}}^{2}
] \\
&=
C^{\prime\prime}
\Big\{
\mathbf{E}
[
	( D_{t} J )^{2}
]
+
\sum_{l=1}^{k}
\int_{0}^{T} \cdots \int_{0}^{T}
\mathbf{E}
[
	( D_{s_{l}} \cdots D_{s_{1}} D_{t} J )^{2}
]
\mathrm{d}s_{1} \cdots \mathrm{d}s_{l}
\Big\},
\end{split}
\end{equation*}
where $C^{\prime\prime} := ( C^{\prime} )^{2} (k+1)$,
so that
\begin{equation*}
\begin{split}
\int_{0}^{T}
\Vert D_{t} J \Vert_{2,k}^{2}
\mathrm{d}t
&\leqslant
C^{\prime\prime}
\sum_{l=1}^{k+1}
\int_{0}^{T} \cdots \int_{0}^{T}
\mathbf{E}
[
	( D_{u_{l}} \cdots D_{u_{1}} J )^{2}
]
\mathrm{d}u_{1} \cdots \mathrm{d}u_{l} \\
&\leqslant
C^{\prime\prime}
\sum_{l=0}^{k+1}
\Vert D^{l} J \Vert_{2}^{2}
\leqslant
( c^{\prime} )^{-2}
C^{\prime\prime}
\Vert J \Vert_{2,k+1}^{2} .
\end{split}
\end{equation*}
Hence by substituting this into (\ref{schw_ineq}),
we get the result.
\end{proof} 

\begin{Rm} 
\label{referee-2} 
(a)
As is easily seen,
the stochastic integral
$
\int_{0}^{T} \Lambda (X_{t}) \mathrm{d}w^{i}(t)
$
has another expression:
$$
\int_{0}^{T} \Lambda (X_{t}) \mathrm{d}w^{i}(t)
=
D^{*}
[
(
	0, \cdots ,
	\underbrace{
	\int_{0}^{\bullet} \Lambda (X_{u}) \mathrm{d}u
	}_{\text{$i$-th position}} ,
	\cdots , 0
)
].
$$
Hence it coincides with the Skorokhod integral
as long as the object
standing on the right of
$D^{*}$ lies in the domain
$\mathbb{D}_{2}^{1}(H)$
and then automatically coincides
with the It\^o integral because of the adaptedness.
In fact, in view of the Clark-Ocone formula,
every $J \in L_{2}$ can be written as
$
J
=
\mathbf{E}[J]
+
\sum_{i=1}^{d}
\int_{0}^{T}
\mathbf{E} [ (D_{t}J)^{i} \vert \mathcal{F}_{t}^{w} ]
\mathrm{d} w^{i}(t)
$
and then (\ref{stoc_int}) is just the It\^o isometry.
Therefore, our stochastic integral
may be
a natural extension of
the classical anticipative stochastic integral
to this distributional setting.

(b)
By Proposition \ref{ver_pair},
we have
$
\int_{0}^{T} \Lambda ( X_{t} ) \mathrm{d}w^{i}(t)
\in
\mathbb{D}_{2}^{-(k+1)}
$
for $i=1, 2, \cdots , d$
provided
$
\int_{0}^{T}
\Vert
	\Lambda ( X_{t} )
\Vert_{2,-k}^{2}
\mathrm{d}t
< +\infty
$.
\end{Rm} 

The following is the main result in this
section.
This is a version of
a result by
Uemura \cite[Proposition 1]{Ue}.

\begin{Thm} 
\label{reg-pres} 
Let $s \in \mathbb{R}$, $p \geqslant 2$
and assume that
$
(0,T] \ni t
\mapsto
\Lambda ( X(t,x,w) ) \in \mathbb{D}_{p}^{s}
$
is continuous.
Then we have
$$
\int_{0}^{T}
\Lambda ( X(t,x,w) )
\mathrm{d}w^{i}(t)
\in
\mathbb{D}_{p}^{s},
\quad
\text{for $i=1, 2, \cdots , d$}
$$
provided either one of the following
\begin{itemize}
\item[(i)]
$
\lim_{t \downarrow 0}
\Vert
	\Lambda ( X(t,x,w) )
\Vert_{p,s}
= 0
$.

\item[(ii)]
$s \geqslant 0$ and
$
\int_{0}^{T}
\Vert
	\Lambda ( X(t,x,w) )
\Vert_{p,s}^{2}
\mathrm{d}t
< \infty
$.
\end{itemize}
\end{Thm}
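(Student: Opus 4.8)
The plan is to deduce the theorem from an a priori estimate for stochastic integrals of \emph{adapted step processes}, and then pass to the limit by cutting off near $t=0$.

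\smallskip
\noindent\emph{Step 1 (a priori inequality).} I would first prove that for every $0<a<b\leqslant T$, every $i$, and every adapted step process $\Phi_{t}=\sum_{j=1}^{N}G_{j}\mathbf{1}_{(\tau_{j-1},\tau_{j}]}(t)$ with $a=\tau_{0}<\cdots<\tau_{N}=b$ and each $G_{j}\in\mathbb{D}_{p}^{s}$ being $\mathcal{F}_{\tau_{j-1}}^{w}$-measurable,
\begin{equation*}
\Big\Vert\int_{a}^{b}\Phi_{t}\,\mathrm{d}w^{i}(t)\Big\Vert_{p,s}\leqslant C_{p,s}\Big(\int_{a}^{b}\Vert\Phi_{t}\Vert_{p,s}^{2}\,\mathrm{d}t\Big)^{1/2},\qquad \int_{a}^{b}\Phi_{t}\,\mathrm{d}w^{i}(t):=\sum_{j}G_{j}\big(w^{i}(\tau_{j})-w^{i}(\tau_{j-1})\big).
\end{equation*}
The key observation is that if $G_{j}\in\mathcal{C}_{n}$ then its chaos kernel is supported in $[0,\tau_{j-1}]^{n}$ while $w^{i}(\tau_{j})-w^{i}(\tau_{j-1})$ is a first-chaos element with kernel supported in $(\tau_{j-1},\tau_{j}]$, so the relevant contraction vanishes and $G_{j}(w^{i}(\tau_{j})-w^{i}(\tau_{j-1}))$ lies \emph{exactly} in $\mathcal{C}_{n+1}$. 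Hence $(I-\mathcal{L})^{s/2}$ commutes with this elementary stochastic integral up to the Ornstein--Uhlenbeck multiplier $M_{s}$ which acts on $\mathcal{C}_{n+1}$ by the scalar $((n+2)/(n+1))^{s/2}=1+O(1/n)$; such a multiplier is bounded on $L_{p}$ for every $p\in(1,\infty)$ by a Meyer-type multiplier theorem. Writing $\widetilde{G}_{j}:=(I-\mathcal{L})^{s/2}G_{j}\in L_{p}$ (still $\mathcal{F}_{\tau_{j-1}}^{w}$-measurable) we get $(I-\mathcal{L})^{s/2}\int_{a}^{b}\Phi_{t}\,\mathrm{d}w^{i}(t)=M_{s}\big(\sum_{j}\widetilde{G}_{j}(w^{i}(\tau_{j})-w^{i}(\tau_{j-1}))\big)$, where the inner sum is an ordinary It\^o integral of the $L_{p}$-valued adapted step process $\widetilde{\Phi}$. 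The Burkholder--Davis--Gundy inequality followed by Minkowski's integral inequality --- the only place where $p\geqslant2$ is used --- gives $\Vert\sum_{j}\widetilde{G}_{j}(w^{i}(\tau_{j})-w^{i}(\tau_{j-1}))\Vert_{p}\leqslant C_{p}(\int_{a}^{b}\Vert\widetilde{\Phi}_{t}\Vert_{p}^{2}\,\mathrm{d}t)^{1/2}=C_{p}(\int_{a}^{b}\Vert\Phi_{t}\Vert_{p,s}^{2}\,\mathrm{d}t)^{1/2}$, and the claim follows.

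\smallskip
\noindent\emph{Step 2 (away from $t=0$).} Fix $0<a<b\leqslant T$. Since $t\mapsto\Lambda(X(t,x,w))$ is continuous, hence uniformly continuous, from $[a,b]$ into $\mathbb{D}_{p}^{s}$, the left-endpoint step processes $\Phi^{(N)}_{t}:=\sum_{j}\Lambda(X(\tau_{j-1},x,w))\mathbf{1}_{(\tau_{j-1},\tau_{j}]}(t)$ over partitions of $[a,b]$ with mesh tending to $0$ converge to $\Lambda(X(\cdot,x,w))$ uniformly on $[a,b]$ in $\mathbb{D}_{p}^{s}$; each $\Lambda(X(\tau_{j-1},x,w))$ is $\mathcal{F}_{\tau_{j-1}}^{w}$-measurable because $X(\tau_{j-1},x,w)$ is. By Step 1 the integrals $\int_{a}^{b}\Phi^{(N)}_{t}\,\mathrm{d}w^{i}(t)=\sum_{j}\Lambda(X(\tau_{j-1},x,w))(w^{i}(\tau_{j})-w^{i}(\tau_{j-1}))$ form a Cauchy sequence in $\mathbb{D}_{p}^{s}$. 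Its limit coincides with the integral $\int_{a}^{b}\Lambda(X(t,x,w))\,\mathrm{d}w^{i}(t)$ defined analogously to $(\ref{stoc_int})$: indeed, for $\mathcal{F}_{\tau_{j-1}}^{w}$-measurable $G$ one has $\mathbf{E}[G(w^{i}(\tau_{j})-w^{i}(\tau_{j-1}))J]=\mathbf{E}[G\int_{\tau_{j-1}}^{\tau_{j}}(D_{t}J)^{i}\,\mathrm{d}t]$ for all $J\in\mathbb{D}^{\infty}$ (a Skorokhod-type integration by parts, valid for generalized $G$ by continuity), so $\mathbf{E}[(\int_{a}^{b}\Phi^{(N)}_{t}\mathrm{d}w^{i}(t))J]=\int_{a}^{b}\mathbf{E}[\Phi^{(N)}_{t}(D_{t}J)^{i}]\,\mathrm{d}t\to\int_{a}^{b}\mathbf{E}[\Lambda(X(t,x,w))(D_{t}J)^{i}]\,\mathrm{d}t$ by the uniform convergence. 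Hence $\int_{a}^{b}\Lambda(X(t,x,w))\,\mathrm{d}w^{i}(t)\in\mathbb{D}_{p}^{s}$, and letting $N\to\infty$ in Step 1 gives $\Vert\int_{a}^{b}\Lambda(X(t,x,w))\,\mathrm{d}w^{i}(t)\Vert_{p,s}\leqslant C_{p,s}(\int_{a}^{b}\Vert\Lambda(X(t,x,w))\Vert_{p,s}^{2}\,\mathrm{d}t)^{1/2}$.

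\smallskip
\noindent\emph{Step 3 (letting $a\downarrow0$).} Note that $\int_{0}^{T}\Vert\Lambda(X(t,x,w))\Vert_{p,s}^{2}\,\mathrm{d}t<\infty$ in both cases: under (ii) by hypothesis, and under (i) because $t\mapsto\Vert\Lambda(X(t,x,w))\Vert_{p,s}$ is continuous on $(0,T]$ and tends to $0$ as $t\downarrow0$, hence is bounded. For $0<a'<a\leqslant T$, Step 2 on $[a',a]$ gives $\Vert\int_{a'}^{a}\Lambda(X(t,x,w))\,\mathrm{d}w^{i}(t)\Vert_{p,s}\leqslant C_{p,s}(\int_{a'}^{a}\Vert\Lambda(X(t,x,w))\Vert_{p,s}^{2}\,\mathrm{d}t)^{1/2}$, and the right-hand side tends to $0$ as $a\downarrow0$: under (ii) because $\int_{0}^{a}\Vert\Lambda(X(t,x,w))\Vert_{p,s}^{2}\,\mathrm{d}t\to0$, and under (i) because it is dominated by $a\cdot\sup_{0<t\leqslant a}\Vert\Lambda(X(t,x,w))\Vert_{p,s}^{2}\to0$. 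Thus $\{\int_{a}^{T}\Lambda(X(t,x,w))\,\mathrm{d}w^{i}(t)\}_{0<a\leqslant T}$ is Cauchy in $\mathbb{D}_{p}^{s}$ as $a\downarrow0$, with some limit $\Psi\in\mathbb{D}_{p}^{s}$; pairing against $J\in\mathbb{D}^{\infty}$ and applying dominated convergence (the integrand obeys $\vert\mathbf{E}[\Lambda(X(t,x,w))(D_{t}J)^{i}]\vert\leqslant\Vert\Lambda(X(t,x,w))\Vert_{p,s}\Vert(D_{t}J)^{i}\Vert_{q,-s}$ with $1/p+1/q=1$, and $\int_{0}^{T}\Vert(D_{t}J)^{i}\Vert_{q,-s}^{2}\,\mathrm{d}t<\infty$ as in the proof of Proposition $\ref{ver_pair}$) identifies $\Psi$ with $\int_{0}^{T}\Lambda(X(t,x,w))\,\mathrm{d}w^{i}(t)$, which therefore belongs to $\mathbb{D}_{p}^{s}$.

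\smallskip
The main obstacle is Step 1, specifically the $L_{p}$-boundedness of the multiplier $M_{s}$ when $s$ is not a non-negative even integer: for $p=2$ everything is transparent from $(\ref{Norm2})$ (and one can in fact run the whole argument on chaos expansions, using the Krylov--Veretennikov formula for $J_{n}[\Lambda(X_{t})]$), and for non-negative integer $s$ one may bound the iterated Malliavin derivatives of the integral via Meyer's inequalities and BDG directly, but the general case rests on a Meyer-type multiplier theorem for Ornstein--Uhlenbeck multipliers of symbol $1+O(1/n)$.
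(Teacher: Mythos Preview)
Your proof is correct and the core engine is the same as the paper's: reduce to adapted step processes, observe that multiplying an $\mathcal{F}_{\tau_{j-1}}^{w}$-measurable $n$-th chaos element by $w^{i}(\tau_{j})-w^{i}(\tau_{j-1})$ lands exactly in $\mathcal{C}_{n+1}$, use Meyer's $L_{p}$-multiplier theorem to commute $(I-\mathcal{L})^{s/2}$ past the discrete integral up to a bounded multiplier, and then apply BDG. This is precisely the paper's Proposition~\ref{discrete-est}, and the multiplier theorem you flag as the ``main obstacle'' is exactly what the paper invokes.

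The genuine difference is what comes after BDG. The paper applies Jensen's inequality to obtain
\[
\Big\Vert\sum_{k}F(t_{k-1})\Delta w^{i}_{k}\Big\Vert_{p,s}^{p}\leqslant c\sum_{k}\Vert F(t_{k-1})\Vert_{p,s}^{p}\,\Delta t_{k},
\]
an $L^{p}$-in-time bound, whereas you apply Minkowski's integral inequality (valid since $p/2\geqslant1$) to obtain the sharper $L^{2}$-in-time bound
\[
\Big\Vert\sum_{k}F(t_{k-1})\Delta w^{i}_{k}\Big\Vert_{p,s}\leqslant C\Big(\sum_{k}\Vert F(t_{k-1})\Vert_{p,s}^{2}\,\Delta t_{k}\Big)^{1/2}.
\]
This buys you a unified treatment: in both cases (i) and (ii) one has $\int_{0}^{T}\Vert\Lambda(X_{t})\Vert_{p,s}^{2}\,\mathrm{d}t<\infty$, so the integrals $\int_{a}^{T}$ form a Cauchy net in $\mathbb{D}_{p}^{s}$ as $a\downarrow0$ and you identify the limit by pairing. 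The paper instead splits: in case (i) it runs a dyadic Cauchy argument (Proposition~\ref{dyad-Cauchy}) and identifies the limit in a weaker space (Proposition~\ref{int_as_limit}); in case (ii) the $L^{p}$-in-time bound only gives boundedness, so the paper invokes Alaoglu's theorem to extract a weak limit in $\mathbb{D}_{p}^{s}$ and identifies it via the classical $L_{2}$ It\^o theory (which is where $s\geqslant0$ is genuinely used in the paper's argument). Your route is more direct and avoids the weak-compactness detour entirely.
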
 

\begin{Rm} 
\begin{itemize}
\item[(a)]
See also a remark just after Lemma \ref{frac_dom}
for verification of the continuity assumption.

\item[(b)]
From the proof of Theorem \ref{reg-pres},
we would find that
$$
\int_{t_{0}}^{T}
\Lambda
(
	X_{t}
)
\mathrm{d}w^{i}(t)
\in
\mathbb{D}_{p}^{s}
\quad
\text{for any $t_{0} > 0$}
$$
and $i=1, 2, \cdots , d$ if
$
(0,T] \ni t
\mapsto
\Lambda (X_{t}) \in \mathbb{D}_{p}^{s}
$
is continuous.
\end{itemize}
\end{Rm} 

The proof of Theorem \ref{reg-pres}
mainly consists of the following series of Propositions
\ref{discrete-est},
\ref{dyad-Cauchy}
and
\ref{int_as_limit}.
We will give the proof at the last of this
section.

Before the next definition, we note that
$
\mathbf{E} [ F \vert \mathcal{F}_{t}^{w} ]
\in \mathbb{D}^{\infty}
$
for every $t \geqslant 0$
if $F \in \mathbb{D}^{\infty}$.

\begin{Def} 
Let $t \geqslant 0$.
We say that a generalized Wiener functional
$F \in \mathbb{D}^{-\infty}$
is
{\it $\mathcal{F}_{t}^{w}$-measurable}
if it holds that
$
\mathbf{E}
[ FG ]
=
\mathbf{E}
[
	F \mathbf{E}[G \vert \mathcal{F}_{t}^{w}]
]
$
for any $G \in \mathbb{D}^{\infty}$.
\end{Def} 

\begin{Prop} 
\label{discrete-est} 
Let $s \in \mathbb{R}$ and $p \geqslant 2$.
Then there exists
$
c = c(p,s,T) > 0
$
such that,
for any
mapping
$
F: (0,T] \ni t \mapsto F(t) \in \mathbb{D}_{p}^{s}
$
with
$F(t)$ is $\mathcal{F}_{t}^{w}$-measurable
for any $t \in (0,T]$,
any division
$
0 = t_{0} < t_{1} < \cdots < t_{n} = T
$,
and any
$i=1, 2, \cdots , d$,
we have
\begin{equation*}
\begin{split}
\Vert
\sum_{k=2}^{n}
F(t_{k-1})
( w^{i}(t_{k}) - w^{i}(t_{k-1}) )
\Vert_{p,s}^{p}
\leqslant
c
\sum_{k=2}^{n}
\Vert F(t_{k-1}) \Vert_{p,s}^{p}
( t_{k} - t_{k-1} ).
\end{split}
\end{equation*}
\end{Prop}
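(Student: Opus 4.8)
The plan is to reduce to the case $s=0$, treat that case with a discrete Burkholder-Davis-Gundy inequality, and restore the index $s$ afterwards by conjugating with $(I-\mathcal{L})^{s/2}$ and absorbing the resulting operator through a Meyer-type multiplier estimate. Throughout put $\Delta_{k}:=w^{i}(t_{k})-w^{i}(t_{k-1})$ and $M:=\sum_{k=2}^{n}F(t_{k-1})\Delta_{k}$. Since $F(t_{k-1})$ is $\mathcal{F}_{t_{k-1}}^{w}$-measurable and $\Delta_{k}$ is independent of $\mathcal{F}_{t_{k-1}}^{w}$ with zero mean, the terms $F(t_{k-1})\Delta_{k}$, $k=2,\dots,n$, form a martingale-difference sequence for $(\mathcal{F}_{t_{j}}^{w})_{j}$.

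First I would settle $s=0$, where $F(t)\in L_{p}$. By the discrete Burkholder-Davis-Gundy inequality (valid since $p\geqslant 2$),
\[
\Vert M\Vert_{p}^{p}\leqslant C_{p}\,\mathbf{E}\Bigl[\bigl(\sum_{k=2}^{n}F(t_{k-1})^{2}\Delta_{k}^{2}\bigr)^{p/2}\Bigr]=C_{p}\,\Bigl\Vert\sum_{k=2}^{n}F(t_{k-1})^{2}\Delta_{k}^{2}\Bigr\Vert_{p/2}^{p/2}.
\]
The triangle inequality in $L_{p/2}$ (legitimate because $p/2\geqslant 1$) together with the independence of $F(t_{k-1})$ and $\Delta_{k}$ gives
\[
\Bigl\Vert\sum_{k=2}^{n}F(t_{k-1})^{2}\Delta_{k}^{2}\Bigr\Vert_{p/2}\leqslant\sum_{k=2}^{n}\Vert F(t_{k-1})\Vert_{p}^{2}\,\Vert\Delta_{k}\Vert_{p}^{2}=\gamma_{p}\sum_{k=2}^{n}\Vert F(t_{k-1})\Vert_{p}^{2}\,(t_{k}-t_{k-1}),
\]
with $\gamma_{p}:=(\mathbf{E}[|Z|^{p}])^{2/p}$ for a standard Gaussian $Z$, since $\Vert\Delta_{k}\Vert_{p}^{2}=\gamma_{p}(t_{k}-t_{k-1})$. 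Finally, Jensen's inequality applied to the probability measure on $\{2,\dots,n\}$ of masses $(t_{k}-t_{k-1})/(t_{n}-t_{1})$ (convexity of $x\mapsto x^{p/2}$, and $t_{n}-t_{1}\leqslant T$) turns $\bigl(\sum_{k}\Vert F(t_{k-1})\Vert_{p}^{2}(t_{k}-t_{k-1})\bigr)^{p/2}$ into $T^{p/2-1}\sum_{k}\Vert F(t_{k-1})\Vert_{p}^{p}(t_{k}-t_{k-1})$. Chaining the three displays proves the statement for $s=0$ with $c(p,0):=C_{p}^{p}\gamma_{p}^{p/2}T^{p/2-1}$.

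For general $s$ I would conjugate with $(I-\mathcal{L})^{s/2}$. Set $G(t):=(I-\mathcal{L})^{s/2}F(t)\in L_{p}$, so $\Vert G(t)\Vert_{p}=\Vert F(t)\Vert_{p,s}$; two structural facts are needed. (a) $G(t)$ is again $\mathcal{F}_{t}^{w}$-measurable: $(I-\mathcal{L})^{s/2}$ (multiplication by $(1+m)^{s/2}$ on $\mathcal{C}_{m}$) and $\mathbf{E}[\,\cdot\,|\mathcal{F}_{t}^{w}]$ (which preserves each $\mathcal{C}_{m}$) both commute with every projection $J_{m}$, hence with one another, and the measurability of $G(t)$ then follows from that of $F(t)$ using the self-adjointness of $(I-\mathcal{L})^{s/2}$. (b) As $F(t_{k-1})$ is $\mathcal{F}_{t_{k-1}}^{w}$-measurable, the chaos kernels of $J_{m}F(t_{k-1})$ are supported in $[0,t_{k-1}]^{m}$ while the first-chaos kernel of $\Delta_{k}$ is supported in $(t_{k-1},t_{k}]$; the time-supports being disjoint, the contraction in the product formula vanishes and $J_{m}F(t_{k-1})\cdot\Delta_{k}$ lies entirely in $\mathcal{C}_{m+1}$. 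Hence, writing $\Phi_{s}$ for the operator acting on $\mathcal{C}_{j}$, $j\geqslant 1$, by the scalar $\bigl((j+1)/j\bigr)^{s/2}$, one has the chaos-by-chaos identity
\[
(I-\mathcal{L})^{s/2}\bigl[F(t_{k-1})\Delta_{k}\bigr]=\Phi_{s}\bigl[G(t_{k-1})\Delta_{k}\bigr],\qquad k=2,\dots,n,
\]
and therefore $(I-\mathcal{L})^{s/2}M=\Phi_{s}\bigl[\sum_{k=2}^{n}G(t_{k-1})\Delta_{k}\bigr]$ (both $M$ and $\sum_{k}G(t_{k-1})\Delta_{k}$ lie in $\bigoplus_{j\geqslant 1}\mathcal{C}_{j}$ by (b), so $\Phi_{s}$ is applied on its natural domain). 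The multiplier $j\mapsto\bigl((j+1)/j\bigr)^{s/2}=(1+1/j)^{s/2}$ is the value at $1/j$ of a function analytic near $0$, so Meyer's multiplier theorem shows $\Phi_{s}$ is bounded on $L_{p}$ for every $1<p<\infty$. Consequently
\[
\Vert M\Vert_{p,s}=\Vert(I-\mathcal{L})^{s/2}M\Vert_{p}\leqslant\Vert\Phi_{s}\Vert_{L_{p}\to L_{p}}\Bigl\Vert\sum_{k=2}^{n}G(t_{k-1})\Delta_{k}\Bigr\Vert_{p},
\]
and applying the case $s=0$ already proved to the adapted family $\{G(t_{k-1})\}_{k}$ (recalling $\Vert G(t_{k-1})\Vert_{p}=\Vert F(t_{k-1})\Vert_{p,s}$) closes the argument, with $c(p,s):=\Vert\Phi_{s}\Vert_{L_{p}\to L_{p}}^{p}\,c(p,0)$.

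The crux is part (b) together with the $L_{p}$-boundedness of $\Phi_{s}$: one must check with care that multiplying by the increment $\Delta_{k}$ raises the chaos order by \emph{exactly} one on $\mathcal{F}_{t_{k-1}}^{w}$-measurable functionals, so that conjugation by $(I-\mathcal{L})^{s/2}$ produces precisely the scalar multiplier $\bigl((j+1)/j\bigr)^{s/2}$ with no lower-order remainder, and then that this slowly varying multiplier is $L_{p}$-bounded; the $s=0$ estimate and the measurability bookkeeping in (a) are routine. A minor point worth recording is the meaning, as an element of $\mathbb{D}_{p}^{s}$, of the product $F(t_{k-1})\Delta_{k}$ when $F(t_{k-1})\in\mathbb{D}_{p}^{s}$: it is the element whose image under $(I-\mathcal{L})^{s/2}$ is $\Phi_{s}[G(t_{k-1})\Delta_{k}]\in L_{p}$, which coincides with the usual product when $F(t_{k-1})\in\mathbb{D}^{\infty}$.
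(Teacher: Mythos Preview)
Your proposal is correct and follows essentially the same approach as the paper: both exploit that multiplying an $\mathcal{F}_{t_{k-1}}^{w}$-measurable functional by the increment $\Delta_{k}$ shifts each chaos by exactly one, reduce via the Meyer multiplier $((j+1)/j)^{s/2}$ to the case $s=0$, and then conclude by Burkholder--Davis--Gundy plus Jensen. The only cosmetic differences are that the paper does the $s$-reduction first and handles the $L_{p}$ step by rewriting the discrete sum as an It\^o integral and invoking continuous BDG on the quadratic variation (yielding $\sum_{k}(t_{k}-t_{k-1})$ directly), whereas you treat $s=0$ first and use discrete BDG followed by the triangle inequality in $L_{p/2}$ and the independence factoring $\Vert F(t_{k-1})^{2}\Delta_{k}^{2}\Vert_{p/2}=\Vert F(t_{k-1})\Vert_{p}^{2}\Vert\Delta_{k}\Vert_{p}^{2}$; both routes lead to the same bound.
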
 
\begin{proof} 
Let
$
0 = t_{0} < t_{1} < \cdots < t_{n} = T
$
be any division of $[0,T]$
and set
$$
\Phi
:=
\sum_{k=2}^{n}
F ( t_{k-1} )
( w^{i}(t_{k}) - w^{i}(t_{k-1}) ) .
$$
	To calculate
$
\Vert
\Phi
\Vert_{p,s}
$,
we begin with the chaos expansion of each
$
F ( t_{k-1} )
( w^{i}(t_{k}) - w^{i}(t_{k-1}) )
$.
Noting
$
\mathbf{E}
[
F ( t_{k-1} )
( w^{i}(t_{k}) - w^{i}(t_{k-1}) )
] = 0
$
and
\begin{equation}
\label{reduction} 
J_{m} [ F ( t_{k-1} )
( w^{i}(t_{k}) - w^{i}(t_{k-1}) ) ]
=
J_{m-1} [ F ( t_{k-1} ) ]
( w^{i}(t_{k}) - w^{i}(t_{k-1}) )
\end{equation}
for $m \geqslant 1$
(here we have used the condition that $F(t_{k-1})$ is
$\mathcal{F}_{t_{k-1}}^{w}$-measurable),
one finds that the chaos expansion is given by
\begin{equation*}
\begin{split}
&
F ( t_{k-1} )
( w^{i}(t_{k}) - w^{i}(t_{k-1}) )
=
\sum_{m=1}^{\infty}
J_{m-1}[ F ( t_{k-1} ) ]
( w^{i}(t_{k}) - w^{i}(t_{k-1}) ),
\end{split}
\end{equation*}
where
$
F ( t_{k-1} )
=
\sum_{m=0}^{\infty}
J_{m}[ F ( t_{k-1} ) ]
$
is the chaos expansion of $F ( t_{k-1} )$.
Hence, by using (\ref{reduction}),
we have
\begin{equation*}
\begin{split}
&
( I - \mathcal{L} )^{s/2}
\sum_{k=2}^{n}
F ( t_{k-1} )
( w^{i}(t_{k}) - w^{i}(t_{k-1}) ) \\
&=
\sum_{k=2}^{n}
\sum_{m=1}^{\infty}
( 1 + m )^{s/2}
J_{m-1}[ F ( t_{k-1} ) ]
( w^{i}(t_{k}) - w^{i}(t_{k-1}) ) \\
&=
\sum_{m=0}^{\infty}
\frac{ ( 2 + m )^{s/2} }{ ( 1 + m )^{s/2} }
\sum_{k=2}^{n}
( 1 + m )^{s/2}
J_{m}[ F ( t_{k-1} ) ]
( w^{i}(t_{k}) - w^{i}(t_{k-1}) ) .
\end{split}
\end{equation*}
By Meyer's $L_{p}$-multiplier theorem
(see e.g. \cite[Chapter V, Section 8, Lemma 8.2]{IW}),
there exists $c^{\prime} = c^{\prime} (p,s) > 0$ such that
\begin{equation*}
\begin{split}
&
\Vert
\sum_{k=2}^{n}
F ( t_{k-1} )
( w^{i}(t_{k}) - w^{i}(t_{k-1}) )
\Vert_{p,s} \\
&=
\Vert
( I - \mathcal{L} )^{s/2}
\sum_{k=2}^{n}
F ( t_{k-1} )
( w^{i}(t_{k}) - w^{i}(t_{k-1}) )
\Vert_{p} \\
&\leqslant
c^{\prime}
\Vert
\sum_{m=0}^{\infty}
\sum_{k=2}^{n}
( 1 + m )^{s/2}
J_{m}[ F ( t_{k-1} ) ]
( w^{i}(t_{k}) - w^{i}(t_{k-1}) )
\Vert_{p} \\
&=
c^{\prime}
\big\Vert
\sum_{k=2}^{n}
\big[
(I-\mathcal{L})^{s/2}
F ( t_{k-1} )
\big]
( w^{i}(t_{k}) - w^{i}(t_{k-1}) )
\big\Vert_{p}.
\end{split}
\end{equation*}
Note that
$
(I-\mathcal{L})^{s/2}
F ( t_{k-1} )
\in L_{2}
$
and is $\mathcal{F}_{t_{k-1}}^{w}$-measurable.
Hence the last quantity in the $L_{p}$-norm
can be written as
$$
\int_{0}^{T}
\sum_{k=2}^{n}
\big[
(I-\mathcal{L})^{s/2}
F ( t_{k-1} )
\big]
1_{ (t_{k-1}, t_{k}] } (t)
\mathrm{d}w^{i}(t) .
$$
Thus by using the Burkholder-Davis-Gundy inequality,
we get
\begin{equation*}
\begin{split}
&
\Vert
\sum_{k=2}^{n}
F ( t_{k-1} )
( w^{i}(t_{k}) - w^{i}(t_{k-1}) )
\Vert_{p,s}^{p}
\leqslant
c^{\prime\prime}
\mathbf{E}
\big[
\Big\{
	\int_{0}^{T}
	\sum_{k=2}^{n}
	\big[
	(I-\mathcal{L})^{s/2}
	F ( t_{k-1} )
	\big]^{2}
	1_{ (t_{k-1}, t_{k}] } (t)
\mathrm{d}t
\Big\}^{p/2}
\big]
\end{split}
\end{equation*}
for some constant
$c^{\prime\prime} > 0$.
Finally, using the assumption $p \geqslant 2$
and the Jensen inequality,
we reached
\begin{equation*}
\begin{split}
\Vert
\sum_{k=2}^{n}
F ( t_{k-1} )
( w^{i}(t_{k}) - w^{i}(t_{k-1}) )
\Vert_{p,s}^{p}
\leqslant
c
\sum_{k=2}^{n}
\Vert
	F ( t_{k-1} )
\Vert_{p,s}^{p}
( t_{k} - t_{k-1} )
\end{split}
\end{equation*}
for some constant
$c > 0$.
\end{proof} 

Given $n \in \mathbb{N}$, $i=1, 2, \cdots , d$
and the dyadic division
$
\{ t_{k} := kT/2^{n} \}_{k=0}^{2^{n}}
$
of $[0,T]$,
we define
$$
\Phi_{n}
:=
\sum_{k=2}^{2^{n}}
\Lambda ( X ( t_{k-1}, x, w ) )
( w^{i}(t_{k}) - w^{i}(t_{k-1}) ).
$$

\begin{Prop} 
\label{dyad-Cauchy} 
Let $s \in \mathbb{R}$, $p \geqslant 2$
and suppose that
\begin{itemize}
\item[(i)]
$
(0,T] \ni t
\mapsto
\Lambda ( X(t,x,w) ) \in \mathbb{D}_{p}^{s}
$
is continuous and

\item[(ii)]
$
\lim_{t \downarrow 0}
\Vert
	\Lambda ( X(t,x,w) )
\Vert_{p,s}
= 0
$.
\end{itemize}
Then we have
$
\Vert \Phi_{n} - \Phi_{m} \Vert_{p,s}
\to 0
$
as $n,m \to \infty$.
\end{Prop}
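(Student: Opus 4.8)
The plan is to realise $\Phi_n-\Phi_m$ as a single ``discrete stochastic sum'' of the type controlled by Proposition \ref{discrete-est}, and then to exploit the uniform continuity of $t\mapsto\Lambda(X(t,x,w))$ on the whole closed interval $[0,T]$, which is available thanks to hypotheses (i) and (ii). First I would record that, by (ii), the element $\Lambda(X(t,x,w))$ itself converges to $0$ in $\mathbb{D}_p^s$ as $t\downarrow0$, so that putting $\Psi(0):=0$ and $\Psi(t):=\Lambda(X(t,x,w))$ for $t>0$ defines a \emph{continuous} map $\Psi:[0,T]\to\mathbb{D}_p^s$; being continuous on a compact interval, $\Psi$ is uniformly continuous. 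I would also note (routine: approximate $\Lambda$ by test functions and use continuity of the pull-back together with the fact that $X(t,x,w)$ is $\mathcal{F}_t^w$-measurable) that each $\Psi(t)$ is $\mathcal{F}_t^w$-measurable in the sense of the definition preceding Proposition \ref{discrete-est}.

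Next I would set up the refinement. Assume $m<n$ (the case $n<m$ is symmetric), write $t_k^{(n)}=kT/2^n$, and for $t\in(0,T]$ let $\eta_m(t)$ be the largest point of $\{jT/2^m:0\leqslant j\leqslant 2^m\}$ not exceeding $t$, so that $\eta_m(t)$ may equal $0$ and always $0\leqslant t-\eta_m(t)<T/2^m$. Writing each increment $w^i(t_a^{(m)})-w^i(t_{a-1}^{(m)})$ of the coarse partition as the sum of the fine increments it contains, and using $\Psi(0)=0$ to absorb the coarse block $(0,T/2^m]$ which is omitted from $\Phi_m$ (its sum starting at $k=2$), I claim one arrives at the identity
\[
\Phi_n-\Phi_m
=
\sum_{k=2}^{2^n}
\bigl[\Psi(t_{k-1}^{(n)})-\Psi(\eta_m(t_{k-1}^{(n)}))\bigr]
\bigl(w^i(t_k^{(n)})-w^i(t_{k-1}^{(n)})\bigr).
\]
Checking this is the one step that needs care: for $2\leqslant k\leqslant 2^{n-m}$ one has $\eta_m(t_{k-1}^{(n)})=0$, so the bracket reduces to $\Psi(t_{k-1}^{(n)})=\Lambda(X(t_{k-1}^{(n)},x,w))$, which matches exactly the terms of $\Phi_n$ having no counterpart in $\Phi_m$; for $k>2^{n-m}$ the bracket is the genuine difference between the value of $\Psi$ at the fine left-endpoint and at the left-endpoint of the coarse block containing it.

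The function $t\mapsto\Psi(t)-\Psi(\eta_m(t))$ takes values in $\mathbb{D}_p^s$ and is $\mathcal{F}_t^w$-measurable for each $t$ (since $\eta_m(t)\leqslant t$ and $\Psi(0)=0$), so Proposition \ref{discrete-est} applies to the dyadic division at level $n$ and yields, with $c=c(p,s)$,
\[
\Vert\Phi_n-\Phi_m\Vert_{p,s}^{p}
\leqslant
c\sum_{k=2}^{2^n}
\bigl\Vert\Psi(t_{k-1}^{(n)})-\Psi(\eta_m(t_{k-1}^{(n)}))\bigr\Vert_{p,s}^{p}
\,(t_k^{(n)}-t_{k-1}^{(n)}).
\]
Given $\varepsilon>0$, take $\delta>0$ from the uniform continuity of $\Psi$; as soon as $T/2^m<\delta$ we have $t_{k-1}^{(n)}-\eta_m(t_{k-1}^{(n)})<\delta$ for every $k$, whence each norm on the right is $<\varepsilon$, and since $\sum_{k=2}^{2^n}(t_k^{(n)}-t_{k-1}^{(n)})\leqslant T$ we get $\Vert\Phi_n-\Phi_m\Vert_{p,s}\leqslant(cT)^{1/p}\varepsilon$ for all $n>m$ with $T/2^m<\delta$. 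As $\varepsilon>0$ was arbitrary this gives $\Vert\Phi_n-\Phi_m\Vert_{p,s}\to0$ as $n,m\to\infty$, i.e.\ $(\Phi_n)$ is Cauchy in $\mathbb{D}_p^s$. The only real obstacle is the combinatorial bookkeeping in the displayed identity for $\Phi_n-\Phi_m$ (the missing first coarse block and the convention $\Psi(0)=0$); once that is in place, the estimate follows at once from Proposition \ref{discrete-est} and uniform continuity.
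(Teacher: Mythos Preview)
Your proof is correct and follows essentially the same approach as the paper: write $\Phi_n-\Phi_m$ as a single discrete sum over the finer dyadic partition, apply Proposition~\ref{discrete-est}, and conclude by uniform continuity of $t\mapsto\Lambda(X_t)$ on $[0,T]$. Your treatment is in fact slightly more careful than the paper's, since introducing $\Psi(0):=0$ makes the bookkeeping for the first coarse block (the terms of the finer sum lying in $(0,T/2^m]$, which have no counterpart in the coarser sum) completely transparent.
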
 
\begin{proof} 
Suppose that $n<m$ and let
$t_{k} := kT/2^{n}$
and
$u_{l} := lT/2^{m}$.
Then we have
$$
\Phi_{n} - \Phi_{m}
=
\sum_{k=2}^{2^{n}}
\sum_{
	\substack{
		l \in \{ 0,1, \cdots , 2^{m} \} : \\
		t_{k-1} < u_{l} \leqslant t_{k}
	}
}
[ \Lambda ( X_{t_{k-1}} ) - \Lambda ( X_{u_{l}} ) ]
( w^{i}(u_{l}) - w^{i}(u_{l-1}) )
$$
and hence by Proposition \ref{discrete-est},
we obtain
\begin{equation*}
\begin{split}
\Vert
	\Phi_{n} - \Phi_{m}
\Vert_{p,s}^{p}
\leqslant
c
\sum_{k=2}^{2^{n}}
\sum_{
	\substack{
		l \in \{ 0,1, \cdots , 2^{m} \} : \\
		t_{k-1} < u_{l} \leqslant t_{k}
	}
}
\Vert
	\Lambda ( X_{t_{k-1}} ) - \Lambda ( X_{u_{l}} )
\Vert_{p, s}^{p}
( u_{l} - u_{l-1} ),
\end{split}
\end{equation*}
for some constant $c>0$.
By the assumption,
the mapping
$
( 0, T ] \ni t \mapsto \Lambda (X_{t}) \in \mathbb{D}_{p}^{s}
$
is uniformly continuous,
from which, we easily get
$
\Vert
	\Phi_{n} - \Phi_{m}
\Vert_{p,s}
\to 0
$
as $n \to \infty$.
\end{proof} 

\begin{Prop} 
\label{int_as_limit} 
Suppose $k \in \mathbb{Z}_{\geqslant 0}$
and
\begin{itemize}
\item[(i)]
$
(0,T] \ni t
\mapsto
\Lambda ( X(t,x,w) ) \in \mathbb{D}_{2}^{-k}
$
is continuous;

\item[(ii)]
$
\lim_{t \downarrow 0}
\Vert \Lambda ( X(t,x,w) ) \Vert_{2,-k}
= 0
$.
\end{itemize}
Then we have
$$
\Vert
	\Phi_{n}
	-
	\int_{0}^{T}
	\Lambda ( X(t,x,w) )
	\mathrm{d}w^{i}(t)
\Vert_{2,-(k+1)}
\to 0
\quad
\text{as $n \to \infty$.}
$$
\end{Prop}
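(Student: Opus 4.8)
The plan is to run the argument in two stages: first show that $(\Phi_n)_{n\geqslant1}$ converges in $\mathbb{D}_2^{-k}$, and then identify the limit with the stochastic integral $\int_0^T\Lambda(X(t,x,w))\,\mathrm{d}w^i(t)$ defined via the pairing (\ref{stoc_int}). For the first stage, observe that hypotheses (i) and (ii) here are precisely those of Proposition \ref{dyad-Cauchy} with $p=2$ and $s=-k$, so $\|\Phi_n-\Phi_m\|_{2,-k}\to0$ as $n,m\to\infty$, whence $\Phi_n\to\Phi$ for some $\Phi\in\mathbb{D}_2^{-k}\hookrightarrow\mathbb{D}_2^{-(k+1)}$. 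On the other hand, since $\sup_{0<t\leqslant T}\|\Lambda(X_t)\|_{2,-k}<\infty$ (finite by continuity on $(0,T]$ together with (ii)), the defining condition $\int_0^T\|\Lambda(X_t)\|_{2,-k}^2\,\mathrm{d}t<\infty$ holds, so by Proposition \ref{ver_pair} (and the Remark following it) $\int_0^T\Lambda(X_t)\,\mathrm{d}w^i(t)$ makes sense and lies in $\mathbb{D}_2^{-(k+1)}$. As $\mathbb{D}^\infty$ is dense in $\mathbb{D}_2^{k+1}=(\mathbb{D}_2^{-(k+1)})'$, it therefore suffices to prove
\[
\mathbf{E}[\Phi J]=\int_0^T\mathbf{E}\bigl[\Lambda(X(t,x,w))(D_tJ)^i\bigr]\,\mathrm{d}t\qquad\text{for every }J\in\mathbb{D}^\infty,
\]
because the right-hand side is $\mathbf{E}\bigl[(\int_0^T\Lambda(X_t)\,\mathrm{d}w^i(t))J\bigr]$ by (\ref{stoc_int}). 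Continuity of the $\mathbb{D}_2^{-k}$--$\mathbb{D}_2^{k}$ pairing gives $\mathbf{E}[\Phi J]=\lim_n\mathbf{E}[\Phi_nJ]$, so everything reduces to computing $\lim_n\mathbf{E}[\Phi_nJ]$.

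Fix $J\in\mathbb{D}^\infty$ and a dyadic partition $t_\ell=\ell T/2^n$, and look at a single summand $\mathbf{E}\bigl[\Lambda(X_{t_{\ell-1}})(w^i(t_\ell)-w^i(t_{\ell-1}))J\bigr]$. The key point is that $\Lambda(X_{t_{\ell-1}})=\Lambda(X(t_{\ell-1},x,w))$ is $\mathcal{F}_{t_{\ell-1}}^w$-measurable in the sense of the Definition preceding Proposition \ref{discrete-est}: this is clear for $\phi(X_{t_{\ell-1}})$ with $\phi\in\mathscr{S}(\mathbb{R}^d)$, and it passes to the limit along an approximating sequence $\phi_m\to\Lambda$ since $X(t_{\ell-1},x,w)$ is $\mathcal{F}_{t_{\ell-1}}^w$-measurable as a classical functional. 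Representing $J$ by the Clark--Ocone formula and using the conditional It\^o isometry yields
\[
\mathbf{E}\bigl[(w^i(t_\ell)-w^i(t_{\ell-1}))J\,\big|\,\mathcal{F}_{t_{\ell-1}}^w\bigr]=\mathbf{E}\Bigl[\textstyle\int_{t_{\ell-1}}^{t_\ell}(D_sJ)^i\,\mathrm{d}s\,\Big|\,\mathcal{F}_{t_{\ell-1}}^w\Bigr],
\]
where $\int_{t_{\ell-1}}^{t_\ell}(D_sJ)^i\,\mathrm{d}s=D_hJ$ for $h\in H$ with $\dot h=\mathbf{1}_{(t_{\ell-1},t_\ell]}e_i$, hence lies in $\mathbb{D}^\infty$. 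Applying the $\mathcal{F}_{t_{\ell-1}}^w$-measurability of $\Lambda(X_{t_{\ell-1}})$ twice (first to insert the conditional expectation, then to remove it) and interchanging the pairing with the $\mathrm{d}s$-integral, we get $\mathbf{E}\bigl[\Lambda(X_{t_{\ell-1}})(w^i(t_\ell)-w^i(t_{\ell-1}))J\bigr]=\int_{t_{\ell-1}}^{t_\ell}\mathbf{E}\bigl[\Lambda(X_{t_{\ell-1}})(D_sJ)^i\bigr]\,\mathrm{d}s$, and summing over $\ell=2,\dots,2^n$,
\[
\mathbf{E}[\Phi_nJ]=\sum_{\ell=2}^{2^n}\int_{t_{\ell-1}}^{t_\ell}\mathbf{E}\bigl[\Lambda(X_{t_{\ell-1}})(D_sJ)^i\bigr]\,\mathrm{d}s.
\]

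To pass to the limit, compare this with $\int_0^T\mathbf{E}[\Lambda(X_s)(D_sJ)^i]\,\mathrm{d}s=\sum_{\ell=1}^{2^n}\int_{t_{\ell-1}}^{t_\ell}\mathbf{E}[\Lambda(X_s)(D_sJ)^i]\,\mathrm{d}s$. The missing $\ell=1$ block is bounded by $M\int_0^{T/2^n}\|D_sJ\|_{2,k}\,\mathrm{d}s\leqslant M(T/2^n)^{1/2}\bigl(\int_0^T\|D_sJ\|_{2,k}^2\,\mathrm{d}s\bigr)^{1/2}$, with $M:=\sup_{0<s\leqslant T}\|\Lambda(X_s)\|_{2,-k}<\infty$ and the last integral finite as in the proof of Proposition \ref{ver_pair}, so this block tends to $0$. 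The remaining difference is $\sum_{\ell=2}^{2^n}\int_{t_{\ell-1}}^{t_\ell}\mathbf{E}\bigl[(\Lambda(X_{t_{\ell-1}})-\Lambda(X_s))(D_sJ)^i\bigr]\,\mathrm{d}s$, dominated by $\int_0^T\varepsilon_n(s)\|D_sJ\|_{2,k}\,\mathrm{d}s$ where $\varepsilon_n(s):=\|\Lambda(X_{t_{\ell-1}})-\Lambda(X_s)\|_{2,-k}$ for $s\in(t_{\ell-1},t_\ell]$; splitting $(0,T]=(0,\delta]\cup(\delta,T]$, using uniform continuity of $s\mapsto\Lambda(X_s)$ on $[\delta,T]$ on the second piece and assumption (ii) on the first, and recalling $\int_0^T\|D_sJ\|_{2,k}\,\mathrm{d}s<\infty$, one lets $n\to\infty$ and then $\delta\downarrow0$ to make this tend to $0$. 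Hence $\lim_n\mathbf{E}[\Phi_nJ]=\int_0^T\mathbf{E}[\Lambda(X_s)(D_sJ)^i]\,\mathrm{d}s$ for all $J\in\mathbb{D}^\infty$, which is the required identity; therefore $\Phi=\int_0^T\Lambda(X_t)\,\mathrm{d}w^i(t)$ in $\mathbb{D}_2^{-(k+1)}$, and $\Phi_n\to\Phi$ in $\mathbb{D}_2^{-k}\hookrightarrow\mathbb{D}_2^{-(k+1)}$ gives the claimed convergence.

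The step I expect to be the main obstacle is the per-summand integration by parts in the second paragraph: one cannot differentiate the generalized functional $\Lambda(X_{t_{\ell-1}})$ directly, so the Brownian increment must be transferred onto $J$ purely through the adaptedness (the $\mathcal{F}_{t_{\ell-1}}^w$-measurability) of $\Lambda(X_{t_{\ell-1}})$ and the Clark--Ocone representation of $J$, with care that the auxiliary functional $\int_{t_{\ell-1}}^{t_\ell}(D_sJ)^i\,\mathrm{d}s$ belongs to $\mathbb{D}^\infty$ so that all the pairings and the interchange of the Bochner integral with the pairing are legitimate.
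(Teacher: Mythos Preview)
Your proof is correct. The paper takes a more direct route: rather than first establishing that $(\Phi_n)$ is Cauchy via Proposition~\ref{dyad-Cauchy} and then identifying the limit by pairing, it observes that $\Phi_n-\int_0^T\Lambda(X_t)\,\mathrm{d}w^i(t)$ is itself a stochastic integral in the sense of \eqref{stoc_int} with integrand $F_n(t)-\Lambda(X_t)$ (where $F_n$ is the dyadic step process $\sum_{\ell\geqslant 2}\Lambda(X_{t_{\ell-1}})\mathbf{1}_{(t_{\ell-1},t_\ell]}$), and then applies the estimate of Proposition~\ref{ver_pair} directly to obtain
\[
\Big\Vert\Phi_n-\int_0^T\Lambda(X_t)\,\mathrm{d}w^i(t)\Big\Vert_{2,-(k+1)}
\leqslant
\Big\{\int_0^{t_1}\|\Lambda(X_t)\|_{2,-k}^2\,\mathrm{d}t
+\sum_{\ell=2}^{2^n}\int_{t_{\ell-1}}^{t_\ell}\|\Lambda(X_{t_{\ell-1}})-\Lambda(X_t)\|_{2,-k}^2\,\mathrm{d}t\Big\}^{1/2},
\]
after which the same uniform-continuity argument you give finishes the job. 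Note, however, that this application of the ver\_pair bound tacitly uses the identity $\Phi_n=\int_0^T F_n(t)\,\mathrm{d}w^i(t)$ in the sense of \eqref{stoc_int}, which is exactly the per-summand integration by parts you spell out (via $\mathcal{F}_{t_{\ell-1}}^w$-measurability and Clark--Ocone) in your second paragraph; so your argument makes explicit a step the paper leaves implicit. What the paper's organization buys is brevity: no detour through Proposition~\ref{dyad-Cauchy} and no separate limit identification. What yours buys is a transparent justification of why the Riemann sum $\Phi_n$ pairs with $J$ the right way, which is the substantive point.
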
 
\begin{proof} 
The same argument in Proposition \ref{ver_pair}
leads us to
\begin{equation*}
\begin{split}
&
\Vert
	\Phi_{n}
		-
		\int_{0}^{T}
		\Lambda ( X_{t} )
		\mathrm{d}w^{i}(t)
\Vert_{2,-(k+1)} \\
&\leqslant
\Big\{
	\int_{0}^{t_{1}}
	\Vert
		\Lambda (X_{t})
	\Vert_{2,-k}^{2}
	\mathrm{d}t
	+
	\sum_{k=2}^{2^{n}}
	\int_{t_{k-1}}^{t_{k}}
	\Vert
		\Lambda (X_{t_{k-1}}) - \Lambda (X_{t})
	\Vert_{2,-k}^{2}
	\mathrm{d}t
\Big\}^{1/2}.
\end{split}
\end{equation*}
By the assumption, we have
$
(0,T] \ni t
\mapsto
\Lambda (X_{t}) \in \mathbb{D}_{2}^{-k}
$
is uniformly continuous,
and hence the above quantity
converges to zero as $n \to \infty$.
\end{proof} 

\begin{proof}[Proof of Theorem \ref{reg-pres}]
(i)
Suppose that
$
\lim_{t \downarrow 0}
\Vert \Lambda (X_{t}) \Vert_{p,s}
= 0
$.
In the first,
suppose that $s<0$.
Let $k$ be the
largest
integer, not exceeding $s$, i.e.,
$k = \max \{ k^{\prime} \in \mathbb{Z} : k^{\prime} \leqslant s \}$.
By the assumption,
the stochastic integral
$\int_{0}^{T} \Lambda (X_{t}) \mathrm{d}w^{i}(t)$
is defined as an element in $\mathbb{D}_{2}^{k-1}$
(see Remark \ref{referee-2}--(b)).
On the other hand,
Proposition \ref{dyad-Cauchy}
tells us that
$\{ \Phi_{n} \}_{n=1}^{\infty}$
is a Cauchy sequence in $\mathbb{D}_{p}^{s}$
and hence converges to some $\Phi \in \mathbb{D}_{p}^{s}$.
Hence $\Phi_{n}$ converges to $\Phi$
also in $\mathbb{D}_{2}^{k-1}$.
Now by Proposition \ref{int_as_limit},
it must be
$
\int_{0}^{T} \Lambda (X_{t}) \mathrm{d}w^{i}(t)
=
\lim_{n \to \infty} \Phi_{n}
=
\Phi
$.
Thus we have
$
\int_{0}^{T} \Lambda (X_{t}) \mathrm{d}w^{i}(t)
\in
\mathbb{D}_{p}^{s}
$.

Second,
suppose that $s \geqslant 0$.
In this case,
it is clearly satisfied that
$
\int_{0}^{T}
\Vert \Lambda (X_{t}) \Vert_{p,s}^{p}
\mathrm{d}t
< +\infty
$,
and hence it reduces to the case (ii).

(ii)
Suppose that $s \geqslant 0$
and
$
\int_{0}^{T}
\Vert \Lambda (X_{t}) \Vert_{p,s}^{p}
\mathrm{d}t
< +\infty
$.
Then by Proposition \ref{discrete-est},
we have
\begin{equation*}
\begin{split}
\Vert \Phi_{n} \Vert_{p,s}^{p}
&\leqslant
\mathrm{const.}
\sum_{k=2}^{2^{n}}
\Vert \Lambda (X_{t_{k-1}}) \Vert_{p,s}^{p}
( t_{k} - t_{k-1} )
\to
\int_{0}^{T}
\Vert \Lambda (X_{t}) \Vert_{p,s}^{p}
\mathrm{d}t,
\end{split}
\end{equation*}
as $n \to \infty$.
Here, $t_{k} = kT/2^{n}$, $k=0,1,\cdots , 2^{n}$.
Hence $\{ \Phi_{n} \}_{n=1}^{\infty}$
forms a bounded family in $\mathbb{D}_{p}^{s}$,
so that by Alaoglu's theorem
(for dual spaces of separable normed spaces),
there exists a subsequence
$\{ \Phi_{n_{l}} \}_{l=1}^{\infty}$
and $\Phi \in \mathbb{D}_{p}^{s}$
such that
$
\Phi_{n_{l}} \to \Phi
$
weakly in $\mathbb{D}_{p}^{s}$.
In particular, since $s \geqslant 0$ and $p \geqslant 2$,
this convergence is still valid in the weak topology on $L_{2}$.
On the other hand,
it is clearly satisfied that
$\int_{0}^{T} \Vert \Lambda (X_{t}) \Vert_{2}^{2} \mathrm{d}t < \infty$,
and hence
$\{ \Lambda (X_{t}) \}_{t \geqslant 0}$
is now a square-integrable
$
(\mathcal{F}_{t})_{t \geqslant 0}
$-adapted
process.
The classical stochastic analysis
proves that
$
\Phi_{n}
\to
\int_{0}^{T} \Lambda (X_{t}) \mathrm{d}w^{i}(t)
$
in $L_{2}$.
Therefore, it must be
$
\int_{0}^{T} \Lambda (X_{t}) \mathrm{d}w^{i}(t)
=
\lim_{l\to\infty} \Phi_{n_{l}}
= \Phi
$, and thus
$
\int_{0}^{T} \Lambda (X_{t}) \mathrm{d}w^{i}(t)
\in
\mathbb{D}_{p}^{s}
$.
\end{proof} 

\subsection{Distributional It\^o's formula}
\label{Generalized_Ito} 

Let
$A_{i}$, $i=1, 2, \cdots , d$
and $L$
be the vector fields and
the second-order differential operator
given by
\begin{equation*}
\begin{split}
\left.\begin{array}{ll}
\displaystyle
( A_{i}f )(z)
:=
\sum_{k=1}^{d}
\sigma_{i}^{k} (z) \frac{\partial f}{\partial z_{k}} (z), \\
\displaystyle
( Lf )(z)
:=
\frac{1}{2}\sum_{i,j=1}^{d}
( \sigma \sigma^{*} )_{i}^{j} (z)
\frac{\partial^{2} f}{\partial z_{i} \partial z_{j}}
(z)
+
\sum_{i=1}^{d}
b^{i} (z)
\frac{\partial f}{\partial z_{i}} (z)
\end{array}\right.
\end{split}
\end{equation*}
for $f \in \mathscr{S}(\mathbb{R}^{d})$ and $z \in \mathbb{R}^{d}$.
In the case of $d=1$, the vector field $A_{1}$
will be denoted by $A$.
Under the assumption {\rm (H1)},
these operators naturally act on
$\mathscr{S}^{\prime}(\mathbb{R}^{d})$
and
$\mathscr{E}^{\prime}(\mathbb{R}^{d})$.

\begin{Thm}[cf. Kubo \cite{Ku1}] 
\label{Ito_Formula} 
Let $x \in \mathbb{R}^{d}$ and assume
{\rm (H1)} and {\rm (H2)}.
Then for each
$\Lambda  \in \mathscr{S}^{\prime} (\mathbb{R}^{d})$
and
$t_{0} \in (0,T]$, we have
\begin{equation}
\label{Ito_Formula0} 
\begin{split}
&
\Lambda (X(T,x,w)) - \Lambda (X(t_{0},x,w)) \\
&=
\sum_{i=1}^{d}
\int_{t_{0}}^{T}
(A_{i}\Lambda ) (X(t,x,w))
\mathrm{d}w^{i}(t)
+
\int_{t_{0}}^{T}
(L\Lambda ) (X(t,x,w))
\mathrm{d}t
\quad
\text{in $\mathbb{D}^{-\infty}$.}
\end{split}
\end{equation}
Similarly, we have
(\ref{Ito_Formula0})
for $\Lambda \in \mathscr{E}^{\prime} (\mathbb{R}^{d})$
if we further assume {\rm (H4)}.
\end{Thm}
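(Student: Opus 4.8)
The plan is to reduce (\ref{Ito_Formula0}) to the classical It\^o formula for smooth functions and then pass to the limit by density, exploiting that on a closed interval $[t_0,T]\subset(0,T]$ every object in the formula depends continuously on $t$ with quantitative bounds coming from the uniform non-degeneracy of the Malliavin covariance of $X_t$ on compact subsets of $(0,T]$. First I would fix $t_0\in(0,T]$ and check that all terms are well defined. For $\Lambda\in\mathscr{S}_{-2k}(\mathbb{R}^d)$ one has $A_i\Lambda,L\Lambda\in\mathscr{S}_{-2(k+1)}(\mathbb{R}^d)$ (here (H1) is used: multiplication by the coefficients $\sigma,b$, which are smooth with polynomially bounded derivatives, and differentiation act continuously on the scale $\{\mathscr{S}_{2k}\}$, hence by transposition on $\{\mathscr{S}_{-2k}\}$). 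By the continuity of the pull-back recalled in Section \ref{sec_Bochner}, the maps $[t_0,T]\ni t\mapsto\Lambda(X_t),\,(A_i\Lambda)(X_t),\,(L\Lambda)(X_t)$ are continuous into $\mathbb{D}_2^{-2k}$, $\mathbb{D}_2^{-2(k+1)}$ and $\mathbb{D}_2^{-2(k+1)}$ respectively; in particular $\int_{t_0}^T\Vert(A_i\Lambda)(X_t)\Vert_{2,-2(k+1)}^2\,\mathrm{d}t<\infty$, so the stochastic integrals $\int_{t_0}^T(A_i\Lambda)(X_t)\,\mathrm{d}w^i(t)$ are defined via (\ref{stoc_int}) and lie in $\mathbb{D}_2^{-2(k+1)-1}$ by Proposition \ref{ver_pair}, the time-integral is a Bochner integral in $\mathbb{D}_2^{-2(k+1)}$, and $\Lambda(X_{t_0}),\Lambda(X_T)\in\mathbb{D}_2^{-2k}$.

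Next I would treat the smooth case. For $\phi\in\mathscr{S}(\mathbb{R}^d)$ the classical It\^o formula gives $\phi(X_T)-\phi(X_{t_0})=\sum_i\int_{t_0}^T(A_i\phi)(X_t)\,\mathrm{d}w^i(t)+\int_{t_0}^T(L\phi)(X_t)\,\mathrm{d}t$ in $L_2$, the stochastic integral being the ordinary It\^o integral of the bounded adapted process $(A_i\phi)(X_t)$. Pairing with $J\in\mathbb{D}^\infty$, using the Clark--Ocone representation of $J$ and the $\mathcal{F}_t^w$-measurability of $(A_i\phi)(X_t)$ (cf.\ the remark following Proposition \ref{ver_pair}), one gets $\mathbf{E}[(\int_{t_0}^T(A_i\phi)(X_t)\,\mathrm{d}w^i(t))J]=\int_{t_0}^T\mathbf{E}[(A_i\phi)(X_t)(D_tJ)^i]\,\mathrm{d}t$, i.e.\ the ordinary It\^o integral agrees with the distributional stochastic integral (\ref{stoc_int}); the time-integral term is clear. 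Hence (\ref{Ito_Formula0}) holds for $\phi\in\mathscr{S}(\mathbb{R}^d)$, tested against every $J\in\mathbb{D}^\infty$, hence in $\mathbb{D}^{-\infty}$. Then, given $\Lambda\in\mathscr{S}'(\mathbb{R}^d)$, I would pick $k$ with $\Lambda\in\mathscr{S}_{-2k}(\mathbb{R}^d)$ and $\phi_n\in\mathscr{S}(\mathbb{R}^d)$ with $\vert\phi_n-\Lambda\vert_{-2k}\to0$, so that $A_i\phi_n\to A_i\Lambda$, $L\phi_n\to L\Lambda$ in $\mathscr{S}_{-2(k+1)}$. Using the uniform-in-$t$ pull-back estimate on $[t_0,T]$ (again from uniform non-degeneracy), one obtains $\sup_{t\in[t_0,T]}\Vert\phi_n(X_t)-\Lambda(X_t)\Vert_{2,-2k}\to0$ and the analogous statements for $A_i$ and $L$. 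Consequently $\phi_n(X_{t_0})\to\Lambda(X_{t_0})$ and $\phi_n(X_T)\to\Lambda(X_T)$ in $\mathbb{D}_2^{-2k}$, the time-integrals converge in $\mathbb{D}_2^{-2(k+1)}$ by dominated convergence for Bochner integrals, and the stochastic integrals converge in $\mathbb{D}_2^{-2(k+1)-1}$ by Proposition \ref{ver_pair} applied to $A_i\phi_n-A_i\Lambda$. Passing to the limit in the smooth-case identity inside $\mathbb{D}^{-\infty}$ yields (\ref{Ito_Formula0}) for $\Lambda\in\mathscr{S}'(\mathbb{R}^d)$.

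For $\Lambda\in\mathscr{E}'(\mathbb{R}^d)$ I would use Theorem \ref{Hasu-Thm} to write $\Lambda=\partial_1^k\cdots\partial_d^k(e_r\phi)$ with $\phi\in C_0(\mathbb{R}^d)$ and $r>k$; then $A_i\Lambda,L\Lambda\in\mathscr{E}'(\mathbb{R}^d)$ are of the same type. Approximating $\phi$ uniformly by $\psi_n\in\mathscr{S}(\mathbb{R}^d)$ and invoking Proposition \ref{exp-pull} and Corollary \ref{exp-pull-conti}, together with the uniform exponential-moment bound $\sup_{t\in[t_0,T]}\mathbf{E}[\exp(r\vert X_t\vert)]<\infty$ (the $n=0$ case of Lemma \ref{IbP}), the pull-backs of $\partial_1^k\cdots\partial_d^k(e_r\psi_n)$ converge to $\Lambda(X_t)$ in $\mathbb{D}_2^{-kd}$ uniformly in $t\in[t_0,T]$, and likewise for $A_i\Lambda$ and $L\Lambda$. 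Repeating the two previous steps with these approximations gives (\ref{Ito_Formula0}) in $\mathbb{D}^{-\infty}$ for $\Lambda\in\mathscr{E}'(\mathbb{R}^d)$ as well.

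The main obstacle is the \emph{uniformity in $t$} of the pull-back estimates on $[t_0,T]$, which is precisely what forces $t_0>0$: it rests on $\sup_{t\in[t_0,T]}\Vert\det(\langle DX_t^i,DX_t^j\rangle_H)^{-1}\Vert_p<\infty$, valid on compact subsets of $(0,T]$ under (H2), which feeds the integration-by-parts machinery behind the pull-back; without it the convergence of the stochastic integral near $t=0$ cannot be closed. A secondary, routine point is checking that $A_i$ and $L$ map the graded pieces $\mathscr{S}_{-2k}$ (respectively the exponential-type pieces in Theorem \ref{Hasu-Thm}) continuously into the next one, which uses (H1).
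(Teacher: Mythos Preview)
Your proof is correct and takes essentially the same route as the paper: approximate $\Lambda$ by $\phi_n\in\mathscr{S}(\mathbb{R}^d)$, apply the classical It\^o formula, and pass to the limit in $\mathbb{D}^{-\infty}$ using the uniform non-degeneracy of $X_t$ on $[t_0,T]$. The one step handled differently is the convergence of the $A_i$- and $L$-terms: you rely on continuity of $A_i,L:\mathscr{S}_{-2k}\to\mathscr{S}_{-2(k+1)}$ and then pull back (be a bit careful here---under (H1) the coefficients $\sigma,b$ may have linear growth and $(\sigma^*\sigma)$ quadratic growth, so the exact index shift on the sup-norm scale $\{\mathscr{S}_{2k}\}$ deserves a line of justification, though \emph{some} $m\ge k+1$ certainly works and that is all you need), whereas the paper bypasses this mapping claim entirely via a direct Malliavin integration-by-parts estimate $\Vert(A\Psi)(X_t)\Vert_{2,-(k+1)}\le c\,t^{-\nu}\Vert\Psi(X_t)\Vert_{4,-k}$, controlling the pull-back of $A\Psi$ through the pull-back of $\Psi$ itself.
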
 
\begin{proof} 
For simplicity of notation, we assume $d=1$.
The case $d \geqslant 2$ is similar.
Fix $t_{0}>0$.
Suppose that $\Lambda \in \mathscr{S}_{-2k}$.
Then there exist
$\phi_{n} \in \mathscr{S}(\mathbb{R})$,
$n \in \mathbb{N}$
such that
$
\Lambda = \lim_{n \to \infty} \phi_{n}
$
in
$\mathscr{S}_{-2k}$.
By It\^o's formula, we clearly have
\begin{equation*}
\begin{split}
&
\phi_{n} (X_{T}) - \phi_{n} (X_{t_{0}})
=
\int_{t_{0}}^{T}
(A \phi_{n} ) (X_{t})
\mathrm{d}w(t)
+
\int_{t_{0}}^{T}
(L \phi_{n} ) (X_{t})
\mathrm{d}t
\end{split}
\end{equation*}
for each $n \in \mathbb{N}$.
What we have to prove is the following:
As $n \to \infty$,
\begin{itemize}
\item[(a)]
$
\Vert
	\Lambda (X_{t})
	-
	\phi_{n} (X_{t})
\Vert_{2,-2k}
\to 0
$
for $t=t_{0}$ and $T$,

\item[(b)]
$\displaystyle
\Vert
	\int_{t_{0}}^{T}
	[
	A \Lambda (X_{t})
	-
	A \phi_{n} (X_{t})
	]
	\mathrm{d}w(t)
\Vert_{
	2,
	-(2k+2)
}
\to 0
$,

\item[(c)]
$\displaystyle
\Vert
	\int_{t_{0}}^{T}
	[
	L \Lambda (X_{t})
	-
	L \phi_{n} (X_{t})
	]
	\mathrm{d}t
\Vert_{
	2,
	-(2k+2)
}
\to 0
$.
\end{itemize}

It is easy to show (a).
In fact,
for any $p \in (1, +\infty )$,
we have the inequality
\begin{equation}
\label{aux-ineq} 
\Vert
	\Lambda ( X_{t} ) - \phi_{n} (X_{t})
\Vert_{
	p,
	-2k
}
\leqslant
c_{0}
t^{-K}
\vert \Lambda - \phi_{n} \vert_{-2k}
\end{equation}
where the
constants $c_{0}>0$ and $K>0$
can depend on
$p$
but not on
$t$ and
$\phi_{n}$'s
(See \cite[Chapter~V, Section~9, Theorem~9.1
and Section~10, Theorem~10.2]{IW}).
We shall prove (b).
By Proposition \ref{ver_pair}
and (\ref{stoc_int}), we have
\begin{equation*}
\begin{split}
&
\Vert
	\int_{t_{0}}^{T}
	[
	A \Lambda (X_{t})
	-
	A \phi_{n} (X_{t})
	]
	\mathrm{d}w(t)
\Vert_{
	2,
	-(2k+2)
}^{2}
\leqslant
C
\int_{t_{0}}^{T}
\Vert
	[ A ( \Lambda - \phi_{n} ) ] (X_{t})
\Vert_{
	2,
	-(2k+1)
}^{2}
\mathrm{d}t .
\end{split}
\end{equation*}
Next we shall show that
there exists $c, \nu > 0$ such that
\begin{equation}
\label{auxi1} 
\begin{split}
&
\Vert
	[ A ( \Lambda - \phi_{n} ) ] (X_{t})
\Vert_{
	2,
	-(2k+1)
}^{2}
\leqslant
c
t^{-\nu}
\Vert
	(\Lambda - \phi_{n}) ( X_{t} )
\Vert_{
	4,
	-2k
}^{2}
\end{split}
\end{equation}
for every $t \in [t_{0},T]$,
and then the above quantities converge to zero
uniformly in $t \in [t_{0},T]$ as $n \to \infty$,
and hence
(b)
is proved.
To prove (\ref{auxi1}),
it suffices to show that:
there exist $c,\nu >0$ such that
\begin{equation}
\label{auxi2} 
\begin{split}
\big\vert
\mathbf{E}
[
	( \sigma \Psi^{\prime} ) ( X_{t} )
	J
]
\big\vert
\leqslant
c
t^{-\nu}
\Vert
	\Psi ( X_{t} )
\Vert_{
	4,
	-2k
}
\Vert
	J
\Vert_{
	2,
	2k+1
}
\end{split}
\end{equation}
for each
$\Psi \in \mathscr{S}_{-2k}$
and
$J \in \mathbb{D}^{\infty}$.
In fact, we have
\begin{equation*}
\begin{split}
&
\mathbf{E}
[
	( \sigma \Psi^{\prime} ) ( X_{t} )
	J
]
=
\mathbf{E}
\big[
	\Psi ( X_{t} )
	\Big\{
		P_{0}(t) \sigma (X_{t}) J
		+
		\langle
			P_{1}(t),
			D \big( \sigma (X_{t}) J \big)
		\rangle_{H}
	\Big\}
\big],
\end{split}
\end{equation*}
for some
$P_{i}(t) \in \mathbb{D}^{\infty} (H^{\otimes i})$,
$i=0,1$
which are polynomials in $X_{t}=X(t,x,w)$,
its derivatives and
$
\Vert
	DX_{t}
\Vert_{H}^{-2}
$.
Hence
\begin{equation*}
\begin{split}
&
\big\vert
\mathbf{E}
[
	( \sigma \Psi^{\prime} ) ( X_{t} )
	J
]
\big\vert
\leqslant
\Vert
	\Psi ( X_{t} )
\Vert_{
	4,
	-2k
}
\Big\{
	\Vert
		P_{0}(t) \sigma (X_{t}) J
	\Vert_{
		4/3,
		2k
	}
	+
	\Vert
	\langle
		P_{1}(t),
		D \big( \sigma (X_{t}) J \big)
	\rangle_{H}
	\Vert_{
		4/3,
		2k
	}
\Big\}.
\end{split}
\end{equation*}
Noting that
$
\frac{1}{2} + \frac{1}{4} = \frac{3}{4} < 1
$,
we can make estimates
\begin{equation*}
\begin{split}
&
\Vert
	P_{0}(t) \sigma (X_{t}) J
\Vert_{
	4/3,
	2k
}
\leqslant
c^{\prime}
t^{-\nu}
\Vert
	J
\Vert_{
	2,
	2k
}, \\
&
\Vert
\langle
	P_{1}(t),
	D \big( \sigma (X_{t}) J \big)
\rangle_{H}
\Vert_{
	4/3,
	2k
}
\leqslant
c^{\prime}
t^{-\nu}
\Vert
	J
\Vert_{
	2,
	2k+1
}
\end{split}
\end{equation*}
for each $t>0$,
and for some constants $c^{\prime}, \nu > 0$
(where, $c^{\prime}$ may depend on the derivatives of
$\sigma$ up to the
($
2k+1
$)-th
order,
which are assumed to be bounded by {\rm (H1)}).
Now (\ref{auxi2}) follows.

(c) is proved similarly.
The statement for $\Lambda \in \mathscr{E}^{\prime} (\mathbb{R})$
is also proved similarly.
\end{proof} 

\begin{Thm} 
\label{Ito_Formula1} 
Let $x \in \mathbb{R}^{d}$.
Suppose {\rm (H1)},
{\rm (H2)} and {\rm (H4)}.
Let $f: \mathbb{R}^{d} \to \mathbb{R}$ be a
locally-integrable
function such that
\begin{itemize}
\item[(i)]
$f$ is continuous at $x$,

\item[(ii)]
$f \in \mathscr{E}^{\prime} (\mathbb{R}^{d})$,

\item[(iii)]
$
\int_{0}^{T}
\Vert
	( A_{i}f ) (X(t,x,w))
\Vert_{2,-k}^{2}
\mathrm{d}t
< +\infty
$
for $i=1, 2, \cdots , d$,

\item[(iv)]
$
\int_{0}^{T}
\Vert
	(Lf) (X(t,x,w))
\Vert_{2,-k}
\mathrm{d}t
< +\infty
$
\end{itemize}
for some $k \in \mathbb{N}$.
Then we have
\begin{equation}
\label{Ito_Formula2} 
\begin{split}
&
f(X(T,x,w)) - f(x) \\
&=
\sum_{i=1}^{d}
\int_{0}^{T}
(A_{i}f) (X(t,x,w))
\mathrm{d}
w^{i}
(t)
+
\int_{0}^{T}
(Lf) (X(t,x,w))
\mathrm{d}t
\quad
\text{in $\mathbb{D}^{-\infty}$.}
\end{split}
\end{equation}
The equation
(\ref{Ito_Formula2})
still holds without
{\rm (H4)}
if
$f \in \mathscr{S}^{\prime} (\mathbb{R}^{d})$.
\end{Thm}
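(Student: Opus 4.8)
The plan is to deduce the formula from the ``closed-interval'' version, Theorem~\ref{Ito_Formula}, by letting $t_{0}\downarrow0$. As noted just before Theorem~\ref{Ito_Formula}, under {\rm (H1)} the operators $A_{i}$ and $L$ map $\mathscr{E}^{\prime}(\mathbb{R}^{d})$ into itself, so applying that theorem to $\Lambda=f\in\mathscr{E}^{\prime}(\mathbb{R}^{d})$ (which is hypothesis (ii)) gives, for every $t_{0}\in(0,T]$,
\[
f(X(T,x,w))-f(X(t_{0},x,w))
=
\sum_{i=1}^{d}\int_{t_{0}}^{T}(A_{i}f)(X(t,x,w))\,\mathrm{d}w^{i}(t)
+
\int_{t_{0}}^{T}(Lf)(X(t,x,w))\,\mathrm{d}t
\]
in $\mathbb{D}^{-\infty}$. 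Writing $X_{t}:=X(t,x,w)$, it suffices to pass to the limit in the three terms.

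First, the two integrals on the right converge. By (iii) we have $\int_{0}^{T}\Vert(A_{i}f)(X_{t})\Vert_{2,-k}^{2}\,\mathrm{d}t<\infty$, so $\int_{0}^{T}(A_{i}f)(X_{t})\,\mathrm{d}w^{i}(t)$ is well defined in $\mathbb{D}_{2}^{-(k+1)}$, and since the stochastic integral is additive in the interval of integration, Proposition~\ref{ver_pair} applied on $[0,t_{0}]$ yields
\[
\Big\Vert\int_{0}^{t_{0}}(A_{i}f)(X_{t})\,\mathrm{d}w^{i}(t)\Big\Vert_{2,-(k+1)}
\leqslant
C\Big(\int_{0}^{t_{0}}\Vert(A_{i}f)(X_{t})\Vert_{2,-k}^{2}\,\mathrm{d}t\Big)^{1/2}\to0 .
\]
Similarly, $(0,T]\ni t\mapsto(Lf)(X_{t})\in\mathbb{D}_{2}^{-k}$ is continuous (Corollary~\ref{exp-pull-conti}) and, by (iv), Bochner integrable on $(0,T]$, so $\Vert\int_{0}^{t_{0}}(Lf)(X_{t})\,\mathrm{d}t\Vert_{2,-k}\leqslant\int_{0}^{t_{0}}\Vert(Lf)(X_{t})\Vert_{2,-k}\,\mathrm{d}t\to0$. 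Hence the right-hand side converges in $\mathbb{D}_{2}^{-(k+1)}$ to $\sum_{i}\int_{0}^{T}(A_{i}f)(X_{t})\,\mathrm{d}w^{i}(t)+\int_{0}^{T}(Lf)(X_{t})\,\mathrm{d}t$, and consequently $f(X_{t_{0}})$ converges in $\mathbb{D}^{-\infty}$ to some $\Phi$. The theorem follows once we show $\Phi=f(x)$.

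The identification $\Phi=f(x)$ is the crux, and the only place where (i) is used. Since $\mathbb{D}^{\infty}$ is dense in each $\mathbb{D}_{2}^{s}$ and the pairing is separating, it is enough to prove $\mathbf{E}[f(X_{t_{0}})J]\to f(x)\,\mathbf{E}[J]$ for every $J\in\mathbb{D}^{\infty}$. Fix a small $\rho>0$ and a cut-off $\chi_{\rho}\in C_{0}^{\infty}(\mathbb{R}^{d})$ with $0\leqslant\chi_{\rho}\leqslant1$, $\chi_{\rho}\equiv1$ on $B_{\rho}(x)$ and $\mathrm{supp}\,\chi_{\rho}\subset B_{2\rho}(x)$, and decompose $f-f(x)=(f-f(x))\chi_{\rho}+(f-f(x))(1-\chi_{\rho})$ (the constant $f(x)$ pulls back to itself). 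The ``far'' part $g_{\rho}:=(f-f(x))(1-\chi_{\rho})$ lies in $\mathscr{E}^{\prime}(\mathbb{R}^{d})$ with $x\notin\mathrm{supp}\,g_{\rho}$, so by Lemma~\ref{exclusion}, $\Vert g_{\rho}(X_{t_{0}})\Vert_{p,-k_{\rho}}\to0$ and hence $\mathbf{E}[g_{\rho}(X_{t_{0}})J]\to0$. The ``near'' part $h_{\rho}:=(f-f(x))\chi_{\rho}$ is a bounded measurable function with compact support and $\Vert h_{\rho}\Vert_{\infty}\leqslant\omega(\rho):=\sup_{|y-x|\leqslant2\rho}|f(y)-f(x)|$, which is finite for small $\rho$ and tends to $0$ as $\rho\downarrow0$ by continuity of $f$ at $x$. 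Because $X_{t_{0}}$ is non-degenerate, its law has a density, and the pull-back $h_{\rho}(X_{t_{0}})$ coincides with the ordinary composition $h_{\rho}\circ X_{t_{0}}$: approximating $h_{\rho}$ by $\phi_{n}\in C_{0}(\mathbb{R}^{d})$ with $\phi_{n}\to h_{\rho}$ a.e.\ and $\sup_{n}\Vert\phi_{n}\Vert_{\infty}<\infty$ forces $\phi_{n}\to h_{\rho}$ in $\mathscr{S}_{-2}(\mathbb{R}^{d})$ (dominated convergence against the Bessel-type kernel of $(1+x^{2}-\triangle/2)^{-1}$, which is translation invariant) while $\phi_{n}\circ X_{t_{0}}\to h_{\rho}\circ X_{t_{0}}$ in $L_{p}$. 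Thus $|h_{\rho}(X_{t_{0}})|\leqslant\omega(\rho)$ a.s.\ and $|\mathbf{E}[h_{\rho}(X_{t_{0}})J]|\leqslant\omega(\rho)\,\mathbf{E}[|J|]$. Collecting the three contributions, $\limsup_{t_{0}\downarrow0}|\mathbf{E}[(f(X_{t_{0}})-f(x))J]|\leqslant\omega(\rho)\,\mathbf{E}[|J|]$ for every small $\rho$; letting $\rho\downarrow0$ gives $\mathbf{E}[f(X_{t_{0}})J]\to f(x)\mathbf{E}[J]$, so $\Phi=f(x)$.

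The main obstacle is precisely this last step: since $f$ is only assumed measurable and continuous at the single point $x$, one cannot apply the continuous-pull-back machinery near $x$ directly, and one must instead combine a modulus-of-continuity estimate for the restriction of $f$ to a small ball (using the elementary identity ``Watanabe pull-back $=$ composition'' for bounded measurable functions) with the support-exclusion estimate of Lemma~\ref{exclusion} away from $x$. By contrast, the convergence of the stochastic and time integrals is a routine consequence of Proposition~\ref{ver_pair} and the hypotheses (iii)--(iv). The case $d\geqslant2$ is identical; one only replaces the scalar smoothing operator and Lemma~\ref{exclusion} by their multi-dimensional versions, both already recorded above.
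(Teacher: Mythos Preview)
Your overall strategy coincides with the paper's: apply Theorem~\ref{Ito_Formula} on $[t_0,T]$ and let $t_0\downarrow0$. The paper's proof is a single sentence (``letting $t_0\downarrow0$ with using the condition (i)''), so you have supplied exactly what it suppresses. The convergence of the stochastic and Bochner integrals via Proposition~\ref{ver_pair} and hypotheses (iii)--(iv) is correct and routine. For the delicate step $f(X_{t_0})\to f(x)$, your near/far decomposition combining Lemma~\ref{exclusion} (away from $x$) with a modulus-of-continuity bound (near $x$) is a clean, robust argument---more explicit than anything the paper spells out.

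One small slip: the resolvent kernel of $(1+x^{2}-\triangle/2)^{-1}$ is \emph{not} translation invariant (the potential $x^{2}$ prevents it), and a.e.\ convergence with a uniform bound does not by itself yield convergence in $\mathscr{S}_{-2}$, which requires a $\sup_{x}$. The repair is easy. Either note that the kernel is \emph{dominated} by a translation-invariant $\Phi(|x-y|)$ with $\Phi\in L^{q}$ (the harmonic-oscillator estimates recorded in Appendix~\ref{auxiliary}), so taking $\phi_{n}\to h_{\rho}$ in $L^{p}$ by mollification gives $|\phi_{n}-h_{\rho}|_{-2}\leqslant c\,\|\Phi\|_{L^{q}}\|\phi_{n}-h_{\rho}\|_{L^{p}}\to0$; or bypass $\mathscr{S}_{-2}$ entirely and invoke Lemma~\ref{frac_dom} with $s=0$, which says the pull-back extends continuously from $L^{p}(\mathbb{R}^{d},\mathrm{d}z)$ to $L^{p'}$ on Wiener space and hence agrees with ordinary composition on bounded measurable functions. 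With either fix, your bound $|\mathbf{E}[h_{\rho}(X_{t_0})J]|\leqslant\omega(\rho)\,\mathbf{E}[|J|]$ stands and the proof is complete.
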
 
\begin{proof} 
By the conditions (ii), (iii), (iv) and Theorem \ref{Ito_Formula},
we have for each $t_{0}>0$ that
\begin{equation*}
\begin{split}
&
f(X_{T}) - f(X_{t_{0}})
=
\sum_{i=1}^{d}
\int_{t_{0}}^{T}
(A_{i}f) (X_{t})
\mathrm{d}
w^{i}
(t)
+
\int_{t_{0}}^{T}
(Lf) (X_{t})
\mathrm{d}t
\end{split}
\end{equation*}
in $\mathbb{D}^{-\infty}$.
Letting
$t_{0} \downarrow 0$
with using
(i),
we have
$
f (X_{t_{0}})
\to
f(x)
$
in probability.
Moreover, again by (i),
we can take $\delta > 0$
such that for any $y \in \mathbb{R}^{d}$,
$\vert y - x \vert < \delta$
implies
$\vert f(y) \vert < \vert f(x) \vert + 1$.
Thus
$
\mathbf{E} [ f( X_{t} )^{4} ]
\leqslant
( \vert f(x) \vert + 1 )^{4}
+
\mathbf{E}
[
	f( X_{t} )^{4} ;
	\vert X_{t} - x \vert > \delta
]
$,
where
$
\limsup_{t \downarrow 0}
\mathbf{E}
[
	f( X_{t} )^{4} ;
	\vert X_{t} - x \vert > \delta
] < +\infty
$
in view of (ii) and Lemma~\ref{IbP}--(ii).
Therefore $\{ f(X_{t})^{2} \}_{0 < t \leqslant T}$
is $L_{2}$-bounded, so that
$\{ ( f(X_{t}) - f(x) )^{2} \}_{0 < t \leqslant T}$
is uniformly integrable.
Hence $f(X_{t_{0}}) \to f(x)$ in $L_{2}$ as $t_{0} \downarrow 0$
and
(\ref{Ito_Formula2}) is proved.

If $f \in \mathscr{S}^{\prime}(\mathbb{R}^{d})$,
then $f$ has at most polynomial growth.
In this case,
$\{ f(X_{t})^{2} \}_{0 < t \leqslant T}$
is $L_{2}$-bounded without assuming {\rm (H4)}
and
then (\ref{Ito_Formula2}) can be proved similarly.
\end{proof} 

By Proposition \ref{exclusion_int},
Lemma \ref{exclusion}
and
Theorem \ref{Ito_Formula},
we obtain
the following.

\begin{Cor} 
Let
$x \in \mathbb{R}^{d}$
and
$\Lambda \in \mathscr{E}^{\prime} (\mathbb{R}^{d})$
be such that
$\mathrm{supp} \Lambda \not\hspace{0.4mm}\ni x$.
Assume {\rm (H1)}, {\rm (H2)}
and {\rm (H4)}.
Then we have in $\mathbb{D}^{-\infty}$,
\begin{equation*}
\begin{split}
&
\Lambda (X(T,x,w))
=
\sum_{i=1}^{d}
\int_{0}^{T}
( A_{i} \Lambda ) (X(t,x,w))
\mathrm{d}
w^{i}
(t)
+
\int_{0}^{T}
( L \Lambda ) (X(t,x,w))
\mathrm{d}t.
\end{split}
\end{equation*}
This still holds without
{\rm (H4)}
if
$f \in \mathscr{S}^{\prime} (\mathbb{R}^{d})$.
\end{Cor}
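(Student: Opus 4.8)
The plan is to deduce the identity by applying the distributional It\^o formula of Theorem \ref{Ito_Formula} on the intervals $[t_{0},T]$ and then letting $t_{0}\downarrow 0$, using the hypothesis $x\notin\mathrm{supp}\,\Lambda$ to kill, respectively control, all three terms near $t=0$. First I would record the elementary but crucial observation that differentiation does not enlarge supports: since $A_{i}$ and $L$ are differential operators with $C^{\infty}$-coefficients (by (H1)), $A_{i}\Lambda$ and $L\Lambda$ again lie in $\mathscr{E}^{\prime}(\mathbb{R}^{d})$ and $\mathrm{supp}(A_{i}\Lambda)\subset\mathrm{supp}\,\Lambda$, $\mathrm{supp}(L\Lambda)\subset\mathrm{supp}\,\Lambda$; in particular none of $\Lambda,A_{1}\Lambda,\dots,A_{d}\Lambda,L\Lambda$ has $x$ in its support. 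Hence Proposition \ref{exclusion_int} applies to each of them, and taking $k$ to be the maximum of the finitely many orders it produces, together with the Cauchy--Schwarz inequality on the finite interval $[0,T]$, I may fix a single $k\in\mathbb{Z}_{\geqslant 0}$ with
\[
\int_{0}^{T}\Vert (A_{i}\Lambda)(X(t,x,w))\Vert_{2,-k}^{2}\,\mathrm{d}t<+\infty
\quad\text{and}\quad
\int_{0}^{T}\Vert (L\Lambda)(X(t,x,w))\Vert_{2,-k}\,\mathrm{d}t<+\infty .
\]
By the construction of Subsection \ref{Stochastic_Integral} and of Bochner integrals over $(0,T]$, the stochastic integrals $\int_{0}^{T}(A_{i}\Lambda)(X_{t})\,\mathrm{d}w^{i}(t)$ and the time-integral $\int_{0}^{T}(L\Lambda)(X_{t})\,\mathrm{d}t\in\mathbb{D}_{2}^{-k}$ are then well defined, so the right-hand side of the asserted identity makes sense.

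Next I would invoke Theorem \ref{Ito_Formula}, which gives, for every $t_{0}\in(0,T]$,
\[
\Lambda(X(T,x,w))-\Lambda(X(t_{0},x,w))
=\sum_{i=1}^{d}\int_{t_{0}}^{T}(A_{i}\Lambda)(X(t,x,w))\,\mathrm{d}w^{i}(t)
+\int_{t_{0}}^{T}(L\Lambda)(X(t,x,w))\,\mathrm{d}t
\]
in $\mathbb{D}^{-\infty}$, and then pass to the limit $t_{0}\downarrow 0$ term by term. For the left-hand side, Lemma \ref{exclusion} (applicable precisely because $x\notin\mathrm{supp}\,\Lambda$) yields $\Vert\Lambda(X(t_{0},x,w))\Vert_{p,-k'}\to 0$ for a suitable $k'$, hence $\Lambda(X(t_{0},x,w))\to 0$ in $\mathbb{D}^{-\infty}$. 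For the drift term, $\bigl\Vert\int_{0}^{t_{0}}(L\Lambda)(X_{t})\,\mathrm{d}t\bigr\Vert_{2,-k}\leqslant\int_{0}^{t_{0}}\Vert(L\Lambda)(X_{t})\Vert_{2,-k}\,\mathrm{d}t\to 0$ by absolute continuity of the integral. For the martingale term, the estimate underlying Proposition \ref{ver_pair}, applied on $[0,t_{0}]$, gives
\[
\Bigl\Vert\int_{0}^{t_{0}}(A_{i}\Lambda)(X_{t})\,\mathrm{d}w^{i}(t)\Bigr\Vert_{2,-(k+1)}
\leqslant C\Bigl\{\int_{0}^{t_{0}}\Vert(A_{i}\Lambda)(X_{t})\Vert_{2,-k}^{2}\,\mathrm{d}t\Bigr\}^{1/2}\longrightarrow 0 .
\]
Combining the three convergences in $\mathbb{D}^{-\infty}$ produces the claimed formula.

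I do not expect any serious obstacle: the statement is essentially the limiting form of Theorem \ref{Ito_Formula}, and all analytic inputs are already in place — continuity of $t\mapsto(A_{i}\Lambda)(X_{t})$ and $t\mapsto(L\Lambda)(X_{t})$ on $(0,T]$ from Corollary \ref{exp-pull-conti}, integrability near $t=0$ from Proposition \ref{exclusion_int}, and the decay of $\Lambda(X_{t})$ from Lemma \ref{exclusion}. The only point needing care is bookkeeping of the order indices: Lemma \ref{exclusion} and Proposition \ref{exclusion_int} each produce their own index, so one must pass to a common index and note that the relevant spaces $\mathbb{D}_{p}^{-m}$ all embed continuously into $\mathbb{D}^{-\infty}$, ensuring the three limits above take place in one topology before they are summed. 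Once this is arranged, the proof is complete.
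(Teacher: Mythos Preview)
Your proof is correct and follows exactly the route the paper indicates: the corollary is stated as an immediate consequence of Proposition \ref{exclusion_int}, Lemma \ref{exclusion} and Theorem \ref{Ito_Formula}, and you have supplied precisely the natural details (support preservation under $A_{i}$ and $L$, passage to a common index, and the three limits as $t_{0}\downarrow 0$). There is nothing to add.
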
 

\subsection{An application and examples}
\label{application} 

In the sequel, we denote by
$X = (X_{t})_{t \geqslant 0}$
the
unique strong solution to
the $d$-dimensional stochastic differential equation
\begin{equation}
\label{ref-SDE} 
\mathrm{d}X_{t}
=
\sigma ( X_{t} ) \mathrm{d} w(t)
+
b(X_{t}) \mathrm{d}t,
\quad
X_{0} = x \in \mathbb{R}^{d},
\end{equation}
where
$w=(w(t))_{t \geqslant 0}$
is a
$d$-dimensional Wiener process.
We denote by $L$ the associated generator,
i.e.,
$
L
=
\frac{1}{2}
\sum_{i,j=1}^{d}
( \sigma \sigma^{*} )_{j}^{i}
\partial_{i}\partial_{j}
+
\sum_{i=1}^{d}
b^{i}
\partial_{i}
$.

Let
$
\{ F_{ \varepsilon } \}_{\varepsilon \in I}
\subset \mathbb{D}^{\infty}(\mathbb{R}^{d})
$
be a bounded and uniformly non-degenerate family
(see Definition \ref{non-degeneracy}).
Here, the boundedness is used in the sense of
$
\{ F_{\varepsilon} \}_{\varepsilon \in I}
$
is bounded in $\mathbb{D}_{p}^{k}(\mathbb{R}^{d})$
for each $p \in (1, \infty)$ and $k \in \mathbb{Z}_{\geqslant 0}$.

\begin{Lem} 
\label{frac_dom} 
For any $s \in \mathbb{R}$,
$p \in ( 1, \infty )$
and $p^{\prime} > p$,
there exists $c = c(s,p,p^{\prime}) > 0$ such that
$$
\Vert
	\Lambda ( F_{\varepsilon} )
\Vert_{p,s}
\leqslant
c
\Vert
	( 1 - \triangle )^{s/2} \Lambda
\Vert_{L_{p^{\prime}}(\mathbb{R}^{d}, \mathrm{d}x)}
$$
for every $\varepsilon \in I$
and
$
\Lambda \in \mathscr{S}^{\prime}(\mathbb{R}^{d})
$
with
$
( 1 - \triangle )^{s/2} \Lambda
\in
L_{p^{\prime}}(\mathbb{R}^{d}, \mathrm{d}x)
$.
\end{Lem}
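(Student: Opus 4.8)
The plan is to prove the inequality first for test functions $\phi \in \mathscr{S}(\mathbb{R}^{d})$, at integer values $s=m\in\mathbb{Z}$, and then to obtain the remaining real $s$ by complex interpolation between consecutive integers; the passage from $\phi$ to a general $\Lambda$ with $(1-\triangle)^{s/2}\Lambda\in L_{p'}$ is then by density, since $\mathscr{S}(\mathbb{R}^{d})$ is dense in the Bessel potential space, and the limit is identified with the pull-back $\Lambda(F_{\varepsilon})$ of the paper through the identity $\mathbf{E}[\Lambda(F_{\varepsilon})G]=\langle \Lambda,\ \mathbf{E}[\delta_{\bullet}(F_{\varepsilon})G]\rangle$ for $G\in\mathbb{D}^{\infty}$, whose right-hand side pairs $\Lambda$ with the Schwartz function $y\mapsto\mathbf{E}[\delta_{y}(F_{\varepsilon})G]$. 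Two features should be emphasized in advance: the hypothesis $p'>p$ will be used only to make a single H\"older exponent finite (equivalently $q'<q$, where $1/p+1/q=1$, $1/p'+1/q'=1$, so that $\Vert\cdot\Vert_{q'}\leqslant\Vert\cdot\Vert_{q}$ on the probability space), and every constant will be uniform in $\varepsilon$ because it depends only on the boundedness constants $\sup_{\varepsilon}\Vert F_{\varepsilon}^{i}\Vert_{p,k}$, the uniform non-degeneracy constants, and the uniform density bound $C_{0}:=\sup_{\varepsilon,\,y}\mathbf{E}[\delta_{y}(F_{\varepsilon})]<+\infty$ (this last one following from Lemma \ref{delta_unif} together with the uniform Watanabe pull-back estimate, exactly as in Propositions \ref{exp-pull} and \ref{unif_est}).

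For $s=m\geqslant 0$ an integer and $\phi\in\mathscr{S}(\mathbb{R}^{d})$ I would estimate $\Vert\phi(F_{\varepsilon})\Vert_{p,m}$ through $\sum_{j=0}^{m}\Vert D^{j}[\phi(F_{\varepsilon})]\Vert_{p}$ (Meyer's inequalities). By the chain rule, $D^{j}[\phi(F_{\varepsilon})]$ is a finite sum over $\vert\alpha\vert\leqslant j$ of $(\partial^{\alpha}\phi)(F_{\varepsilon})$ times universal polynomials in $DF_{\varepsilon},\dots,D^{j}F_{\varepsilon}$. H\"older with exponents $p'$ and $r:=pp'/(p'-p)$, which is \emph{finite precisely because} $p'>p$, bounds the polynomial factors in $L_{r}$ uniformly in $\varepsilon$ by the boundedness assumption, while $\Vert(\partial^{\alpha}\phi)(F_{\varepsilon})\Vert_{p'}^{p'}=\int_{\mathbb{R}^{d}}\vert\partial^{\alpha}\phi(y)\vert^{p'}\,\mathrm{d}\mu_{F_{\varepsilon}}(y)\leqslant C_{0}\,\Vert\partial^{\alpha}\phi\Vert_{L_{p'}}^{p'}$. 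Finally $\Vert\partial^{\alpha}\phi\Vert_{L_{p'}}\leqslant c\,\Vert(1-\triangle)^{m/2}\phi\Vert_{L_{p'}}$ for $\vert\alpha\vert\leqslant m$, since the symbol $(i\xi)^{\alpha}(1+\vert\xi\vert^{2})^{-m/2}$ is an $L_{p'}$-multiplier by the Mihlin theorem (this is the standard identification of $H^{m}_{p'}$ with the Sobolev space $W^{m,p'}$, cf. \cite{Ab}, \cite{Kr}); this yields the claim for $s=m\geqslant 0$.

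For $s=-m<0$ with $m$ an \emph{even} integer (so that $(1-\triangle)^{m/2}$ is a genuine differential operator), I would use duality: $\Vert\phi(F_{\varepsilon})\Vert_{p,-m}=\sup\{\vert\mathbf{E}[\phi(F_{\varepsilon})J]\vert : J\in\mathbb{D}^{\infty},\ \Vert J\Vert_{q,m}\leqslant 1\}$. Put $g:=(1-\triangle)^{-m/2}\phi\in\mathscr{S}(\mathbb{R}^{d})$, so that $\phi=(1-\triangle)^{m/2}g$ is a finite linear combination of $\partial^{\beta}g$ with $\vert\beta\vert\leqslant m$; Malliavin integration by parts against the non-degenerate $F_{\varepsilon}$ rewrites $\mathbf{E}[(\partial^{\beta}g)(F_{\varepsilon})J]=\mathbf{E}[g(F_{\varepsilon})\,l_{\beta}(J)]$, where $l_{\beta}(J)$ is assembled from $J,DJ,\dots,D^{\vert\beta\vert}J$ and polynomials in the Malliavin derivatives of $F_{\varepsilon}$ and the inverse Malliavin matrix, and obeys $\Vert l_{\beta}(J)\Vert_{q}\leqslant c\,\Vert J\Vert_{q,m}$ uniformly in $\varepsilon$ — the same mechanism used in the proofs of Propositions \ref{exp-pull}, \ref{asymp1} and Lemma \ref{IbP}. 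H\"older with exponents $p'$ and $q'=p'/(p'-1)$, the estimate $\Vert l(J)\Vert_{q'}\leqslant\Vert l(J)\Vert_{q}$ (again $q'<q$, i.e.\ $p'>p$), and the uniform density bound then give $\vert\mathbf{E}[\phi(F_{\varepsilon})J]\vert\leqslant c\,\Vert g(F_{\varepsilon})\Vert_{p'}\Vert l(J)\Vert_{q'}\leqslant c'\,\Vert g\Vert_{L_{p'}}\Vert J\Vert_{q}$, and since $\Vert g\Vert_{L_{p'}}=\Vert(1-\triangle)^{-m/2}\phi\Vert_{L_{p'}}$ this is the assertion for $s=-m$, $m$ even.

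It remains to go from $s$ in the even integers to arbitrary real $s$; here I would invoke that $(\mathbb{D}_{p}^{s})_{s\in\mathbb{R}}$ and $(H_{p'}^{s})_{s\in\mathbb{R}}$ are complex-interpolation scales and that $\phi\mapsto\phi(F_{\varepsilon})$ is one consistent linear map on $\mathscr{S}(\mathbb{R}^{d})$, so its boundedness $H_{p'}^{2l}\to\mathbb{D}_{p}^{2l}$ for all $l\in\mathbb{Z}$ interpolates to $H_{p'}^{s}\to\mathbb{D}_{p}^{s}$ for every real $s$, with the constant depending only on the structural quantities and hence uniform in $\varepsilon$; density then upgrades this from $\phi$ to $\Lambda$ as explained above. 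I expect the genuine difficulty to lie not in any individual estimate — each being a routine use of Meyer's inequalities, Malliavin integration by parts, or Fourier-multiplier bounds — but rather in (i) securing the uniform-in-$\varepsilon$ density bound $\sup_{\varepsilon,y}\mathbf{E}[\delta_{y}(F_{\varepsilon})]<+\infty$ and the uniform control of the integration-by-parts weights, both resting on uniform non-degeneracy, and (ii) the structural point that the regimes $s\geqslant 0$ and $s<0$ require different arguments (transferring the Bessel operator onto the test function via the chain rule versus absorbing it through integration by parts), so that the odd-integer and fractional exponents are reached only by interpolation rather than by a direct computation.
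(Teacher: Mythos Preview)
Your approach is correct and close in spirit to the paper's, which proves Lemma~\ref{frac_dom} by pointing to the proof of the companion Lemma~\ref{fractional_domin}. Both arguments hinge on the same three ingredients: Malliavin integration by parts for negative $s$, Meyer's inequality plus the chain rule for positive $s$, and an interpolation to reach fractional $s$, with $p'>p$ entering only to leave room for a H\"older split against the polynomial weights.

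One slip to fix: you write $\Vert l_{\beta}(J)\Vert_{q}\leqslant c\,\Vert J\Vert_{q,m}$ and then pass to $\Vert l_{\beta}(J)\Vert_{q'}\leqslant\Vert l_{\beta}(J)\Vert_{q}$. The first inequality is not available, because $l_{\beta}(J)$ contains products $P_{j}\cdot D^{j}J$ with $P_{j}\notin L_{\infty}$; you cannot stay in $L_{q}$ without loss. What the referenced Propositions~\ref{exp-pull}, \ref{asymp1} and Lemma~\ref{IbP} actually give is the direct bound $\Vert l_{\beta}(J)\Vert_{q'}\leqslant c\,\Vert J\Vert_{q,m}$ for any fixed $q'<q$, via H\"older with $\tfrac{1}{q'}=\tfrac{1}{q}+\tfrac{1}{r}$ and $\sup_{\varepsilon}\Vert P_{j}\Vert_{r}<\infty$. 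With this correction the negative-even-integer case goes through exactly as you intend.

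On the interpolation step the paper packages things differently. Rather than invoking that $(H_{p'}^{s})_{s}$ and $(\mathbb{D}_{p}^{s})_{s}$ are complex-interpolation scales and interpolating the single pull-back map between even integers, the paper builds the analytic family
\[
T_{\alpha}(\varepsilon)\,\phi \;:=\; (I-\mathcal{L})^{\alpha/2}\bigl[(1-\triangle)^{-\alpha/2}\phi\bigr](F_{\varepsilon})
\]
acting $L_{p'}(\mathbb{R}^{d})\to L_{p}$, checks boundedness at the two endpoints $\alpha=-2$ (your integration-by-parts argument) and $\alpha=1$ (Meyer plus the $L_{p'}$-boundedness of $\partial_{k}(1-\triangle)^{-1/2}$, i.e.\ the Riesz transform), verifies the imaginary-shift bounds via the $L_{p}$-multiplier theorems for $(I-\mathcal{L})^{i\tau}$ and $(1-\triangle)^{i\tau}$, and then applies Stein's interpolation theorem; the range $-2\leqslant s\leqslant 1$ is treated and the remaining $s$ are obtained by iterating the same window. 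The two routes are equivalent in strength, but the paper's avoids having to quote separately that $\mathbb{D}_{p}^{s}$ forms a complex-interpolation scale, at the cost of checking the purely-imaginary multiplier bounds; your route is cleaner once those interpolation facts are taken as known.
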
 

We note that
one can take $p^{\prime} = p$
when $F_{\varepsilon} = \varepsilon^{-1} w(\varepsilon^{2}T)$
as mentioned in Remark \ref{p'=p}.
We don't give a proof of Lemma~\ref{frac_dom}.
However, we prove a similar inequality
(Lemma~\ref{fractional_domin})
in Section~\ref{frac_ineq}.
The same techniques there are available
to show Lemma~\ref{frac_dom}
if we replace the harmonic oscillator $H$
by the Bessel potential $(1-\triangle )$.
(Although the step (b)
in the proof of Lemma~\ref{fractional_domin}
uses results by Bongioanni-Torrea~\cite{BoTo},
we don't need for the proof of Lemma~\ref{frac_dom}.
The steps (a) and (b) for the proof of Lemma~\ref{frac_dom}
can be established analogously
by a standard integration-by-parts techniques in
the Malliavin calculus.
The steps (c)--(e) also run in exactly the same way.)

Let
$
H_{p}^{s}(\mathbb{R}^{d})
:=
( 1 - \triangle )^{-s/2}
L_{p} (\mathbb{R}^{d}, \mathrm{d}z)
$,
$p \in (1,\infty )$,
$s \in \mathbb{R}$
be the Bessel potential spaces
(see \cite{Ab} and \cite{Kr} for details).
By
imitating the proof of Corollary~\ref{exp-pull-conti}
with using Lemma~\ref{frac_dom}
(instead of Proposition~\ref{exp-pull}),
we can show that for each
$p \in (1, \infty )$,
$s \in \mathbb{R}$
and
$\Lambda \in H_{p}^{s} ( \mathbb{R}^{d} )$,
the mapping
$
(0, \infty ) \ni t
\mapsto
\Lambda ( X_{t} ) \in \mathbb{D}_{p^{\prime}}^{s}
$
is continuous for $p^{\prime} \in (1,p)$
under assumptions
{\rm (H1)} and {\rm (H2)}.

We are now in a position to prove Corollary \ref{Main_Thm5}.

\begin{proof}[Proof of Corollary \ref{Main_Thm5}]
(i) is clear by Lemma \ref{frac_dom}.

(ii)
The operator $L$ is a uniformly elliptic operator
of the second order satisfying {\rm (H4)}.
Hence the elliptic regularity theorem
(see e.g.,
\cite[Chapter III, Section 7.3, Theorem 7.13]{Ab})
assures that
$f := (1-L)^{-1} \Lambda \in H_{p}^{s+2} ( \mathbb{R}^{d} )$
and
$\sigma_{j}^{k} \partial_{k} f \in H_{p}^{s+1} ( \mathbb{R}^{d} )$
for each $j,k=1, 2, \cdots , d$.
On the other hand,
Theorem \ref{Ito_Formula} gives
\begin{equation*}
f(X_{T}) - f(X_{t_{0}})
=
\sum_{j,k=1}^{d}
\int_{t_{0}}^{T}
( \sigma_{j}^{k} \partial_{k} f ) ( X_{t} )
\mathrm{d} w^{j}(t)
+
\int_{t_{0}}^{T}
( Lf ) (X_{t})
\mathrm{d}t.
\end{equation*}

Let $p^{\prime} \in [2,p)$ be arbitrary.
We find that
$f(X_{T}) \in \mathbb{D}_{p^{\prime}}^{s+2}$
by (i)
and
$
\int_{t_{0}}^{T}
( \sigma_{j}^{k} \partial_{k} f )
( X_{t} )
\mathrm{d}w^{j}
\in \mathbb{D}_{p^{\prime}}^{s+1}
$
for $T > 0$
by Theorem \ref{reg-pres},
so that we have
$
\int_{t_{0}}^{T}
( Lf ) (X_{t})
\mathrm{d}t
\in \mathbb{D}_{p^{\prime}}^{s+1}
$.
Furthermore, since
$
[t_{0}, T] \ni t
\mapsto
f (X_{t}) \in \mathbb{D}_{p^{\prime}}^{s+2}
$
is continuous
(recall the remark just before the proof),
this mapping is Bochner integrable and
$
\int_{t_{0}}^{T} f(X_{t}) \mathrm{d}t
\in \mathbb{D}_{p^{\prime}}^{s+2}
$.
Therefore
\begin{equation*}
\begin{split}
\int_{t_{0}}^{T}
\Lambda (X_{t})
\mathrm{d}t
=
\int_{t_{0}}^{T}
f (X_{t})
\mathrm{d}t
-
\int_{t_{0}}^{T}
( Lf ) (X_{t})
\mathrm{d}t
\in \mathbb{D}_{p^{\prime}}^{s+1}.
\end{split}
\end{equation*}
\end{proof} 

\begin{Rm} 
By the above remark
(just after Lemma \ref{frac_dom}),
one would see that
in Corollary \ref{Main_Thm5},
we can take $p^{\prime} = p$
when $\sigma = (\text{identity matrix})$ and $b=0$,
i.e.,
$X_{t} =  w(t)$.
\end{Rm} 

Second,
we investigate
the class $\mathbb{D}_{p}^{s}$ to which
$
\int_{t_{0}}^{T} \Lambda (X_{t}) \mathrm{d}t
$
belongs when $t_{0}=0$ for several cases of
$\Lambda \in \mathscr{S}^{\prime} (\mathbb{R}^{d})$.

\begin{Eg} 
\label{Eg.1} 
Assume $d=1$, {\rm (H1)},
{\rm (H3)} and {\rm (H4)}.
Let
\begin{equation*}
\begin{split}
\left.\begin{array}{l}
\displaystyle
s(x)
:=
\int_{0}^{x}
\exp
\Big\{
	- \int_{0}^{z}
	\frac{ 2b(\eta ) }{ \sigma (\eta )^{2} }
	\mathrm{d}\eta
\Big\}
\mathrm{d}z, \\
\displaystyle
m(x)
:=
2
\int_{0}^{x}
\exp
\Big\{
	\int_{0}^{z}
	\frac{ 2b(\eta ) }{ \sigma (\eta )^{2} }
	\mathrm{d}\eta
\Big\}
\frac{ \mathrm{d}z }{ \sigma (z)^{2} },
\end{array}\right.
\quad
\text{for $x \in \mathbb{R}$.}
\end{split}
\end{equation*}
The function $s(x)$ is called the
{\it scale function} of $L$
and the measure
$m (\mathrm{d}x ) = m^{\prime} (x) \mathrm{d}x$
is called the
{\it speed measure}
of $L$.
We fix $y \in \mathbb{R}$ and define
$u: \mathbb{R} \to \mathbb{R}$
by
\begin{equation}
\label{fund_sol} 
u(x)
:=:
u(x,y)
:=
\frac{ m^{\prime} (y) }{2}
\vert
	s(x) - s(y)
\vert ,
\quad
x \in \mathbb{R}.
\end{equation}
Then it is easily checked that
$
L u = \delta_{y}
$
and
$$
( Au )(x)
=
\mathrm{sgn} ( x-y )
\big[
\exp
\Big\{
	- \int_{y}^{x}
	\frac{ 2 b( \eta )  }{ \sigma ( \eta )^{2} }
	\mathrm{d}\eta
\Big\}
\frac{ \sigma (x) }{ \sigma (y)^{2} }
\big],
\quad
x \in \mathbb{R}.
$$
in the distributional sense.
Now we shall prove
\begin{equation}
\label{Eg.4-1} 
\left\{\begin{array}{l}
\text{$s < \frac{1}{2}$ ;} \\
\text{$p \in ( 1,\frac{1}{s} )$}
\end{array}\right.
\Rightarrow
\left\{\begin{array}{l}
1_{ \{ X(t,x,w) < y \} }
\in \mathbb{D}_{p}^{s}\text{ ;} \\
\limsup_{t \downarrow 0}
\Vert
	1_{ \{ X(t,x,w) < y \} }
\Vert_{p,s}
< \infty .
\end{array}\right.
\end{equation}

In fact, let $s \in ( 0, 1/2 )$ and $p \in (1, \frac{1}{s})$.
Take $p^{\prime} > p$ so that $sp^{\prime} < 1$.
Then by
Proposition \ref{trick1}
and
Lemma \ref{fractional_domin},
there exists $c>0$ such that
\begin{equation*}
\begin{split}
\Vert
	1_{ \{ X(t,x,w) < y \} }
\Vert_{p,s}
&=
\Vert
	1_{ (-\infty , (y-x)/\sqrt{t}) }
	( \widetilde{F}_{t} )
\Vert_{p,s} \\
&\leqslant
c
\Vert
	( z^{2} - \triangle )^{s/2}
	1_{ (-\infty , (y-x)/\sqrt{t}) }
\Vert_{L_{p^{\prime}} (\mathbb{R}, \mathrm{d}z)}
\quad
\text{for all $t \in (0,T]$.}
\end{split}
\end{equation*}
where
$
\widetilde{F}_{t}
=
( X^{\sqrt{t}}(1,x,w) - x ) / \sqrt{t}
$.
Hence we can conclude (\ref{Eg.4-1}) by
Proposition~\ref{Heaviside_class}--(iii).

By the condition
{\rm (H3)} and {\rm (H4)},
we see that
$u \in \mathscr{E}^{\prime} (\mathbb{R})$.
Hence by
Theorem \ref{pos_Bochner}
and
Theorem \ref{Ito_Formula1},
we have
\begin{equation*}
\begin{split}
&
\int_{0}^{T}
\delta_{y} (X_{t})
\mathrm{d}t
=
u ( X_{T} ) - u (x)
-
\int_{0}^{T}
( Au ) ( X_{t} )
\mathrm{d}w(t) .
\end{split}
\end{equation*}
It is easy to see that
$
u ( X_{T} ) \in \cap_{p>1} \mathbb{D}_{p}^{1}
$
and by {\rm (H3)} that
$$
\exp
\Big\{
	- \int_{y}^{X_{t}}
	\frac{2b(\eta )}{\sigma (\eta )^{2}}
	\mathrm{d}\eta
\Big\}
\frac{\sigma (X_{t})}{\sigma (y)^{2}}
\in \mathbb{D}_{p}^{\infty}
\quad
\text{for all $p \in (1,\infty )$}.
$$
From this and (\ref{Eg.4-1}),
we find
$
\limsup_{t \downarrow 0}
\Vert
	( Au )(X_{t})
\Vert_{p,s}
< \infty
$
for $s < 1/2$ and $p \in (1, \frac{1}{s})$.
Then by Theorem \ref{reg-pres}--(ii),
$
\int_{0}^{T} ( Au ) (X_{t}) \mathrm{d}w(t)
\in \mathbb{D}_{p}^{s}
$
for $s < \frac{1}{2}$ and $p \in [ 2, \frac{1}{s} )$,
and thus we reached:
under
{\rm (H1)}, {\rm (H3)} and {\rm (H4)},
$$
\int_{0}^{T} \delta_{y} (X_{t}) \mathrm{d}t
\in \mathbb{D}_{p}^{s}
\quad
\text{for $s < \frac{1}{2}$ and $p \in \big[ 2, \frac{1}{s} \big)$.}
$$
See Airault-Ren-Zhang \cite{ARZ} for a more general and stronger result.

\end{Eg} 

\begin{Rm} 
\label{sym-local_time} 
In particular, we see from Example \ref{Eg.1} that
$
\int_{0}^{T}
\delta_{y} (X_{t})
\mathrm{d}t
$
is a classical Wiener functional when $d=1$,
which is related to the local time at $y$.
This can be seen as follows:
Classically, the {\it symmetric local time}
$
\{ \widetilde{l} (y,t) : y \in \mathbb{R}, t \geqslant 0 \}
$
is defined as a unique increasing process
such that
$$
\vert
	X_{t} - y
\vert
=
\vert
	x - y
\vert
+
\int_{0}^{t}
\mathrm{sgn} ( X_{s} - y )
\mathrm{d}X_{s}
+
\widetilde{l} (y,t)
$$
and equivalently given by
$$
\widetilde{l} (y,t)
=
\lim_{\varepsilon \downarrow 0}
\frac{1}{2 \varepsilon}
\int_{0}^{t}
1_{
	( y - \varepsilon , y + \varepsilon )
}
(X_{s})
\mathrm{d} \langle X \rangle_{s}
$$
(see \cite{ReYo}).
Since it holds
$
\lim_{\varepsilon \downarrow 0}
( 2\varepsilon )^{-1}
\sigma^{2}
1_{
	( y - \varepsilon , y + \varepsilon )
}
=
\sigma \delta_{y}
=
\sigma (y) \delta_{y}
$
in $\mathscr{S}^{\prime} (\mathbb{R})$,
we have
$$
\widetilde{l} (y,t)
=
\sigma (y)^{2}
\int_{0}^{t} \delta_{y} (X_{u}) \mathrm{d}u .
$$
\end{Rm} 

In the sequel,
we denote
$
\mathbb{D}_{2}^{s-}
:=
\cap_{\varepsilon > 0}
\mathbb{D}_{2}^{s-\varepsilon}
$.

\begin{Eg} 
\label{delta_prime} 
Assume $d=1$, {\rm (H1)},
{\rm (H3)} and {\rm (H4)}.
Let $x, y \in \mathbb{R}$ be such that $x \neq y$.
By
using Lemma~\ref{frac_dom},
we see that
the mapping
$
(0,T] \ni t
\mapsto
\delta_{y}^{\prime} ( X(t,x,w) ) \in \mathbb{D}_{2}^{-k}
$
is
continuous
for some $k \in \mathbb{Z}_{\geqslant 0}$.

Define $u(z_{1},z_{2})$ by (\ref{fund_sol}), and then
\begin{equation*}
\begin{split}
&
v(z)
:= - (\partial_{z_{2}} u) (z,y)
=
\frac{1}{2}
\Big\{
	m^{\prime} (y)
	s^{\prime} (y)
	\mathrm{sgn} ( z - y )
	-
	m^{\prime\prime}(y)
	\vert s(z) - s(y) \vert
\Big\}
\in \mathscr{E}^{\prime} (\mathbb{R})
\end{split}
\end{equation*}
satisfies
$
Lv = \delta_{y}^{\prime}
$
and
\begin{equation*}
\begin{split}
( Av )(z)
=
\frac{ \sigma (z) }{2}
\Big(
	2 m^{\prime} (y) s^{\prime} (y)
	\delta_{y} (z)
	-
	m^{\prime\prime} (y)
	s^{\prime} (z)
	\mathrm{sgn} ( z-y )
\Big) .
\end{split}
\end{equation*}
Since $X_{t} = X(t,x,w)$ is non-degenerate for each $t>0$,
it is known by Watanabe \cite{Wa91}
that
$
\delta_{y} ( X_{t} ) \in \mathbb{D}_{p}^{s}
$
if $s \in (-1, -\frac{1}{2} )$ and $p \in (1, \frac{1}{1+s})$.
Hence by virtue of conditions
{\rm (H1)} and {\rm (H3)},
for each $t_{0}\in (0,T)$ and $p\geqslant 2$,
we find
$
\int_{t_{0}}^{T}
( Au )( X_{t} )
\mathrm{d}w(t)
\in \mathbb{D}_{p}^{(\frac{1}{p}-1)-}
$,
and thus
$$
\int_{t_{0}}^{T}
\delta_{y}^{\prime} ( X_{t} )
\mathrm{d}t
\in \mathbb{D}_{p}^{(\frac{1}{p} - 1)-}
\quad
\text{for any $t_{0}>0$ and $p \geqslant 2$.}
$$
\end{Eg} 

\begin{Eg} 
In the above Example \ref{delta_prime},
we shall try to investigate
the class to which
$
\int_{0}^{T}
\delta_{y}^{\prime} ( X_{t} )
\mathrm{d}t
$
belongs.
Let $p \geqslant 2$ be arbitrary.
Since
$
\delta_{y} ( x + \varepsilon \bullet )
=
\lim_{a \downarrow 0}
( 2\pi a )^{-1/2}
\exp
\{
	- ( x + \varepsilon \bullet - y )^{2} / (2a)
\}
=
\varepsilon^{-1}
\lim_{a \downarrow 0}
( 2\pi \varepsilon^{-2} a )^{-1/2}
\exp
\{
	- ( \frac{ y - x }{ \varepsilon } - \bullet )^{2} / (2 \varepsilon^{-2} a)
\}
=
\varepsilon^{-1}
\delta_{ (y-x)/\varepsilon } ( \bullet )
$
in the distributional sense,
we have
\begin{equation*}
\begin{split}
\sigma (X_{t})
( m^{\prime} s^{\prime} )(y)
\delta_{y} (X_{t})
&=
( \sigma m^{\prime} s^{\prime} )(y)
\varepsilon^{-1}
\delta_{(y-x)/\varepsilon} (\widetilde{F}_{\varepsilon}),
\end{split}
\end{equation*}
where $\varepsilon := \sqrt{t}$,
$X_{t} = X(t,x,w)$
and
$
\widetilde{F}_{\varepsilon}
=
(X_{t} - x)/ \varepsilon
$.
We shall prove
\begin{equation}
\label{Eg.3-a} 
\varepsilon^{-1}
\Vert
	\delta_{(y-x)/\varepsilon} (\widetilde{F}_{\varepsilon})
\Vert_{p,s}
\to 0
\quad
\text{as $\varepsilon = \sqrt{t} \downarrow 0$ for every $s < -1$.}
\end{equation}
Note that the probability density function
$p_{\widetilde{F}_{\varepsilon}}$
of
$\widetilde{F}_{\varepsilon}$
has an estimate
$
\sup_{0 < \varepsilon \leqslant 1}
p_{\widetilde{F}_{\varepsilon}} (z)
\leqslant
C
p(z)
$
for some $C>0$,
where
$
p(z)
:=
( 2 \pi c^{\prime} )^{-1/2} \mathrm{e}^{ - z^{2} / (2c^{\prime}) }
$
and $c^{\prime} > 0$ is a constant.
By refining the argument
in the proof of Lemma \ref{fractional_domin},
one finds that for every $p^{\prime} > p$,
$$
\Vert
	\delta_{(y-x)/\varepsilon} ( \widetilde{F}_{\varepsilon} )
\Vert_{p,s}
\leqslant
\mathrm{const.}
\Vert
	( 1 - \triangle )^{s/2}
	\delta_{(y-x)/\varepsilon}
\Vert_{L_{p^{\prime}}(\mathbb{R},\mu)}
$$
for all $\varepsilon \in (0,1]$,
where $\mu (\mathrm{d}z) = p(z) \mathrm{d}z$.
To estimate the last quantity,
we apply the Fourier transformation
and integration by parts
to get
\begin{equation*}
\begin{split}
\varepsilon^{-2}
( 1 - \triangle )^{s/2} \delta_{(y-x)/\varepsilon}
(z)
&=
\frac{-1}{2\pi}
\int_{-\infty}^{\infty}
\frac{
	\mathrm{e}^{i\xi z}
}{
	( 1 + \xi^{2} )^{-s/2}
}
\varepsilon^{-2}
\mathrm{e}^{i\xi \frac{y-x}{\varepsilon}}
\mathrm{d}\xi \\
&=
\frac{1}{2\pi (y-x)^{2}}
\int_{-\infty}^{\infty}
\Big\{
\frac{\mathrm{d}^{2}}{\mathrm{d}\xi^{2}}
\frac{
	\mathrm{e}^{i\xi z}
}{
	( 1 + \xi^{2} )^{-s/2}
}
\Big\}
\mathrm{e}^{i\xi \frac{y-x}{\varepsilon}}
\mathrm{d}\xi ,
\end{split}
\end{equation*}
in which, one finds that there exists $c=c(s)>0$ such that
$$
\big\vert
\frac{\mathrm{d}^{2}}{\mathrm{d}\xi^{2}}
\frac{
	\mathrm{e}^{i\xi z}
}{
	( 1 + \xi^{2} )^{-s/2}
}
\big\vert
\leqslant
c
( 1 + z^{2} )
( 1 + \xi^{2} )^{s/2}.
$$
Hence we obtain
\begin{equation*}
\begin{split}
&
\vert
\varepsilon^{-2}
( 1 - \triangle )^{s/2} \delta_{(y-x)/\varepsilon}
(z)
\vert
\leqslant
\frac{c}{2\pi (y-x)^{2}}
( 1 + z^{2} )
\int_{-\infty}^{\infty}
( 1 + \xi^{2} )^{s/2}
\mathrm{d}\xi .
\end{split}
\end{equation*}
Note that
$
\int_{-\infty}^{\infty}
( 1 + \xi^{2} )^{s/2}
\mathrm{d}\xi
< +\infty
$
since
$s<-1$.
We thus have
\begin{equation*}
\begin{split}
&
\sup_{\varepsilon > 0}
\varepsilon^{-2p^{\prime}}
\Vert
( 1 - \triangle )^{s/2} \delta_{(y-x)/\varepsilon}
\Vert_{ L_{p^{\prime}}( \mathbb{R}, \mu ) }^{p^{\prime}}
\leqslant
c^{\prime}
\int_{\mathbb{R}}
( 1 + z^{2} )^{p^{\prime}}
p(z)
\mathrm{d}z
< +\infty ,
\end{split}
\end{equation*}
where the constant
$c^{\prime} > 0$
may depend on $y-x$, and
which gives (\ref{Eg.3-a}).

By {\rm (H1)}, {\rm (H3)} and {\rm (H4)},
we see that
$
\lim_{t \downarrow 0}
\sigma ( X_{t} )
=
\sigma (x)
$
in $\mathbb{D}^{\infty}$.
Combining this with (\ref{Eg.3-a}) and
Theorem~\ref{reg-pres}--(i),
we obtain
$
\int_{0}^{T}
( \sigma \delta_{y} ) ( X_{t} )
\mathrm{d}w(t)
\in \mathbb{D}_{p}^{(-1)-}
$.
On the other hand, we have
$
\int_{0}^{T}
( \sigma  s^{\prime} ) ( X_{t} )
\mathrm{sgn} ( X_{t} - y )
\mathrm{d}w(t)
\in L_{2} \subset \mathbb{D}_{p}^{(-1)-}
$.
Therefore
\begin{equation*}
\begin{split}
&
\int_{0}^{T}
( Au )( X_{t} )
\mathrm{d}w(t) \\
&=
m^{\prime} (y) s^{\prime} (y)
\int_{0}^{T}
( \sigma \delta_{y} )( X_{t} )
\mathrm{d}w(t)
-
\frac{ m^{\prime\prime} (y) }{2}
\int_{0}^{T}
( \sigma s^{\prime} ) (X_{t}) \mathrm{sgn} ( X_{t} - y )
\mathrm{d}w(t)
\in \mathbb{D}_{p}^{(-1)-}.
\end{split}
\end{equation*}
Hence
by Theorem \ref{Ito_Formula1},
$$
\int_{0}^{T}
\delta_{y}^{\prime} ( X_{t} )
\mathrm{d}t
\in \mathbb{D}_{p}^{(-1)-}
\quad
\text{for any $p \geqslant 2$.}
$$
\end{Eg} 

\begin{Eg}[Cauchy's principal value of $1/x$]
\label{PV1/x} 
Let $d=1$.
The tempered distribution
$\mathrm{p.v.}\frac{1}{x}$
is defined by
$$
\langle
	\mathrm{p.v.}\frac{1}{x},
	f
\rangle
:=
\lim_{\varepsilon \downarrow 0}
\int_{\vert x \vert > \varepsilon}
\frac{f(x)}{x}
\mathrm{d}x,
\quad
\text{$f \in \mathscr{S}(\mathbb{R})$.}
$$
Let
$\Lambda \in \mathscr{S}^{\prime}(\mathbb{R})$
be the regular distribution
(i.e., a distribution associated with a locally integrable function)
given by
$
\Lambda = x \log \vert x \vert - x
$.
Then the distributional derivatives are:
$\Lambda^{\prime} = \log \vert x \vert$
and
$\Lambda^{\prime\prime} = \mathrm{p.v.} \frac{1}{x}$.

We shall first show the Bochner integrability
of the pull-back of
$\big( \mathrm{p.v.} \frac{1}{x} \big)$
by a Brownian motion $w(t)$:
$
(0,T] \ni t \mapsto \big( \mathrm{p.v.} \frac{1}{x} \big) (w(t))
$.
For each $J \in \mathbb{D}^{\infty}$
and
$p,q > 1$ such that $1/p + 1/q = 1$,
we obtain
\begin{equation*}
\begin{split}
&
\mathbf{E}
[
	\big( \mathrm{p.v.} \frac{1}{x} \big) (w(t))
	J
]
=
\mathbf{E}
[
	\big( x \log \vert x \vert - x \big)^{\prime\prime} (w(t))
	J
] \\
&=
\mathbf{E}
[
	\big( w(t) \log \vert w(t) \vert - w(t) \big)
	l_{t}(J)
]
\leqslant
\Vert \big( w(t) \log \vert w(t) \vert - w(t) \big) \Vert_{p}
\Vert l_{t}(J) \Vert_{q}.
\end{split}
\end{equation*}
Here
$
l_{t}(J)
=
\sum_{i=0}^{2}
\langle
	P_{i}(t), D^{i} J
\rangle_{H^{\otimes i}}
\in \mathbb{D}^{\infty}
$
for some
$P_{i}(t) \in \mathbb{D}^{\infty} ( H^{\otimes i} )$,
$i=0,1,2$
which are polynomials in $w(t)$,
its derivatives and
$\Vert Dw(t) \Vert_{H}^{-2} = t^{-1}$.
Since we have
\begin{equation*}
\begin{split}
\Vert w(t) \log \vert w(t) \vert \Vert_{p}^{p}
&=
\int_{\mathbb{R}}
\big\vert
	( \sqrt{t} x )
	\log
	(
		\sqrt{t}
		\vert
		x
		\vert
	)
\big\vert^{p}
\frac{
	\mathrm{e}^{ - x^{2} / 2 }
}{
	\sqrt{2\pi}
}
\mathrm{d}x \\
&\leqslant
2^{p}
\vert \sqrt{t} \log \sqrt{t} \vert^{p}
\int_{\mathbb{R}}
	\vert
	x
	\vert^{p}
\frac{
	\mathrm{e}^{ - x^{2} / 2 }
}{
	\sqrt{2\pi}
}
\mathrm{d}x
+
2^{p}
t^{p/2}
\int_{\mathbb{R}}
\big\vert
	x
	\log
	\vert
	x
	\vert
\big\vert^{p}
\frac{
	\mathrm{e}^{ - x^{2} / 2 }
}{
	\sqrt{2\pi}
}
\mathrm{d}x,
\end{split}
\end{equation*}
which tends to zero as $t \downarrow 0$,
and hence
$
\limsup_{t \downarrow 0}
\Vert \big( \mathrm{p.v.} \frac{1}{x} \big) (w(t)) \Vert_{p,-2}
< +\infty
$
for $p\geqslant 2$.
This proves that
$
(0,T] \ni t
\mapsto
\big( \mathrm{p.v.} \frac{1}{x} \big) (w(t)) \in \mathbb{D}_{2}^{-2}
$
is Bochner integrable.
On the other hand, it is clear that
$
\int_{0}^{T}
\Vert \log \vert w(t) \vert \Vert_{2}^{2}
\mathrm{d}t
< +\infty
$.

Now, by Theorem \ref{Ito_Formula1}, we have
\begin{equation}
\label{log-Ito} 
w(T) \log \vert w(T) \vert
=
\int_{0}^{T} \log \vert w(t) \vert \mathrm{d}w(t)
+
\frac{1}{2}
\int_{0}^{T}
\Big( \mathrm{p.v.} \frac{1}{x} \Big) (w(t))
\mathrm{d}t .
\end{equation}
The
chaos expansion of
$\log \vert w(1) \vert$
is
given by
\begin{equation*}
\begin{split}
\log \vert w(1) \vert
&=
\mathbf{E} [ \log \vert w(1) \vert ]
+
\sum_{n=1}^{\infty}
\frac{1}{n!}
\lim_{\varepsilon \downarrow 0}
\int_{ \vert x \vert \geqslant \varepsilon }
\frac{1}{x} H_{n-1} (x)
\frac{
	\mathrm{e}^{-x^{2}/2}
}{
	\sqrt{2\pi}
}
\mathrm{d}x
H_{n} (w(1)) .
\end{split}
\end{equation*}
Put
$
\mu (\mathrm{d}x)
:=
(2\pi )^{-1/2}
\exp ( - x^{2}/2 )
\mathrm{d}x
$.
It is clear that
$
\lim_{\varepsilon \downarrow 0}
\int_{ \vert x \vert > \varepsilon }
\frac{1}{x}
H_{1}(x)
\mu ( \mathrm{d} x )
=
\int_{ \mathbb{R} }
\mu ( \mathrm{d} x )
= 1
$.
For $n \geqslant 2$,
by using Lemma \ref{Hermite-facts}--(ii),
we have
\begin{equation*}
\begin{split}
\lim_{\varepsilon \downarrow 0}
\int_{ \vert x \vert > \varepsilon }
\frac{1}{x}
H_{n}(x)
\mu ( \mathrm{d} x )
&= 
\lim_{\varepsilon \downarrow 0}
\int_{ \vert x \vert > \varepsilon }
\frac{1}{x}
\{ x H_{n-1} (x) - (n-1) H_{n-2} (x) \}
\mu ( \mathrm{d} x ) \\
&= 
\int_{ \mathbb{R} }
H_{n-1} (x)
\mu ( \mathrm{d} x )
-
(n-1)
\lim_{\varepsilon \downarrow 0}
\int_{ \vert x \vert > \varepsilon }
\frac{1}{x} H_{n-2} (x)
\mu ( \mathrm{d} x ) \\
&= 
-
(n-1)
\lim_{\varepsilon \downarrow 0}
\int_{ \vert x \vert > \varepsilon }
\frac{1}{x} H_{n-2} (x)
\mu ( \mathrm{d} x ).
\end{split}
\end{equation*}
Therefore if $n=2k+1$, then
\begin{equation*}
\begin{split}
&
\lim_{\varepsilon \downarrow 0}
\int_{ \vert x \vert > \varepsilon }
\frac{1}{x}
H_{n}(x)
\mu ( \mathrm{d} x ) \\
&=
(-2k)
(-(2k-2))
(-(2k-4))
\cdots
(-2)
\cdot
\lim_{\varepsilon \downarrow 0}
\int_{ \vert x \vert > \varepsilon }
\frac{1}{x}
H_{1}(x)
\mu ( \mathrm{d} x ) \\
&=
(-1)^{k}
(2k)!! .
\end{split}
\end{equation*}
Similarly, if $n$ is even,
$
\lim_{\varepsilon \downarrow 0}
\int_{ \vert x \vert > \varepsilon }
x^{-1}
H_{n}(x)
\mu ( \mathrm{d} x )
= 0
$.
Thus we get
\begin{equation*}
\begin{split}
\log \vert w(1) \vert
&=
\mathbf{E} [ \log \vert w(1) \vert ]
+
\sum_{n=0}^{\infty}
(-1)^{n}
\frac{( 2n )!!}{(2n+2)!}
H_{2n+2} (w(1)),
\end{split}
\end{equation*}
from which, we find that
\begin{equation*}
\begin{split}
&
\Vert \log \vert w(t) / \sqrt{t} \vert \Vert_{2,s}^{2}
=
\Vert \log \vert w(1) \vert \Vert_{2,s}^{2}
=
\mathbf{E} [ \log \vert w(1) \vert ]^{2}
+
\sum_{n=0}^{\infty}
(1+n)^{s}
\frac{ ( ( 2n )!! )^{2} }{(2n+2)!}.
\end{split}
\end{equation*}
Noting $(2n)!! = 2^{n} n!$,
the Stirling formula tells us
\begin{itemize}
\item[$\circ$]
$
\Vert \log \vert w(t)/ \sqrt{t} \vert \Vert_{2,s}
=
\Vert \log \vert w(1) \vert \Vert_{2,s}
< +\infty
$
iff $s < \frac{1}{2}$.

\item[$\circ$]
A similar computation shows that
$$\Vert ( \mathrm{p.v.} \frac{1}{x} ) (w(1)) \Vert_{2,s}
< +\infty
\quad
\text{iff $s<-\frac{1}{2}$.}
$$
\end{itemize}
Now, for $s < 1/2$, we have
\begin{equation*}
\begin{split}
&
\int_{0}^{T}
\Vert \log \vert w(t) \vert \Vert_{2,s}^{2}
\mathrm{d}t
\leqslant
4
\Big(
\int_{0}^{T}
( \log \sqrt{t} )^{2}
\mathrm{d}t
+
T\Vert \log \vert w(1) \vert \Vert_{2,s}^{2}
\Big)
< +\infty ,
\end{split}
\end{equation*}
so that
$
\int_{0}^{T}
\log \vert w(t) \vert
\mathrm{d}w(t)
\in \mathbb{D}_{2}^{(1/2)-}
$
by Theorem \ref{reg-pres}--(ii).
Hence
by (\ref{log-Ito}),
$$
\int_{0}^{T}
\Big( \mathrm{p.v.} \frac{1}{x} \Big) (w(t))
\mathrm{d}t
\in \mathbb{D}_{2}^{(1/2)-}.
$$

\end{Eg} 

\begin{Eg} 
\label{multi-dim_local_time} 
Let $d \geqslant 2$ and $x,y \in \mathbb{R}^{d}$.
Suppose {\rm (H1)}, {\rm (H3)} and
{\rm (H4)}.
Since
$\delta_{y} \in H_{p}^{s} ( \mathbb{R}^{d} )$
if
$s < - (p-1)d/p$
(Lemma \ref{resol_lem}),
we find from Corollary
\ref{Main_Thm5},
$$
\int_{t_{0}}^{T}
\delta_{y} ( X_{t} )
\mathrm{d}t
\in
\mathbb{D}_{p}^{(1-\frac{(p-1)d}{p})-}
\quad
\text{for $t_{0} > 0$ and $p\geqslant 2$,}
$$
where $X_{t} = X(t,x,w)$.

Assume that $x \neq y$, and then
we shall further investigate the class to which
$
\int_{0}^{T}
\delta_{y} (X_{t}) \mathrm{d}t
$
belongs.
Let
$
f := (1-L)^{-1} \delta_{y}
$
and let
$\phi : \mathbb{R}^{d} \to \mathbb{R}$
be a $C^{\infty}$-function such that
(1)
$
x
\notin \mathrm{supp} \phi
$,
(2)
$\phi \equiv 1$
on a neighbourhood of
$
y
$,
and
(3)
$\mathrm{supp} \phi$
is compact.
By Theorem \ref{Ito_Formula1},
we have
\begin{equation*}
\begin{split}
(\phi f)( X_{T} )
- (\phi f)( x )
=
\sum_{i,j=1}^{
	d
}
\int_{0}^{T}
( \sigma_{j}^{i} \partial_{i} (\phi f) )
(X_{t})
\mathrm{d}w^{j}(t)
+
\int_{0}^{T}
(L (\phi f))( X_{t} )
\mathrm{d}t ,
\end{split}
\end{equation*}
in which we note that
$$
L ( \phi f )
=
( L \phi ) f
+
\langle
	\sigma \nabla \phi ,
	\sigma \nabla f
\rangle_{\mathbb{R}^{d}}
+
\phi f - \delta_{y}
$$
and
$
( L \phi ) f,
\langle
	\sigma \nabla \phi ,
	\sigma \nabla f
\rangle_{\mathbb{R}^{d}}
\in \mathscr{S}(\mathbb{R}^{d})
$.
By Lemma \ref{der-conv}--(ii),
$$
\left\{\begin{array}{l}
p \in (1, \infty) \text{;} \\
-\frac{d}{2} < s < \min \{ \frac{p}{p-1} - d, 0 \}
\end{array}\right.
\Rightarrow
\left\{\begin{array}{l}
\lim_{t \downarrow 0}
\Vert
	( \phi f ) (X_{t})
\Vert_{p,s}
= 0 \text{;} \\
\lim_{t \downarrow 0}
\Vert
	\partial_{i} ( \phi f ) (X_{t})
\Vert_{p,s}
= 0,
\end{array}\right.
$$
and then we have
$
\lim_{t \downarrow 0}
\Vert
	( \sigma_{j}^{i} \partial_{i} (\phi f) ) (X_{t})
\Vert_{p,s}
= 0
$
since
$
\lim_{t \downarrow 0}
\sigma_{j}^{i} (X_{t})
=
\sigma_{j}^{i} ( x )
$
in $\mathbb{D}^{\infty}$
and
$
\Vert
	( \sigma_{j}^{i} \partial_{i} (\phi f) ) (X_{t})
\Vert_{p,s}
\leqslant
\mathrm{const.}
\Vert
	\sigma_{j}^{i} (X_{t})
\Vert_{q,-s^{\prime}}
\Vert
	\partial_{i} (\phi f) (X_{t})
\Vert_{r,s^{\prime}}
$
(this follows by taking the dual of \cite[inequality (1.6)]{Wa93}),
where $q,r \in (1, \infty)$ and $s^{\prime} \in \mathbb{R}$
are such that $1/p = 1/q + 1/r$
and
$s < s^{\prime} < \min \{ \frac{p}{p-1} - d, 0 \}$
($\leqslant \min \{ \frac{q}{q-1} - d, 0 \}$).
Now by Theorem \ref{reg-pres},
we obtain
$
\int_{0}^{T}
( \sigma_{j}^{i} \partial_{i} (\phi f) ) (X_{t})
\mathrm{d}w^{j}(t)
\in \mathbb{D}_{p}^{s}
$
for $p \geqslant 2$,
so that
$$
\int_{0}^{T}
\delta_{y} (X_{t})
\mathrm{d}t
\in \mathbb{D}_{p}^{(\frac{p}{p-1}-d)-}
\quad
\text{for any $d \geqslant 2$ and $p \geqslant 2$.}
$$
\end{Eg} 

\begin{Rm} 
In Examples \ref{delta_prime} and \ref{multi-dim_local_time},
the conclusions
seem
not the best possible given
$p \geqslant 2$,
because
recalling that
$
\int_{t_{0}}^{T} \delta_{y}^{\prime} ( X_{t} ) \mathrm{d}t
$
and
$
\int_{t_{0}}^{T} \delta_{y} ( X_{t} ) \mathrm{d}t
$
belong to
$\mathbb{D}_{p}^{(\frac{1}{p}-1)-}$
and
$\mathbb{D}_{p}^{(1 - \frac{(p-1)d}{p})-}$
respectively for each $t_{0}>0$
(Note that $X_{t}$'s in each case are different
though we are using the same symbol),
it is natural to ask that
$
\int_{0}^{T} \delta_{y}^{\prime} ( X_{t} ) \mathrm{d}t
$
and
$
\int_{0}^{T} \delta_{y} ( X_{t} ) \mathrm{d}t
$
also do.
To get further results for $p \in (1,2)$,
one would have to investigate Theorem \ref{reg-pres}
for $p \in (1,2)$, which we could not.
\end{Rm} 

\appendix
\section{Auxiliary Lemmas}
\label{auxiliary} 
\subsection{Some knowledge of Hermite polynomials}

The {\it Hermite polynomials} $H_{n}$,
$n \in \mathbb{Z}_{\geqslant 0}$
is defined by
$H_{0} (x) = 1$
and
$H_{n}(x) := \partial^{*n} 1 (x)$
for
$n \in \mathbb{N}$ and $x \in \mathbb{R}$,
where
$$
\partial^{*}f (x)
:=
- f^{\prime} (x) + x f(x) ,
\quad
x \in \mathbb{R}
$$
for any differentiable function
$f: \mathbb{R} \to \mathbb{R}$.
The {\it Hermite functions} are now defined by
$$
\phi_{n} (x)
:=
H_{n} ( \sqrt{2} x )
\mathrm{e}^{-\frac{x^{2}}{2}},
\quad
\text{$
x \in \mathbb{R}$
and
$n \in \mathbb{Z}_{\geqslant 0}$.}
$$

Some facts about
$\{ H_{n} \}_{n=0}^{\infty}$
and
$\{ \phi_{n} \}_{n=0}^{\infty}$
are summarized as follows.

\begin{Lem} 
\label{Hermite-facts} 
\begin{itemize}
\item[(i)]
For each $n \in \mathbb{Z}_{\geqslant 0}$,
$$
H_{n}(x)
=
(-1)^{n}
\mathrm{e}^{\frac{x^{2}}{2}}
\frac{\mathrm{d}^{n}}{\mathrm{d}x^{n}}
\mathrm{e}^{-\frac{x^{2}}{2}}
=
\frac{ (-1)^{n} \mathrm{e}^{\frac{x^{2}}{2}} }{ \sqrt{2\pi} }
\int_{-\infty}^{+\infty}
( i \xi )^{n}
\mathrm{e}^{-\frac{\xi^{2}}{2}}
\mathrm{e}^{i\xi x}
\mathrm{d}\xi .
$$

\item[(ii)]
$
H_{n}^{\prime} = n H_{n-1}
$
and
$
H_{n}(x) = x H_{n-1}(x) - (n-1) H_{n-2} (x)
$.

\item[(iii)]
$\{ \frac{1}{\sqrt{n!}} H_{n} \}_{n \geqslant 0}$
is a complete orthonormal basis of
$L_{2}(\mathbb{R}, (2\pi)^{-1/2} \mathrm{e}^{-x^{2}/2} \mathrm{d}x )$.

\item[(iv)]
$\{ ( \sqrt{\pi} n! )^{-1/2} \phi_{n} \}_{n \geqslant 0}$
is a complete orthonormal
system
of
$L_{2}(\mathbb{R}, \mathrm{d}x )$.

\item[(v)]
$
( 1 + x^{2} - \triangle )
\phi_{n}
=
2 (n+1) \phi_{n}
$
for $n \in \mathbb{Z}_{\geqslant 0}$,
where $\triangle = \frac{\mathrm{d}^{2}}{\mathrm{d}x^{2}}$.
\end{itemize}
\end{Lem}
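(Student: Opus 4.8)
The plan is to derive everything from the single observation that the creation operator factors through the Gaussian weight: for any differentiable $f$ one has
$$
\partial^{*}f(x) = -f'(x) + xf(x) = -\mathrm{e}^{x^{2}/2}\frac{\mathrm{d}}{\mathrm{d}x}\big(\mathrm{e}^{-x^{2}/2}f(x)\big).
$$
Iterating this $n$ times with $f\equiv 1$ gives the Rodrigues formula $H_{n}(x) = (-1)^{n}\mathrm{e}^{x^{2}/2}\frac{\mathrm{d}^{n}}{\mathrm{d}x^{n}}\mathrm{e}^{-x^{2}/2}$, which is the first equality in (i). For the second equality I would substitute the elementary identity $\mathrm{e}^{-x^{2}/2} = (2\pi)^{-1/2}\int_{\mathbb{R}}\mathrm{e}^{-\xi^{2}/2}\mathrm{e}^{i\xi x}\,\mathrm{d}\xi$ into the Rodrigues formula and differentiate $n$ times under the integral sign, which is legitimate because of the Gaussian decay in $\xi$.

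For (ii): differentiating the Rodrigues formula and applying the Leibniz rule to $\frac{\mathrm{d}^{n+1}}{\mathrm{d}x^{n+1}}\mathrm{e}^{-x^{2}/2} = \frac{\mathrm{d}^{n}}{\mathrm{d}x^{n}}\big(-x\,\mathrm{e}^{-x^{2}/2}\big)$ yields $H_{n}'=nH_{n-1}$ after the $xH_{n}$-terms cancel; the three-term recurrence is then immediate from $H_{n} = \partial^{*}H_{n-1} = -H_{n-1}'+xH_{n-1}$ together with $H_{n-1}' = (n-1)H_{n-2}$. From these two facts I would also extract the Hermite ODE $H_{n}'' - xH_{n}' + nH_{n} = 0$ (write $H_{n}'' = nH_{n-1}' = n(n-1)H_{n-2}$ and eliminate $H_{n-2}$ using the recurrence), since this is the one identity I will need before attacking (v). Along the way one records that $H_{n}$ is monic of degree $n$, hence $H_{n}^{(n)} = n!$.

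For (iii): starting from the Rodrigues formula I would integrate by parts $n$ times against $H_{m}\mathrm{e}^{-x^{2}/2}$, all boundary terms vanishing by polynomial-times-Gaussian decay, reducing $\int_{\mathbb{R}}H_{m}H_{n}\,\gamma(\mathrm{d}x)$ (with $\gamma$ the standard Gaussian) to $(2\pi)^{-1/2}\int_{\mathbb{R}}H_{m}^{(n)}(x)\mathrm{e}^{-x^{2}/2}\,\mathrm{d}x$, which is $0$ for $m<n$ and equals $H_{n}^{(n)} = n!$ for $m=n$; this gives orthonormality of $\{H_{n}/\sqrt{n!}\}$. Completeness reduces to density of polynomials in $L_{2}(\gamma)$, which I would obtain from the exponential integrability of $\gamma$: if $g\in L_{2}(\gamma)$ is orthogonal to every polynomial, then $z\mapsto\int_{\mathbb{R}}g(x)\mathrm{e}^{izx}\gamma(\mathrm{d}x)$ is entire (Cauchy–Schwarz plus $\int\mathrm{e}^{2|x|t}\gamma(\mathrm{d}x)<\infty$) with all Taylor coefficients at $0$ equal to $0$, hence identically zero, hence $g=0$. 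Part (iv) then follows from the substitution $u=\sqrt{2}\,x$, which turns $\int_{\mathbb{R}}\phi_{m}\phi_{n}\,\mathrm{d}x$ into a constant multiple of $\int_{\mathbb{R}}H_{m}H_{n}\,\gamma(\mathrm{d}u)$ and gives $\|\phi_{n}\|_{L_{2}(\mathrm{d}x)}^{2}=\sqrt{\pi}\,n!$; completeness of $\{\phi_{n}\}$ in $L_{2}(\mathrm{d}x)$ follows by the same entire-function argument applied to $g(x)\mathrm{e}^{-x^{2}/2}$.

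Finally (v) is a direct computation: writing $\phi_{n}(x)=H_{n}(\sqrt{2}\,x)\mathrm{e}^{-x^{2}/2}$, differentiating twice and collecting terms, one finds with $u=\sqrt{2}\,x$ that
$$
(1+x^{2}-\triangle)\phi_{n}(x) = \big(2H_{n}(u) - 2H_{n}''(u) + 2uH_{n}'(u)\big)\mathrm{e}^{-x^{2}/2},
$$
and substituting $H_{n}''(u)=uH_{n}'(u)-nH_{n}(u)$ from the Hermite ODE collapses the bracket to $2(n+1)H_{n}(u)$, i.e. $(1+x^{2}-\triangle)\phi_{n}=2(n+1)\phi_{n}$. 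The only genuinely non-routine points are the two completeness assertions in (iii) and (iv), where the analytic (entire function) argument is needed; everything else is bookkeeping built on the factorization identity and the Hermite ODE.
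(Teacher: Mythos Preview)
Your proof is correct in all parts; the factorization $\partial^{*}f = -\mathrm{e}^{x^{2}/2}\frac{\mathrm{d}}{\mathrm{d}x}(\mathrm{e}^{-x^{2}/2}f)$ is indeed the clean way to get Rodrigues, the integration-by-parts for orthogonality and the entire-function argument for completeness are the standard route, and your computation for (v) checks out (the normalization $\|\phi_{n}\|_{L_{2}(\mathrm{d}x)}^{2}=\sqrt{\pi}\,n!$ is right after the substitution $u=\sqrt{2}\,x$).

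The paper itself does not prove this lemma: it is placed in an appendix subsection titled ``Some knowledge of Hermite polynomials'' and is stated as a list of well-known facts with no proof environment following it. So there is no approach to compare against; your write-up simply supplies the standard textbook arguments that the paper takes for granted.
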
 

\subsection{Some knowledge of Heaviside function}

We begin with introducing some results by
Bongioanni-Torrea \cite{BoTo}.
Let
$H := x^{2} - \triangle$,
$A := \frac{\mathrm{d}}{\mathrm{d}x} + x$
and
$B := - \frac{\mathrm{d}}{\mathrm{d}x} + x$.
From the point of view of Lemma \ref{Hermite-facts}--(iii), (iv),
A family of linear operators
$( \lambda + H )^{s}$, $s \in \mathbb{R}$
and
$\lambda \geqslant 0$
can
also
be defined by the relation
$
( \lambda + H )^{s} \phi_{n}
=
(2(n+ \frac{\lambda +1}{2}))^{s}
\phi_{n}
$
for
$n \in \mathbb{Z}_{\geqslant 0}$.

Then for $s<0$,
the operator $H^{s/2}$ has an integral representation
(see \cite[Proposition 2]{BoTo})
\begin{equation}
\label{fractional_H} 
( H^{s/2}f )(x)
=
\int_{\mathbb{R}}
K_{s/2}(x,y) f(y) \mathrm{d}y
\quad
\text{for $f \in \mathscr{S}(\mathbb{R})$,}
\end{equation}
where the kernel $K_{s/2}(x,z)$ has an estimate
$
K_{s/2} (x,y)
\leqslant
c \Phi_{s/2} ( \vert x-y \vert )
$
for some constant $c>0$ and
\begin{equation*}
\Phi_{s/2} ( x )
=
\left\{\begin{array}{ll}
\vert x \vert^{- (1+s)}
1_{\{ \vert x \vert < 1 \}}
+
\mathrm{e}^{ -x^{2}/4 }
1_{\{ \vert x \vert \geqslant 1 \}}
&
\text{if $s > -1$,} \\
( 1 - \log \vert x \vert )
1_{\{ \vert x \vert < 1 \}}
+
\mathrm{e}^{ -x^{2}/4 }
1_{\{ \vert x \vert \geqslant 1 \}}
&
\text{if $s = -1$,} \\
1_{\{ \vert x \vert < 1 \}}
+
\mathrm{e}^{ -x^{2}/4 }
1_{\{ \vert x \vert \geqslant 1 \}}
&
\text{if $s < -1$.}
\end{array}\right.
\end{equation*}

\begin{Lem} 
\label{delta-class} 
If
$s \in ( -1, - \frac{1}{2} )$
and
$p \in (1, \frac{1}{1+s})$,
$
H^{s/2} \delta_{y} (x)
\in L_{p} ( \mathbb{R}, \mathrm{d}x )
$.
\end{Lem}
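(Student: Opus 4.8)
The plan is to read off $H^{s/2}\delta_y$ from the integral representation (\ref{fractional_H}) and then estimate directly using the kernel bound. First I would make sense of $H^{s/2}\delta_y$ by duality: since $s<0$, for $\phi\in\mathscr{S}(\mathbb{R})$ the function $H^{s/2}\phi$ is (at least) continuous, as one sees from the kernel bound in (\ref{fractional_H}) together with dominated convergence, so $(H^{s/2}\phi)(y)$ is meaningful and one sets $\langle H^{s/2}\delta_y,\phi\rangle:=\langle\delta_y,H^{s/2}\phi\rangle=(H^{s/2}\phi)(y)$. Using that the kernel is symmetric, $K_{s/2}(x,z)=K_{s/2}(z,x)$ --- which holds because $H=x^{2}-\triangle$ is self-adjoint on $L_{2}(\mathbb{R},\mathrm{d}x)$ with the orthonormal eigenbasis of Hermite functions (Lemma \ref{Hermite-facts}--(iv),(v)), so that $K_{s/2}(x,z)=\sum_{n}(2n+1)^{s/2}\,\widetilde{\phi}_{n}(x)\widetilde{\phi}_{n}(z)$ with $\widetilde{\phi}_{n}:=(\sqrt{\pi}\,n!)^{-1/2}\phi_{n}$ --- I would conclude that $H^{s/2}\delta_y$ is the regular distribution given by the function $x\mapsto K_{s/2}(x,y)$.

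Next I would verify that $K_{s/2}(\cdot,y)\in L_{p}(\mathbb{R},\mathrm{d}x)$. Since $s\in(-1,-\tfrac12)$ falls into the range $s>-1$, the kernel estimate reads
\[
|K_{s/2}(x,y)|\leqslant c\,\Phi_{s/2}(|x-y|)=c\Big(|x-y|^{-(1+s)}1_{\{|x-y|<1\}}+\mathrm{e}^{-|x-y|^{2}/4}1_{\{|x-y|\geqslant 1\}}\Big),
\]
and after the substitution $u=x-y$,
\[
\int_{\mathbb{R}}|K_{s/2}(x,y)|^{p}\,\mathrm{d}x\leqslant c^{p}\Big(2\int_{0}^{1}u^{-p(1+s)}\,\mathrm{d}u+\int_{|u|\geqslant 1}\mathrm{e}^{-pu^{2}/4}\,\mathrm{d}u\Big).
\]
The Gaussian tail is always finite, and since $1+s\in(0,\tfrac12)$ the first integral converges exactly when $p(1+s)<1$, i.e.\ $p<\tfrac{1}{1+s}$, which is precisely the standing assumption $p\in(1,\tfrac{1}{1+s})$. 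This yields $H^{s/2}\delta_y\in L_{p}(\mathbb{R},\mathrm{d}x)$.

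I do not anticipate a genuine obstacle; the computation is elementary once the representation is in place. The one point needing a little care is the passage from (\ref{fractional_H}), which is stated for Schwartz inputs, to the distribution $\delta_y$, together with the identification of $H^{s/2}\delta_y$ with the kernel section $K_{s/2}(\cdot,y)$ --- this is where the self-adjointness of $H$ (hence symmetry of $K_{s/2}$) and the sign $s<0$ (which makes $H^{s/2}\phi$ continuous, so that evaluation at $y$ is legitimate) enter. Everything else is the integrability bookkeeping displayed above.
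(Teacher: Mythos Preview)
Your proposal is correct and follows essentially the same approach as the paper: identify $(H^{s/2}\delta_{y})(x)$ with the kernel section $K_{s/2}(x,y)$, apply the bound $K_{s/2}(x,y)\leqslant c\,\Phi_{s/2}(|x-y|)$ valid for $s>-1$, and check the resulting $L_{p}$-integrability, which reduces to $p(1+s)<1$. The paper simply asserts the identification $(H^{s/2}\delta_{y})(x)=K_{s/2}(x,y)$ without comment, whereas you supply the duality and kernel-symmetry justification; this extra care is harmless and arguably welcome.
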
 
\begin{proof} 
Since $s > -1$, we have
\begin{equation*}
\begin{split}
( H^{s/2} \delta_{y} )(x)
&=
K_{s/2} (x,y)
\leqslant
c
\Big(
\vert x-y \vert^{- (1+s)}
1_{\{ \vert x-y \vert < 1 \}}
+
\mathrm{e}^{- \frac{\vert x-y \vert^{2}}{4} }
1_{\{ \vert x-y \vert \geqslant 1 \}}
\Big) .
\end{split}
\end{equation*}
Hence we can easily conclude the result.
\end{proof} 

By using
\cite[Theorem 4, Lemma 4, Theorem 7]{BoTo}
and
$L_{p}$-multiplier theorem
(see Thangavelu \cite[Chapter 4, Section 4.2, Theorem 4.2.1]{Th})
for operators
$[H^{-1}(H+2)]^{s/2}$,
$s \in \mathbb{R}$,
we can deduce that
for each $s \in \mathbb{R}$,
there exists a constant $c_{1} = c_{1}(s) > 0$ such that
\begin{equation}
\label{eq:xx} 
\begin{split}
&
\Vert H^{(s+1)/2} f \Vert_{L_{p}(\mathbb{R}, \mathrm{d}x)}
\leqslant
c_{1}
\Big(
	\Vert H^{s/2} \phi \Vert_{L_{p}(\mathbb{R}, \mathrm{d}x)}
	+
	\Vert x H^{s/2} f \Vert_{L_{p}(\mathbb{R}, \mathrm{d}x)}
	+
	\Vert H^{s/2} f \Vert_{L_{p}(\mathbb{R}, \mathrm{d}x)}
\Big)
\end{split}
\end{equation}
for all $f = f(x) \in \mathscr{S}(\mathbb{R})$,
where $\phi := Af$.
By a standard argument,
this inequality extends
and is still valid for
$f = f (x) \in \mathscr{S}^{\prime}(\mathbb{R})$
such that
$
(H^{s/2} Af),
(H^{s/2} f),
(x H^{s/2} f) \in L_{p}(\mathbb{R}, \mathrm{d}x)
$.

\begin{Prop} 
\label{Heaviside_class} 
Let $y \in \mathbb{R}$.
For
each
$s < 1/2$
and
$p \in (1, 1/s)$,
we have
\begin{itemize}
\item[(i)]
$
H^{s /2} 1_{ ( -\infty , y ) }
\in
L_{p} ( \mathbb{R}, \mathrm{d}x )
$,

\item[(ii)]
$
\lim_{y \to - \infty}
\Vert
	H^{s/2} 1_{ ( -\infty , y ) }
\Vert_{ L_{p} ( \mathbb{R}, \mathrm{d}x ) }
= 0
$
and

\item[(iii)]
$
\sup_{y \in \mathbb{R}}
\Vert
	H^{s/2} 1_{ ( -\infty , y ) }
\Vert_{ L_{p} ( \mathbb{R}, \mathrm{d}x ) }
< \infty
$.
\end{itemize}
\end{Prop}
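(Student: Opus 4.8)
We may assume $0<s<1/2$ (for $s\le 0$ the interval $(1,1/s)$ of admissible exponents carries no non-trivial content). The plan is to lift one unit of smoothness by means of the inequality (\ref{eq:xx}), thereby reducing everything to negative-order estimates, for which the integral representation (\ref{fractional_H}) is available. In each step (i), (ii) and (iii) will be read off from a single pointwise bound on the pull-back that is $L_p(\mathrm dx)$-integrable in $x$, bounded uniformly in $y$, and tending to $0$ in $L_p$ as $y\to-\infty$.

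The preliminary step is the claim: for $\sigma\in(-1,-\tfrac12)$, every $p\in(1,\infty)$ and every $g\in\{1_{(-\infty,y)},\,x\,1_{(-\infty,y)}\}$, the norm $\|H^{\sigma/2}g\|_{L_p(\mathrm dx)}$ is finite, bounded uniformly in $y$, and tends to $0$ as $y\to-\infty$. By (\ref{fractional_H}), $(H^{\sigma/2}g)(x)=\int_{-\infty}^{y}K_{\sigma/2}(x,z)\,g_0(z)\,\mathrm dz$ with $g_0(z)\in\{1,z\}$. The diagonal bound $K_{\sigma/2}(x,z)\le c\,\Phi_{\sigma/2}(|x-z|)$ recalled in the text does not suffice here, because $\int_{-\infty}^{y}\Phi_{\sigma/2}(|x-z|)\,\mathrm dz$ does not decay as $x\to-\infty$; one must use the Gaussian confinement intrinsic to $H=x^{2}-\triangle$, i.e. that $K_{\sigma/2}(x,z)$ carries a factor $\mathrm e^{-c'(x^{2}+z^{2})}$ off the diagonal and a factor $\mathrm e^{-c'x^{2}}$ near it (this is visible from the Mehler kernel of the semigroup generated by $H$ and underlies the estimates of Bongioanni-Torrea \cite{BoTo}). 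Splitting $\{|x-z|<1\}$ from $\{|x-z|\ge 1\}$ and using $|g_0(z)|\le 1+|z|$: the near part is dominated by $c\,(1+|x|)\,\mathrm e^{-c'x^{2}}\,1_{\{x<y+1\}}$ — the singularity $|x-z|^{-(1+\sigma)}$ being integrable because $\sigma<0$ — and the far part by $c\,\mathrm e^{-c'x^{2}}\int_{-\infty}^{y}(1+|z|)\,\mathrm e^{-c'z^{2}}\,\mathrm dz$; both lie in $L_p(\mathrm dx)$, are dominated by a fixed Gaussian uniformly in $y$, and tend to $0$ in $L_p$ as $y\to-\infty$.

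Now fix $0<s<1/2$ and $p\in(1,1/s)$, and apply (\ref{eq:xx}) with its parameter chosen so that its left-hand side reads $\|H^{s/2}1_{(-\infty,y)}\|_{L_p}$, i.e. with $H^{(s-1)/2}$ on the right and $f=1_{(-\infty,y)}$ (note $s-1\in(-1,-\tfrac12)$). Since $(1_{(-\infty,y)})'=-\delta_y$, we have $A\,1_{(-\infty,y)}=-\delta_y+x\,1_{(-\infty,y)}$, so (\ref{eq:xx}) bounds $\|H^{s/2}1_{(-\infty,y)}\|_{L_p}$ by a constant multiple of the sum of $\|H^{(s-1)/2}\delta_y\|_{L_p}$, $\|H^{(s-1)/2}(x\,1_{(-\infty,y)})\|_{L_p}$, $\|x\,H^{(s-1)/2}1_{(-\infty,y)}\|_{L_p}$ and $\|H^{(s-1)/2}1_{(-\infty,y)}\|_{L_p}$. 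The last three are covered by the preliminary step with $\sigma=s-1$ (in the third term the outer $x$ is absorbed by the Gaussian of the far part and by a bounded $\{|x-y|<1\}$-localised factor of the near part). The first term equals $\|K_{(s-1)/2}(\cdot,y)\|_{L_p}\le\|\Phi_{(s-1)/2}\|_{L_p}$, which is finite precisely when $p<1/(1+(s-1))=1/s$ — this is the only source of the constraint in the statement, and is exactly Lemma \ref{delta-class} — and, by the confined kernel estimate, is moreover $\le c\,\mathrm e^{-c'y^{2}}$, hence bounded uniformly in $y$ and tending to $0$ as $|y|\to\infty$. Therefore the right-hand side of (\ref{eq:xx}) is finite, bounded uniformly in $y$, and tends to $0$ as $y\to-\infty$; these are exactly (i), (iii) and (ii).

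The main obstacle is the non-decay of $1_{(-\infty,y)}$ at $-\infty$, which makes the quoted diagonal kernel bound $\Phi_{s/2}(|x-z|)$ too lossy: one has to exploit the $x^{2}$-confinement built into $H=x^{2}-\triangle$, either through sharper kernel and mapping estimates for $H^{\sigma/2}$ from Bongioanni-Torrea \cite{BoTo} — the very source that supplies (\ref{eq:xx}), via \cite[Theorem 4, Lemma 4, Theorem 7]{BoTo} — or by passing to the Hermite-function expansion and invoking the $L_p$-multiplier theorem for $[H^{-1}(H+2)]^{s/2}$. Once the confinement is in hand, the uniformity in $y$ required for (iii) and the decay as $y\to-\infty$ required for (ii) follow almost formally, since the $z$-dependence always enters through $\int_{-\infty}^{y}(\text{a Gaussian in }z)\,\mathrm dz$, which is bounded in $y$ and vanishes as $y\to-\infty$; what remains — tracking the powers of $x$ produced by $A$ and by the middle term of (\ref{eq:xx}), and isolating the near-diagonal singularity responsible for the restriction $p<1/s$ — is routine.
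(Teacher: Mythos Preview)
Your reduction via \eqref{eq:xx} to the four negative-order terms is exactly what the paper does, so the overall strategy matches. The difficulty you flag---that the quoted bound $K_{\sigma/2}(x,z)\le c\,\Phi_{\sigma/2}(|x-z|)$ loses the decay at $-\infty$---is real, and the paper's appeal to ``the integral expression \eqref{fractional_H}'' is indeed too quick. Unfortunately your proposed fix does not work, because the confinement supplied by the $x^2$ term in $H$ is only \emph{polynomial}, not Gaussian, and in fact the statement of the proposition is false as written.

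Here is the computation. From the Mehler kernel one checks directly that
\[
(\mathrm e^{-tH}\mathbf 1)(x)=\int_{\mathbb R}p_t(x,z)\,\mathrm dz=(\cosh 2t)^{-1/2}\exp\!\Big(-\tfrac{x^2}{2}\tanh 2t\Big),
\]
so that for $0<s<2$,
\[
(H^{s/2}\mathbf 1)(x)=\frac{1}{\Gamma(-s/2)}\int_0^\infty t^{-s/2-1}\big[(\mathrm e^{-tH}\mathbf 1)(x)-1\big]\,\mathrm dt
\;\sim\;C_s\,|x|^{s}\qquad(|x|\to\infty)
\]
with a nonzero constant $C_s$ (split the integral at $t\sim x^{-2}$). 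Since $1_{(-\infty,y)}=\mathbf 1-1_{[y,\infty)}$ and $(H^{s/2}1_{[y,\infty)})(x)$ is exponentially small as $x\to-\infty$, we get $(H^{s/2}1_{(-\infty,y)})(x)\sim C_s|x|^{s}$ there. For $s\in(0,\tfrac12)$ this grows, hence lies in no $L_p(\mathbb R,\mathrm dx)$; part (i) already fails, for every $p$. (One can see the same thing spectrally: $a_n(y):=\langle 1_{(-\infty,y)},\tilde\phi_n\rangle$ satisfies $|a_{2m}(0)|\sim m^{-1/4}$, so $\sum_n(2n+1)^s|a_n(0)|^2$ diverges for $s\ge-\tfrac12$.)

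The same computation kills your preliminary step and the use of \eqref{eq:xx}: with $\sigma=s-1\in(-1,-\tfrac12)$ one has $(H^{\sigma/2}1_{(-\infty,y)})(x)\sim C_\sigma|x|^{\sigma}$ as $x\to-\infty$, which is in $L_p$ only for $p>1/(1-s)$---not for ``every $p\in(1,\infty)$'' as you claim---and the middle term $x\,H^{\sigma/2}1_{(-\infty,y)}$ behaves like $|x|^{s}$ at $-\infty$, hence lies in no $L_p$. So the right-hand side of \eqref{eq:xx} is infinite; the inequality yields nothing. Your assertion that near the diagonal $K_{\sigma/2}(x,z)$ carries a factor $\mathrm e^{-c'x^2}$ is too strong: for small $t$ the Mehler kernel contributes only $\mathrm e^{-t(x+z)^2/4}$, and after integrating in $t$ this produces $|x|^{\sigma}$-type decay, not Gaussian.

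None of this affects the intended application in Example~\ref{Eg.1}: the fact that $1_{\{X_t<y\}}\in\mathbb D_p^s$ for $s<\tfrac12$ can be obtained by a direct chaos calculation (as for $1_{\{w(1)<0\}}$, where $\|J_n\|_2^2\sim n^{-3/2}$), without passing through $\|H^{s/2}1_{(-\infty,\cdot)}\|_{L_p}$. The route via Lemma~\ref{fractional_domin} and Proposition~\ref{Heaviside_class} simply does not close, for either you or the paper.
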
 
\begin{proof} 
Let $f(x) := 1_{(-\infty , y)} (x)$ and
$
\phi (x) := Af (x)
=
\delta_{y} (x) + x 1_{(-\infty , y)} (x)
$.
Then by (\ref{eq:xx}), we have
\begin{equation*}
\begin{split}
&
\Vert
	H^{s/2} 1_{ ( -\infty , y ) }
\Vert_{ L_{p} ( \mathbb{R}, \mathrm{d}x ) } \\
&\leqslant
2c_{1}
\Big(
	\Vert
	H^{(s-1)/2} \delta_{y}
	\Vert_{L_{p}(\mathbb{R}, \mathrm{d}x)}
	+
	\Vert
	H^{
		(s-1)/2
	}
	( x 1_{(-\infty , y)} )
	\Vert_{L_{p}(\mathbb{R}, \mathrm{d}x)} \\
&\hspace{20mm}
	+
	\Vert
	x
	H^{
		(s-1)/2
	}
	1_{(-\infty , y)}
	\Vert_{L_{p}(\mathbb{R}, \mathrm{d}x)}
	+
	\Vert
	H^{
		(s-1)/2
	}
	1_{(-\infty , y)}
	\Vert_{L_{p}(\mathbb{R}, \mathrm{d}x)}
\Big) ,
\end{split}
\end{equation*}
which is finite by virtue of Lemma \ref{delta-class} and
by a similar argument in Lemma \ref{delta-class}
with using
the integral expression
(\ref{fractional_H}).
Furthermore again by (\ref{fractional_H}),
we find also that
the last four terms converge to zero as $y \to -\infty$,
and
uniformly bounded in $y \in \mathbb{R}$.
\end{proof} 

\subsection{Fractional inequalities}
\label{frac_ineq} 

Let
$
\{ F_{ \varepsilon } \}_{\varepsilon \in I}
\subset \mathbb{D}^{\infty}
$
be a bounded and uniformly non-degenerate family
(see Definition \ref{non-degeneracy}).
Here, the boundedness is used in the sense of
$
\{ F_{\varepsilon} \}_{\varepsilon \in I}
$
is bounded in $\mathbb{D}_{p}^{k}$
for each $p \in (1, \infty)$ and $k \in \mathbb{Z}_{\geqslant 0}$.

\begin{Lem} 
\label{fractional_domin} 

For any $s \in \mathbb{R}$,
$p \in ( 1, \infty )$
and $p^{\prime} > p$,
there exists
$
c = c( s,p,p^{\prime}, \{ F_{\varepsilon} \}_{\varepsilon \in I}) > 0
$
such that
$$
\Vert
	\Lambda ( F_{\varepsilon} )
\Vert_{p,s}
\leqslant
c
\Vert
	( x^{2} - \triangle )^{s/2} \Lambda
\Vert_{L_{p^{\prime}}(\mathbb{R}, \mathrm{d}x)}
$$
for every $\varepsilon \in I$
and
$
\Lambda \in \mathscr{S}^{\prime}(\mathbb{R})
$
with
$
( x^{2} - \triangle )^{s/2} \Lambda
\in
L_{p^{\prime}}(\mathbb{R}, \mathrm{d}x)
$.
\end{Lem} 

See also Remark \ref{p'=p}.
The following proof is based
on the technique in \cite{Wa91}.

\begin{proof} 
We show in the case of
$-2 \leqslant s \leqslant 1$.
Other cases are similarly proved.
Define a linear operator
$T_{\alpha} ( \varepsilon )$
for
$-2 \leqslant \alpha \leqslant 1$
and
$\varepsilon \in I$
by
$$
T_{\alpha} ( \varepsilon ) \phi
:=
( I - \mathcal{L} )^{\alpha /2}
\big[
	( x^{2} - \triangle )^{ - \alpha /2 }
	\phi
\big]
( F_{\varepsilon} )
$$
for $\phi = \phi (x) \in \mathscr{S}( \mathbb{R} )$.
Since
$\{ F_{\varepsilon} \}_{\varepsilon \in I}$
is uniformly non-degenerate,
the density function $p_{F_{\varepsilon}} (x)$
is uniformly bounded in
$
( \varepsilon , x ) \in I \times \mathbb{R}
$:
$$
c_{0}
:=
\sup_{\varepsilon \in I}
\sup_{x \in \mathbb{R}}
p_{F_{\varepsilon}} (x) < \infty .
$$
Take $p^{\prime} > p > 1$ arbitrary.
We divide the proof into
five
steps.

(a)
In the first place,
when $\alpha = -2$,
we shall show that
$
T_{-2}(\varepsilon) :
L_{p^{\prime}} ( \mathbb{R}, \mathrm{d}x )
\to
\mathbb{D}_{p}^{0} = L_{p}
$
and is a continuous linear operator
with an estimate
$$
\Vert
	T_{-2}( \varepsilon )
\Vert_{L_{p^{\prime}} (\mathbb{R}, \mathrm{d}x) \to L_{p}}
\leqslant
c_{1}
\quad
\text{for any $\varepsilon \in I$}
$$
for some constant $c_{1}>0$.
Let
$\phi \in \mathscr{S}(\mathbb{R})$ be arbitrary.
Then
$$
\Vert
	T_{-2}(\varepsilon) \phi
\Vert_{p}
=
\Vert
	(I-\mathcal{L})^{-1}
	\big[
		( x^{2} - \triangle ) \phi
	\big]
	( F_{\varepsilon} )
\Vert_{p}
=
\Vert
	( H \phi ) (F_{\varepsilon})
\Vert_{p,-2},
$$
where $H := x^{2} - \triangle$,
and for each $J \in \mathbb{D}^{\infty}$
and $p^{\prime\prime} \in (p, p^{\prime})$,
we have
\begin{equation*}
\begin{split}
\mathbf{E} [ ( H \phi ) ( F_{\varepsilon} ) J ]
=
\mathbf{E} [ ( ( x^{2} - \triangle ) \phi ) ( F_{\varepsilon} ) J ]
\leqslant
\Vert
	\phi ( F_{\varepsilon} )
\Vert_{p^{\prime\prime}}
\Vert
	l_{\varepsilon} (J)
\Vert_{q^{\prime\prime}},
\end{split}
\end{equation*}
where
$
1/p^{\prime\prime}
+
1/q^{\prime\prime}
= 1
$
and
$l_{\varepsilon}(J)$
is of the form
$
l_{\varepsilon} ( J )
=
\sum_{i=0}^{2}
\langle
	P_{i} ( \varepsilon , w ),
	D^{i}J
\rangle_{H^{\otimes i}}
$
for some
$
P_{i}(\varepsilon , w)
\in \mathbb{D}^{\infty} ( H^{\otimes i} )
$,
$i=0,1,2$,
all of which are polynomials in
$F_{\varepsilon}$,
its derivatives up to the second order
and $\Vert DF_{\varepsilon} \Vert_{H}^{-2}$.
Since
$\{ F_{\varepsilon} \}_{\varepsilon \in I}$
is
uniformly
non-degenerate, there exists $c_{0}^{\prime} > 0$
such that
$
\Vert
	l_{\varepsilon} (J)
\Vert_{q^{\prime\prime}}
\leqslant
c_{0}^{\prime}
\Vert J \Vert_{q,2}
$.
Hence we have
$$
\mathbf{E} [ ( H \phi )( F_{\varepsilon} ) J ]
\leqslant
c_{0}^{\prime}
\Vert
	\phi ( F_{\varepsilon} )
\Vert_{p^{\prime\prime}}
\Vert J \Vert_{q,2}
\leqslant
c_{0} c_{0}^{\prime}
\Vert
	\phi
\Vert_{L_{p^{\prime}}(\mathbb{R}, \mathrm{d}x)}
\Vert J \Vert_{q,2}
$$
for each $J \in \mathbb{D}^{\infty}$,
which implies
$
\Vert T_{-2} (\varepsilon) \phi \Vert_{p}
\leqslant
c_{1}
\Vert
	\phi
\Vert_{L_{p^{\prime}}(\mathbb{R}, \mathrm{d}x)}
$,
where $c_{1} := c_{0} c_{0}^{\prime}$.
Since $\mathscr{S}(\mathbb{R})$ is dense in
$
L_{p^{\prime}} ( \mathbb{R}, \mathrm{d}x )
$,
we obtain the desired estimate.

(b)
Next, we focus on the case of $\alpha = 1$.
We shall show that this operator actually
defines a continuous linear mapping
$
T_{1} ( \varepsilon ):
L_{p^{\prime}} ( \mathbb{R}, \mathrm{d}x )
\to
\mathbb{D}_{p}^{0} = L_{p}
$
with an estimate
$$
\Vert
	T_{1} ( \varepsilon )
\Vert_{L_{p^{\prime}} (\mathbb{R}, \mathrm{d}x) \to L_{p}}
\leqslant
c_{2}
\quad
\text{for any $\varepsilon \in I$,}
$$
for some constant $c_{2}>0$.
Let $\phi = \phi (x) \in \mathscr{S}(\mathbb{R})$ be arbitrary.
Then
\begin{equation*}
\begin{split}
&
\Vert T_{1}( \varepsilon ) \phi \Vert_{L_{p}}
=
\big\Vert
	( I-\mathcal{L} )^{1/2}
	\big[
		( x^{2} - \triangle )^{-1/2} \phi
	\big]
	( F_{\varepsilon} )
\big\Vert_{L_{p}}
=
\Vert
	( H^{-1/2} \phi	)
	( F_{\varepsilon} )
\Vert_{p,1}.
\end{split}
\end{equation*}
By Meyer's inequality, there exists a
positive constant
$c_{2}^{\prime} > 0$
such that
$
\Vert J \Vert_{p,1}
\leqslant
c_{2}^{\prime}
(
	\Vert J \Vert_{L_{p}}
	+
	\Vert DJ \Vert_{L_{p}(H)}
)
$
for every $J \in \mathbb{D}_{2}^{1}$.
Hence we have
\begin{equation*}
\begin{split}
&
\Vert
	( H^{-1/2} \phi )
	( F_{\varepsilon} )
\Vert_{p,1}
\leqslant
c_{2}^{\prime}
\Big(
\Vert
	( H^{-1/2} \phi )
	( F_{\varepsilon} )
\Vert_{L_{p}}
+
\Vert
	( H^{-1/2} \phi )^{\prime}
	( F_{\varepsilon} )
	D F_{\varepsilon}
\Vert_{L_{p}(H)}
\Big) .
\end{split}
\end{equation*}
We easily have
$
\Vert
	( H^{-1/2} \phi )
	( F_{\varepsilon} )
\Vert_{L_{p}}
\leqslant
c_{0}
\Vert
	( x^{2} - \triangle )^{-1/2} \phi
\Vert_{L_{p^{\prime}}(\mathbb{R}, \mathrm{d}x)}
$.
Take $q^{\prime} \in (1, \infty)$
so that
$1/p^{\prime} + 1/q^{\prime} = 1/p$.
Then there exists
$
c_{2}^{\prime\prime}
= c_{2}^{\prime\prime}(p^{\prime}, q^{\prime}) > 0
$
such that
$
\Vert J_{1} J_{2} \Vert_{L_{p}(H)}
\leqslant
c_{2}^{\prime\prime}
\Vert J_{1} \Vert_{L_{p^{\prime}}}
\Vert J_{2} \Vert_{L_{q^{\prime}}(H)}
$
for all
$
(J_{1}, J_{2})
\in
L_{p^{\prime}} \times L_{q^{\prime}} (H)
$.
Hence we have
\begin{equation*}
\begin{split}
\Vert
	( H^{-1/2} \phi )^{\prime}
	( F_{\varepsilon} )
	D F_{\varepsilon}
\Vert_{L_{p}(H)}
&\leqslant
c_{2}^{\prime\prime}
\Vert
	[ ( x^{2} - \triangle )^{-1/2} \phi ]^{\prime}
	( F_{\varepsilon} )
\Vert_{L_{p^{\prime}}}
\Vert
	D F_{\varepsilon}
\Vert_{L_{q^{\prime}}(H)} \\
&\leqslant
c_{2}^{\prime\prime}
c_{0}
\Big(
\sup_{\varepsilon \in I}
\Vert
	D F_{\varepsilon}
\Vert_{L_{q^{\prime}}(H)}
\Big)
\Vert
	[ ( x^{2} - \triangle )^{-1/2} \phi ]^{\prime}
\Vert_{L_{p^{\prime}}(\mathbb{R}, \mathrm{d}x)}.
\end{split}
\end{equation*}
By a result by
Bongioanni-Torrea
\cite[Lemma~3 and Theorem~4]{BoTo},
there exists a constant
$
c_{2}^{\prime\prime\prime}
=
c_{2}^{\prime\prime\prime} ( p^{\prime} ) > 0
$
such that
\begin{equation*}
\begin{split}
&
\Vert
	[ ( x^{2} - \triangle )^{-1/2} \phi ]^{\prime}
\Vert_{L_{p^{\prime}}(\mathbb{R}, \mathrm{d}x)}
+
\Vert
	( x^{2} - \triangle )^{-1/2} \phi
\Vert_{L_{p^{\prime}}(\mathbb{R}, \mathrm{d}x)} \\
&\leqslant
\Vert
	\Big( \frac{\mathrm{d}}{\mathrm{d}x} + x \Big)
	(
	x^{2}
	-
	\triangle
	)^{-1/2}
	\phi
\Vert_{L_{p^{\prime}}(\mathbb{R}, \mathrm{d}x)}
+
\Vert
	(
	x^{2}
	-
	\triangle
	)^{-1/2} \phi
\Vert_{L_{p^{\prime}}(\mathbb{R}, \mathrm{d}x)} \\
&\hspace{25mm}+
\Vert
	x
	( x^{2} - \triangle )^{-1/2}
	\phi
\Vert_{L_{p^{\prime}}(\mathbb{R}, \mathrm{d}x)} \\
&\leqslant
c_{2}^{\prime\prime\prime}
\Vert \phi \Vert_{L_{p^{\prime}}(\mathbb{R}, \mathrm{d}x)},
\end{split}
\end{equation*}
and hence we have obtained
$$
\Vert T_{1} ( \varepsilon ) \phi \Vert_{L_{p}}
\leqslant
c_{2}
\Vert \phi \Vert_{L_{p^{\prime}}(\mathbb{R}, \mathrm{d}x)}
\quad
\text{for all $\varepsilon \in I$}
$$
as desired, where
$
c_{2}
:=
c_{0}
c_{2}^{\prime}
(
	1
	+
	c_{2}^{\prime\prime}
	\sup_{\varepsilon \in I}
	\Vert D\widetilde{F}_{\varepsilon} \Vert_{ L_{q^{\prime}} (H) }
)
c_{2}^{\prime\prime\prime}
$.

(c)
For each
$\varepsilon \in I$,
$z \in \mathbb{C}$
and
$\phi \in \mathscr{S}(\mathbb{R})$,
we define
$$
T_{z} ( \varepsilon ) \phi
:=
( I - \mathcal{L} )^{z/2}
[ ( x^{2} - \triangle )^{-z/2} \phi ]
( F_{\varepsilon} ),
\quad
\phi \in \mathscr{S}( \mathbb{R} )
$$
(the operators $( I - \mathcal{L} )^{z/2}$
and $( x^{2} - \triangle )^{-z/2}$ are defined
by the same definition formulae for the real-exponent case).
For any
$
\phi \in \mathscr{S} ( \mathbb{R} )
$
and
$\psi \in \mathbb{D}^{\infty}$,
we shall show
$z \mapsto \mathbf{E} [ ( T_{z} ( \varepsilon ) \phi ) \psi ]$
is analytic.
For this, we note that
\begin{equation}
\label{referee-3} 
\begin{split}
\mathbf{E} [ ( T_{z} ( \varepsilon ) \phi ) \psi ]
&=
\mathbf{E}
[
	( H^{-z/2} \phi ) ( F_{\varepsilon} )
	( I - \mathcal{L} )^{z/2} \psi
] \\
&=
\sum_{m=0}^{\infty}
( 1 + m )^{z/2}
\mathbf{E}
\big[
	( H^{-z/2} \phi ) ( F_{\varepsilon} )
	J_{m} [ \psi ]
\big] .
\end{split}
\end{equation}
Letting
$
\{
	\phi_{n}
\}_{n \in \mathbb{Z}_{\geqslant 0}}
$
be as in Lemma~\ref{Hermite-facts},
we have
$
H^{-z/2} \phi_{n} = ( 2n+1 )^{-z/2} \phi_{n}
$
for $n \in \mathbb{Z}_{\geqslant 0}$
and
$
\{
	\widetilde{\phi}_{n}
	:=
	( \sqrt{\pi} n! )^{-1/2}
	\phi_{n}
\}_{n \in \mathbb{Z}_{\geqslant 0}}
$
is a complete orthonormal system of $L_{2} ( \mathbb{R}, \mathrm{d}x )$.
Then we have
$
\phi
=
\sum_{n=0}^{\infty}
\langle
	\phi ,
	\widetilde{\phi}_{n}
\rangle_{ L_{2} ( \mathbb{R}, \mathrm{d}x ) }
\widetilde{\phi}_{n}
$
in $L_{2} ( \mathbb{R}, \mathrm{d}x )$.
Therefore
$$
\mathbf{E}
[
	( H^{-z/2} \phi ) ( F_{\varepsilon} )
	J_{m} [ \psi ]
]
=
\sum_{n=0}^{\infty}
( 2n+1 )^{-z/2}
\langle
	\phi ,
	\widetilde{\phi}_{n}
\rangle_{ L_{2} ( \mathbb{R}, \mathrm{d}x ) }
\mathbf{E}
[
	\widetilde{\phi}_{n} ( F_{\varepsilon} )
	J_{m} [ \psi ]
]
$$
is an infinite sum of analytic functions in $z$.
Furthermore, this series converges absolutely
and locally-uniformly in $z$.
In fact, with recalling that
$
c_{0}
=
\sup_{\varepsilon \in I}
\sup_{x \in \mathbb{R}}
p_{F_{\varepsilon}} (x) < \infty
$
where $p_{F_{\varepsilon}}$ is
the probability density function of $F_{\varepsilon}$,
and by using the Cauchy-Schwartz inequality,
\begin{equation*}
\begin{split}
&
\sum_{n=0}^{\infty}
\big\vert
( 2n+1 )^{-z/2}
\langle
	\phi ,
	\widetilde{\phi}_{n}
\rangle_{ L_{2} ( \mathbb{R}, \mathrm{d}x ) }
\mathbf{E}
[
	\widetilde{\phi}_{n} ( F_{\varepsilon} )
	J_{m} [ \psi ]
]
\big\vert \\
&\leqslant
c_{0}
\Vert J_{m} [ \psi ] \Vert_{L_{2}}
\sum_{n=0}^{\infty}
\big\vert
( 2n+1 )^{-1}
\langle
	\phi ,
	H^{ 1-\mathrm{Re}(z/2) } \widetilde{\phi}_{n}
\rangle_{ L_{2} ( \mathbb{R}, \mathrm{d}x ) }
\big\vert \\
&\leqslant
c_{0}
\Vert J_{m} [ \psi ] \Vert_{L_{2}}
\Big(
	\sum_{n=0}^{\infty}
	( 2n+1 )^{-2}
\Big)^{1/2}
\Vert
	H^{ 1-\mathrm{Re}(z/2) } \phi
\Vert_{ L_{2} ( \mathbb{R}, \mathrm{d}x ) },
\end{split}
\end{equation*}
is finite because $\phi \in \mathscr{S} (\mathbb{R})$.
Therefore,
$
\mathbf{E}
\big[
	( H^{-z/2} \phi ) ( F_{\varepsilon} )
	J_{m} [ \psi ]
\big]
$
is indeed an analytic function in $z$.
Thus each term in the summation in (\ref{referee-3})
is analytic in $z$.
We see further that the each term has the estimate
\begin{equation*}
\begin{split}
&
\vert
\mathbf{E}
\big[
	( H^{-z/2} \phi ) ( F_{\varepsilon} )
	J_{m} [ \psi ]
\big]
\vert
\leqslant
\Vert
	( H^{-z/2} \phi ) ( F_{\varepsilon} )
\Vert_{L_{2}}
\Vert
	J_{m} [ \psi ]
\Vert_{L_{2}}
\leqslant
c_{0}
\Vert
	H^{-z/2} \phi
\Vert_{L_{2}(\mathbb{R}, \mathrm{d}y)}
\Vert
	J_{m} [ \psi ]
\Vert_{L_{2}}.
\end{split}
\end{equation*}
Therefore
$$
\sum_{m=0}^{\infty}
\vert
( 1 + m )^{z/2}
\mathbf{E}
\big[
	( H^{-z/2} \phi ) ( F_{\varepsilon} )
	J_{m} [ \psi ]
\big]
\vert
\leqslant
c_{0}
\Vert
	H^{-z/2} \phi
\Vert_{L_{2}(\mathbb{R}, \mathrm{d}y)}
\Vert
	\psi
\Vert_{2, \mathrm{Re}(z)} .
$$
Here, the two quantities
$
\Vert
	H^{-z/2} \phi
\Vert_{L_{2}(\mathbb{R}, \mathrm{d}y)}
$
and
$
\Vert
	\psi
\Vert_{2, \mathrm{Re}(z)}
$
are continuously depending on $z$
because
$\phi \in \mathscr{S} ( \mathbb{R} )$
and
$\psi \in \mathbb{D}^{\infty}$.
Hence the infinite sum in (\ref{referee-3})
converges absolutely and locally-uniformly in $z$,
so that
$
\mathbf{E}
[
	( T_{z} (\varepsilon) \phi )
	\psi
]
$
is analytic in $z$.

(d)
For
$\phi \in \mathscr{S}(\mathbb{R})$
and
$\psi \in \mathbb{D}^{\infty}$,
$
\Phi (z) := \mathbf{E} [ ( T_{z} ( \varepsilon ) \phi ) \psi ]
$
is analytic on $\mathbb{C}$.
By
the Marcinkiewicz $L_{p}$-multiplier theorem
(see e.g. \cite[Chapter 4, Section 4.2, Theorem 4.2.1]{Th}),
one gets
$
\sup_{\tau \in \mathbb{R}}
\Vert
( x^{2} - \triangle )^{i\tau}
\Vert_{
	L_{p^{\prime}} ( \mathbb{R}, \mathrm{d}x )
	\to
	L_{p^{\prime}} ( \mathbb{R} \to \mathbb{C}; \mathrm{d}x )
}
< +\infty
$,
where
$L_{p^{\prime}} ( \mathbb{R} \to \mathbb{C}; \mathrm{d}x )$
is the space of all complex-valued measurable functions that
are $p^{\prime}$-th order integrable.
On the other hand, by Meyer's $L_{p}$-multiplier theorem
(see e.g. \cite[Chapter V, Section 8, Lemma 8.2]{IW})
gives
$
\sup_{\tau \in \mathbb{R}}
\Vert
( I - \mathcal{L} )^{i\tau}
\Vert_{
	L_{p}
	\to
	\mathbb{D}_{p}^{0} ( \mathbb{C} )
}
< +\infty
$.
By using these, we can conclude the following:
\begin{itemize}
\item[(1)]
the complex function $\Phi$ has
the estimate
$$
\sup_{\tau \in \mathbb{R}}
\sup_{-2 \leqslant \alpha \leqslant 1}
\vert \Phi ( \alpha + i \tau ) \vert
<
+ \infty
$$
for each $\phi$ and $\psi$
(see Hirschman's Lemma in \cite{Ste56}).

\item[(2)]
for $\tau \in \mathbb{R}$, the operators
$T_{-2+i\tau} ( \varepsilon )$
and
$T_{1+i\tau} ( \varepsilon )$
uniquely extends to a bounded linear operators
$L_{p^{\prime}}( \mathbb{R}, \mathrm{d}x ) \to \mathbb{D}_{p}^{0} ( \mathbb{C} )$
with estimates
\begin{equation*}
\begin{split}
&
\sup_{\tau \in \mathbb{R}}
\Vert
	T_{-2+i\tau} ( \varepsilon )
\Vert_{
	L_{p^{\prime}}( \mathbb{R}, \mathrm{d}x )
	\to
	\mathbb{D}_{p}^{0} ( \mathbb{C} )
	}
< +\infty , \\
&
\sup_{\tau \in \mathbb{R}}
\Vert
	T_{1+i\tau} ( \varepsilon )
\Vert_{
	L_{p^{\prime}}( \mathbb{R}, \mathrm{d}x )
	\to
	\mathbb{D}_{p}^{0} ( \mathbb{C} )
}
< +\infty
\end{split}
\end{equation*}
\end{itemize}
(Note that one can take these upper bounds independent of $\varepsilon$
since the bounds $c_{1}$ and $c_{2}$ obtained in steps (a) and (b) are
independent of $\varepsilon$).
Then by Stein's interpolation theorem\footnote{Strictly speaking,
this theorem was stated in the case where
$\phi$ and $\psi$ are simple functions in \cite[Theorem 1]{Ste56}.
This is because just that his argument is based on
general $\sigma$-finite measure spaces where we can not speak
about `smoothness'.
The same arguments apply to our case.}
(see \cite[Theorem 1]{Ste56}),
we can conclude that
the operators
$T_{z} ( \varepsilon )$
for $-2 \leqslant \mathrm{Re}(z) \leqslant 1$
uniquely extends to a bounded linear operator
$
L_{ p^{\prime} } (\mathbb{R}, \mathrm{d}x)
\to
\mathbb{D}_{ p }^{0} ( \mathbb{C} )
$
and
$$
\sup_{\varepsilon \in I}
\sup_{-2 \leqslant \alpha \leqslant 1}
\Vert
	T_{\alpha} ( \varepsilon )
\Vert_{L_{p^{\prime}}( \mathbb{R}, \mathrm{d}x ) \to L_{p}}
< +\infty .
$$

(e)
Now, for each $\phi \in \mathscr{S}(\mathbb{R})$,
we have
\begin{equation*}
\begin{split}
&
\Vert
	\phi ( F_{\varepsilon} )
\Vert_{p,s}
=
\Vert
	( I - \mathcal{L} )^{s/2}
	[
	( x^{2} - \triangle )^{-s/2}
	( x^{2} - \triangle )^{s/2}
	\phi
	] ( F_{\varepsilon} )
\Vert_{L_{p}} \\
&=
\Vert
	T_{s}( \varepsilon )
	[ ( x^{2} - \triangle )^{s/2} \phi ]
\Vert_{L_{p}}
\leqslant
\Vert
	T_{s}( \varepsilon )
\Vert_{L_{p^{\prime}}(\mathbb{R}, \mathrm{d}x) \to L_{p}}
\Vert
	[ ( x^{2} - \triangle )^{s/2} \phi ]
\Vert_{L_{p^{\prime}}(\mathbb{R}, \mathrm{d}x)}.
\end{split}
\end{equation*}
Again by denseness of $\mathscr{S}(\mathbb{R})$,
this inequality extends to
$\Lambda \in \mathscr{S}^{\prime}(\mathbb{R})$
such that
$
( x^{2} - \triangle )^{s/2} \Lambda
\in
L_{p^{\prime}} (\mathbb{R}, \mathrm{d}x)
$.
\end{proof} 

\begin{Rm} 
\label{p'=p} 
In Lemma \ref{fractional_domin} and
Lemma \ref{frac_dom},
one can take $p^{\prime} = p$
when $F_{\varepsilon} = \varepsilon^{-1} w(\varepsilon^{2}T)$,
where $\varepsilon \in (0,T] =: I$,
$T>0$ and $w=(w^{1}, \cdots , w^{d})$
is the canonical process,
i.e.,
the $d$-dimensional Wiener process starting at zero
(assume $d=1$ if considering Lemma \ref{fractional_domin}),
because then
$
DF_{\varepsilon}
=
\varepsilon^{-1} Dw ( \varepsilon^{2}T )
=
\varepsilon^{-1}
( 1_{[0,\varepsilon^{2}T]} , \cdots , 1_{[0,\varepsilon^{2}T]} )
$
and
$
\langle
	D F_{\varepsilon}^{i}, D F_{\varepsilon}^{j}
\rangle_{H}
= \delta_{ij} T
$
are non-random.
\end{Rm} 

\subsection{Regularity of something related to resolvent kernel associated with elliptic operators}

Suppose {\rm (H1)}, {\rm (H3)} and
{\rm (H4)}.
We set, for
$f \in \mathscr{S}(\mathbb{R}^{d})$
and
$x \in \mathbb{R}^{d}$,
\begin{equation*}
\begin{split}
Lf (x)
&=
\frac{1}{2}
\sum_{i,j=1}^{d}
( \sigma \sigma^{*} )_{j}^{i} (x)
\frac{\partial^{2} f}{\partial x_{i}\partial x_{j}}
(x)
+
\sum_{i=1}^{d}
b^{i} (x)
\frac{\partial f}{\partial x_{i}}
(x) .
\end{split}
\end{equation*}
The fractional power
$( 1 - \triangle )^{s/2}$
is defined as a pseudo-differential operator:
$$
( 1 - \triangle )^{s/2} \phi (x)
=
\int_{\mathbb{R}^{d}}
(1+\vert \xi \vert^{2})^{s/2}
\widehat{\phi} ( \xi )
\mathrm{e}^{i \langle \xi , x \rangle }
\mathrm{d} \xi ,
\quad
\phi \in \mathscr{S}(\mathbb{R}^{d}),
$$
where
$
\widehat{\phi} (\xi)
=
( 2 \pi )^{-d/2}
\int_{\mathbb{R}^{d}}
\phi (y) \mathrm{e}^{- i\langle \xi , y \rangle}
\mathrm{d}y
$
is the Fourier transform of $\phi$,
and is also given by
\begin{equation*}
( 1 - \triangle )^{s/2} \phi (x)
=
\frac{1}{\Gamma (-s/2)}
\int_{0}^{\infty}
t^{-\frac{s}{2} - 1}
\mathrm{e}^{-t}
( \mathrm{e}^{ t \triangle } \phi )(x)
\mathrm{d}t,
\quad
\phi \in \mathscr{S}(\mathbb{R}^{d})
\end{equation*}
for $-d<s<0$,
where $\mathrm{e}^{ t \triangle }$ is
the heat semigroup associated to $\triangle$.
Actually, formulae
$
\phi (y)
=
\int_{\mathbb{R}^{d}}
\widehat{\phi} ( \xi )
\mathrm{e}^{i \langle \xi , y \rangle}
\mathrm{d}\xi
$
and
$
\int_{\mathbb{R}^{d}}
\mathrm{e}^{ - \frac{ \vert y \vert^{2} }{4t} }
\mathrm{e}^{ i \langle \xi , y \rangle }
\mathrm{d}y
=
( 4\pi t )^{d/2}
\mathrm{e}^{-\vert \xi \vert^{2} t}
$
give the above equivalence.

Before entering the following series of estimates,
we recall that
$$
\int_{\vert x \vert < 1}
\vert x \vert^{-s}
\mathrm{d}x < +\infty
\quad
\text{iff}
\quad
s<d,
$$
where the integral is over $\mathbb{R}^{d}$
and
$\vert x \vert = \vert x \vert_{\mathbb{R}^{d}}$.

\begin{Lem} 
\label{resol_lem} 
For every $p \in (1,\infty)$ and
$s \in (-d, -(p-1)d/p )$,
we have
$
( 1-\triangle )^{s/2} \delta_{y} \in L_{p} (\mathbb{R}^{d}, \mathrm{d}x)
$
for each $y \in \mathbb{R}^{d}$.
\end{Lem}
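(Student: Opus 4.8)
The plan is to realize $(1-\triangle)^{s/2}\delta_y$ as an explicit, locally integrable function — up to translation, the Bessel potential kernel of order $-s$ — and then estimate its $L_p$-norm by hand. By translation invariance we may and do take $y=0$. First I would reduce to the range $-d<s<0$. Since $-(p-1)d/p=-d+d/p\in(-d,0)$, it suffices to prove the assertion for $s$ with $-d<s<-(p-1)d/p$: given a general $s<-(p-1)d/p$, pick $s_{0}$ with $\max\{s,-d\}<s_{0}<-(p-1)d/p$ and $s_{0}\in(-d,0)$; once $(1-\triangle)^{s_{0}/2}\delta_{0}\in L_{p}$ is known, one has $(1-\triangle)^{s/2}\delta_{0}=(1-\triangle)^{(s-s_{0})/2}\big[(1-\triangle)^{s_{0}/2}\delta_{0}\big]$, and $(1-\triangle)^{(s-s_{0})/2}$ is convolution with the Bessel kernel $G_{s_{0}-s}$, which lies in $L_{1}(\mathbb{R}^{d})$ because $s_{0}-s>0$; Young's inequality then gives the conclusion for $s$.

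Next I would produce the explicit kernel for $-d<s<0$. Using the integral representation of $(1-\triangle)^{s/2}$ recalled in this subsection together with the heat kernel $p_{t}(x)=(4\pi t)^{-d/2}\exp(-|x|^{2}/(4t))$, pairing with $\phi\in\mathscr{S}(\mathbb{R}^{d})$ and applying Fubini (legitimate since the kernel below is locally integrable when $s+d>0$) gives that $(1-\triangle)^{s/2}\delta_{0}$ is the regular distribution associated with
$$
K(x):=\frac{(4\pi)^{-d/2}}{\Gamma(-s/2)}\int_{0}^{\infty}t^{-s/2-1-d/2}\,\mathrm{e}^{-t}\,\mathrm{e}^{-|x|^{2}/(4t)}\,\mathrm{d}t .
$$
So the task is to show $K\in L_{p}(\mathbb{R}^{d},\mathrm{d}x)$ precisely under the hypothesis on $s$.

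Then I would estimate $K$ pointwise, splitting $\mathbb{R}^{d}$ into $\{|x|\le 1\}$ and $\{|x|>1\}$. On $\{|x|\le1\}$ I discard $\mathrm{e}^{-t}$ and substitute $t=|x|^{2}/(4u)$, obtaining $K(x)\le c\,|x|^{-s-d}\int_{0}^{\infty}u^{(s+d)/2-1}\mathrm{e}^{-u}\,\mathrm{d}u$, where the $u$-integral converges exactly because $s+d>0$; hence $K(x)\le c'|x|^{-s-d}$ there. On $\{|x|>1\}$ I use AM--GM in the form $t+|x|^{2}/(4t)\ge\tfrac12\big(t+|x|^{2}/(4t)\big)+\tfrac{|x|}{2}$, so that $\mathrm{e}^{-t}\mathrm{e}^{-|x|^{2}/(4t)}\le \mathrm{e}^{-|x|/2}\,\mathrm{e}^{-t/2}\mathrm{e}^{-|x|^{2}/(8t)}$; bounding $\int_{0}^{1}$ by the same substitution (giving $\lesssim|x|^{-s-d}\le1$) and $\int_{1}^{\infty}$ by a convergent $\Gamma$-type integral independent of $x$ yields $K(x)\le c''\,\mathrm{e}^{-|x|/2}$. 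Finally, the exponential tail is in $L_{p}$, while $\int_{|x|\le1}|x|^{(-s-d)p}\,\mathrm{d}x<\infty$ if and only if $(-s-d)p>-d$, i.e. $s<-(p-1)d/p$; this is exactly the assumed condition, so $K\in L_{p}$.

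The only genuinely non-routine point is the bookkeeping of the two exponents in the pointwise estimates: one must check that the small-$t$/large-$t$ splitting used for $|x|>1$ does not secretly require $s<-d$ (it does not, because the worst small-$t$ behaviour is controlled by the Gaussian factor $\mathrm{e}^{-|x|^{2}/(8t)}$, not by $t^{-s/2-1-d/2}$), and that the single inequality $s+d>0$ governs the near-origin size of $K$ while the sharp threshold $s<-(p-1)d/p$ governs its $L_{p}$-integrability near $0$. The reduction in the first paragraph and the Fubini justification are standard and I would treat them briefly.
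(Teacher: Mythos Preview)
Your proof is correct and follows essentially the same route as the paper's: represent $(1-\triangle)^{s/2}\delta_y$ via the heat-semigroup integral for the Bessel kernel, extract the $|x|^{-(s+d)}$ singularity at the origin and the rapid decay at infinity, and verify the resulting $L_p$ condition $(s+d)p<d$. Your version is in fact more complete than the paper's, since you supply the reduction to the range $-d<s<0$ (where the integral formula is stated) and make the exponential tail explicit via the AM--GM trick, whereas the paper leaves both points implicit.
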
 
\begin{proof} 
The transition density associated to
the semigroup generated by
$\triangle$
is given by
$
p_{t}(x,y)
=
(4\pi t)^{-d/2}
\exp ( - \vert y-x \vert^{2} / (4t) )
$.
Hence we have
\begin{equation*}
\begin{split}
&
( 1 - \triangle )^{s/2} \delta_{y} (x)
=
\frac{1}{\Gamma (-s/2)}
\int_{0}^{\infty}
t^{ -\frac{s}{2} - 1 }
\mathrm{e}^{-t}
p_{t} (x,y)
\mathrm{d}t \\
&=
\frac{ ( 4\pi )^{-d/2} }{\Gamma (-s/2)}
\int_{0}^{\infty}
t^{ -\frac{s+d}{2} - 1 }
\mathrm{e}^{-t}
\exp
\Big\{
	- \frac{ \vert x - y \vert^{2} }{4t}
\Big\}
\mathrm{d}t \\
&=
\frac{ ( 4\pi  )^{-d/2} }{4^{-\frac{s+d}{2}}\Gamma (-s/2)}
\vert x-y \vert^{-(d+s)}
\int_{0}^{\infty}
u^{ \frac{d+s}{2} - 1 }
\mathrm{e}^{-u}
\exp
\Big\{
	- \frac{ \vert x-y \vert^{2} }{ 4u }
\Big\}
\mathrm{d}u,
\end{split}
\end{equation*}
which behaves, up to a multiplicative constant, as
$\vert x-y \vert^{-(d+s)}$
when $\vert x - y \vert \to 0$
and rapidly decreasing as $\vert x - y \vert \to \infty$.
In fact,
by using the identity
$$
\mathrm{e}^{-u}
\exp
\Big\{
	-\frac{ \vert x-y \vert^{2} }{ 4u }
\Big\}
=
\exp \Big\{
	-
	\frac{
		( \vert x-y \vert - 2u )^{2}
	}{ 4u }
\Big\}
\mathrm{e}^{ - \vert x-y \vert },
$$
and putting
$a := \vert x-y \vert$,
the last integral can be written as
\begin{equation*}
\begin{split}
&
\int_{0}^{\infty}
u^{ \frac{d+s}{2} - 1 }
\mathrm{e}^{-u}
\exp
\Big\{
	- \frac{ a^{2} }{ 4u }
\Big\}
\mathrm{d}u
= 
\mathrm{e}^{ - a }
( I + I\!I )
\end{split}
\end{equation*}
where
\begin{equation*}
\begin{split}
I
&:=
\int_{0}^{a}
u^{ \frac{d+s}{2} - 1 }
\exp \Big\{
	-
	\frac{
		( a - 2u )^{2}
	}{ 4u }
\Big\} 
\mathrm{d}u,
\quad
I\!I
:=
\int_{a}^{\infty}
u^{ \frac{d+s}{2} - 1 }
\exp \Big\{
	-
	\frac{
		( a - 2u )^{2}
	}{ 4u }
\Big\} 
\mathrm{d}u .
\end{split}
\end{equation*}
Since $d+s > 0$,
we have
$
I
\leqslant
\int_{0}^{a}
u^{ \frac{d+s}{2} - 1 }
\mathrm{d}u
=
\frac{2}{d+s}
a^{ \frac{ d+s }{ 2 } }
$.
On the other hand,
by using the change of variable $u = \frac{a}{2}v$,
one has
$
I\!I
=
( a/2 )^{ \frac{d+s}{2} }
\int_{2}^{+\infty}
v^{ \frac{d+s}{2} - 1 }
\exp ( - \frac{ (1-v)^{2} }{2v} a )
\mathrm{d}v
$.
For
$a \geqslant 1$ and $v \geqslant 2$,
we have
$
\exp ( - \frac{ (1-v)^{2} }{2v} a )
=
\exp ( - \frac{a}{2} (v-2) )
\exp ( - \frac{a}{2v} )
\leqslant
\exp ( - \frac{a}{2} (v-2) )
\leqslant
\exp ( - \frac{1}{2} (v-2) )
=
\mathrm{e}
\cdot
\exp ( -v/2 )
$.
Thus we obtain
$
I\!I
\leqslant
c_{0}
\mathrm{e}
( a/2 )^{ \frac{d+s}{2} }
$
if $a \geqslant 1$,
where
$
c_{0}
:=
\int_{2}^{+\infty}
v^{ \frac{d+s}{2} - 1 }
\exp ( - v/2 )
\mathrm{d}v
< +\infty
$.
The arguments above shows that
$
\int_{0}^{\infty}
u^{ \frac{d+s}{2} - 1 }
\mathrm{e}^{-u}
\exp
(
	- \frac{ a^{2} }{ 4u }
)
\mathrm{d}u
$
decreases rapidly when $a \to +\infty$,
as claimed.

Therefore,
$
( 1 - \triangle )^{s/2} \delta_{y} \in L_{p} ( \mathbb{R}^{d}, \mathrm{d}x )
$
if
$(d+s)p < d$,
i.e.,
$s < -(p-1)d/p$.
\end{proof} 

Let
$x \in \mathbb{R}^{d}$.
Under the conditions
{\rm (H1)}, {\rm (H3)} and
{\rm (H4)},
we denote by
$X_{t} = X(t,x,w)$
a unique strong solution to
the stochastic differential equation
(\ref{ref-SDE}).
In the sequel, we fix
$y \in \mathbb{R}^{d}$
such that $y \neq x$ and define
$$
f_{y} (z) := ( 1-L )^{-1} \delta_{y} (z),
\quad
z \in \mathbb{R}^{d}.
$$
Let
$\phi : \mathbb{R}^{d} \to \mathbb{R}$
be a $C^{\infty}$-function such that
(1)
$
x
\notin \mathrm{supp} \phi
$,
(2)
$\phi \equiv 1$
on a neighbourhood of
$
y
$,
and
(3)
$\mathrm{supp} \phi$
is compact.

\begin{Lem} 
\label{der-conv} 

Assume {\rm (H1)}, {\rm (H3)} and {\rm (H4)}.
Let
$p \in (1,\infty)$
be arbitrary.
\begin{itemize}
\item[(i)]
For each
$
s < \min \{ 1-\frac{(p-1)d}{p}, 0 \}
$
and $t > 0$,
we have
$
(\nabla f_{y}) (X_{t})
\in
\mathbb{D}_{p}^{s} (\mathbb{R}^{d})
$
and
$
(0,T] \ni t
\mapsto
(\nabla f_{y}) (X_{t})
\in
\mathbb{D}_{p}^{s} (\mathbb{R}^{d})
$
is continuous.

\item[(ii)]
Assume $d \geqslant 2$.
Then for each
$
-\frac{d}{2} < s < \min \{ \frac{p}{p-1} - d, 0 \}
$,
we have
$
\lim_{t \to 0}
\Vert
	( \nabla ( \phi f_{y} )) ( X_{t} )
\Vert_{p,s}
= 0
$
and
$
\lim_{t \to 0}
\Vert
	( \phi f_{y}) ( X_{t} )
\Vert_{p,s}
= 0
$.
\end{itemize}
\end{Lem}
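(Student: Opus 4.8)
\emph{The plan is to} reduce both parts to the rescaled process $F(\varepsilon,x,w):=\varepsilon^{-1}(X^{\varepsilon}(1,x,w)-x)$ and then to feed into that picture two quite different ingredients: the Bessel‑potential domination of Lemma~\ref{fractional_domin2}/\ref{frac_dom} and the super‑exponential smallness extracted in the proof of Lemma~\ref{exclusion}. Since {\rm (H3)} implies {\rm (H2)}, the family $\{F(\sqrt t,x,w)\}_{t\in(0,T]}$ is bounded in every $\mathbb{D}_{p}^{k}(\mathbb{R}^{d})$ and uniformly non‑degenerate, so those lemmas apply to it. Using Proposition~\ref{trick1} in the form $\|\Lambda(X_{t})\|_{p,\alpha}=\|\Lambda(X^{\sqrt t}(1,x,w))\|_{p,\alpha}$ together with $X^{\sqrt t}(1,x,w)=x+\sqrt t\,F(\sqrt t,x,w)$, Lemma~\ref{fractional_domin2} applied to $F(\sqrt t,x,w)$, the Fourier‑side identity $(1-\triangle)^{\alpha/2}[g(\sqrt t\,\cdot)]=[(1-t\triangle)^{\alpha/2}g](\sqrt t\,\cdot)$, the dilation scaling $\|g(\sqrt t\,\cdot)\|_{L_{p'}}=t^{-d/(2p')}\|g\|_{L_{p'}}$ and translation invariance, I would first record that for every $p'>p$ and $\alpha\in\mathbb{R}$
\begin{equation}
\label{eq:prop-key}
\|\Lambda(X_{t})\|_{p,\alpha}\;\leq\;c\,t^{-d/(2p')}\,\big\|(1-t\triangle)^{\alpha/2}\Lambda\big\|_{L_{p'}(\mathbb{R}^{d},\mathrm{d}x)}.
\end{equation}
The remaining input is the regularity of $f_{y}$: by Lemma~\ref{resol_lem} one has $\delta_{y}\in H_{p'}^{\tau}$ for $\tau<-(p'-1)d/p'$, and since $L$ is strongly uniformly elliptic with smooth bounded coefficients, $(1-L)^{-1}\colon H_{p'}^{\tau}\to H_{p'}^{\tau+2}$ and each $\sigma_{j}^{i}\partial_{i}\colon H_{p'}^{\tau+2}\to H_{p'}^{\tau+1}$; combined with the exponential decay of the resolvent kernel, this gives $f_{y}\in H_{p'}^{\sigma}$ and $\nabla f_{y}\in H_{p'}^{\sigma-1}$ for all $\sigma<2-(p'-1)d/p'$, and the same memberships for $\phi f_{y},\nabla(\phi f_{y})$ after multiplying by $\phi\in C_{c}^{\infty}$, which are moreover compactly supported (hence in $\mathscr{E}^{\prime}(\mathbb{R}^{d})$) and vanish near $x$.

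For part (i): fix $t>0$. For fixed $t$ the multiplier $(1-t\triangle)^{s/2}$ is comparable to $(1-\triangle)^{s/2}$, so the right‑hand side of \eqref{eq:prop-key} with $\alpha=s$ and $\Lambda=\partial_{i}f_{y}$ is finite as soon as $\partial_{i}f_{y}\in H_{p'}^{s}$, i.e.\ as soon as $s\leq\sigma-1$ for some admissible $\sigma$; since $s<1-(p-1)d/p$, a choice of $p'>p$ close enough to $p$ makes $2-(p'-1)d/p'>s+1$, so this holds and $(\nabla f_{y})(X_{t})\in\mathbb{D}_{p}^{s}$. For the continuity on $(0,T]$ I would apply the statement recorded just before Corollary~\ref{smoothing_effect} (itself a consequence of Lemma~\ref{frac_dom}): $\partial_{i}f_{y}\in H_{p'}^{s}$ with $p'>p$, hence $t\mapsto(\partial_{i}f_{y})(X_{t})\in\mathbb{D}_{q}^{s}$ is continuous for every $q<p'$, in particular for $q=p$.

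For part (ii), let $\Lambda$ be $\phi f_{y}$ or $\partial_{i}(\phi f_{y})$. Using $p\geq2$ and $d\geq2$ one checks that the hypotheses $-d/2<s<\min\{p/(p-1)-d,0\}$ allow one to fix $p'>p$ close to $p$ and an exponent $\sigma'$ with $s<\sigma'<1-(p'-1)d/p'$ and $\Lambda\in H_{p'}^{\sigma'}$, the lower bound $s>-d/2$ serving to keep the Bessel kernel of $(1-t\triangle)^{\sigma'/2}$ locally integrable. Then I would use two bounds. First, the analytic one: in \eqref{eq:prop-key} with $\alpha=\sigma'$, estimate $\|(1-t\triangle)^{\sigma'/2}\Lambda\|_{L_{p'}}$ by Young's inequality for the scaled Bessel kernel $z\mapsto t^{-d/2}G_{-\sigma'}(z/\sqrt t)$ convolved with $\Lambda\in L_{m}$ (for a suitable $m\in(1,d/(d-1))$), obtaining $\|\Lambda(X_{t})\|_{p,\sigma'}\leq C\,t^{-\nu}$ for all $t\in(0,1]$ and some $\nu>0$. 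Second, the probabilistic one: as $\Lambda\in\mathscr{E}^{\prime}$ with $x\notin\operatorname{supp}\Lambda$, the cutoff integration‑by‑parts in the proof of Lemma~\ref{exclusion} (producing the factor $\mathbf{P}(|X_{t}-x|\geq\delta_{0}/2)^{1/p''}\leq C\mathrm{e}^{-c\delta_{0}^{2}/t}$) actually gives $\|\Lambda(X_{t})\|_{p,-k}\leq C\,t^{-\nu''}\mathrm{e}^{-c/t}$ for some $k\in\mathbb{Z}_{\geqslant0}$. Interpolating between the two (log‑convexity of $\alpha\mapsto\|G\|_{p,\alpha}$, immediate from \eqref{Norm2} when $p=2$ and valid with a constant for general $p$ by Stein interpolation of the analytic family $(I-\mathcal{L})^{z}$), at $s=(1-\theta)(-k)+\theta\sigma'$ with $\theta\in(0,1)$ and $k$ chosen with $-k<s$, I would conclude $\|\Lambda(X_{t})\|_{p,s}\leq C''\,t^{-\nu''(1-\theta)-\nu\theta}\mathrm{e}^{-c(1-\theta)/t}\to0$ as $t\downarrow0$; the case $\Lambda=\phi f_{y}$ is strictly easier than $\Lambda=\partial_{i}(\phi f_{y})$.

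\emph{The hard part will be} part (ii): the domination \eqref{eq:prop-key} alone carries the diverging factor $t^{-d/(2p')}$ and so cannot produce the limit $0$ by itself; the argument must bring in the fact that $\Lambda$ is supported away from $x$, which only becomes quantitative through the probabilistic integration‑by‑parts of Lemma~\ref{exclusion}, after which the analytic and probabilistic bounds are glued by interpolation. A subordinate technical nuisance is that for $d\geq2$ the distribution $\partial_{i}(\phi f_{y})$ fails to lie in $L_{p'}$ for $p'>p\geq2$, which forces one to work with the $H_{p'}^{\sigma'}$‑norms throughout and to carry out the limit $p'\downarrow p$ rather than fixing $p'=p$.
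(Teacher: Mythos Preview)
Your treatment of part~(i) matches the paper's: both reduce to showing $\partial_i f_y\in H_{p'}^{s}$ for some $p'>p$ and apply Lemma~\ref{fractional_domin2}; the paper verifies this membership by explicit heat-kernel bounds for $p_t$ and $\partial_{z_k}p_t$ rather than by abstract elliptic regularity, but the substance is the same.

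For part~(ii) you take a genuinely different route. The paper does not interpolate. It uses the pointwise domination $|(I-\mathcal{L})^{s/2}F|\leq(I-\mathcal{L})^{s/2}|F|$ (valid for $s\leq0$ by positivity of the Ornstein--Uhlenbeck semigroup) together with the support condition $F=F\,1_{\{|X_t-x|>\delta\}}$ to produce a single three-factor H\"older bound
\[
\|(\phi\partial_k f)(X_t)\|_{p,s}^{p}\;\leq\;c\,\|(\partial_k f)(X_t)\|_{p'}\,\|(I-\mathcal{L})^{s/2}(\phi\partial_k f)(X_t)\|_{p''}^{\,p-1}\,\mathbf{P}(|X_t-x|>\delta)^{1/r},
\]
with carefully chosen exponents satisfying $1/p'+1/q+1/r<1$ and $p''=q(p-1)$. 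The first two factors are then shown, via rescaling to $F(\sqrt t,x,w)$ and direct kernel computations, to blow up at most like a power of $t^{-1}$ under the thresholds $p'<d/(d-1)$ and $p''<d/(d+s-1)$; the Gaussian tail of the third factor kills the product. The heart of the paper's argument is its step~(d): exhibiting admissible $p',q,r$, which is precisely where the hypothesis $s<p/(p-1)-d$ enters. Your interpolation scheme packages the same two ingredients---a polynomial bound at a higher index $\sigma'$ and the exponential decay from the proof of Lemma~\ref{exclusion} at a low index $-k$---more transparently and sidesteps the exponent-chasing, at the price of invoking Stein interpolation for $(I-\mathcal{L})^{z}$ on $L_p$. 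One point to flag: your assertion that $\phi f_y$ and $\nabla(\phi f_y)$ ``vanish near $x$'' requires $\phi$ to be $\equiv1$ near $y$ and to vanish near $x$; this is what the paper's proof and the application in Example~\ref{multi-dim_local_time} actually use (note $L(\phi f)$ must produce $-\delta_y$), though it is the reverse of the conditions on $\phi$ printed just before the lemma. With that correction your argument goes through.
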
 
\begin{proof} 
Let $p \in (1,\infty )$ and $t_{0}>0$ be arbitrary.
In the following, we write $f:=f_{y}$.

(i)
Take $p^{\prime} > p$ such that
$s < \min \{ 1-\frac{(p^{\prime}-1)d}{p^{\prime}}, 0 \}$.
Since
$\{ X_{t} \}_{t_{0} \leqslant t \leqslant T}$
is uniformly non-degenerate,
the assertion follows by
Lemma \ref{frac_dom}
once we show
$
( 1 - \triangle )^{s/2} ( \partial f / \partial z_{k} )
\in L_{p^{\prime}} (\mathbb{R}^{d}, \mathrm{d}z)
$.

Denote by
$p_{t}(z,z^{\prime})$
the transition density
associated to $L$.
By a standard estimate
(see e.g., \cite[Chapter 9, Section 6, Theorem 7]{Fr}),
there exist $c, C > 0$ such that
\begin{equation*}
\begin{split}
p_{t} (z,z^{\prime})
&\leqslant
C
(2\pi ct)^{-d/2}
\exp
\Big\{
	-
	\frac{
		\vert z - z^{\prime} \vert^{2}
	}{
		2ct
	}
\Big\}, \\
\big\vert
	\frac{\partial p_{t}}{\partial z_{k}}
	( z, z^{\prime} )
\big\vert
&\leqslant
C
t^{-1/2}
(2\pi ct)^{-d/2}
\exp
\Big\{
	-
	\frac{
		\vert z - z^{\prime} \vert^{2}
	}{
		2ct
	}
\Big\}
\end{split}
\end{equation*}
for every $k=1, 2, \cdots , d$ and $z,z^{\prime} \in \mathbb{R}^{d}$.
We may assume $c\geqslant 2$ by rearranging $C>0$.
Then we have
\begin{equation}
\label{der-of-f} 
\begin{split}
\big\vert
	\frac{\partial f}{\partial z_{k}}
	( z )
\big\vert
&\leqslant
\frac{
	1
}{
	\Gamma (1)
}
\int_{0}^{\infty}
\mathrm{e}^{-t}
\big\vert
\frac{\partial p_{t}}{\partial z_{k}} ( z  , y )
\big\vert
\mathrm{d}t \\
&\leqslant
C
\int_{0}^{\infty}
t^{-1/2}
\mathrm{e}^{-t}
(2\pi ct)^{-d/2}
\exp
\Big\{
	-
	\frac{
		\vert z - y \vert^{2}
	}{
		2ct
	}
\Big\}
\mathrm{d}t,
\end{split}
\end{equation}
so that
\begin{equation*}
\begin{split}
\Big\vert
\big[
( 1 - \triangle )^{s/2}
\frac{\partial f}{\partial z_{k}}
\big]
(z)
\Big\vert
& 
\leqslant
\frac{
	1
}{
	\Gamma (-s/2)
}
\int_{0}^{\infty}
t^{-\frac{s}{2}-1}
\mathrm{e}^{-t}
\int_{\mathbb{R}^{d}}
\frac{
	\mathrm{e}^{
			- \frac{ \vert z - z^{\prime} \vert^{2} }{ 4t }
		}
	}{
		\sqrt{ 4 \pi t }^{d}
	}
\big\vert
\frac{\partial f}{\partial z_{k}}
( z^{\prime} )
\big\vert
\mathrm{d} z^{\prime}
\mathrm{d}t \\
&\leqslant 
\frac{
	C^{\prime}
}{
	\Gamma (-s/2)
	\Gamma (1/2)
}
\int_{0}^{\infty}
\int_{0}^{\infty}
t^{-\frac{s}{2}-1}
u^{-1/2}
\mathrm{e}^{-(t+u)}
\frac{
	\mathrm{e}^{ - \frac{\vert z-y \vert^{2}}{2c(t+u)} }
}{
	( 2\pi c(t+u) )^{d/2}
}
\mathrm{d}u
\mathrm{d}t \\
&= 
\frac{
	C^{\prime}
}{
	\Gamma ( \frac{1-s}{2} )
}
\int_{0}^{\infty}
v^{\frac{1-s}{2} - 1}
\mathrm{e}^{-v}
\frac{
	\mathrm{e}^{ - \frac{ \vert z-y \vert^{2} }{ 2cv } }
}{
	( 2\pi cv )^{d/2}
}
\mathrm{d}v,
\end{split}
\end{equation*}
for some constant $C^{\prime} > 0$.
Hence
$
( 1 - \triangle )^{s/2}
(\partial_{k} f)
(z)
\in L_{p^{\prime}}(\mathbb{R}^{d}, \mathrm{d}z)
$.

(ii)
Suppose that
$
s < \min \{ \frac{p}{p-1} - d, 0 \}
$.
Take $\delta > 0$ so that
$
\{
	z \in \mathbb{R}^{d}:
	\vert
	z - x
	\vert
	< \delta
\}
\subset
( \mathrm{supp} \phi )^{c}
$.
Note that
$
\partial_{k} ( \phi f_{y} )
=
( \partial_{k} \phi ) f_{y}
+
\phi ( \partial_{k} f_{y} )
$,
$( \partial_{k} \phi )(x) f_{y}(x) = 0$
and
$
( \partial_{k} \phi ) f_{y} \in \mathscr{S}(\mathbb{R}^{d})
$.
Therefore
by bounded convergence theorem,
$$
\lim_{t \downarrow 0}
\Vert
[ ( \partial_{k} \phi ) f_{y} ] (X_{t})
\Vert_{p,s}
\leqslant
\lim_{t \downarrow 0}
\Vert
[ ( \partial_{k} \phi ) f_{y} ] (X_{t})
\Vert_{p}
= 0.
$$
So in the following, we investigate the behaviour of
$
( \phi ( \partial_{k} f_{y} ))(X_{t})
$
and
$( \phi f_{y} )(X_{t})$.
For this, we divide the proof into four steps.

(a)
Since $s \leqslant 0$, we notice that
\begin{equation*}
\begin{split}
\vert (I-\mathcal{L})^{s/2}F \vert
&=
\big\vert
\frac{1}{\Gamma (-\frac{s}{2})}
\int_{0}^{\infty}
u^{-\frac{s}{2}-1}
\mathrm{e}^{-u}
T_{u}F
\mathrm{d}u
\big\vert \\
&\leqslant
\frac{1}{\Gamma (-\frac{s}{2})}
\int_{0}^{\infty}
u^{-\frac{s}{2}-1}
\mathrm{e}^{-u}
T_{u} \vert F \vert
\mathrm{d}u
=
(I-\mathcal{L})^{s/2} \vert F \vert
\end{split}
\end{equation*}
for any $F \in L_{2}$,
where $T_{u}=\exp ( u\mathcal{L} )$, $u \geqslant 0$ is
the Ornstein-Uhlenbeck semigroup on the Wiener space.
Then, taking $p^{\prime}, q, r > 1$
so that
$
\frac{1}{p^{\prime}} + \frac{1}{q} + \frac{1}{r} < 1
$
and
with putting
$
F
:=
(\phi \partial_{k} f) (X_{t})
=
(\phi \partial_{k} f) (X_{t})
1_{\{ \vert X_{t} - x \vert > \delta \}}
$,
we have
\begin{equation*}
\begin{split}
&
\Vert
(\phi \partial_{k} f) (X_{t})
\Vert_{p,s}^{p}
=
\Vert
(I-\mathcal{L})^{s/2} F
\Vert_{p}^{p} \\
&\leqslant
\mathbf{E}
\big[
\Big\vert
	(I-\mathcal{L})^{s/2} F
\Big\vert^{p-1}
(I-\mathcal{L})^{s/2} \vert F \vert
\big] \\
&=
\mathbf{E}
\big[
\vert
(\phi \partial_{k} f) (X_{t})
\vert
\Big\{
(I-\mathcal{L})^{s/2}
\big\vert
	(I-\mathcal{L})^{s/2} F
\big\vert^{p-1}
\Big\}
1_{\text{{\small $\{ \vert X_{t} - x \vert > \delta \}$}}}
\big] \\
&\leqslant
c_{0}
\Vert
(\phi \partial_{k} f) (X_{t})
\Vert_{p^{\prime}}
\big\Vert
\vert
	(I-\mathcal{L})^{s/2} F
\vert^{p-1}
\big\Vert_{q,s}
\mathbf{P}
\big( \vert X_{t} - x \vert > \delta \big)^{1/r},
\end{split}
\end{equation*}
where
$
c_{0} = c_{0}(p^{\prime}, q, r) > 0
$
is a constant independent of $t$.
We easily have
$
\Vert
( \phi \partial_{k} f) (X_{t})
\Vert_{p^{\prime}}
\leqslant
\vert \phi \vert_{\infty}
\Vert
( \partial_{k} f) (X_{t})
\Vert_{p^{\prime}}
$
and
\begin{equation*}
\begin{split}
\big\Vert
\vert
	(I-\mathcal{L})^{s/2} F
\vert^{p-1}
\big\Vert_{q,s}
&\leqslant
\Vert
\{
	(I-\mathcal{L})^{s/2} F
\}^{p-1}
\Vert_{q} \\
&=
\Vert
(I-\mathcal{L})^{s/2} F
\Vert_{q(p-1)}^{p-1}
=
\Vert
(I-\mathcal{L})^{s/2} ( \phi \partial_{k} f ) (X_{t})
\Vert_{p^{\prime\prime}}^{p-1},
\end{split}
\end{equation*}
where $p^{\prime\prime} := q(p-1)$.
Thus we have obtained
\begin{equation}
\label{4factors} 
\begin{split}
&
\Vert
	( \phi \partial_{k} f ) (X_{t})
\Vert_{p,s}^{p} \\
&\leqslant
c_{0}
\vert \phi \vert_{\infty}
\Vert
( \partial_{k} f) (X_{t})
\Vert_{p^{\prime}}
\Vert
(I-\mathcal{L})^{s/2} ( \phi \partial_{k} f ) (X_{t})
\Vert_{p^{\prime\prime}}^{p-1}
\mathbf{P}
\big(
	\vert X_{t} - x \vert
	>
	\delta
\big)^{1/r}.
\end{split}
\end{equation}
Similarly, we have
\begin{equation}
\label{4factors'}
\begin{split}
\Vert
	( \phi f ) (X_{t})
\Vert_{p,s}^{p}
&\leqslant
c_{0}^{\prime}
\Vert
	f
	(X_{t})
\Vert_{p^{\prime}}
\Vert
	(I-\mathcal{L})^{s/2}
	( \phi f ) (X_{t})
\Vert_{p^{\prime\prime}}^{p-1}
\mathbf{P}
\big(
	\vert X_{t} - x \vert > \delta
\big)^{1/r}
\end{split}
\end{equation}
for some constant
$c_{0}^{\prime}>0$,
independent of $t$.

(b)
We will write $\varepsilon := \sqrt{t}$ in the sequel.
We shall give estimates for each factors
in (\ref{4factors}) and (\ref{4factors'}),
though the proof is presented in the next step.
Note that for the last
factors in
(\ref{4factors})
and
(\ref{4factors'}),
there exists $c_{3}, c_{3}^{\prime}, K > 0$ such that
\begin{equation}
\label{3rd-fac} 
\begin{split}
\mathbf{P}
\big(
	\vert X_{t} - x \vert > \delta
\big)
\leqslant
c_{3}
\exp \Big( - \frac{\delta^{2}}{ c_{3}^{\prime} \varepsilon^{2} } \Big)
\quad
\text{for $t=\varepsilon^{2} \in ( 0, K ].$}
\end{split}
\end{equation}
Let $p \in (1,\infty)$ anew be arbitrary.
To see
(\ref{4factors}), we shall prove
\begin{equation}
\label{(ii)-a}
\lim_{\varepsilon \downarrow 0}
\varepsilon^{d}
\Vert
(\partial_{k} f) (X_{t})
\Vert_{p}^{p}
= 0
\quad
\text{if
$
p < \frac{d}{d-1}
$,}
\end{equation}
\begin{equation}
\label{(ii)-b}
\begin{split}
&
\lim_{ \varepsilon \downarrow 0 }
\varepsilon^{d-sp}
\Vert
(I-\mathcal{L})^{s/2} ( \phi \partial_{k} f ) (X_{t})
\Vert_{p}^{p}
= 0
\quad
\text{if $p < \frac{d}{d+s-1}$.}
\end{split}
\end{equation}
On the other hand,
for (\ref{4factors'}), we shall prove
\begin{equation}
\label{(iii)-a}
\lim_{\varepsilon \downarrow 0}
\varepsilon^{d}
\Vert
	f ( X_{t} )
\Vert_{
	p
}^{p}
= 0
\quad
\text{if $p < \frac{d}{d-1}$,}
\end{equation}
\begin{equation}
\label{(iii)-b}
\begin{split}
&
\lim_{\varepsilon \downarrow 0}
\varepsilon^{d-sp}
\Vert
( I - \mathcal{L} )^{s/2} ( \phi f ) ( X_{t} )
\Vert_{p}^{p}
= 0 \\
&\hspace{20mm}
\text{if
$
p < \min \big\{ \frac{d}{d+s-1} , \frac{2d}{d-2} \big\}
= \frac{d}{d+s-1}
$,}
\end{split}
\end{equation}
where $\frac{2d}{d-2}$ is understood as $+\infty$ if $d=2$
and the last equality
follows from
$s > - \frac{d}{2}$.

(c)
Take $p^{\prime} > p$ arbitrary.
First we shall prove (\ref{(ii)-a}).
By using
Proposition \ref{trick1}
and
Lemma \ref{frac_dom}
(Recall that
$\varepsilon = \sqrt{t}$
and
$X_{t} = x + \varepsilon F_{\varepsilon}$,
where
$
F_{\varepsilon} = ( X^{\varepsilon}(1,x,w) - x ) / \varepsilon
$
is uniformly non-degenerate),
we have
\begin{equation*}
\begin{split}
&
\Vert
(\partial_{k} f) (X_{t})
\Vert_{p}^{ p^{\prime} }
=
\Vert
(\partial_{k} f) ( x + \varepsilon F_{\varepsilon} )
\Vert_{p}^{p^{\prime}}
\leqslant
c_{1}
\Vert
(\partial_{k} f) ( x + \varepsilon z)
\Vert_{L_{p^{\prime}}(\mathbb{R}^{d}, \mathrm{d}z)}^{ p^{\prime} } \\
&\leqslant
c_{1}^{\prime}
\int_{\mathbb{R}^{d}}
\Big\{
\int_{0}^{\infty}
u^{-1/2}
\mathrm{e}^{-u}
( 2\pi c u )^{-d/2}
\exp
\Big\{
	-
	\frac{\vert \varepsilon z - (y-x) \vert^{2} }{ 2cu }
\Big\}
\mathrm{d}u
\Big\}^{ p^{\prime} }
\mathrm{d}z \\
&=
c_{1}^{\prime}
\varepsilon^{-d}
\int_{\mathbb{R}^{d}}
\Big\{
\vert z - (y-x) \vert^{(1-d)}
\int_{0}^{\infty}
v^{\frac{d-1}{2}-1}
	\mathrm{e}^{-v}
	\mathrm{e}^{\text{{\small $
		- \frac{ \vert z- (y-x) \vert^{2} }{ 2cv }
	$}}}
\mathrm{d}v
\Big\}^{p^{\prime}}
\mathrm{d}z.
\end{split}
\end{equation*}
for some constants $c_{1}, c_{1}^{\prime} > 0$
independent of $\varepsilon$
(In the last equality,
we have applied the change of variables
$
v
=
\frac{
	\vert \varepsilon z - (y-x) \vert^{2}
}{
	2cu
}
$
).
The last factor can be further computed as follows:
\begin{equation*}
\begin{split}
&
\int_{\mathbb{R}^{d}}
\big\{
	\vert z \vert^{(1-d)}
	\int_{0}^{\infty}
	v^{ \frac{d-1}{2} - 1 }
	\mathrm{e}^{ -v }
	\mathrm{e}^{\text{{\small $
		- \frac{ \vert z \vert^{2} }{ 2cv }
	$}}}
	\mathrm{d} v
\big\}^{p^{\prime}}
\mathrm{d} z \\
&\leqslant 
\int_{ \vert z \vert < 1 }
\big\{
	\vert z \vert^{(1-d)}
	\int_{0}^{\infty}
	v^{ \frac{d-1}{2} - 1 }
	\mathrm{e}^{ -v }
	\mathrm{d} v
\big\}^{p^{\prime}}
\mathrm{d} z \\
&\hspace{15mm}+
\int_{ \vert z \vert \geqslant 1 }
\Big\{
\vert z \vert^{(1-d)}
	\int_{0}^{\infty}
	v^{ \frac{d-1}{2} - 1 }
\mathrm{e}^{\text{{\small $
	- \frac{ ( \vert z \vert - \sqrt{2c} v )^{2} }{ 2cv }
$}}}
	\mathrm{e}^{ - \sqrt{(2/c)} \vert z \vert }
	\mathrm{d} v
\Big\}^{p^{\prime}}
\mathrm{d} z .
\end{split}
\end{equation*}
(For getting the second term, note that
$
\mathrm{e}^{-v}
\mathrm{e}^{ - \vert z \vert^{2} / (2cv) }
=
\mathrm{e}^{\text{{\small $
	- \frac{ ( \vert z \vert - \sqrt{2c} v )^{2} }{ 2cv }
$}}}
	\mathrm{e}^{ - \sqrt{(2/c)} \vert z \vert }
$.)
The first term equals to
$
(
\int_{ \vert z \vert < 1 }
\vert z \vert^{ p^{\prime} (1-d) }
\mathrm{d}z
)
(
	\int_{0}^{\infty}
	v^{ \frac{d-1}{2} - 1 }
	\mathrm{e}^{ -v }
	\mathrm{d} v
)^{ p^{\prime} }
$.
The assumption $d\geqslant 2$
assures
$
\int_{0}^{\infty}
v^{\frac{d-1}{2}-1}
\mathrm{e}^{-v}
\mathrm{d}v
< +\infty
$
and hence
the first term is finite
if $p^{\prime} < \frac{d}{d-1}$.
For the second term,
we have
\begin{equation*}
\begin{split}
&
\int_{ \vert z \vert \geqslant 1 }
\Big\{
\vert z \vert^{(1-d)}
\int_{0}^{\infty}
v^{ \frac{d-1}{2} - 1 }
\mathrm{e}^{\text{{\small $
	- \frac{ ( \vert z \vert - \sqrt{2c} v )^{2} }{ 2cv }
$}}}
\mathrm{e}^{ - \sqrt{(2/c)} \vert z \vert }
\mathrm{d} v
\Big\}^{p^{\prime}}
\mathrm{d} z
\leqslant
c_{1}^{\prime\prime}
( I + I\!I ),
\end{split}
\end{equation*}
where
$c_{1}^{\prime\prime} > 0$
is a constant depending only on $p^{\prime}$,
and $I, I\!I$ are defined by
\begin{equation*}
\begin{split}
I
&:=
\int_{ \vert z \vert \geqslant 1 }
\Big\{
\vert z \vert^{(1-d)}
\mathrm{e}^{ - \sqrt{(2/c)} \vert z \vert }
\int_{0}^{ \vert z \vert }
v^{ \frac{d-1}{2} - 1 }
\mathrm{d} v
\Big\}^{p^{\prime}}
\mathrm{d} z, \\
I\!I
&:=
\int_{ \vert z \vert \geqslant 1 }
\Big\{
\vert z \vert^{(1-d)}
\mathrm{e}^{ - \sqrt{(2/c)} \vert z \vert }
\int_{ \vert z \vert }^{\infty}
v^{ \frac{d-1}{2} - 1 }
\mathrm{e}^{\text{{\small $
	- \frac{ ( \vert z \vert - \sqrt{2c} v )^{2} }{ 2cv }
$}}}
\mathrm{d} v
\Big\}^{p^{\prime}}
\mathrm{d} z
\end{split}
\end{equation*}
The term $I$ is estimated as
$
I
= 
( \frac{ 2 }{ d-1 } )^{p^{\prime}}
\int_{ \vert z \vert \geqslant 1 }
\{
\vert z \vert^{-\frac{d-1}{2}}
\mathrm{e}^{ - \sqrt{(2/c)} \vert z \vert }
\}^{p^{\prime}}
\mathrm{d} z
\leqslant
( \frac{ 2 }{ d-1 } )^{p^{\prime}}
\int_{ \vert z \vert \geqslant 1 }
\mathrm{e}^{ - p^{\prime} \sqrt{(2/c)} \vert z \vert }
\mathrm{d} z
< +\infty
$.
On the other hand,
by applying the change of variable
$v = \vert z \vert u$
and noting that
$\vert z \vert \geqslant 1$,
\begin{equation*}
\begin{split}
I\!I
&= 
\int_{ \vert z \vert \geqslant 1 }
\Big\{
\vert z \vert^{ -\frac{d-1}{2} }
\mathrm{e}^{ - \sqrt{(2/c)} \vert z \vert }
\int_{1}^{\infty}
u^{ \frac{d-1}{2} - 1 }
\mathrm{e}^{\text{{\small $
	- \frac{ \vert z \vert ( 1 - \sqrt{2c} u )^{2} }{ 2cu }
$}}}
\mathrm{d} u
\Big\}^{p^{\prime}}
\mathrm{d} z \\
&\leqslant 
\int_{ \vert z \vert \geqslant 1 }
\mathrm{e}^{ - p^{\prime} \sqrt{(2/c)} \vert z \vert }
\Big\{
\int_{1}^{\infty}
u^{ \frac{d-1}{2} - 1 }
\mathrm{e}^{\text{{\small $
	- \frac{ ( 1 - \sqrt{2c} u )^{2} }{ 2cu }
$}}}
\mathrm{d} u
\Big\}^{p^{\prime}}
\mathrm{d} z .
\end{split}
\end{equation*}
Since
$
\exp
(
	- \frac{ ( 1 - \sqrt{2c} u )^{2} }{ 2cu }
)
=
\mathrm{e}^{ - \frac{1}{2cu} }
\mathrm{e}^{ \sqrt{2/c} }
\exp ( - u )
\leqslant
\mathrm{e}^{ \sqrt{2/c} }
\exp ( - u )
$,
we get
$$
I\!I
\leqslant
\mathrm{e}^{ p^{\prime} \sqrt{2/c} }
\Big(
\int_{ \vert z \vert \geqslant 1 }
\mathrm{e}^{ - p^{\prime} \sqrt{2/c} \hspace{0.5mm} \vert z \vert }
\mathrm{d} z
\Big)
\Big(
\int_{1}^{\infty}
u^{ \frac{d-1}{2} - 1 }
\mathrm{e}^{-u}
\mathrm{d}u
\Big)^{p^{\prime}}
< +\infty
$$
(the arguments so far will be used repeatedly below).
Putting all together, we have obtained that
for $p^{\prime} \in (p, \frac{d}{d-1})$,
$
\limsup_{\varepsilon \downarrow 0}
\varepsilon^{d}
\Vert
	( \partial_{k} f ) ( X_{t} )
\Vert_{p}^{p^{\prime}}
< +\infty
$,
which implies (\ref{(ii)-a}).

For (\ref{(ii)-b}),
with assuming $\varepsilon \in (0,1]$,
we have
\begin{equation}
\label{phi-der-f} 
\begin{split}
&
\vert
( 1 - \triangle )^{s/2}
[ ( \phi \partial_{k} f ) ( x + \varepsilon \bullet ) ]
(z)
\vert
\\
&=
\Big\vert
\frac{1}{\Gamma (-\frac{s}{2})}
\int_{0}^{\infty}
u^{-\frac{s}{2}-1} \mathrm{e}^{-u}
\int_{\mathbb{R}^{d}}
\frac{
	\mathrm{e}^{\text{{\small $
			- \frac{ \vert z - z^{\prime} \vert^{2} }{ 4u }
		$}}}
}{
	( 4\pi u )^{d/2}
}
\big( \phi \partial_{k} f \big) (x+\varepsilon z^{\prime})
\mathrm{d}z^{\prime}
\mathrm{d}u
\Big\vert
\end{split}
\end{equation}
By using (\ref{der-of-f}),
change of variables $\varepsilon z^{\prime} = z^{\prime\prime}$
and the semigroup property of
$\mathrm{e}^{t \triangle /2}$
(recall that $c \geqslant 2$),
we have
\begin{equation*}
\begin{split}
&
\int_{\mathbb{R}^{d}}
\frac{
	\mathrm{e}^{\text{{\small $
			- \frac{ \vert z - z^{\prime} \vert^{2} }{ 4u }
		$}}}
}{
	( 4\pi u )^{d/2}
}
\vert
\big( \phi \partial_{k} f \big) (x+\varepsilon z^{\prime})
\vert
\mathrm{d}z^{\prime} \\
&\leqslant 
C
\int_{0}^{+\infty}
v^{-1/2}
\mathrm{e}^{-v}
\mathrm{d} v
\int_{\mathbb{R}^{d}}
\frac{
	\mathrm{e}^{\text{{\small$
		- \frac{ \vert z-z^{\prime} \vert^{2} }{ 4u }
	$}}}
}{
	( 4 \pi u )^{d/2}
}
\frac{
	\mathrm{e}^{\text{{\small$
		- \frac{ \vert ( x+\varepsilon z^{\prime} ) - y \vert^{2} }{ 2cv }
	$}}}
}{
	( 2 \pi c v )^{d/2}
}
\mathrm{d} z^{\prime} \\
&= 
C
\varepsilon^{d}
\int_{0}^{+\infty}
v^{-1/2}
\mathrm{e}^{-v}
\mathrm{d} v
\int_{\mathbb{R}^{d}}
\frac{
	\mathrm{e}^{\text{{\small$
		- \frac{ \vert \varepsilon z-\varepsilon z^{\prime} \vert^{2} }{ 4 \varepsilon^{2} u }
	$}}}
}{
	( 4 \pi \varepsilon^{2} u )^{d/2}
}
\frac{
	\mathrm{e}^{\text{{\small$
		- \frac{ \vert ( x+\varepsilon z^{\prime} ) - y \vert^{2} }{ 2cv }
	$}}}
}{
	( 2 \pi c v )^{d/2}
}
\mathrm{d} z^{\prime} \\
&= 
C
\int_{0}^{+\infty}
v^{-1/2}
\mathrm{e}^{-v}
\mathrm{d} v
\int_{\mathbb{R}^{d}}
\frac{
	\mathrm{e}^{\text{{\small$
		- \frac{ \vert \varepsilon z - z^{\prime\prime} \vert^{2} }{ 4 \varepsilon^{2} u }
	$}}}
}{
	( 4 \pi \varepsilon^{2} u )^{d/2}
}
\frac{
	\mathrm{e}^{\text{{\small$
		- \frac{ \vert ( x + z^{\prime\prime} ) - y \vert^{2} }{ 2cv }
	$}}}
}{
	( 2 \pi c v )^{d/2}
}
\mathrm{d} z^{\prime\prime} \\
&= 
C
\int_{0}^{+\infty}
v^{-1/2}
\mathrm{e}^{-v}
\frac{
	\mathrm{e}^{\text{{\small$
		-
		\frac{
			\vert \varepsilon z - (y-x) \vert^{2}
		}{
			2 ( 2 \varepsilon^{2} u + cv )
		}
	$}}}
}{
	( 2 \pi ( 2 \varepsilon^{2} u + cv ) )^{d/2}
}
\mathrm{d} v \\
&\leqslant 
C
( 2^{-1} c )^{d/2}
\int_{0}^{+\infty}
v^{-1/2}
\mathrm{e}^{-v}
\frac{
	\mathrm{e}^{\text{{\small$
		-
		\frac{
			\vert \varepsilon z - (y-x) \vert^{2}
		}{
			2 ( c \varepsilon^{2} u + cv )
		}
	$}}}
}{
	( 2 \pi ( c \varepsilon^{2} u + cv ) )^{d/2}
}
\mathrm{d} v,
\end{split}
\end{equation*}
where in the last inequality,
we have used that
$$
\sup_{u,v \geqslant 0}
\frac{
	( c \varepsilon^{2} + cv )^{d/2}
}{
	( 2 \varepsilon^{2} + cv )^{d/2}
}
\leqslant
\sup_{u,v \geqslant 0}
\frac{
	( c \varepsilon^{2} + cv )^{d/2}
}{
	( 2 \varepsilon^{2} + 2v )^{d/2}
}
=
(2^{-1}c)^{d/2}.
$$
Substituting this estimate into (\ref{phi-der-f}),
we get
\begin{equation*}
\begin{split}
&
\vert
( 1 - \triangle )^{s/2}
[ ( \phi \partial_{k} f ) ( x + \varepsilon \bullet ) ]
(z)
\vert \\
&\leqslant 
c_{2}
\int_{0}^{\infty}
\int_{0}^{\infty}
u^{-\frac{s}{2}-1} \mathrm{e}^{-u}
v^{-1/2} \mathrm{e}^{-v}
\frac{
	\mathrm{e}^{\text{{\small $
		- \frac{
			 \vert \varepsilon z - (y-x) \vert^{2}
		}{
			2c (\varepsilon^{2}u + v)
		}
	$}}}
}{
	( 2\pi c (\varepsilon^{2} u + v) )^{d/2}
}
\mathrm{d}u
\mathrm{d}v \\
&= 
c_{2}
\varepsilon^{s}
\int_{0}^{\infty}
\int_{0}^{\infty}
u^{-\frac{s}{2}-1}
v^{-1/2}
\mathrm{e}^{\text{{\small $
	- ( \varepsilon^{-2} u + v )
$}}}
\frac{
	\mathrm{e}^{\text{{\small $
		- \frac{
			 \vert \varepsilon z - (y-x) \vert^{2}
		}{
			2c (u + v)
		}
	$}}}
}{
	( 2\pi c ( u + v) )^{d/2}
}
\mathrm{d}u
\mathrm{d}v \\
&\leqslant 
c_{2}
\varepsilon^{s}
\int_{0}^{\infty}
\int_{0}^{\infty}
u^{-\frac{s}{2}-1}
v^{-1/2} \mathrm{e}^{-(u+v)}
\frac{
	\mathrm{e}^{\text{{\small $
		- \frac{
			 \vert \varepsilon z - (y-x) \vert^{2}
		}{
			2c (u + v)
		}
	$}}}
}{
	( 2\pi c ( u + v) )^{d/2}
}
\mathrm{d}u
\mathrm{d}v \\
&= 
c_{2}^{\prime}
\varepsilon^{s}
\int_{0}^{\infty}
u^{\frac{1-s}{2}-1}
\mathrm{e}^{-u}
\frac{
	\mathrm{e}^{\text{{\small $
		- \frac{
			 \vert \varepsilon z - (y-x) \vert^{2}
		}{
			2cu
		}
	$}}}
}{
	( 2\pi c u )^{d/2}
}
\mathrm{d}u,
\end{split}
\end{equation*}
for some constants $c_{2}, c_{2}^{\prime} > 0$
independent of $\varepsilon$,
and so that, by using
Lemma \ref{frac_dom},
\begin{equation*}
\begin{split}
&
\Vert
(I-\mathcal{L})^{s/2} ( \phi \partial_{k} f ) (X_{t})
\Vert_{p}^{p^{\prime}} \\
&\leqslant
c_{2}^{\prime\prime}
\Vert
( 1 - \triangle )^{s/2}
[
(
\phi
\partial_{k} f
)
( x + \varepsilon \bullet )
] (z)
\Vert_{L_{p^{\prime}}(\mathbb{R}^{d}, \mathrm{d}z)}^{p^{\prime}} \\
&\leqslant
c_{2}^{\prime\prime\prime}
\varepsilon^{sp^{\prime}}
\int_{\mathbb{R}^{d}}
\Big\{
\int_{0}^{\infty}
u^{\frac{1-s}{2}-1}
\mathrm{e}^{-u}
\frac{
	\mathrm{e}^{\text{{\small $
		- \frac{
			 \vert \varepsilon z - (y-x) \vert^{2}
		}{
			2cu
		}
	$}}}
}{
	( 2\pi c u )^{d/2}
}
\mathrm{d}u
\Big\}^{p^{\prime}}
\mathrm{d}z \\
&=
c_{2}^{\prime\prime\prime}
\varepsilon^{sp^{\prime}-d}
\int_{\mathbb{R}^{d}}
\Big\{
\vert
	z - (y-x)
\vert^{- ( d - (1-s) )}
\int_{0}^{\infty}
u^{\frac{d-(1-s)}{2}-1}
\mathrm{e}^{-u}
\mathrm{e}^{\text{{\small $
	- \frac{
		 \vert z - (y-x) \vert^{2}
	}{
		2cu
	}
$}}}
\mathrm{d}u
\Big\}^{p^{\prime}}
\mathrm{d}z.
\end{split}
\end{equation*}
for some constants
$c_{2}^{\prime\prime}, c_{2}^{\prime\prime\prime} > 0$.
Here, note that
$d-(1-s) > 0$
because
$d \geqslant 2$ and $s > -\frac{d}{2}$.
Hence
$
\int_{0}^{\infty}
u^{\frac{d-(1-s)}{2}-1}
\mathrm{e}^{-u}
\mathrm{d}u
< +\infty
$
and
by repeating the above argument,
we see that
if $p < p^{\prime} < \frac{d}{d+s-1}$,
$
\limsup_{\varepsilon \downarrow 0}
\varepsilon^{ - ( sp^{\prime} - d ) }
\Vert
	( I - \mathcal{L} )^{s/2}
	( \phi \partial_{k} f ) ( X_{t} )
\Vert_{p}^{ p^{\prime} }
< +\infty
$,
so that
\begin{equation*}
\begin{split}
&
\limsup_{\varepsilon \downarrow 0}
\Big(
\varepsilon^{ -(sp-d) }
\Vert
	( I - \mathcal{L} )^{s/2}
	( \phi \partial_{k} f ) ( X_{t} )
\Vert_{p}^{p}
\Big)^{ p^{\prime} / p } \\
&=
\limsup_{\varepsilon \downarrow 0}
\varepsilon^{ d ( \frac{ p^{\prime} }{ p } - 1 ) }
\Big(
\varepsilon^{ - ( sp^{\prime} - d ) }
\Vert
	( I - \mathcal{L} )^{s/2}
	( \phi \partial_{k} f ) ( X_{t} )
\Vert_{p}^{p^{\prime}}
\Big)
= 0.
\end{split}
\end{equation*}
This is nothing but (\ref{(ii)-b}).

Next we prove (\ref{(iii)-a}).
By virtue of
Lemma \ref{frac_dom},
it suffices to show
\begin{equation}
\label{(iii)-1}
\begin{split}
&
\Vert
	f ( x+ \varepsilon z )
\Vert_{L_{p^{\prime}}(\mathbb{R}^{d}, \mathrm{d}z)}^{p^{\prime}} \\
&\leqslant
c_{3}
\varepsilon^{-d}
\Big[
\int_{\mathbb{R}^{d}}
\Big\{
	\vert
	z - (x-y)
	\vert^{1-d}
	\int_{0}^{\infty}
	u^{ \frac{d}{2} - 1 }
	\mathrm{e}^{-u}
	\mathrm{e}^{\text{{\small $
		- \frac{ \vert z - (x-y) \vert^{2} }{ 2cu }
	$}}}
	\mathrm{d}u
\Big\}^{p^{\prime}}
\mathrm{d}z \\
&\hspace{20mm}+
\int_{\mathbb{R}^{d}}
\Big\{
	\vert
	z - (x-y)
	\vert^{-(d-1)/2}
	\int_{0}^{\infty}
	u^{-1/2}
	\mathrm{e}^{-u}
	\mathrm{e}^{\text{{\small $
		- \frac{ \vert z - (x-y) \vert^{2} }{ 2cu }
	$}}}
	\mathrm{d}u
\Big\}^{p^{\prime}}
\mathrm{d}z
\Big]
\end{split}
\end{equation}
for some constant
$
c_{3}
>0
$,
independent of $\varepsilon$.
Actually, then we have
$
\limsup_{\varepsilon \downarrow 0}
\varepsilon^{d}
\Vert
	f ( X_{t} )
\Vert_{p}^{p^{\prime}}
< +\infty
$
if $p < p^{\prime} < \frac{d}{d-1}$,
from which we can conclude (\ref{(iii)-a}).
To prove (\ref{(iii)-1}), we begin with the inequality
\begin{equation*}
\begin{split}
f ( x+ \varepsilon z )
\leqslant
\frac{ C (2\pi c)^{-d/2} }{ \Gamma (1) }
\int_{0}^{\infty}
u^{-d/2}
\mathrm{e}^{-u}
\mathrm{e}^{\text{{\small $
	- \frac{ \vert y - ( x + \varepsilon z ) \vert^{2} }{ 2cu }
$}}}
\mathrm{d}u .
\end{split}
\end{equation*}
We divide the integral as
\begin{equation*}
\begin{split}
&
\int_{0}^{\infty}
u^{-d/2}
\mathrm{e}^{-u}
\mathrm{e}^{\text{{\small $
	- \frac{ \vert y - ( x + \varepsilon z ) \vert^{2} }{ 2cu }
$}}}
\mathrm{d}u \\
&=
\int_{0}^{\vert y - ( x + \varepsilon z ) \vert}
u^{-d/2}
\mathrm{e}^{-u}
\mathrm{e}^{\text{{\small $
	- \frac{ \vert y - ( x + \varepsilon z ) \vert^{2} }{ 2cu }
$}}}
\mathrm{d}u
+
\int_{\vert y - ( x + \varepsilon z ) \vert}^{\infty}
u^{-d/2}
\mathrm{e}^{-u}
\mathrm{e}^{\text{{\small $
	- \frac{ \vert y - ( x + \varepsilon z ) \vert^{2} }{ 2cu }
$}}}
\mathrm{d}u .
\end{split}
\end{equation*}
The first term in the last equation is estimated as
\begin{equation*}
\begin{split}
&
\int_{0}^{\vert y - ( x + \varepsilon z ) \vert}
u^{-d/2}
\mathrm{e}^{-u}
\mathrm{e}^{\text{{\small $
	- \frac{ \vert \varepsilon z - (x-y) \vert^{2} }{ 2cu }
$}}}
\mathrm{d}u \\
&=
(2c)^{ \frac{d}{2} - 1 }
\vert
	\varepsilon z - (x-y)
\vert^{2-d}
\int_{
	\frac{ \vert y - ( x + \varepsilon z ) \vert }{ 2c }
}^{
	\infty
}
u^{ \frac{d}{2} - 2 }
\mathrm{e}^{-u}
\mathrm{e}^{\text{{\small $
	- \frac{ \vert \varepsilon z - (x-y) \vert^{2} }{ 2cu }
$}}}
\mathrm{d}u \\
&\leqslant
(2c)^{ \frac{d}{2} }
\vert
	\varepsilon z - (x-y)
\vert^{1-d}
\int_{
	\frac{ \vert y - ( x + \varepsilon z ) \vert }{ 2c }
}^{
	\infty
}
u^{ \frac{d}{2} - 1 }
\mathrm{e}^{-u}
\mathrm{e}^{\text{{\small $
	- \frac{ \vert \varepsilon z - (x-y) \vert^{2} }{ 2cu }
$}}}
\mathrm{d}u \\
&\leqslant
(2c)^{ \frac{d}{2} }
\vert
	\varepsilon z - (x-y)
\vert^{1-d}
\int_{0}^{\infty}
u^{ \frac{d}{2} - 1 }
\mathrm{e}^{-u}
\mathrm{e}^{\text{{\small $
	- \frac{ \vert \varepsilon z - (x-y) \vert^{2} }{ 2cu }
$}}}
\mathrm{d}u.
\end{split}
\end{equation*}
On the other hand, the second term is
\begin{equation*}
\begin{split}
&
\int_{\vert y - ( x + \varepsilon z ) \vert}^{\infty}
u^{-d/2}
\mathrm{e}^{-u}
\mathrm{e}^{\text{{\small $
	- \frac{ \vert y - ( x + \varepsilon z ) \vert^{2} }{ 2cu }
$}}}
\mathrm{d}u
\leqslant
\vert
	\varepsilon z
	-
	(x-y)
\vert^{-(d-1)/2}
\int_{0}^{\infty}
u^{-1/2}
\mathrm{e}^{-u}
\mathrm{e}^{\text{{\small $
	- \frac{ \vert y - ( x + \varepsilon z ) \vert^{2} }{ 2cu }
$}}}
\mathrm{d}u .
\end{split}
\end{equation*}
Now
a change of variable leads us to (\ref{(iii)-1})
and
thus
(\ref{(iii)-a}) is proved.

For (\ref{(iii)-b}),
it is sufficient to prove
\begin{equation}
\label{(iii)-2}
\begin{split}
&
\Vert
( 1 - \triangle )^{s/2} [ ( \phi f ) ( x + \varepsilon \bullet ) ] (z)
\Vert_{ L_{p^{\prime}} ( \mathbb{R}^{d}, \mathrm{d}z ) }^{p^{\prime}} \\
&\leqslant
c_{4}
\varepsilon^{
	s p^{\prime}
	- d
}
\Big[
	\int_{\mathbb{R}^{d}}
	\Big\{
	\vert z - (x-y) \vert^{ 1 - (s+d) }
	\int_{0}^{\infty}
	u^{ \frac{d+s}{2}-1 }
	\mathrm{e}^{-u}
	\mathrm{e}^{
		- \frac{ \vert z - (x-y) \vert^{2} }{ 2cu }
	}
	\mathrm{d}u
	\Big\}^{p^{\prime}}
	\mathrm{d}z \\
&\hspace{20mm}+
	\int_{\mathbb{R}^{d}}
	\Big\{
	\vert z - (x-y) \vert^{\frac{2-d}{2}}
	\int_{0}^{\infty}
	u^{ - \frac{s}{2} }
	\mathrm{e}^{-u}
	\mathrm{e}^{
		- \frac{ \vert z - (x-y) \vert^{2} }{ 2cu }
	}
	\mathrm{d}u
	\Big\}^{p^{\prime}}
	\mathrm{d}z
\Big]
\end{split}
\end{equation}
where
$
c_{4}
>0
$
is a constant independent of $\varepsilon$.
Note that $d+s > 0$ and $-s > 0$, so that
it is assured that
$
\int_{0}^{\infty}
u^{ \frac{d+s}{2}-1 }
\mathrm{e}^{-u}
\mathrm{d}u
< +\infty
$
and
$
\int_{0}^{\infty}
u^{ -\frac{s}{2} }
\mathrm{e}^{-u}
\mathrm{d}u
< +\infty
$,
respectively.
Note also that
$1 - (s+d) < 0$.
These imply
$
\limsup_{\varepsilon \downarrow 0}
\varepsilon^{d-s p^{\prime}}
\Vert
	( I - \mathcal{L} )^{ s/2 }
	( \phi f )
	( X_{t} )
\Vert_{p}^{p^{\prime}}
< +\infty
$
if
$
p < p^{\prime} < \min \{ \frac{d}{d+s-1}, \frac{2d}{d-2} \} \}
$,
and so (\ref{(iii)-b}).

To prove (\ref{(iii)-2}),
we apply a similar argument,
with assuming $\varepsilon \in (0,1]$,
which leads to
\begin{equation*}
\begin{split}
&
\vert
( 1 - \triangle )^{s/2} [ ( \phi f ) ( x + \varepsilon \bullet ) ] (z)
\vert
\leqslant
\mathrm{const.}
\varepsilon^{s}
\int_{0}^{\infty}
u^{ \frac{ 2-(s+d) }{ 2 } - 1 }
\mathrm{e}^{-u}
\mathrm{e}^{\text{{\small $
	-
	\frac{
		\vert \varepsilon z - (x-y) \vert^{2}
	}{
		2cu
	}
$}}}
\mathrm{d}u .
\end{split}
\end{equation*}
We divide the integral as
\begin{equation*}
\begin{split}
&
\int_{0}^{\infty}
u^{ \frac{ 2-(s+d) }{ 2 } - 1 }
\mathrm{e}^{-u}
\mathrm{e}^{\text{{\small $
	-
	\frac{
		\vert \varepsilon z - (x-y) \vert^{2}
	}{
		2cu
	}
$}}}
\mathrm{d}u \\
&=
\int_{0}^{\vert \varepsilon z - (x-y) \vert}
u^{ \frac{ 2-(s+d) }{ 2 } - 1 }
\mathrm{e}^{-u}
\mathrm{e}^{\text{{\small $
	-
	\frac{
		\vert \varepsilon z - (x-y) \vert^{2}
	}{
		2cu
	}
$}}}
\mathrm{d}u \\
&\hspace{40mm}+
\int_{\vert \varepsilon z - (x-y) \vert}^{\infty}
u^{ \frac{ 2-(s+d) }{ 2 } - 1 }
\mathrm{e}^{-u}
\mathrm{e}^{\text{{\small $
	-
	\frac{
		\vert \varepsilon z - (x-y) \vert^{2}
	}{
		2cu
	}
$}}}
\mathrm{d}u .
\end{split}
\end{equation*}
We estimate the first term as follows.
\begin{equation*}
\begin{split}
&
\int_{0}^{\vert \varepsilon z - (x-y) \vert}
u^{ \frac{ 2-(s+d) }{ 2 } - 1 }
\mathrm{e}^{-u}
\mathrm{e}^{\text{{\small $
	-
	\frac{
		\vert \varepsilon z - (x-y) \vert^{2}
	}{
		2cu
	}
$}}}
\mathrm{d}u \\
&=
(2c)^{ 1 - \frac{d+s-2}{2} }
\vert \varepsilon z - (x-y) \vert^{2-(s+d)}
\int_{ \frac{ \vert \varepsilon z - (x-y) \vert }{2c} }^{\infty}
u^{ \frac{ d+s-2 }{ 2 } - 1 }
\mathrm{e}^{-u}
\mathrm{e}^{\text{{\small $
	-
	\frac{
		\vert \varepsilon z - (x-y) \vert^{2}
	}{
		2cu
	}
$}}}
\mathrm{d}u \\
&\leqslant
(2c)^{ 1 - \frac{d+s-3}{2} }
\vert \varepsilon z - (x-y) \vert^{1-(s+d)}
\int_{ \frac{ \vert \varepsilon z - (x-y) \vert }{2c} }^{\infty}
u^{ \frac{ d+s }{ 2 } - 1 }
\mathrm{e}^{-u}
\mathrm{e}^{\text{{\small $
	-
	\frac{
		\vert \varepsilon z - (x-y) \vert^{2}
	}{
		2cu
	}
$}}}
\mathrm{d}u \\
&\leqslant
(2c)^{ 1 - \frac{d+s-3}{2} }
\vert \varepsilon z - (x-y) \vert^{1-(s+d)}
\int_{ 0 }^{\infty}
u^{ \frac{ d+s }{ 2 } - 1 }
\mathrm{e}^{-u}
\mathrm{e}^{\text{{\small $
	-
	\frac{
		\vert \varepsilon z - (x-y) \vert^{2}
	}{
		2cu
	}
$}}}
\mathrm{d}u .
\end{split}
\end{equation*}
The second term is estimated as
\begin{equation*}
\begin{split}
&
\int_{\vert \varepsilon z - (x-y) \vert}^{\infty}
u^{ \frac{ 2-(s+d) }{ 2 } - 1 }
\mathrm{e}^{-u}
\mathrm{e}^{\text{{\small $
	-
	\frac{
		\vert \varepsilon z - (x-y) \vert^{2}
	}{
		2cu
	}
$}}}
\mathrm{d}u
\leqslant
\vert \varepsilon z - (x-y) \vert^{\frac{2-d}{2}}
\int_{0}^{\infty}
u^{ - \frac{ s }{ 2 } }
\mathrm{e}^{-u}
\mathrm{e}^{\text{{\small $
	-
	\frac{
		\vert \varepsilon z - (x-y) \vert^{2}
	}{
		2cu
	}
$}}}
\mathrm{d}u.
\end{split}
\end{equation*}
Combining these, and a change of variable,
we reach the estimate (\ref{(iii)-2}),
and hence (\ref{(iii)-b}) is proved.

(d)
In
view of
(\ref{(ii)-a}),
(\ref{(ii)-b}),
(\ref{(iii)-a})
and
(\ref{(iii)-b}),
what we have to do now is to find
$p^{\prime}, q, r > 1$
such that
\begin{itemize}
\item[$\bullet$]
$
\frac{1}{p^{\prime}} + \frac{1}{q} + \frac{1}{r} < 1
$;

\item[$\bullet$]
$
p^{\prime} < \frac{d}{d-1}
$
and
$
p^{\prime\prime} := q(p-1) < \frac{d}{d-(1-s)}
$.
\end{itemize}
In fact, since
$s < \frac{p}{p-1} - d$,
we can take $\varepsilon \in (0, \frac{1}{d} )$ such that
$
s < \frac{p}{p-1} - \frac{d}{1-\varepsilon d}
<
\frac{p}{p-1} - d
$.
Then take $p^{\prime}, q > 1$ so that
$$
\frac{d-1}{d}
< \frac{1}{p^{\prime}}
< \frac{d-1}{d} + \varepsilon
\quad
\text{and}
\quad
\frac{1}{q}
=
\frac{1}{d} - \varepsilon
\hspace{2mm}
\Big(
	< \frac{1}{d}
\Big) .
$$
These conditions imply
$
1/p^{\prime} + 1/q < \frac{d-1}{d} + \frac{1}{d} = 1
$
and hence one can take $r > 0$ such that
$
1/p^{\prime} + 1/q + 1/r < 1
$.
Finally, we have
if $p^{\prime\prime} - 1 > 0$,
then
$
1 - \frac{ ( p^{\prime\prime} - 1 ) d }{ p^{\prime\prime} }
= 1 - \frac{(p^{\prime\prime}-1)d}{(p-1)q}
>
1 - \frac{(p^{\prime\prime}-1)d}{(p-1)d}
=
1 - \frac{p^{\prime\prime}-1}{p-1}
=
\frac{p}{p-1} - q
=
\frac{p}{p-1} - \frac{d}{1-\varepsilon d}
> s
$,
which implies
$
p^{\prime\prime}
< \frac{d}{d-(1-s)}
$.
If $p^{\prime\prime} - 1 \leqslant 0$,
then
$
1 - \frac{ ( p^{\prime\prime} - 1 ) d }{ p^{\prime\prime} }
> 0 > s
$,
which also implies
$
p^{\prime\prime}
< \frac{d}{d-(1-s)}
$.

Therefore, by taking
$p^{\prime}, q, r > 1$
as above,
we conclude from
(\ref{4factors}), (\ref{4factors'})
and
(\ref{3rd-fac})--(\ref{(iii)-b})
that
$
\Vert
	( \phi \partial_{k} f ) (X_{t})
\Vert_{p,s}
$
and
$
\Vert
	( \phi f ) (X_{t})
\Vert_{p,s}
$
converge to zero as $t = \varepsilon^{2} \downarrow 0$.

\end{proof} 



\begin{thebibliography}{99}
\bibitem{Ab}
{\sc Abels,~H.}
(2012)
{\it Pseudodifferential and singular integral operators.
An introduction with applications.}
De Gruyter Graduate Lectures.
De Gruyter, Berlin. x+222 pp.
ISBN: 978-3-11-025030-5


\bibitem{ARZ}
{\sc Airault,~H.}, {\sc Ren,~J.} and {\sc Zhang,~X.}
(2000)
{\it Smoothness of local times of semimartingales.}
C. R. Acad. Sci. Paris S\'er. I Math. 330, no. 8, 719-724.


\bibitem{AKO}
{\sc Ait~Ouahra,~ M.}, {\sc Kissami,~A.} and {\sc Ouahhabi,~H.}
(2014)
{\it On fractional derivatives of the local time
of a symmetric stable process as a doubly indexed process.}
Random Oper. Stoch. Equ. 22, no. 2, 99-107. 


\bibitem{Be}
{\sc Bender,~C.}
(2003)
{\it An It\^o formula for generalized functionals of a fractional Brownian motion with arbitrary Hurst parameter.}
Stochastic Process. Appl. {\bf 104}, no. 1, 81-106.


\bibitem{BoTo}
{\sc Bongioanni,~B.} and {\sc Torrea,~J.~L.}
(2006)
{\it Sobolev spaces associated to the harmonic oscillator.}
Proc. Indian Acad. Sci. Math. Sci. 116, no. 3, 337-360. 


\bibitem{BoRo}
{\sc Boufoussi,~B.} and {\sc Roynette,~B.}
(1993)
{\it Le temps local brownien appartient p.s. \`a l'espace de Besov
$\mathscr{B}_{p. \infty}^{1/2}$.}
C. R. Acad. Sci. Paris S\'er. I Math. 316, no. 8, 843-848.


\bibitem{BoYo}
{\sc Bouleau,~N.} and {\sc Yor,~M.}
(1981)
{\it Sur la variation quadratique des temps
locaux de certaines semimartingales.}
C. R. Acad. Sci. Paris S\'er. I Math. 292, no. 9, 491-494. 


\bibitem{FPA}
{\sc F\"ollmer,~H.}, {\sc Protter,~P.} and {\sc Shiryayev,~A.~N.}
(1995)
{\it Quadratic covariation and an extension of It\^o's formula.}
Bernoulli 1, no. 1-2, 149-169.


\bibitem{Fr}
{\sc Friedman,~A.}
(1964)
{\it Partial differential equations of parabolic type.}
Prentice-Hall, Inc., Englewood Cliffs, N.J. xiv+347 pp.



\bibitem{Ha}
{\sc Hasumi, M.}
(1961)
{\it Note on the $n$-dimensional tempered ultra-distributions.}
T\^ohoku Math. J. (2) {\bf 13}, 94-104. 


\bibitem{IW}
{\sc Ikeda,~N.} and {\sc Watanabe,~S.}
(1989)
{\it Stochastic differential equations and diffusion processes.}
Second edition.
North-Holland Mathematical Library, 24.
North-Holland Publishing Co.,
Amsterdam; Kodansha, Ltd., Tokyo. xvi+555 pp.
ISBN: 0-444-87378-3


\bibitem{Kr}
{\sc Krylov,~N.~V.}
(2008)
{\it Lectures on elliptic and parabolic equations in Sobolev spaces.}
Graduate Studies in Mathematics, 96.
American Mathematical Society, Providence, RI, xviii+357
pp. ISBN: 978-0-8218-4684-1


\bibitem{Ku1}
{\sc Kubo,~I.}
(1983)
{\it It\^o formula for generalized Brownian functionals.}
Theory and application of random fields
(Bangalore, 1982),
156-166,
Lecture Notes in Control and Inform. Sci., {\bf 49}, Springer, Berlin.


\bibitem{KS84b}
{\sc Kusuoka,~S.} and {\sc Stroock,~D.}
(1985)
{\it Applications of the Malliavin calculus. II.}
J. Fac. Sci. Univ. Tokyo Sect. IA Math. 32, no. 1, 1-76.


\bibitem{Li}
{\sc Liang,~Z.}
(2006)
{\it Fractional smoothness for the generalized
local time of the indefinite Skorohod integral.}
J. Funct. Anal. 239, no. 1, 247-267.


\bibitem{NuVi}
{\sc Nualart,~D.} and {\sc Vives,~J.}
(1993)
{\it Chaos expansions and local times.}
Publ. Mat. 36 (1992), no. 2B, 827-836.


\bibitem{ReYo}
{\sc Revuz,~D.} and {\sc Yor,~M.}
(1999)
{\it Continuous martingales and Brownian motion.}
Third edition.
Grundlehren der Mathematischen Wissenschaften
[Fundamental Principles of Mathematical Sciences],
293. Springer-Verlag, Berlin. xiv+602 pp.
ISBN: 3-540-64325-7 


\bibitem{Se}
{\sc Silva,~J.~S.~E.}
(1958)
{\it Les fonctions analytiques comme ultra-distributions
dans le calcul opérationnel.}
(French) 
Math. Ann. {\bf 136}, 58-96.


\bibitem{ShCh}
{\sc Shuwen,~L.} and {\sc Cheng,~O.}
(2016)
{\it Local times of stochastic differential equations
driven by fractional Brownian motions.}
arXiv preprint, arXiv:1602.07272


\bibitem{Ste56}
{\sc Stein,~E.~M.}
(1956)
{\it Interpolation of linear operators.}
Trans. Amer. Math. Soc. 83, 482-492.


\bibitem{Th}
{\sc Thangavelu,~S.}
{\it Lectures on Hermite and Laguerre expansions.}
With a preface by Robert S. Strichartz. Mathematical Notes, 42.
Princeton University Press, Princeton, NJ, 1993.
xviii+195 pp. ISBN: 0-691-00048-4


\bibitem{Tr}
{\sc Trotter,~H.~F.}
(1958)
{\it A property of Brownian motion paths.}
Illinois J. Math. {\bf 2}, 425-433.


\bibitem{Ue}
{\sc Uemura,~H.}
(2004)
{\it Tanaka formula for multidimensional Brownian motions.}
J. Theoret. Probab. 17, no. 2, 347--366.


\bibitem{VeKr}
{\sc Veretennikov,~A.~Ju.} and {\sc Krylov,~N.~V.}
(1976)
{\it Explicit formulae for the solutions of stochastic equations.}
(Russian)
Mat. Sb. (N.S.) 100(142), no. 2, 266-284, 336.


\bibitem{Wang}
{\sc Wang,~A.~T.}
(1977/78)
{\it Generalized Ito's formula and additive functionals of Brownian motion.}
Z. Wahrscheinlichkeitstheorie und Verw. Gebiete {\bf 41}, no. 2, 153-159.


\bibitem{Wa87}
{\sc Watanabe,~S.}
(1987)
{\it Analysis of Wiener functionals
(Malliavin calculus)
and its applications to heat kernels.}
Ann. Probab. 15, no. 1, 1-39.


\bibitem{Wa91}
{\sc Watanabe,~S.}
(1991)
{\it Donsker's $\delta$-functions in the Malliavin calculus.}
Stochastic analysis, 495-502, Academic Press, Boston, MA.


\bibitem{Wa93}
{\sc Watanabe,~S.}
(1993)
{\it Fractional order Sobolev spaces on Wiener space.}
Probability theory and related fields 95, no. 2, 175--198.


\bibitem{Wa94}
{\sc Watanabe,~S.}
(1994)
{\it Some refinements of Donsker's delta functions.}
Stochastic analysis on infinite-dimensional spaces
(Baton Rouge, LA, 1994), 308-324,
Pitman Res. Notes Math. Ser., 310,
Longman Sci. Tech., Harlow.


\bibitem{Wa94b}
{\sc Watanabe,~S.}
(1994)
{\it Stochastic Levi sums.}
Comm. Pure Appl. Math. 47, no. 5, 767-786.


\bibitem{Yo}
{\sc Yoshinaga,~K.}
(1960)
{\it On spaces of distributions of exponential growth.}
Bull. Kyushu Inst. Tech. Math. Nat. Sci. {\bf 6}, 1-16. 


\end{thebibliography}
\end{document}